\documentclass[11pt,a4paper]{article}

\usepackage{amssymb,amsmath,amscd,enumerate,hyperref,verbatim}
\usepackage{physics}
\usepackage[utf8]{inputenc}
\usepackage[mathscr]{euscript}
\usepackage[left=1in,right=1in,top=1in,bottom=1in]{geometry}
   \setlength{\parindent}{.25in}
\usepackage{enumitem}
\usepackage{extpfeil}
\usepackage{thmtools}
\usepackage{amsthm}
\usepackage{thm-restate}
\usepackage{hyperref}
\usepackage{cleveref}
\usepackage[titles]{tocloft}
\usepackage{graphicx} 
\usepackage{verbatim}
\usepackage{sectsty}
\sectionfont{\fontsize{12}{15}\selectfont}

\newcommand{\indentEq}{\hspace{.25in}}
\newcommand{\Sub}{\scriptscriptstyle}

\hypersetup{
     colorlinks   = true,
}

\newtheorem{thm}{Theorem}[section]
\newtheorem{lem}[thm]{Lemma}
\newtheorem{prop}[thm]{Proposition}
\newtheorem{cor}[thm]{Corollary}
\newtheorem{defn}[thm]{Definition}
\newtheorem{conj}[thm]{Conjecture}
\newtheorem*{eg*}{Example}
\newtheorem{rem}[thm]{Remark}

\numberwithin{equation}{section}

\begin{document}

\title{Geodesic Coordinates for the Pressure Metric at the Fuchsian Locus}

 \author{Xian Dai}

\date{}
\maketitle 
\hypersetup{linkcolor=green}

    \hypersetup{linkcolor=red}

    \hypersetup{linkcolor=blue}

\begin{abstract}
We prove that the Hitchin parametrization provides geodesic coordinates at the Fuchsian locus for the pressure metric in the Hitchin component $\mathcal{H}_{3}(S)$ of surface group representations into $\mathrm{PSL}(3,\mathbb{R})$.  

The proof consists of the following elements: we compute first derivatives of the pressure metric using the thermodynamic formalism. We invoke a gauge-theoretic formula to compute first and second variations of reparametrization functions by studying flat connections from Hitchin's equations and their parallel transports. We then extend these expressions of integrals over closed geodesics to integrals over the two-dimensional surface. Symmetries of the Liouville measure then provide cancellations, which show that the first derivatives of the pressure metric tensors vanish at the Fuchsian locus.
\end{abstract}

\setcounter{tocdepth}{1}
\tableofcontents

\section{Introduction}

The Weil-Petersson metric on Teichm{\"u}ller space is a central object in classical Teichm{\"u}ller theory. Quite a bit is known about it: It is a negatively curved real analytic K{\"a}hler metric with isometry group induced from the extended mapping class group (Ahlfors \cite{Some_remarks_on_TeichSpace}, Tromba \cite{Tromba_Teich}, Masur-Wolf \cite{MasurWolf_WP}). Although it is not complete (Wolpert \cite{Wolpert_Noncompleteness}, Chu \cite{Chu_NonCompleteness}), it resembles a complete negative curved metric and shares many similar nice properties (Wolpert \cite{Wolpert_Noncompleteness},\cite{Wolpert_NielsenProblem}).

In recent years, considerable attention has focused on higher rank Teichm{\"u}ller spaces (\cite{Goldman1990_Projective}, \cite{HITCHIN_LieGrp_and_TeichmullerSpace}, \cite{Labourie_AnosovFlows}). It is natural to seek metric structures on these spaces with the hope that structure will reflect important properties of the spaces. To that end, Bridgeman, Canary, Labourie and Sambarino in \cite{PressureMetric-MainPaper} have extended the Weil-Petersson metric from Teichm{\"u}ller space to an analytic Riemannian metric by techniques from thermodynamic formalism, called the pressure metric on Hitchin components. The Hitchin component $\mathcal{H}_{n}(S)$, defined by Hitchin in \cite{HITCHIN_LieGrp_and_TeichmullerSpace} is a special component of the representation space of the fundamental group of a closed surface $S$ of genus $g \geq 2$ into $\mathrm{PSL}(n,\mathbb{R})$. In particular, the Teichm{\"u}ller space $\mathcal{T}(S)$, identified as representations into $\mathrm{PSL}(2,\mathbb{R})$, embeds in this component and is called the Fuchsian locus. To define the pressure metric, we associate a geodesic flow to each Hitchin representation and describe these reparametrized geodesic flows by some H{\"o}lder functions called reparametrization functions. Our pressure metric is defined on the tangent space of a Hitchin component by taking the variance of the first variation of reparametrization functions that record the infinitesimal change of the representations.

Bridgeman, Canary, Labourie and Sambarino have proved that the pressure metric in fact restricts to a multiple of the Weil-Petersson metric on the Fuchsian locus and is invariant under the action of the mapping class group. Despite this nice coincidence, very little is presently known about the pressure metric. Some $C^{0}$ properties of the pressure metric have recently been identifed by Labourie and Wentworth in \cite{Variation_along_FuchsianLocus}. In particular, they show, when restricted to the Fuchsian locus, the pressure metric is proportional to a Petersson-type pairing for variation given by holomorphic differentials. Building upon their work, our goal in this paper is to investigate some variational $C^{1}$ properties of the pressure metric using tools from thermodynamic formalism.

One may be curious to what extent that the pressure metric in Hitchin components resembles  Weil-Petersson geometry. Inspired by Ahlfors' work in \cite{Some_remarks_on_TeichSpace} that the Bers coordinates are geodesic for Weil-Petersson metric, we will show that for one particular case of Hitchin component, similar coordinates are geodesic for the pressure metric near the Fuchsian locus. The Hitchin component we consider is $\mathcal{H}_{3}(S)$ which coincides with the space of convex real projective structures \cite{CG_convex}. It is a prototypical example of higher rank Teichm{\"u}ller spaces.
We expect similar results will hold for general cases of Hitchin components $\mathcal{H}_{n}(S)$.

Inspired by the methods in Labourie and Wentworth's work \cite{Variation_along_FuchsianLocus} for the $C^{0}$ properties of the pressure metric, we will find and evaluate expressions for the derivatives of the pressure metric at the Fuchsian locus for the case of $\mathrm{PSL}(3,\mathbb{R})$ and its Hitchin component $\mathcal{H}_{3}(S)$.

The coordinates we choose are very natural in the setting of Hitchin components from a Higgs bundle perspective. Picking $(q_{1}, \cdots, q_{6g-6})$ to be a basis for $H^{0}(X,K^{2})$ over $\mathbb{R}$ and $(q_{6g-5}, \cdots, q_{16g-16})$ to be a basis for $H^{0}(X,K^{3})$ over $\mathbb{R}$, every element of $\mathcal{H}_{3}(S)$ corresponds to some
\begin{equation*}
    m(\xi)=\xi_{1}q_{1}+ \cdots \xi_{l}q_{l}
\end{equation*}
with $\xi=(\xi_{1}, \cdots, \xi_{l})\in \mathbb{R}^{l}$ and $l=16g-16$. 

The $\xi_i$ are coordinate functions and the coordinate system is realized by the Hitchin parametrization $\mathcal{H}_{3}(S) \cong H^{0}(X,K^2)$ $\bigoplus H^{0}(X,K^3)$. The Hitchin parametrization is given by the Hitchin section of the Hitchin fibration which are defined by Hitchin in \cite{HITCHIN_LieGrp_and_TeichmullerSpace} and will be explained in the next section. 

We will show
\begin{restatable}{thm}{main}
Let $S$ be a closed oriented surface with genus $g\geq 2$. For any point $\sigma\in \mathcal{T}(S) \subset \mathcal{H}_{3}(S)$, let $X$ be the Riemann surface corresponding to $\sigma$. Then the Hitchin parametrization $H^{0}(X,K^2)\bigoplus H^{0}(X,K^3)$ provides geodesic coordinates for the pressure metric at $\sigma$.
\label{thm main}
\end{restatable}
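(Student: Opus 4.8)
The plan is to reduce the assertion to a vanishing statement about the metric tensor in the Hitchin coordinates $\xi = (\xi_1, \dots, \xi_l)$. A coordinate system is geodesic at a point exactly when the Christoffel symbols vanish there, equivalently when $\partial_k g_{ij}(\sigma) = 0$ for all $i,j,k$, where $g_{ij}(\xi) = \langle \partial_{\xi_i}, \partial_{\xi_j}\rangle_P$ denote the components of the pressure metric. Since the coordinates $\xi_i$ are linear on the real vector space $H^0(X,K^2)\oplus H^0(X,K^3)$, straight lines through $\sigma$ are coordinate lines, and the whole theorem collapses to showing that these finitely many first derivatives vanish at the Fuchsian point.

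First I would make the pressure metric explicit through the thermodynamic formalism. Writing $f_\xi$ for the reparametrization function of the representation $m(\xi)$ and $\dot f_i$ for its first variation in the direction $\partial_{\xi_i}$, the metric has the schematic form $g_{ij} = \operatorname{Var}(\dot f_i, \dot f_j)/\int f\, dm$, where $m$ is the equilibrium (measure of maximal entropy) and $\operatorname{Var}$ is the dynamical covariance. Differentiating once in $\xi_k$ produces three kinds of terms: terms pairing the second variation $\ddot f_{ik}$ with $\dot f_j$ (and their symmetrizations), a triple-product term $\dot f_i \dot f_j \dot f_k$ arising from the variation of the equilibrium state, and a term from the variation of the normalizing denominator. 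The delicate point here is to control how the equilibrium measure moves; I would use the standard differentiation formulas for pressure and equilibrium states so that each contribution is ultimately an integral of an explicit function against $m$.

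Next I would compute the variations $\dot f_i$ and $\ddot f_{ij}$ by the gauge-theoretic method. The reparametrization function records the translation length of the reparametrized flow along closed geodesics, which for a Hitchin representation is read from the logarithm of the top eigenvalue of the holonomy. Assembling the flat connection $D = \nabla + \Phi + \Phi^{*}$ from Hitchin's equations at the Fuchsian point and differentiating its parallel transport around closed orbits, I would express $\dot f$ and $\ddot f$ as integrals over each geodesic of explicit tensors built from the variation of the Higgs field; at the Fuchsian locus these variations are governed by the differentials $q_2 \in H^0(X,K^2)$ and $q_3 \in H^0(X,K^3)$, and I would record precisely how each such differential enters. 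I would then convert these integrals over closed geodesics into integrals over the unit tangent bundle $T^1 S$ against the Liouville measure, using that at the Fuchsian locus the equilibrium measure agrees with a multiple of Liouville.

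The decisive step is a symmetry argument on $T^1 S$. The Liouville measure is invariant under the circle action rotating the fibers, and a holomorphic section of $K^m$ contributes to the integrand with definite weight $\pm m$ under this $S^1$ action. Every term of $\partial_k g_{ij}$ is, after the extension to the surface, a product of three factors with weights drawn from $\{\pm 2, \pm 3\}$, so its total weight has the form $\pm w_i \pm w_j \pm w_k$ with each $w \in \{2,3\}$. One checks that such a sum is never zero: when the number of cubic factors is odd the total is odd, and when it is even a direct inspection of $\pm 2 \pm 2 \pm 2$ and $\pm 2 \pm 3 \pm 3$ and their analogues shows nonvanishing. Hence each fiber integral vanishes and $\partial_k g_{ij}(\sigma) = 0$. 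I expect the main obstacle to be precisely the gauge-theoretic step together with the passage to the surface: obtaining clean, manifestly weight-homogeneous expressions for $\dot f$ and especially for $\ddot f$ requires controlling the parallel transport of $D$ and its derivatives to second order, and the extension to $T^1 S$ must be carried out so that the $S^1$-weight bookkeeping survives intact. Once the integrands are in weight-graded form over Liouville measure, the cancellation is essentially arithmetic.
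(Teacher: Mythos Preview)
Your outline follows the paper's strategy fairly closely through the first four steps: reduction to $\partial_k g_{ij}(\sigma)=0$, the thermodynamic formula for first derivatives of the metric (which indeed produces a higher-covariance term in $(\dot f_i,\dot f_j,\dot f_k)$ together with covariance terms pairing $\dot f$ with $\ddot f$), the gauge-theoretic computation of $\dot f$ and $\ddot f$ via variation of parallel transport, and the extension from closed geodesics to $UX$. Those are the right moves.

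The gap is in your ``decisive step.'' The claim that every term is a product of three factors of definite $S^1$-weight in $\{\pm 2,\pm 3\}$ is false, because the integrands involve \emph{flowed} quantities. The fiber rotation $x\mapsto e^{i\theta}x$ does not commute with the geodesic flow, so even though $\Re q_\alpha(x)$ carries weight $\pm 3$, the flowed value $\Re q_\alpha(\Phi_t(e^{i\theta}x))$ does not. Concretely, in the Poincar\'e disk chart based at $x$ one finds
\[
\Re q_\alpha\bigl(\Phi_r(e^{i\theta}x)\bigr)=\Re\Bigl(\sum_{n\ge 0} a_n(x)\,R^n(1-R^2)^3\,e^{i(n+3)\theta}\Bigr),
\]
so \emph{every} weight $\pm(n+3)$ appears. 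After rotational averaging one is left with infinitely many coefficients such as $A_n=\int_{UX}\Re(a_0 a_n\overline{b_{n+3}})\,dm_0$ and $B_n=\int_{UX}\Re(a_0\overline{a_{n+3}} b_n)\,dm_0$ which are not a priori zero. The same remark applies to the second-variation terms: the explicit $\ddot f$ (e.g.\ $\Re q_\alpha(x)\int_0^\infty e^{-2s}\Re q_\beta(\Phi_s(x))\,ds$) is not weight-homogeneous, contrary to what you hope for. The only rotation that interacts cleanly with the flow is the antipodal map $x\mapsto -x$ (through $\Phi_t(-x)=-\Phi_{-t}(x)$), and the resulting $\mathbb{Z}_2$-parity argument does kill some cases but \emph{not} those with an even number of cubic directions, for instance $\partial_i g_{\alpha\alpha}$ and $\partial_\beta g_{\alpha i}$.

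What the paper actually does, after rotational averaging, is invoke a second symmetry---geodesic-flow invariance of $m_0$---in a much sharper way: specializing the flow parameters to particular relations ($s=t$, $s=\tfrac{t}{2}$, or $s=mt$ for several $m$), combined with $\Phi_{-t}(x)=-\Phi_t(-x)$, yields enough linear identities among the surviving coefficients $A_n,B_n$ (respectively $C_n,D_n$, etc.) to force them all to vanish. This is the real content of the key Lemmas and is not ``essentially arithmetic.'' You should also note that the paper disposes of several derivative types by geometric shortcuts before any analysis begins: $\partial_k g_{ij}$ with all indices quadratic uses that Bers coordinates are geodesic for Weil--Petersson, and the relation $\partial_j g_{i\alpha}=0\Rightarrow\partial_\alpha g_{ij}=0$ uses that the Fuchsian locus is totally geodesic.
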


More explicitly, if we denote components of the pressure metric at $\sigma$ as $g_{ij}(\sigma)$ with respect to the coordinates given by Hitchin parametrization, then $\partial_{k}g_{ij}(\sigma)=0$ for all possible $i,j,k$ ranging from $1$ to $16g-16$.   

The proof will be a combination of techniques from the theory of thermodynamic formalism and the theory of Higgs bundles. On the one hand, we will use thermodynamic formalism to study the pressure metric and investigate its $C^{1}$ properties. On the other hand, reparametrization functions and their variations need to be understood via their Higgs bundle invariants. We now outline some important ingredients of our computations and proofs.

Since there are two types of tangential directions in $\mathcal{H}_{3}(S)$, directions given by quadratic differentials and directions given by cubic differentials (corresponding to directions along the Fuchsian locus and transverse to it respectively), the derivatives of the metric tensor will be divided into different cases according to this distinction.
\begin{itemize}
    \item 
     The vanishing of a few types of first derivatives of the metric tensor follows easily from the geometric facts that the Fuchsian locus is a totally geodesic embedding into the Hitchin component and that the Bers coordinates on Teichm{\"u}ller space are geodesic.
     
    \item 
    On the other hand, to compute the bulk of the components, we need to invoke thermodynamic formalism to obtain an explicit formula for first derivatives of the pressure metric. We find the formula of the first variation of the pressure metric by computing third derivatives of pressure functions using the theory of the Ruelle operator. This expression involves first and second variations of the reparametrization functions.
    
    \item
   We start from studying first and second variations of reparametrization functions on closed geodesics. Because vectors tangent to periodic geodesics are dense in tangent bundles of hyperbolic surfaces, the computation of first and second variations of the reparametrization functions on closed geodesics can be extended to the unit tangent bundle after an argument that the natural extensions are H{\"o}lder functions.
   
    \item 
       To study the first variation of reparametrization functions on closed geodesics, we recall a gauge theoretic formula from \cite{Variation_along_FuchsianLocus}. 
       We then interpret the resulting formula as defining a system of homogeneous ordinary differential equations which we proceed to solve.
   
   \item 
    Finding the second variation of the reparametrization functions is equivalent to understanding the first variation of our gauge theoretic formula from the previous paragraph. The difficulty here is describing how projections onto the eigenvectors for the holonomy map vary when we have a family of representations in the Hitchin component. Indeed, it turns out that we need to understand the variation of all of the eigenvectors of our holonomy map. We interpret this problem in terms of solving a system of non-homogeneous ordinary differential equations with suitable boundary conditions which we then proceed to solve.
   
   \item
   For some types of metric tensors that involve both the tangential directions and transverse directions to the Fuchsian locus, analyzing flat connections associated to these directions require understanding the corresponding harmonic metrics that are solutions of Hitchin's equations. The harmonic metrics become no longer diagonalizible when leaving the Fuchsian locus along these mixed directions. We break up the infinitesimal version of Hitchin's equation system and obtain nine scalar equations. We analyze them by maximum principles and Bochner techniques to compute second variations of reparametrization functions. 
   
   \item
   The evaluation of first derivatives of the pressure metric can be lifted to the Poincar\'e disk following an idea from \cite{Variation_along_FuchsianLocus}. Here is where it becomes important that we are taking first derivatives of the pressure metric rather than zero derivatives of the pressure metric. In particular, we find formulas involving iterated integrals of these holomorphic differentials. Specifying a point on the unit tangent bundle, we can identify the Poincar\'e disk as our coordinate chart and write down the analytic expansions of our holomorphic differentials on this chart. Using geodesic flow invariance and rotational invariance of the Liouville measure, we find that no nonzero coefficients of our analytic expansions remain after integration.

\end{itemize} 

There are more cases of tangential directions along Fuchsian locus in $\mathcal{H}_{n}(S)$ for $n \geq 4$ where the harmonic metrics are not known to be diagonalizable. Despite the fact that this makes the analysis difficult, the $n=3$ case suggests the following conjecture.

\begin{conj}
Let $S$ be a closed oriented surface with genus $g\geq 2$ and $n\geq 4$. For any point $\sigma\in \mathcal{T}(S) \subset \mathcal{H}_{n}(S)$, let $X$ be the Riemann surface corresponding to $\sigma$, the Hitchin parametrization $\bigoplus\limits_{i=2}^{i=n} H^{0}(X,K^i)$ provides geodesic coordinates for the pressure metric at $\sigma$.
\end{conj}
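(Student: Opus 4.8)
The strategy for this conjecture is to run the same four-stage argument established for the main theorem on $\mathcal{H}_{3}(S)$, systematically replacing the rank-$3$ computations by the representation theory of the principal embedding $\mathfrak{sl}_2 \hookrightarrow \mathfrak{sl}_n$, and then to confront the one genuinely new phenomenon that appears only for $n \geq 4$. First, the thermodynamic-formalism input is dimension-independent: the Ruelle-operator computation of the third derivative of the pressure functions produces an explicit formula expressing $\partial_k g_{ij}(\sigma)$ as an integral over $T^1 X$, against the Liouville measure, of a universal polynomial in the first variations $\dot f_i,\dot f_j,\dot f_k$ and the second variations $\ddot f_{ij},\ddot f_{ik},\ddot f_{jk}$ of the reparametrization functions. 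The elementary reductions also survive verbatim: since the Fuchsian locus is totally geodesic and the Bers coordinates are Weil--Petersson geodesic, every component $\partial_k g_{ij}$ with all three indices of quadratic type vanishes, so it suffices to treat triples $(i,j,k)$ of differential orders in which at least one order exceeds $2$.

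Second, I would organize all the Higgs-bundle data at $\sigma$ by the principal $\mathfrak{sl}_2$. At the Fuchsian point the adjoint bundle decomposes under this $\mathfrak{sl}_2$ as $\bigoplus_{i=2}^{n} V_{2i-1}$ into irreducible summands of dimensions $3,5,\dots,2n-1$, and each Hitchin coordinate $q_i\in H^{0}(X,K^{i})$ is realized as a highest-weight holomorphic section of the summand $V_{2i-1}$. Consequently the infinitesimal deformation, the associated flat connection, and its parallel transport all split into weight spaces for the induced $S^1$-action on $T^1 X$, and each $q_i$ contributes with a definite angular weight proportional to its order $i$. With this grading in place, the gauge-theoretic first-variation formula of Labourie--Wentworth, which is valid for all $n$, becomes a homogeneous linear ODE system along each closed geodesic that is block-diagonal in the weight; one solves it exactly as in the rank-$3$ case to identify $\dot f_i$ as a weight-$i$ object.

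Third comes the second variation $\ddot f_{ij}$, which is the first-order variation of the gauge formula and therefore requires the variation of \emph{every} eigenline of the holonomy, not merely the top one. I would set this up as a non-homogeneous ODE system along the geodesic with periodicity boundary conditions; the principal-$\mathfrak{sl}_2$ weight decomposition again block-diagonalizes the system, reducing it to scalar and small-block equations on the individual weight spaces. The genuinely harder point, already visible at $n=3$, is that along mixed directions the harmonic metric solving Hitchin's equations ceases to be diagonalizable, so one must linearize the full Hitchin system and solve it; for general $n$ this is a coupled elliptic system of $n^2-1$ equations. I would split it by weight and attack each weight block by the maximum principle and Bochner techniques, after checking that the $S^1$-grading is compatible with the Laplacian and the zeroth-order curvature terms. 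The expected output is a precise description of the angular-weight content of $\ddot f_{ij}$, namely that it is supported in weights $\{0,\pm(i-j),\pm(i+j)\}$.

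Finally I would lift the Liouville integral to the Poincar\'e disk, expand the holomorphic differentials locally, and evaluate the iterated integrals arising from the first- and second-variation formulas. The integrand carried by a triple $(i,j,k)$ decomposes into $S^1$-weights of the form $\pm i \pm j \pm k$ together with the weight shifts recorded above, and rotational invariance of the Liouville measure annihilates every summand of nonzero weight. This is exactly where the essential new difficulty of $n\geq 4$ lies: for $n=3$ one checks that no sign choice makes $\pm i\pm j\pm k=0$ when $i,j,k\in\{2,3\}$, so rotational invariance alone forces $\partial_k g_{ij}(\sigma)=0$, whereas as soon as $n\geq 4$ resonances appear — for instance $2+2-4=0$ at $n=4$ — and the symmetry argument leaves a finite, growing set of zero-weight terms that must be shown to vanish on their own. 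The main obstacle is therefore to evaluate these resonant iterated integrals explicitly and prove their cancellation, presumably by combining the radial structure of the disk integrals with a further symmetry of the Fuchsian locus, namely the real involution $q_i\mapsto \bar q_i$ and the geodesic-flow-reversal parity, under which the resonant contributions should pair off to zero. Establishing this cancellation uniformly in $n$ is precisely what separates the conjecture from the proven case.
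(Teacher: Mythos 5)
You should first be clear that the paper contains no proof of this statement: it is posed as an open conjecture, so there is nothing to compare your argument against, and your text can only be judged as a program. As a program it reproduces the dimension-independent scaffolding that the paper itself uses at $n=3$ (the Ruelle-operator formula for third derivatives of pressure, the totally-geodesic/Bers reduction, the Labourie--Wentworth first-variation formula, ODE systems along closed geodesics, the linearized Hitchin system attacked by maximum principles), and those stages are plausible for general $n$. But your diagnosis of where the difficulty sits misreads the $n=3$ proof in a way that matters. It is \emph{not} true that ``rotational invariance alone forces $\partial_k g_{ij}(\sigma)=0$'' when $i,j,k\in\{2,3\}$. When a differential of order $k$ is evaluated along the flow, its disk expansion is $\Re\bigl(\sum_{n\geq 0} a_n(x)R^n(1-R^2)^k e^{i(n+k)\theta}\bigr)$, so flowing injects \emph{every} angular weight $n+k$, and zero-weight coefficient combinations such as $\int_{UX}\Re(a_0 a_n \bar b_{n+3})\,\mathrm{d}m_0$ survive the $\theta$-integration for all triples of orders, already at $n=3$. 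The bulk of the paper's Sections 6 and 7 consists precisely in killing these surviving coefficients ($A_n,B_n,C_n,D_n,E_n,F_n,G_n,H_n,I_n,J_n$) by comparing the correlation at special time ratios $s=t$, $s=t/2$, $s=mt$, using $\Phi_{-t}(x)=-\Phi_t(-x)$ and $(e^{i\pi})^{*}m_0=m_0$, and inducting on Taylor coefficients. So the dividing line between $n=3$ and $n\geq 4$ is not ``no resonances versus resonances'' at the level of $\pm i\pm j\pm k$; resonant terms exist in both cases, and what degenerates for $n\geq 4$ is the arithmetic of these coefficient-comparison arguments.

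This sharpens the gap you already acknowledge. The sign identities that drive the $n=3$ inductions come from flow reversal combined with $\Re q_k(-x)=(-1)^k\Re q_k(x)$: they produce the crucial minus signs exactly because each triple from $\{2,3\}$ contains an odd number of odd-order factors somewhere in the argument. For $n\geq 4$ the genuinely new triples are the all-even or even-parity ones (e.g.\ $(2,2,4)$, where moreover $2+2-4=0$ makes even the instantaneous correlation $\int_{UX}\Re q_2\,\Re q_2\,\Re \bar q_4\,\mathrm{d}m_0$ survive rotation), and there the parity under $x\mapsto -x$ is $+1$, so the very symmetry you propose as the fix --- conjugation $q_i\mapsto \bar q_i$ together with flow-reversal parity --- supplies no sign and cannot pair the resonant contributions to zero. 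These terms would have to be evaluated by some new mechanism, and nothing in your outline produces one; this is a missing idea, not a routine extension. A secondary inaccuracy: your claim that $\ddot f_{ij}$ is supported in $S^1$-weights $\{0,\pm(i-j),\pm(i+j)\}$ cannot hold literally, since the second variations are of the form $\Re q_i(x)\int_0^\infty e^{-2s}\Re q_j(\Phi_s x)\,\mathrm{d}s$ and rotation does not commute with the geodesic flow, so these functions have no definite weight --- which is exactly why the paper must expand everything on the Poincar\'e disk rather than argue by weight bookkeeping alone.
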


Recently, a Riemannian metric in $\mathcal{H}_{n}(S)$ associated to periods given by the first simple root length $L_{\alpha_1}(\rho(\gamma))=\log\frac{\lambda_1(\rho(\gamma))}{\lambda_2(\rho(\gamma))}$ has been defined by Bridgeman, Canary, Labourie and Sambarino in \cite{BCLS_SimpleRootFlows},  where $\lambda_1(\rho(\gamma))$ and $\lambda_2(\rho(\gamma))$ are largest and second largest modulus of eigenvalues of $\rho(\gamma)$. This Riemannian metric is called the Liouville pressure quadratic form in \cite{BCLS_SimpleRootFlows}. Our methods of computing first derivative of metric tensors can be applied to the Liouville pressure quadratic form. We expect similar geodesic coordinate results to hold in that setting as well.

\paragraph{Structure of the article:}The article is organized as follow. In section 2, we recall some fundamental results from the theory of thermodynamic formalism and reparametrizations of geodesic flows. We define the pressure metric. We also introduce Higgs bundles and Hitchin deformation for defining our coordinates in Hitchin components. Section 3 is devoted to preliminary proofs by thermodynamic formalism machinery.  We compute the formula for third derivatives of the pressure function. In section 4, we start the proof of the main theorem and divide the components of first derivatives of metric tensors into several types. We also include a gauge-theoretic formula given by Labourie and Wentworth in \cite{Variation_along_FuchsianLocus} here. Then in section 5, we derive second variations of reparametrization functions by studying infinitesimal variation of parallel transport equations. In section 6, we evaluate the first derivatives of the pressure metric and show they are zero following the steps explained above. We finally generalize the arguments to all types of metric tensors in the last section 7.

\paragraph{Acknowledgement}
The author would like to thank her advisor, Michael Wolf, for his help and kind support. The weekly meetings were an important source of encouragement and guidance. The author also wants to thank Martin Bridgeman for his warm introduction to the pressure metric. The author also wants to express her appreciation to  Siqi He, Qiongling Li, Andrea Tamburelli and Siran Li for helpful conversations with them. Finally the author acknowledges support from U.S. National Science Foundation (NSF) grant DMS-1564374 as well as Geometric structures and Representation Varieties (the GEAR network). The paper would not have been possible without these supports. Finally we would like to thank the reviewers for careful reading and comments.

\section{Background and notation}

In this section, we develop the notation and background material that we will need. We begin in section \ref{subsection repa} with a discussion of reparametrization of geodesic flows. Then in section \ref{subsection thermo} we recall the elements of thermodynamic formalism that we will need and finally in section \ref{subsection Hitchin}, we conclude with some notation from the theory of Higgs bundles which arises in our arguments.

Let $S$ be a closed oriented surface with genus $g\geq 2$. We will define all the concepts for introducing the pressure metric in the context of Hitchin components $\mathcal{H}_{n}(S)$. The reader can find more general version in \cite{PressureMetric-MainPaper}. The Hitchin components $\mathcal{H}_{n}(S)$ will be briefly introduced in section \ref{subsection Hitchin}.

Equip $S$ with a complex structure $J$ so that $X=(S,J)$ is a Remain surface and thus a point in Teichm{\"u}ller Space. Let $\sigma$ be the hyperbolic metric in the conformal class of $X$. We denote the unit tangent bundle of $X$ with respect to $\sigma$ by $UX$ and $\Phi$ the geodesic flow on $(X,\sigma)$.

\subsection{Reparametrization function} 
\label{subsection repa}
In this subsection, we introduce how we reparametrize the geodesic flow $\Phi$ by reparametrization functions. In particular, we introduce Liv\v{s}ic's theorem and geodesic flows for Hitchin representations.

Suppose $f:UX \to \mathbb{R}$ is a positive H{\"o}lder function and $a$ a closed orbit. We will reparametrize the flow $\Phi$ by the function $f$ so that for the new flow ${\Phi}^{f}$, the flow's direction remains the same everywhere but the speed of the flow changes. In particular, for a $\Phi$-periodic orbit $a$, denoting its period with respect to $\Phi$ by $l(a)$, we want the period of $a$ for the new flow ${\Phi}^{f}$ to be the following:

\begin{equation*}
    l_{f}(a)= \int_{0}^{l(a)}f(\Phi_{s}(x))\mathrm{d}s,
\end{equation*}
where $x$ is any point on $a$.

This leads to the following definition of reparametrization.
\begin{defn}
Let $f: UX \to \mathbb{R}$ be a positive H{\"o}lder continuous function. We define the reparametrization of $\Phi$ by $f$ to be the flow $\Phi^{f}$ on $UX$ such that for any $(x,t)\in UX \times \mathbb{R}$,

\begin{equation*}
    \Phi_{t}^{f}(x)={\Phi_{\Sub \alpha_{\Sub f}(x,t)}}(x), 
\end{equation*}

where $\kappa_{f}(x,t)=\int_{0}^{t}f(\Phi_{s}(x)))\mathrm{d}s$ and $\alpha_{f}: UX \times \mathbb{R} \to \mathbb{R}$ satisfies
\begin{equation*}
    \alpha_{f}(x,\kappa_{f}(x,t))=t.
\end{equation*}

\end{defn}

\begin{rem}
Suppose $O$ is the set of periodic orbits of $\Phi$. If $a \in O$, then its period as a $\Phi_{t}^f$ periodic orbit is $l_{f}(a)$ because 
\begin{equation*}
\Phi_{l_{\Sub f}(a)}^{f}(x)=\Phi_{\alpha_{\Sub f}(x,l_f(a))}(x)=\Phi_{\Sub l(a)}(x)=x.
\end{equation*}
\end{rem}

We introduce Liv\v{s}ic cohomology classes, originally established by Liv\v{s}ic  \cite{Livshits-cohomologous}.  Liv\v{s}ic cohomologous H{\"o}lder functions turn out to reparametrize a flow in ``equivalent" ways.

Let $C^{h}(UX)$ denote the set of real-valued H{\"o}lder functions on $UX$. 
 \begin{defn} 
 \label{defn Livsic}
    For $f,g \in C^{h}(UX)$, we say they are Liv\v{s}ic cohomologous if there exists a H{\"o}lder continuous function $V:UX\to \mathbb{R}$ that is differentiable in the flow's direction such that 
  \begin{equation*}
      f(x)-g(x)= \frac{\partial}{\partial t}\bigg|_{t=0}V(\Phi_{t}(x)).
  \end{equation*}
 If $f$ is Liv\v{s}ic cohomologous to $g$, then we will denote it as $f\sim g$. 
 \end{defn}

We have the following important properties of Liv\v{s}ic cohomologous functions:
 \begin{enumerate}
     \item 
      (Liv\v{s}ic's Theorem, \cite{Liv_ic_Cohomology}) Two H\"older continuous function $f$ and $g$ are Liv\v{s}ic cohomologous if and only if $l_{f}(a)=l_g(a)$ for every $a\in O$.
     
     \item
     If $f$ and $g$ are Liv\v{s}ic cohomologous then they have the same integral over any $\Phi$-invariant measure.
     
     This is because $\int_{UX} V(\Phi_{t}(x)) \mathrm{d}m= Const$ for any $\Phi$-invariant measure $m$ and any $t\in \mathbb{R}$.
     
     \item If $f$ and $g$ are positive and Liv\v{s}ic  cohomologous, then the reparametrized flows $\Phi^f$ and $\Phi^g$ are H{\"o}lder conjugate, i.e. there exists a H{\"o}lder homeomorphism $h:UX\to UX$ such that, for all $x\in UX$ and $t\in \mathbb{R}$,
     \begin{equation*}
         h(\Phi_{t}^f(x))=\Phi_{t}^{g}(h(x)).
     \end{equation*}
     See (\cite[Proposition.19.2.8]{Katok_dynamics} ).
 \end{enumerate}
 
 The procedure of reparametrizing geodesic flows can be applied to Hitchin components $\mathcal{H}_{n}(S)$ and provides reparametrization functions as codings for representations. This idea was first introduced by Sambarino to study counting problems associated to Anosov representations \cite{Sambarino_Lemma}. It has also been elaborated later in \cite{Sambarino_OrbitCounting}, \cite{Sambarino_Entropy} and Sambarino's other papers as well. In the setting we are working on, similar ideas lead to a construction of a geodesic flow $\Phi^{\rho}$ associated to each (conjugacy class of) Hitchin representation $\rho \in \mathcal{H}_{n}(S)$. We refer the reader to \cite{PressureMetric-MainPaper} for the explicit construction. In particular, this flow relates $\mathcal{H}_{n}(S)$ to thermodynamic formalism. We will describe here some of the important properties of $\Phi^{\rho}$:
\begin{itemize}
    \item $\Phi^{\rho}$ is an Anosov flow.
     \item There exists a H{\"o}lder function $f_{\rho}: UX \longrightarrow \mathbb{R^{+}}$, called the reparametrization function of $\rho$, such that the reparametrized flow $\Phi^{f_{\rho}}$ of $\Phi$ is H{\"o}lder conjugate to $\Phi^{\rho}$ (\cite{Sambarino_Lemma}). 
    \item The period of the orbit associated to $[\gamma]\in \pi_{1}(S)$ is $\log \Lambda_{\gamma}(\rho)$, where $\Lambda_{\gamma}(\rho)$ is the spectral radius of $\rho(\gamma)$, i,e, the largest modulus of the eigenvalues of $\rho(\gamma)$.
\end{itemize}
 
\begin{rem}
One can also reperametrizes the geodesic flow by a H{\"o}lder function with periods given by simple root lengths $L_{\alpha_1}(\rho(\gamma))=\log\frac{\lambda_1(\rho(\gamma))}{\lambda_2(\rho(\gamma))}$ where $\lambda_1(\rho(\gamma))$ and $\lambda_2(\rho(\gamma))$ are largest and second largest modulus of eigenvalues of $\rho(\gamma)$. This will lead to the Liouville pressure quadratic form which also gives rise to a Riemannian metric in $\mathcal{H}_{n}(S)$(see \cite[Theorem 1.6]{BCLS_SimpleRootFlows}). However we will mainly focus on spectrum radius length $\Lambda_{\gamma}(\rho)$ and its associated pressure metric in this paper.  
\end{rem}
 
\subsection{Thermodynamic formalism}
 \label{subsection thermo}
Next we will introduce some concepts arising from the thermodynamic formalism needed for our proofs. The introduction of most of the material here can also be found in \cite{PressureMetric-MainPaper}. After the introduction, we will define the pressure metric on Hitchin components.

As usual, we let $\Phi$ denote the geodesic flow on a hyperbolic surface $(X, \sigma)$. We denote by $\mathcal{M}^{\Phi}$ the set of $\Phi$-invariant probability measures on $UX$. Recall $l(a)$ denotes the period of the periodic point $a$ with respect to $\Phi$. Let
\begin{equation*}
    R_T=\{a  \text{ closed orbit of $\Phi$ } | l(a)\leq T\}.
\end{equation*}

 \begin{defn}
The topological entropy of $\Phi$ is defined as:
\begin{equation*}
    h(\Phi)=\limsup\limits_{T\longrightarrow \infty} \frac{\log \# R_T}{T}.
\end{equation*}
\end{defn}

Recall for a  H{\"o}lder function $f:UX \to \mathbb{R}$, we denote
\begin{equation*}
    l_{f}(a)= \int_{0}^{l(a)}f(\Phi_{s}(x))\mathrm{d}s.
\end{equation*}
\begin{defn}
The topological pressure (or simply pressure) of a continuous function $f: UX \to \mathbb{R}$ with respect to $\Phi$ is defined by
\begin{equation*}
 \mathbf{P}(\Phi,f)=\limsup\limits_{T\longrightarrow \infty} \frac{1}{T}\log(\sum\limits_{a\in R_T} e^{l_f(a)}).
\end{equation*}
\end{defn}

\begin{rem}
From this definition, we see the pressure of a function f only depends on the periods of $f$, i.e. the collection of numbers $\{ l_{f}(a)\}$ for any $a \in O$. From Liv\v{s}ic's Theorem, we conclude the pressure of a function only depends on Liv\v{s}ic cohomologous class.
\end{rem}

In statistical mechanics, suppose we are given a physical system with different possible states $i=1, \cdots,n$ and the energies of these states are $E_1, E_2 \cdots, E_n$ with probability $p_j$ that state $j$ occurs. When energy is fixed, the principle ``nature maximizes entropy $h$" says that the entropy $h(p_1, \cdots, p_n)=\sum_{i=1}^{n}-p_i \log p_i$ of the distribution will be maximized with right choices of $p_i$. However when the physical system is put in contact with a much larger ``heat source" which is at a fixed temperature $T$ and energy is allowed to pass between the original system and the heat source, ``nature minimizes the free energy" will instead apply by reaching the ``Gibbs distribution". The free energy is $E-kTh$, where $k$ is a physical constant, $h$ is the entropy and $E=\sum_{i=1}^{n} p_i E_i$ is the average of energy. In the thermodynamic formalism, energy potentials $E_i$ of different states are encoded by continuous functions and ``Gibbs distributions" for discrete probability spaces are generalized to equilibrium states. The principle ``nature minimizes free energy" motivates the following.

\begin{prop}
\label{prop defn_pressure}
(Variational principle.)

Denoting the measure-theoretic entropy of $\Phi$ with respect to a measure $m\in\mathcal{M}^{\Phi}$ as $h(\Phi,m)$, the (topological) pressure of a continuous function $f:UX\to \mathbb{R}$ satisfies
\begin{equation*}
\mathbf{P}(\Phi,f)= \sup\limits_{m\in \mathcal{M}^{\Phi}}(h(\Phi, m)+ \int_{\Sub UX} f \mathrm{d}m ).   
\end{equation*}
In particular, the topological entropy is the supremum of all measure-theoretic entropies,
\begin{equation*}
\mathbf{P}(\Phi,0)=\sup\limits_{m\in \mathcal{M}^{\Phi}}(h(\Phi, m))=h(\Phi).
\end{equation*}
\end{prop}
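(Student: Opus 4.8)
The plan is to establish the variational principle through the two standard inequalities, after first reconciling the periodic-orbit definition of pressure used here with the topological pressure defined via dynamically separated sets. Because the geodesic flow $\Phi$ on the hyperbolic surface $(X,\sigma)$ is Anosov, it is expansive and satisfies Bowen's specification property; consequently the periodic-orbit partition function $\sum_{a \in R_T} e^{l_f(a)}$ has the same exponential growth rate as the partition functions $\sum_{x \in E} e^{\int_0^T f(\Phi_s x)\,ds}$ taken over maximal $(T,\epsilon)$-separated sets $E$, in the limit $\epsilon \to 0$. This step lets me replace $\mathbf{P}(\Phi,f)$ by the Bowen--Walters topological pressure, for which the classical machinery applies. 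Alternatively, one may fix a Markov partition (Bowen--Ratner) realizing $\Phi$ as a suspension flow over a subshift of finite type and transport the entire statement to the symbolic setting, where the result is standard (Ruelle, Walters). In either case the theorem itself is classical, so the sketch below reconstructs the known proof.

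For the lower bound $\mathbf{P}(\Phi,f) \geq h(\Phi,m) + \int_{UX} f\,dm$, I fix an arbitrary $m \in \mathcal{M}^{\Phi}$ and a finite measurable partition $\mathcal{Q}$ of $UX$ of small diameter whose boundary is $m$-null. Refining $\mathcal{Q}$ dynamically along the time-one map $\Phi_1$ and applying the Shannon--McMillan--Breiman theorem, $m$-almost every point lies in a dynamical cylinder of measure comparable to $e^{-T h(\Phi,m)}$; since the total mass is $1$, there are on the order of $e^{T h(\Phi,m)}$ such cylinders, and summing $e^{\int_0^T f}$ over one representative per cylinder produces, via Birkhoff's theorem and $\frac{1}{T}\int_0^T f \to \int f\,dm$, a lower bound of the form $e^{T(h(\Phi,m) + \int f\,dm)}$ for the separated-set partition function. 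Taking $\frac{1}{T}\log$, letting $T \to \infty$, and then $\epsilon \to 0$, yields the inequality for every $m$, hence for the supremum.

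The upper bound $\mathbf{P}(\Phi,f) \leq \sup_{m}\bigl(h(\Phi,m) + \int f\,dm\bigr)$ is the substantive direction. For each $T$ I choose a maximal $(T,\epsilon)$-separated set $E_T$ realizing the partition function $Z_T = \sum_{x \in E_T} e^{\int_0^T f(\Phi_s x)\,ds}$, form the probability measures $\nu_T = Z_T^{-1}\sum_{x \in E_T} e^{\int_0^T f(\Phi_s x)\,ds}\,\delta_x$, average them along the flow to obtain $\Phi$-invariant candidates $\mu_T = \frac{1}{T}\int_0^T (\Phi_s)_* \nu_T\,ds$, and pass to a weak-$*$ subsequential limit $\mu \in \mathcal{M}^{\Phi}$. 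A partition-counting estimate combined with the concavity of $t \mapsto -t\log t$ bounds the entropy contribution from below along the sequence, and the crucial input is that expansiveness of $\Phi$ renders the entropy map $m \mapsto h(\Phi,m)$ upper semicontinuous, so that $h(\Phi,\mu) + \int f\,d\mu$ dominates the limiting growth rate $\mathbf{P}(\Phi,f)$ up to an error vanishing with $\epsilon$.

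The main obstacle is precisely this final step: controlling the measure-theoretic entropy of the weak-$*$ limit $\mu$. Weak-$*$ convergence alone does not preserve entropy, and one must exploit expansiveness (equivalently, the existence of a finite generating partition with $\mu$-null boundaries) both to guarantee upper semicontinuity and to rule out entropy loss in the limit. The special case $\mathbf{P}(\Phi,0) = h(\Phi)$ then follows by setting $f \equiv 0$ and observing that the topological entropy, defined here as the exponential growth rate of closed orbits, coincides with the Bowen definition under specification.
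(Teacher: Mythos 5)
The paper does not prove this proposition at all: it states the variational principle as a classical fact from thermodynamic formalism and immediately remarks (Remark following the proposition) that one may equally well take it as the \emph{definition} of pressure and topological entropy. So there is no in-paper argument to compare against; your proposal reconstructs the standard external proof. Your outline is essentially the correct classical route: you rightly identify that the paper's periodic-orbit definition of $\mathbf{P}(\Phi,f)$ must first be reconciled with the Bowen--Walters separated-set pressure, and that expansiveness plus specification of the Anosov geodesic flow (or, equivalently, passage to a Markov coding) is exactly what makes the two growth rates agree -- this is the one step that is genuinely specific to the present setting and the paper itself relies on the same Bowen--Ruelle machinery elsewhere (Section 3.1). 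The lower bound via Shannon--McMillan--Breiman and Birkhoff, and the upper bound via the Misiurewicz construction of the averaged measures $\mu_T$ and a weak-$*$ limit, are the standard arguments. One small correction: in the Misiurewicz proof of the upper bound, upper semicontinuity of $m \mapsto h(\Phi,m)$ is not the crucial input; one fixes a partition $\mathcal{Q}$ with $\mu$-null boundary and bounds $\frac{1}{T}\log Z_T$ directly by $h_{\mu}(\Phi_1,\mathcal{Q}) + \int f\,d\mu$ plus an error vanishing with $\epsilon$, using only concavity of $-t\log t$ and subadditivity; expansiveness is what guarantees a generating partition exists but the limit-entropy control does not hinge on semicontinuity of the entropy map. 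This does not affect the validity of the overall plan.
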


\begin{rem}
One can also take Proposition \ref{prop defn_pressure} as definitions of pressure and topological entropies. 
\end{rem}

We shall omit the background geodesic flow $\Phi$ in the notation of pressure and simply write 
\begin{align*}
    \mathbf{P}(\cdot)=\mathbf{P}(\Phi, \cdot)
\end{align*}
 
 \begin{defn}
  A measure $m\in \mathcal{M}^{\Phi}$ on $UX$ such that 
  \begin{equation*}
\mathbf{P}(f)= h(\Phi, m)+ \int_{\Sub UX} f \mathrm{d}m   
\end{equation*}
is called an equilibrium state of $f$. 
 \end{defn}

\begin{prop}
(Bowen-Ruelle \cite{Bowen-Ruelle})
For any H{\"older} function $f:UX \to \mathbb{R}$, with respect to the geodesic flow $\Phi$, there exists a unique equilibrium state for $f$, denoted as $m_{f}$. Moreover, $m_{f}$ is ergodic.
\end{prop}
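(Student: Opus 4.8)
The plan is to reduce this continuous-time statement to the standard thermodynamic formalism for subshifts of finite type, where the Ruelle--Perron--Frobenius (RPF) theorem yields existence, uniqueness and ergodicity in a single package. First I would use the fact that the geodesic flow $\Phi$ on the unit tangent bundle of a closed hyperbolic surface is a topologically mixing Anosov flow, hence carries a local product structure. Bowen's construction of Markov partitions for such flows then provides a finite-to-one Hölder semiconjugacy realizing $(UX,\Phi)$ as a suspension flow: there is a subshift of finite type $(\Sigma,\hat\sigma)$, with $\hat\sigma$ the left shift (not to be confused with the metric $\sigma$), a strictly positive Hölder roof function $r:\Sigma\to\mathbb{R}^{+}$, and a semiconjugacy $\pi$ from the suspension $\Sigma_r$ onto $UX$ intertwining the suspension flow with $\Phi$.

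Next I would transport the variational problem to the base. By Abramov's formula, $\Phi$-invariant probability measures $m$ on $UX$ correspond (away from the partition boundary) to $\hat\sigma$-invariant probability measures $\mu$ on $\Sigma$ via $m=(\mu\times\mathrm{Leb})/\int r\,d\mu$, with entropies related by $h(\Phi,m)=h(\hat\sigma,\mu)/\int r\,d\mu$. Writing $\Delta f(x)=\int_0^{r(x)} f(\Phi_s\,\pi x)\,ds$ for the discretized potential, the variational principle (Proposition \ref{prop defn_pressure}) converts the equilibrium problem for $f$ into that of finding the unique $c\in\mathbb{R}$ with
\begin{equation*}
\mathbf{P}_{\hat\sigma}\!\left(\Delta f - c\, r\right)=0,
\end{equation*}
whereupon $c=\mathbf{P}(f)$ and the equilibrium state of $f$ is the suspension of the equilibrium state of the base potential $\Delta f - \mathbf{P}(f)\, r$. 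Existence and uniqueness of this $c$ follow because $c\mapsto \mathbf{P}_{\hat\sigma}(\Delta f - c r)$ is continuous, strictly decreasing, and has the correct sign at $\pm\infty$, since $r$ is bounded below by a positive constant.

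It then remains to solve the base problem, which is classical. After replacing the two-sided Hölder potential by a cohomologous one depending only on future coordinates (Sinai's lemma), I would study the Ruelle transfer operator $\mathcal{L}_\phi$ acting on Hölder functions on the one-sided shift. The RPF theorem gives a simple leading eigenvalue $e^{\mathbf{P}_{\hat\sigma}(\phi)}$ with positive Hölder eigenfunction $h$ and dual eigenmeasure $\nu$; the measure $\mu=h\,\nu$ is the unique equilibrium state for $\phi$, is a Gibbs measure, and---because the spectral gap of $\mathcal{L}_\phi$ forces exponential decay of correlations---is ergodic (indeed mixing) for $\hat\sigma$. Since the suspension of an ergodic invariant measure is ergodic for the suspension flow, pushing $\mu$ forward through the suspension and the semiconjugacy $\pi$ yields the desired $m_f$, with uniqueness and ergodicity inherited from the base.

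The main obstacle, and the only genuinely technical point, is that the symbolic coding is merely a finite-to-one semiconjugacy rather than a conjugacy: the Markov partition has boundaries along which distinct symbolic orbits are identified. To transfer uniqueness and ergodicity faithfully I would verify that this boundary set is null for every equilibrium state in play---using that it is a proper closed flow-invariant subset of lower ``dimension,'' so any ergodic equilibrium state giving it full measure would contradict the positivity of the leading eigenfunction and the Gibbs property---thereby promoting the semiconjugacy to a measure-theoretic isomorphism on a set of full measure. The remaining bookkeeping (Abramov normalization, checking that $\Delta f$ is Hölder, and translating base ergodicity to flow ergodicity) is routine once this null-boundary step is in place.
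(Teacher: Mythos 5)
The paper states this proposition without proof, citing Bowen--Ruelle, and your outline is precisely the standard argument from that source: Markov coding of the Anosov geodesic flow, Abramov/suspension reduction to the base potential $\Delta f - \mathbf{P}(f)r$, the Ruelle--Perron--Frobenius theorem on the one-sided shift, and the null-boundary step (ergodicity plus the full support forced by the Gibbs property rules out a proper closed invariant set of full measure). This is the same symbolic-dynamics machinery the paper itself recapitulates in Section 3.1 for its pressure-derivative computations, so your proposal is correct and consistent with the paper's framework.
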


\begin{rem}
We observe from the definition of equilibrium states that if $f-g$ is Liv\v{s}ic cohomologous to a constant, then $f$ and $g$ have the same equilibrium states.
\label{rem SameEquilState}
\end{rem}

\begin{defn}
The equilibrium state $m_0$ for $f=0$ is called a probability measure of maximal entropy. It is also called the Bowen-Margulis measure of $\Phi$. We also denote it as $m_{\Phi}$. It satisfies
\begin{align*}
     \mathbf{P}(0)= \mathbf{P}(\Phi, 0)=h(\Phi, m_{\Phi})=h(\Phi).
\end{align*}
\end{defn}

\begin{rem}
\label{rem Liouville}
    
The Liouville measure $m_{L}$, the normalized Riemannian measure on $UX$, is a probability measure of maximal entropy for geodesic flows of closed hyperbolic manifolds (see \cite{katok_Entropy-and-closed-geodesies} section 2). Thus when considering the geodesic flow $\Phi$ of a hyperbolic surface $(X,\sigma)$, we have $m_{L}=m_{\Phi}$.
\end{rem}

Given $f$ a positive H{\"o}lder continuous function on $UX$, denoting $h(f)=h(\Phi^{f})$ to be the topological entropy of the reparametrized flow $\Phi^{f}$, we have the following lemma that allows us to ``normalize" a H{\"o}lder function to have pressure zero.

\begin{lem}
(Sambarino \cite{Sambarino_Lemma}, Bowen-Ruelle \cite{Bowen-Ruelle})
The pressure satisfies
$$\mathbf{P}(-hf)=0$$
if and only if $h=h(f)=h(\Phi^{f})$.
\end{lem}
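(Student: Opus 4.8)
The plan is to combine the variational principle (Proposition \ref{prop defn_pressure}) with Abramov's formula, which relates the measure-theoretic entropy of the reparametrized flow $\Phi^{f}$ to that of the background flow $\Phi$. Writing $s$ for a real parameter, so that the assertion concerns the unique root of the map $s \mapsto \mathbf{P}(-sf)$, the strategy is to prove directly that $\mathbf{P}(-h(\Phi^{f})f) = 0$ and then to deduce the ``if and only if'' from strict monotonicity of $s \mapsto \mathbf{P}(-sf)$.

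First I would set up the correspondence between invariant measures. Since $f$ is positive and H\"older, every $m \in \mathcal{M}^{\Phi}$ produces a $\Phi^{f}$-invariant probability measure $m^{f} = (f\,m)/\int_{UX} f \,\mathrm{d}m$, and $m \mapsto m^{f}$ is a bijection from $\mathcal{M}^{\Phi}$ onto the space of $\Phi^{f}$-invariant probability measures. Abramov's formula then gives
\begin{equation*}
h(\Phi^{f}, m^{f}) = \frac{h(\Phi, m)}{\int_{UX} f \,\mathrm{d}m}.
\end{equation*}
Applying the variational principle to the reparametrized flow and rewriting the supremum over $\Phi^{f}$-invariant measures as a supremum over $\mathcal{M}^{\Phi}$ via this bijection yields the key identity
\begin{equation*}
h(\Phi^{f}) = \mathbf{P}(\Phi^{f}, 0) = \sup_{m \in \mathcal{M}^{\Phi}} \frac{h(\Phi, m)}{\int_{UX} f \,\mathrm{d}m}.
\end{equation*}

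Next I would establish $\mathbf{P}(-h(\Phi^{f}) f) = 0$ by two inequalities. For the upper bound, the displayed identity gives $h(\Phi, m) \leq h(\Phi^{f}) \int_{UX} f \,\mathrm{d}m$ for every $m \in \mathcal{M}^{\Phi}$, hence $h(\Phi, m) - h(\Phi^{f})\int_{UX} f \,\mathrm{d}m \leq 0$; taking the supremum over $m$ and invoking the variational principle for $\Phi$ with potential $-h(\Phi^{f})f$ shows $\mathbf{P}(-h(\Phi^{f})f) \leq 0$. For the lower bound, let $\mu_{0}$ be the probability measure of maximal entropy of $\Phi^{f}$, and let $m_{0} \in \mathcal{M}^{\Phi}$ be the measure with $m_{0}^{f} = \mu_{0}$. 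Then $m_{0}$ attains the supremum above, so $h(\Phi, m_{0}) - h(\Phi^{f})\int_{UX} f \,\mathrm{d}m_{0} = 0$, and the variational principle forces $\mathbf{P}(-h(\Phi^{f})f) \geq 0$. Together these give the equality; if one prefers not to assume existence of $\mu_{0}$, the same lower bound follows from a maximizing sequence, since the supremum is $\geq$ every term of a sequence tending to $0$ from below.

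Finally I would prove the equivalence via strict monotonicity. Since $UX$ is compact and $f$ is positive and continuous, $f \geq c$ for some $c > 0$, so $\int_{UX} f \,\mathrm{d}m \geq c$ for all $m \in \mathcal{M}^{\Phi}$. For $s_{1} < s_{2}$ the variational principle gives
\begin{equation*}
\mathbf{P}(-s_{2}f) = \sup_{m \in \mathcal{M}^{\Phi}} \Big( h(\Phi, m) - s_{2}\int_{UX} f \,\mathrm{d}m \Big) \leq \mathbf{P}(-s_{1}f) - (s_{2}-s_{1})c < \mathbf{P}(-s_{1}f),
\end{equation*}
where the middle inequality comes from pulling the constant $-(s_{2}-s_{1})c$ out of the supremum. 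Hence $s \mapsto \mathbf{P}(-sf)$ is strictly decreasing and has at most one zero; combined with $\mathbf{P}(-h(\Phi^{f})f) = 0$, this shows that $\mathbf{P}(-hf) = 0$ holds precisely when $h = h(\Phi^{f})$, as claimed. The main obstacle is the measure-theoretic input underlying the argument: establishing Abramov's formula together with the entropy-compatible bijection of invariant measures under reparametrization, and the existence of the maximal-entropy measure for $\Phi^{f}$ used in the lower bound. These are exactly the steps where the positivity and H\"older regularity of $f$ are genuinely needed.
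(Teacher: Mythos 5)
Your proof is correct. The paper itself gives no argument for this lemma --- it is quoted with citations to Sambarino and Bowen--Ruelle --- and your route (the bijection $m \mapsto m^{f}$ between invariant measures, Abramov's formula $h(\Phi^{f},m^{f})=h(\Phi,m)/\int_{UX} f\,\mathrm{d}m$, the resulting identity $h(\Phi^{f})=\sup_{m}h(\Phi,m)/\int_{UX} f\,\mathrm{d}m$, and strict monotonicity of $s\mapsto \mathbf{P}(-sf)$ from $f\geq c>0$) is precisely the standard argument of those references. Your maximizing-sequence fallback for the lower bound is a good touch, since it sidesteps the potential circularity of invoking a maximal-entropy measure for $\Phi^{f}$ whose usual construction already presupposes the lemma.
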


Potrie and Sambarino show, in the Hitchin component $\mathcal{H}_{n}(S)$, the topological entropy is maximized only along the Fuchsian locus. In particular, it is a constant on the Fuchsian locus.

\begin{thm}(Potrie-Sambarino \cite{Sambarino_Entropy})
\label{thm Entropy}
If $\rho \in \mathcal{H}_{n}(S)$, then $h(\rho)\leq \frac{2}{n-1}$. Moreover, if $h(\rho)=\frac{2}{n-1}$, then $\rho$ lies in the Fuchsian locus.
\end{thm}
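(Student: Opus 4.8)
The plan is to reduce the theorem to a single sharp statement about the \emph{first simple root} and then extract both the inequality and the rigidity from convexity of the pressure functional. Write $\lambda_1(\gamma)\ge\cdots\ge\lambda_n(\gamma)$ for the moduli of the eigenvalues of $\rho(\gamma)$; since $\rho$ is Hitchin these are positive and distinct and the limit curve is a Frenet curve, and the period of $[\gamma]$ for $\Phi^{\rho}$ is $\log\lambda_1(\gamma)$. For each simple root $\alpha_i$ let $f_i\colon UX\to\mathbb{R}^+$ be the Hölder reparametrization function whose period on the orbit of $[\gamma]$ is $\alpha_i(\gamma)=\log\lambda_i(\gamma)-\log\lambda_{i+1}(\gamma)$, and let $f$ be the reparametrization function of $\rho$ itself, with period $\log\lambda_1(\gamma)$. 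The expansion of the first fundamental weight in simple roots gives $\log\lambda_1=\sum_{i=1}^{n-1}c_i\,\alpha_i$ with $c_i=\tfrac{n-i}{n}>0$ and $\sum_i c_i=\tfrac{n-1}{2}$. Since $\sum_i c_i f_i$ and $f$ then have equal period on every closed orbit, Liv\v{s}ic's theorem makes them cohomologous, and as the pressure depends only on the Liv\v{s}ic class I may freely replace $f$ by $\sum_i c_i f_i$ in every pressure computation.

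The key lemma I would isolate is that each simple root has entropy exactly $1$, i.e. $h(f_i)=1$, equivalently $\mathbf{P}(-f_i)=0$ by Sambarino's lemma. It suffices to prove this for the first root $\alpha_1$: the $i$-th root of $\rho$ is the first root of the exterior power $\Lambda^{i}\rho$, whose two largest eigenvalue-moduli are $\lambda_1\cdots\lambda_i$ and $\lambda_1\cdots\lambda_{i-1}\lambda_{i+1}$, so that its first root equals $\log\lambda_i-\log\lambda_{i+1}=\alpha_i$; and $\Lambda^{i}\rho$ is again projective Anosov with a hyperconvex limit curve, so the first-root statement applied to it yields $h(f_i)=1$.

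Granting the lemma, the inequality is immediate from convexity of the pressure. By the variational principle $\mathbf{P}$ is a supremum of affine functionals of the potential, hence convex; setting $t_i:=c_i/\sum_j c_j=\tfrac{2(n-i)}{n(n-1)}$, so that $\sum_i t_i=1$, I get
$$\mathbf{P}\Bigl(-\tfrac{2}{n-1}f\Bigr)=\mathbf{P}\Bigl(\sum_i t_i(-f_i)\Bigr)\le\sum_i t_i\,\mathbf{P}(-f_i)=0.$$
Because $s\mapsto\mathbf{P}(-sf)$ is strictly decreasing (as $f>0$) and vanishes precisely at $s=h(f)=h(\rho)$, this forces $h(\rho)\le\tfrac{2}{n-1}$. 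For rigidity, suppose $h(\rho)=\tfrac{2}{n-1}$, so the estimate above is an equality. Strict convexity of $\mathbf{P}$ transverse to coboundaries, a consequence of the uniqueness of equilibrium states (Bowen--Ruelle), then forces all $-f_i$ to share one equilibrium state; with $\mathbf{P}(-f_i)=0$ this makes the $f_i$ pairwise Liv\v{s}ic cohomologous, so $\log\lambda_i(\gamma)-\log\lambda_{i+1}(\gamma)$ is independent of $i$ for every $\gamma$. Together with $\sum_j\log\lambda_j(\gamma)=0$ this makes the eigenvalues of $\rho(\gamma)$ equal to $\bigl\{e^{\frac{n+1-2i}{2}\ell(\gamma)}\bigr\}_{i=1}^{n}$ for a single function $\ell$, which is exactly the eigenvalue pattern of the irreducible $SL(2,\mathbb{R})\hookrightarrow SL(n,\mathbb{R})$; I would finish by noting that this pins the Frenet curve to be the Veronese curve, so $\rho$ factors through the principal $SL(2,\mathbb{R})$ and lies in the Fuchsian locus.

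The main obstacle is the key lemma $h(f_1)=1$, and the two bounds are different in character. The lower bound $h(f_1)\ge 1$ follows from producing enough closed orbits (the limit curve is a topological circle, so the $\alpha_1$-length spectrum is at least as rich as the hyperbolic one), while the upper bound $h(f_1)\le 1$ is the delicate part: it relies on the Anosov/hyperconvexity regularity of the Frenet curve -- a Hölder estimate controlling how the contraction rate $\log(\lambda_1/\lambda_2)$ compares with hyperbolic displacement -- fed into Sambarino's orbit-counting and reparametrization machinery. This is the one step where the special geometry of the Hitchin component, rather than soft thermodynamic formalism, is essential.
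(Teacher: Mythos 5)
You should note first that the paper does not prove this statement at all: it is quoted from Potrie--Sambarino \cite{Sambarino_Entropy} and used as a black box (the only consequences invoked later are $h(\rho(0))=1$ and the vanishing of the first derivatives of $h$ along the Fuchsian locus). So there is no internal proof to compare against; the relevant benchmark is the published argument, and your architecture coincides with it: write $\log\lambda_1=\sum_{i=1}^{n-1}\tfrac{n-i}{n}\alpha_i$, reduce everything to the claim that each simple-root entropy equals $1$ (with the passage from $\alpha_1$ to $\alpha_i$ via exterior powers), obtain the inequality from convexity of the pressure applied to the convex combination $\sum_i t_i(-f_i)$, and obtain rigidity from the fact that equality in that convexity estimate forces a common equilibrium state and hence, since all the pressures vanish, pairwise Liv\v{s}ic-cohomologous $f_i$. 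Your bookkeeping ($c_i=\tfrac{n-i}{n}$, $\sum_i c_i=\tfrac{n-1}{2}$, the resulting eigenvalue pattern $\log\lambda_i=\tfrac{n+1-2i}{2}\ell$) is correct.

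Two steps remain genuinely unproven as written. First, the key lemma $h(f_1)=1$: you correctly isolate the upper bound $h(f_1)\le 1$ as the hard analytic point and correctly locate it in the $C^1$ regularity of the Frenet limit curve, but the sentence describing "a H\"older estimate controlling how the contraction rate compares with hyperbolic displacement, fed into Sambarino's machinery" is a pointer to where the proof lives rather than a proof; this estimate is essentially the entire content of \cite{Sambarino_Entropy} and cannot be treated as soft. Second, the terminal step of the rigidity: from the eigenvalue pattern $\log\lambda_i(\gamma)=\tfrac{n+1-2i}{2}\ell(\gamma)$ for all $\gamma$ you assert that the Frenet curve is the Veronese curve and that $\rho$ factors through the principal $SL(2,\mathbb{R})$. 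An eigenvalue (Jordan-projection) pattern does not by itself determine a representation; the published argument closes this gap by observing that the pattern collapses the limit cone of $\rho(\pi_1(S))$ to a single ray and then invoking Benoist's theorem that the limit cone of a Zariski-dense subgroup of a reductive group has nonempty interior, together with the classification of possible Zariski closures of Hitchin representations, to force the closure to be the principal $SL(2,\mathbb{R})$. As a reduction of the theorem to these two inputs your proposal is the right proof; as a self-contained argument it is incomplete at exactly these two points.
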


We start to define variance and covariance which will be important. The convergence of them for mean zero functions is classical. 
\begin{defn}
For $g$ a H{\"o}lder continuous function on $UX$ with mean zero with respect to $m_{f}$(i.e. $\int_{\Sub UX} g \mathrm{d}m_f=0$), the variance of $g$ with respect to $f$ is defined as:
\begin{equation}
    \mathrm{Var}(g,m_f)= \lim_{T\to\infty} \frac{1}{T} \int_{\Sub UX}  \left( \int_{0}^{T} g(\Phi_{s}(x))\mathrm{d}s \right) ^2 \mathrm{d}m_{f}(x).
    \label{eq:Var}
\end{equation}
\end{defn}

\begin{defn}
For $g_{1}, g_{2}$  H{\"o}lder continuous functions on $UX$ with mean zero with respect to $m_{f}$ (i.e. $\int_{\Sub UX} g_1 \mathrm{d}m_f = \int_{\Sub UX} g_2 \mathrm{d}m_f =0$), the covariance of $g_1, g_2$ with respect to f is defined as:
\begin{equation}
    Cov(g_1,g_2,m_{f})= \lim_{T\to\infty} \frac{1}{T} \int_{\Sub UX}  \left( \int_{0}^{T} g_1(\Phi_{s}(x))\mathrm{d}s \right) \left( \int_{0}^{T} g_2(\Phi_{s}(x))\mathrm{d}s \right)   \mathrm{d}m_{f}(x).
    \label{eq:Cov}
\end{equation}
\end{defn}

Note these expressions are finite: 
\begin{prop}
For $g_{1}, g_{2}$ H{\"o}lder continuous function on $UX$ with mean zero with respect to $m_{f}$, the covariance of $g_1$ and $g_2$ is finite:   
$$Cov(g_1,g_2,m_{f}) < \infty .$$
\end{prop}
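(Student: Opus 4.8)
The plan is to reduce the bilinear quantity to a quadratic one and then recognize it as the integral of a time-correlation function that decays. First, at each finite time $T$ the integrand defining the covariance satisfies the exact polarization identity, so
\[
Cov(g_1,g_2,m_f)=\tfrac14\left(\mathrm{Var}(g_1+g_2,m_f)-\mathrm{Var}(g_1-g_2,m_f)\right),
\]
provided the two variances on the right converge. Since $g_1\pm g_2$ are again H\"older with mean zero against $m_f$, it suffices to prove that $\mathrm{Var}(g,m_f)$ exists and is finite for every mean-zero H\"older $g$. Expanding the square in \eqref{eq:Var} and using the $\Phi$-invariance of $m_f$ (via the substitution $y=\Phi_s x$), the two-point integral $\int_{UX} g(\Phi_s x)\,g(\Phi_t x)\,dm_f(x)$ depends only on $\tau=t-s$ and equals the autocorrelation $\rho_g(\tau)=\int_{UX} g\cdot(g\circ\Phi_\tau)\,dm_f$. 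The elementary identity $\int_0^T\!\int_0^T h(t-s)\,ds\,dt=\int_{-T}^{T}(T-|\tau|)\,h(\tau)\,d\tau$ then gives
\[
\frac1T\int_{UX}\left(\int_0^T g(\Phi_s x)\,ds\right)^2 dm_f(x)=\int_{-T}^{T}\left(1-\frac{|\tau|}{T}\right)\rho_g(\tau)\,d\tau.
\]

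The analytic heart of the matter is decay of correlations. The flow $\Phi$ is the geodesic flow of the hyperbolic surface $(X,\sigma)$, hence a contact Anosov flow, and $m_f$ is the equilibrium state of a H\"older potential; for such data the correlations of H\"older observables decay exponentially, $|\rho_g(\tau)|\le C_g\,e^{-c|\tau|}$ for some $c>0$ and $C_g$ controlled by the H\"older norm of $g$. In particular $\rho_g\in L^1(\mathbb{R})$, the integrand above is dominated by $|\rho_g|$ and converges pointwise to $\rho_g$, and dominated convergence yields
\[
\mathrm{Var}(g,m_f)=\int_{-\infty}^{\infty}\rho_g(\tau)\,d\tau<\infty.
\]
This establishes both existence of the limit and its finiteness, and by the polarization identity above gives $Cov(g_1,g_2,m_f)<\infty$.

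The single nontrivial ingredient, and hence the main obstacle, is the decay-of-correlations estimate; everything else is bookkeeping, and I would cite this estimate rather than reprove it, since proving a spectral gap for the transfer operator of a contact Anosov flow is itself a substantial theorem. An alternative that stays entirely within the thermodynamic-formalism language already introduced is to pass to a symbolic model, coding $\Phi$ as a suspension of a subshift of finite type over a H\"older roof function; there $\mathrm{Var}(g,m_f)$ can be written through the resolvent $(I-\mathcal{L})^{-1}$ of the Ruelle--Perron--Frobenius operator $\mathcal{L}$ restricted to the mean-zero subspace, and the quasi-compactness of $\mathcal{L}$ supplies exactly the boundedness needed for convergence and finiteness. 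Both routes isolate the same spectral-gap input.
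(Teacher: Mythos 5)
Your argument is correct and rests on the same key input as the paper, which simply cites decay of correlations (for the paper, via \cite{McMullen2008}) and leaves the reduction implicit; your polarization-plus-autocorrelation bookkeeping fills in exactly the routine steps the paper omits. No gap here --- both routes isolate the same spectral-gap/decay-of-correlations ingredient, so this is the paper's proof written out in full.
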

The convergence is guaranteed by decay of correlations (see \cite{McMullen2008}).

\begin{defn}
\label{defn projection}
We define an operator $P_{m}: C^{h}(UX)\to C^{h}(UX)$ associated to a probability measure $m$ on $UX$ to be:
\begin{equation*}
    P_{m}(g)(x)=g(x)-m(g),
\end{equation*}
\end{defn}

where we use the notation $m(g)=\int_{\Sub UX} g \mathrm{d}m$ for a probability measure $m$. 

The following corollary will be useful: 
\begin{cor}
\label{cor:CovNotMeanZero}

 It suffices to have $m_{f}(g_{1})=0$ and $m_{f}(g_{2})<\infty$ to guarantee the convergence of covariance and 
\begin{equation}
 Cov(g_{1},g_{2},m_{f})=Cov(g_{1}, P_{m_{f}}(g_{2}),m_{f})< \infty.   
\end{equation}

\begin{proof}[Proof of Corollary \ref{cor:CovNotMeanZero}]\hfill

Because
\begin{align*}
   & \frac{1}{T} \int_{\Sub UX}  \left( \int_{0}^{T} g_1(\Phi_{s}(x))\mathrm{d}s \right) \left( \int_{0}^{T} g_{2}(\Phi_{s}(x))-P_{m_f}(g_{2}(\Phi_{s}(x)))\mathrm{d}s \right)   \mathrm{d}m_{f}(x)\\
   =& \frac{1}{T} \int_{\Sub UX}  \left( \int_{0}^{T} g_1(\Phi_{s}(x))\mathrm{d}s \right) \left( \int_{0}^{T} m_{f}(g_2)\mathrm{d}s \right)   \mathrm{d}m_{f}(x)  \\
   =&m_{f}(g_2)\int_{\Sub UX} \int_{0}^{T}g_{1}(\Phi_{s}(x))\mathrm{d}s \mathrm{d}m_{f}(x)
   \hspace{.5in}\text{As $m_{f}(g_2)$ is a constant}\\
   =&m_{f}(g_2)\int_{0}^{T}\int_{\Sub UX}g_{1}(\Phi_{s}(x))\mathrm{d}m_{f}(x)ds  \hspace{.5in}\text{By Fubini theorem}\\
   =&m_{f}(g_2)\int_{0}^{T}\int_{\Sub UX}g_1(x)\mathrm{d}m_{f}(x)ds  \hspace{.5in}\text{$m_{f}$ is $\Phi$-invariant}\\
   =&0
\end{align*}
 Let $T\to \infty$, we obtain the desired result. 
The same applies to the case $m_{f}(h_{2})=0, m_{f}(h_{1})<\infty$.
\end{proof}
\end{cor}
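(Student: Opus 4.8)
The plan is to reduce the claim to the already-established convergence for pairs of $m_f$-mean-zero functions by isolating the constant ``mean'' part of $g_2$. Concretely, I would write $g_2 = P_{m_f}(g_2) + m_f(g_2)$, where by Definition \ref{defn projection} the function $P_{m_f}(g_2) = g_2 - m_f(g_2)$ has $m_f$-mean zero and $m_f(g_2)$ is a finite constant by hypothesis. Since the integrand defining the covariance is linear in its second slot (the inner integral $\int_0^T g_2(\Phi_s(x))\,\mathrm{d}s$ splits additively), the covariance expression \emph{before} passing to the limit decomposes into the corresponding expression for the pair $(g_1, P_{m_f}(g_2))$ plus a remainder coming from the constant $m_f(g_2)$.

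Next I would show that this remainder vanishes identically for every $T$, not merely in the limit. In the remainder the inner integral of the constant is $\int_0^T m_f(g_2)\,\mathrm{d}s = T\,m_f(g_2)$, so after dividing by $T$ one is left with $m_f(g_2)\int_{UX} \left(\int_0^T g_1(\Phi_s(x))\,\mathrm{d}s\right)\mathrm{d}m_f(x)$. Pulling the constant out, applying Fubini to interchange the $s$- and $x$-integrations, and then using the $\Phi$-invariance of $m_f$ to replace $\int_{UX} g_1(\Phi_s(x))\,\mathrm{d}m_f$ by $\int_{UX} g_1\,\mathrm{d}m_f$, this reduces to $m_f(g_2)\int_0^T m_f(g_1)\,\mathrm{d}s = 0$, since $m_f(g_1)=0$. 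Thus the constant contribution is zero termwise, and the two covariance expressions agree at every finite $T$.

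Finally, the pair $(g_1, P_{m_f}(g_2))$ consists of two $m_f$-mean-zero Hölder functions, so the classical convergence of the covariance, guaranteed by the decay of correlations cited above (\cite{McMullen2008}), applies and yields both existence of the limit and finiteness. Combining the two steps gives $Cov(g_1, g_2, m_f) = Cov(g_1, P_{m_f}(g_2), m_f) < \infty$, which is the claim; the symmetric statement, with the roles of $g_1$ and $g_2$ exchanged, follows the same way. The only technical point requiring care is the Fubini interchange, which is justified because $g_1$ is bounded (being Hölder on the compact space $UX$) and the domain $[0,T]\times UX$ carries finite measure. No genuine obstacle arises here: the ``hard part'' is really just the observation that the difference of the two covariances vanishes \emph{exactly} rather than only asymptotically, which is what lets us dispense with the mean-zero hypothesis on $g_2$.
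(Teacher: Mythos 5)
Your proposal is correct and follows essentially the same route as the paper: isolate the constant $m_f(g_2)$ via $g_2 = P_{m_f}(g_2) + m_f(g_2)$, show the constant's contribution vanishes identically for each $T$ using Fubini, $\Phi$-invariance of $m_f$, and $m_f(g_1)=0$, then invoke the known convergence for mean-zero pairs. The only addition is your explicit justification of the Fubini step, which the paper takes for granted.
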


We will also need the following characterization of covariance for later use.

\begin{defn}(Pollicott \cite{pollicott_DerivativeOfEntropy})
\label{defn Cov}
 For $g_{1}, g_{2}$  H{\"o}lder continuous function with mean zero with respect to $m_{f}$ (i.e. $\int_{\Sub UX} g_1 \mathrm{d}m_f = \int_{\Sub UX} g_2 \mathrm{d}m_f =0$), the covariance of $g_1, g_2$ may also be written as:
 \begin{equation*}
    Cov(g_1,g_2,m_{f})= \lim_{T\to\infty}\int_{\Sub UX} g_2(x) \left( \int_{-\frac{T}{2}}^{\frac{T}{2}} g_1(\Phi_{s}(x))\mathrm{d}s \right)   \mathrm{d}m_{f}(x).
\end{equation*}
\end{defn}

\begin{proof}
This proof is from \cite{pollicott_DerivativeOfEntropy}.
\begin{align*}
    Cov(g_1,g_2,m_{f})&=\lim_{T\to\infty} \frac{1}{T} \int_{\Sub UX}  \left( \int_{0}^{T} g_1(\Phi_{s}(x))\mathrm{d}s \right) \left( \int_{0}^{T} g_2(\Phi_{s}(x))\mathrm{d}s \right)   \mathrm{d}m_{f}(x) \\
    &=\lim_{T\to\infty} \frac{1}{T} \int_{\Sub UX}  \left( \int_{-\frac{T}{2}}^{\frac{T}{2}} g_1(\Phi_{s}(x))\mathrm{d}s \right) \left( \int_{-\frac{T}{2}}^{\frac{T}{2}} g_2(\Phi_{s}(x))\mathrm{d}s \right)   \mathrm{d}m_{f}(x)\hspace{.5in}\text{$m_{f}$ is $\Phi$-invariant}\\
    &=\lim_{T\to\infty}  \int_{-\frac{T}{2}}^{\frac{T}{2}} \int_{\Sub UX}  g_1(\Phi_{t}(x)) \frac{1}{T} \left( \int_{-\frac{T}{2}}^{\frac{T}{2}} g_2(\Phi_{s}(x))\mathrm{d}s \right)   \mathrm{d}m_{f}(x) \mathrm{d}t
\end{align*}
Because $m\in \mathcal{M}^{\Phi}$, the following does not vary with $s$.
\begin{align*}
   Const.&=\lim_{T\to\infty}\int_{-\frac{T}{2}}^{\frac{T}{2}} \int_{\Sub UX}  g_1(\Phi_{t}(x))  g_2(\Phi_{s}(x))  \mathrm{d}m_{f}(x) \mathrm{d}t \hspace{.5in}\text{$\forall s\in \mathbb{R}$} \\
   &=\lim_{T\to\infty}  \int_{-\frac{T}{2}}^{\frac{T}{2}} \int_{\Sub UX}  g_1(\Phi_{t}(x)) \frac{1}{S} \left( \int_{-\frac{S}{2}}^{\frac{S}{2}} g_2(\Phi_{s}(x))\mathrm{d}s \right)   \mathrm{d}m_{f}(x) \mathrm{d}t \hspace{.5in}\text{Average over $s \in \left[-\frac{S}{2}, \frac{S}{2}\right]$}\\
    &=\lim_{S\to\infty} \lim_{T\to\infty}\int_{-\frac{T}{2}}^{\frac{T}{2}} \int_{\Sub UX}  g_1(\Phi_{t}(x)) \frac{1}{S} \left( \int_{-\frac{S}{2}}^{\frac{S}{2}} g_2(\Phi_{s}(x))\mathrm{d}s \right)   \mathrm{d}m_{f}(x) \mathrm{d}t\\
     &=\lim_{T\to\infty}  \int_{-\frac{T}{2}}^{\frac{T}{2}} \int_{\Sub UX}  g_1(\Phi_{t}(x)) \frac{1}{T} \left( \int_{-\frac{T}{2}}^{\frac{T}{2}} g_2(\Phi_{s}(x))\mathrm{d}s \right)   \mathrm{d}m_{f}(x) \mathrm{d}t\\
   &=Cov(g_1,g_2,m_{f})
\end{align*}
In particular, setting $s=0$ gives
\begin{align*}
  Cov(g_1,g_2,m_{f})=\lim_{T\to\infty}\int_{-\frac{T}{2}}^{\frac{T}{2}} \int_{\Sub UX}  g_1(\Phi_{t}(x))  g_2((x))  \mathrm{d}m_{f}(x) \mathrm{d}t
\end{align*}
Rearranging the integrals gives the desired result.
\end{proof}

Higher correlation and higher covariance are introduced for Anosov diffemorphism in \cite{kotani_sunada_HigherCorrelations}. For geodesic flows, we define 

\begin{defn}
\label{defn higherCov}
 For $g_{1}, g_{2}, g_{3}$  H{\"o}lder continuous functions with mean zero with respect to $m_f$, we define the higher covariance as follows,
 \begin{equation*}
    Cov(g_{1}, g_{2}, g_{3}, m_f)= \lim_{T\to \infty}\frac{1}{T}
  \int_{\Sub UX} \int_{0}^{T}g_{1}(\Phi_{t}(x))\mathrm{d}t \int_{0}^{T}g_{2}(\Phi_{t}(x))\mathrm{d}t \int_{0}^{T}g_{3}(\Phi_{t}(x))\mathrm{d}t\mathrm{d}m_f(x).
\end{equation*}
equivalently,
 \begin{equation*}
    Cov(g_1,g_2,g_{3}, m_f)= \lim_{T\to\infty}\int_{\Sub UX} g_1(x) \left( \int_{-\frac{T}{2}}^{\frac{T}{2}} g_2(\Phi_{s}(x))\mathrm{d}s \right) \left( \int_{-\frac{T}{2}}^{\frac{T}{2}} g_3(\Phi_{s}(x))\mathrm{d}s \right)  \mathrm{d}m_f(x).
\end{equation*}
\end{defn}

This equivalence is clear from the proof of equivalent Definition \ref{defn Cov}.  The convergence of $Cov(h_{1}, h_{2}, h_{3}, m)$ is guaranteed by ``exponential multiple mixing" for geodesic flow on negatively curved compact surfaces (see Pollicott's note \cite{Pollicott_MultipleMixing}). These definitions will be used later when we introduce first derivatives of the pressure metric.

We use the general notation in the sequel:
 \begin{align}
      \partial_{s}{f}(0)=\frac{d}{ds}\bigg|_{s=0}f(s) , \text{  } \partial^{2}_{s}{f}(0)=\frac{d^2}{ds^2}\bigg|_{s=0}f(s)
      \label{eq notation1}
 \end{align}
If there are more than one parameter, e.g. $f(s_1,s_2,\cdots, s_{k})$ and $k\geq 2$, then we specify the indexes that we are taking derivatives of:
\begin{align}
\partial_{s_{i_1}\cdots,s_{i_j}}f(0)=\frac{\partial^{j} f(s_1, s_2, \cdots, s_k)}{\partial s_{i_1}\cdots \partial{s_{i_j}}}\bigg|_{s_1=s_2=\cdots=0}
\label{eq notation2}
\end{align}

\begin{thm}(Parry-Pollicott \cite{Pollicot_Zeta}, McMullen \cite{McMullen2008})
Let $f_s$ be a smooth family of functions in $C^{h}(UX)$, then we have

\begin{enumerate}
\item The first derivative of $\mathbf{P}(f_s)$ at $s=0$ is given by
\begin{equation}
    \frac{d \mathbf{P}(f_s)}{ds}\bigg|_{s=0}= \int_{\Sub UX} \partial_{s}{f}_{0} \mathrm{d}m_{f_{0}},
    \label{eq: First dev of P}
\end{equation}
\item
If the first derivative is zero, then 

\begin{equation}
    \frac{d^2 \mathbf{P}(f_s)}{ds^2}\bigg|_{s=0}= Var(\partial_{s}{f}_{0}, m_{f_{0}})+\int_{\Sub UX} \partial^{2}_{s}{f}_{0} \mathrm{d}m_{f_{0}},
    \label{eq:Second s-dev of P}
\end{equation}

\item
If the first derivative is zero, then 

$Var(\partial_{s}{f_{0}}, m_{f_{0}})=0$ if and only if $\partial_{s}f_{0}$ is Liv\v{s}ic cohomologous to zero.

\end{enumerate}

\end{thm}

 \begin{rem}
 If $f(s,t)$ is a smooth two parameter family in $C^h(UX)$, then 
 \begin{equation}
     \frac{\partial \mathbf{P}(f(s,t))}{\partial t \partial s}\bigg|_{s=t=0}=Cov(P_{\Sub m_{f(0)}}(\partial_{s}f(0)),P_{\Sub m_{\Sub f(0)}}(\partial_{t}f(0)), m_{f(0)})+\int_{\Sub UX} \partial_{st}f(0)\mathrm{d}m_{f(0)}.
     \label{eq: Second dev of P}
 \end{equation}
 \end{rem}

 Define $\mathcal{P}(UX)$ to be the set of pressure zero H{\"o}lder functions on $UX$, i.e.
\begin{equation*}
\mathcal{P}(UX)=\{f\in C^h(UX): \mathbf{P}(f)=0 \}.
\end{equation*}
The tangent space of $\mathcal{P}(UX)$ at $f$ is the set
\begin{align*}
    T_{f} \mathcal{P}(UX)=\text{ker } d_{f}\mathbf{P}= \{h\in C^{h}(UX) | \int_{UX} h  \mathrm{d}m_f =0\}
\end{align*}

We define a pressure semi-norm on the tangent space of $\mathcal{P}(UX)$ at $f$, by letting

\begin{defn}
The pressure semi-norm of $g\in T_{f} \mathcal{P}(UX)$ is defined as:
\begin{equation*}
    {\langle g,g \rangle}_{\Sub P}=-\frac{Var(g,m_f)}{\int_{UX}f dm_f}.
\end{equation*}
\end{defn}

One notices for $g\in T_{f} \mathcal{P}(UX)$, the variance $Var(g,m_f)=0$ if and only if $g$ is Liv\v{s}ic cohomologous to $0$, i.e. $g\sim 0$. 
  
\subsection{Higgs bundles and Hitchin deformation}  
\label{subsection Hitchin}

In this subsection we introduce all the notation from the theory of Higgs bundles that will arise in our arguments. We also introduce a coordinate system on the Hitchin component at the end of the section.

Recall $S$ is a closed oriented surface with genus $g\geq 2$ and $X=(S,J)$ is a Riemann surface. 
\begin{defn}
A rank $n$ Higgs bundle over $X$ is a pair $(E,\Phi)$ where $E$ is a holomorphic vector bundle of rank $n$ and $\Phi \in H^{0}(X,End(E)\otimes K)$ is called a Higgs field. A $\mathrm{SL}(n,\mathbb{C})$-Higgs bundle is a Higgs bundle $(E,\Phi)$ satisfying $\det E = \mathcal{O}$ and $\Tr\Phi=0$.
\end{defn}

\begin{defn}{\ }
\begin{enumerate}
    \item A Higgs bundle $(E,\Phi)$ is (semi)stable if every proper $\Phi$-invariant holomorphic subbundle $F$ of $E$ satisfies
    \begin{align*}
        \frac{\deg(F)}{\rank(F)}(\leq) < \frac{\deg(E)}{\rank(E)}. 
    \end{align*}
    \item A semi-stable Higgs bundle $(E,\Phi)$ is polystable if it decomposes as a direct sum of stable Higgs bundles. 
\end{enumerate}
\end{defn}

\begin{thm}
\label{thmChernConnection}
It is classical that for a holomorphic vector bundle $E$ with holomorphic structure $\bar{\partial}_{E}$ and a Hermitian metric $H$, there exists a unique connection $\nabla_{{\bar{\partial}}_{E},H}$, called the Chern connection, such that 
\begin{enumerate}
    \item  $\nabla_{{\bar{\partial}}_{E},H}^{0,1}=\bar{\partial}_{E}$.
    \item $\nabla_{{\bar{\partial}}_{E},H}$ is unitary.
\end{enumerate}
\end{thm}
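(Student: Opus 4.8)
The plan is to reduce the statement to a pointwise linear-algebra computation in a local holomorphic frame and then to globalize by appealing to the uniqueness. The starting observation is that any connection $\nabla$ on $E$ splits by form type as $\nabla = \nabla^{1,0} + \nabla^{0,1}$, and condition (1) simply prescribes $\nabla^{0,1} = \bar{\partial}_E$. Thus the entire content of the theorem is that the $(1,0)$-part $\nabla^{1,0}$ is forced to be unique by the unitarity condition (2), and that the resulting operator is an honest connection.

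First I would fix a local holomorphic frame $e_1, \dots, e_n$ for $E$, in which $\bar{\partial}_E$ is the standard $\bar{\partial}$ operator (so $\bar{\partial}_E e_i = 0$) and the metric is recorded by the Hermitian positive-definite matrix $H = (H_{i\bar{j}})$ with $H_{i\bar{j}} = H(e_i, e_j)$. Condition (1) forces the connection matrix to be of type $(1,0)$ in this holomorphic frame; write it as $\Theta$, so that $\nabla e_i = \sum_k \Theta^k_i e_k$ with each $\Theta^k_i$ a $(1,0)$-form. I would then expand the unitarity identity $d\,H(s,t) = H(\nabla s, t) + H(s, \nabla t)$ on frame elements to obtain
\begin{equation*}
dH_{i\bar{j}} = \sum_k \Theta^k_i H_{k\bar{j}} + \sum_l \overline{\Theta^l_j}\, H_{i\bar{l}}.
\end{equation*}

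Next I would decompose this identity by $(p,q)$-type. Since the $\Theta^k_i$ are $(1,0)$-forms and the $\overline{\Theta^l_j}$ are $(0,1)$-forms, the $(1,0)$-part reads $\partial H_{i\bar{j}} = \sum_k \Theta^k_i H_{k\bar{j}}$, which determines $\Theta$ uniquely as an explicit algebraic expression in $H^{-1}$ and $\partial H$ (of the form $\Theta = H^{-1}\partial H$ in a suitable matrix convention), using positive-definiteness of $H$ to invert it. Because $H$ is Hermitian, the $(0,1)$-part of the identity is exactly the complex conjugate of the $(1,0)$-part, so it imposes no further constraint and the two equations are mutually consistent. This simultaneously yields uniqueness (the equations leave no freedom in $\Theta$) and local existence (define $\Theta$ by the formula and read off $\nabla = \bar{\partial}_E + \Theta$).

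Finally I would globalize. The cleanest route is to invoke the local uniqueness just established: on an overlap of two holomorphic frames, the two locally-defined Chern connections each satisfy (1) and (2), hence coincide there, so the local operators patch to a single global connection $\nabla_{\bar{\partial}_E, H}$; alternatively one checks directly that $\Theta = H^{-1}\partial H$ transforms correctly under a holomorphic change of frame. This patching/invariance step is the main point requiring care, but it is only mildly subtle once uniqueness is in hand; the core existence-and-uniqueness assertion is a formal consequence of the type decomposition, which is precisely why the result is classical.
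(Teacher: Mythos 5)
Your proof is correct. The paper states this result as classical and supplies no proof of its own, so there is nothing to compare against; your argument — prescribing $\nabla^{0,1}=\bar{\partial}_E$, decomposing the metric-compatibility identity by $(p,q)$-type in a local holomorphic frame to force $\Theta = H^{-1}\partial H$, observing that the $(0,1)$-part is the conjugate equation and hence imposes nothing new, and globalizing by local uniqueness — is exactly the standard textbook derivation, and it is the formula the paper later uses in Section 5.1. The only caveat is a harmless convention mismatch: the paper takes $H$ conjugate-linear in the \emph{first} variable, while your index computation implicitly uses linearity in the first slot, which merely permutes where the conjugates and the inverse of $H$ land in the final formula.
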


We will from now on restrict our interest to degree zero Higgs bundles.

\begin{thm}
(Hitchin\cite{Hitchin87theself-duality}, Simpson\cite{Simpson-Variation-of-Hodge})
Let $(E,\Phi)$ be a rank $n$, degree zero Higgs bundle on $X$. Then $E$ admits a Hermitian metric $H$ satisfying Hitchin's equation if and only if $(E,\Phi)$ is polystable. Hitchin's equation is 
\begin{align}
    F_{\bar{\partial},H}+[\Phi,{\Phi}^{*H}]=0,
    \label{eq Hitchin'sEquation}
\end{align}
where $F_{\bar{\partial},H}$ is the curvature of the Chern connection $\nabla_{{\bar{\partial}}_{E},H}$, and ${\Phi}^{*H}$ is the Hermitian adjoint of $\Phi$.
\end{thm}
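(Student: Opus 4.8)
The plan is to establish the two implications of the equivalence separately: the ``only if'' direction (a solution forces polystability) by a Chern--Weil type computation, and the ``if'' direction (polystability produces a solution) by the analytic machinery underlying the Hitchin--Kobayashi correspondence.

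For the ``only if'' direction, suppose $H$ solves $F_{\bar\partial,H}+[\Phi,\Phi^{*H}]=0$ and let $F\subset E$ be a proper $\Phi$-invariant holomorphic subbundle. Using the $H$-orthogonal $C^\infty$-splitting $E=F\oplus F^{\perp}$, write the Chern connection and the Higgs field in block form; because $F$ is holomorphic the second fundamental form $\beta\in\Omega^{1,0}(\mathrm{Hom}(F,F^{\perp}))$ is of type $(1,0)$, and because $F$ is $\Phi$-invariant the Higgs field is block upper-triangular. The Gauss--Codazzi equations then express the curvature of the induced Chern connection on $(F,H|_F)$ in terms of the restriction of $F_{\bar\partial,H}$, the term $-\beta^{*}\wedge\beta$, and the off-diagonal Higgs block. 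Taking the trace, integrating over the Riemann surface $X$ (where every relevant form is already of top degree), and substituting Hitchin's equation, I would obtain $\deg F$ as a sum of manifestly non-positive contributions; since $\deg E=0$ and hence $\mu(E)=0$, this yields $\mu(F)\le 0$, so $(E,\Phi)$ is semistable. Moreover any slope-zero $\Phi$-invariant subbundle forces $\beta\equiv 0$ and a vanishing off-diagonal Higgs block, hence splits off as a Higgs direct summand, so $(E,\Phi)$ is in fact polystable.

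For the ``if'' direction I would first reduce to the stable case: a polystable Higgs bundle of slope zero is by definition an orthogonal direct sum of stable Higgs bundles of slope zero, and a solution on $E$ is the direct sum of solutions on the summands, so it suffices to solve Hitchin's equation when $(E,\Phi)$ is stable. Fixing a background metric $H_0$, I would run the Hitchin--Higgs heat flow $H^{-1}\partial_t H=-\sqrt{-1}\,\Lambda\bigl(F_{\bar\partial,H}+[\Phi,\Phi^{*H}]\bigr)$ (on a curve the contracted and the full equation agree), equivalently deforming along the negative gradient of the Donaldson functional. Short-time existence is standard parabolic theory, and long-time existence follows from the evolution equation for the Hermitian--Einstein--Higgs tensor together with a maximum principle, so the crux is convergence as $t\to\infty$.

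The main obstacle will be precisely this convergence, and it is where stability is used. Following the Uhlenbeck--Yau and Simpson strategy, one argues by contradiction: if the metrics do not stay $C^0$-bounded relative to $H_0$, then a renormalized weak limit produces a weakly holomorphic $\Phi$-invariant subbundle, which by the regularity theorem is a genuine saturated $\Phi$-invariant subsheaf whose slope is $\ge 0$, contradicting stability of $(E,\Phi)$. Making this rigorous requires coercivity of the Donaldson functional, a Moser-iteration $C^0$ estimate controlling the Hermitian--Einstein--Higgs tensor, and carrying the $\Phi$-invariance of the destabilizing object through every step; this last point is the only genuinely new ingredient beyond the classical vector-bundle statement. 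Once the a priori $C^0$ bound is secured, elliptic bootstrapping upgrades subconvergence to $C^\infty$, and the limiting metric solves Hitchin's equation.
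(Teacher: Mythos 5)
This theorem is not proved in the paper at all: it is quoted as background, with the proof deferred entirely to the cited works of Hitchin and Simpson, so there is no in-paper argument to compare yours against. Your outline is, in substance, exactly the argument of those references. The ``only if'' direction via the second fundamental form, the block-triangular form of $\Phi$ on a $\Phi$-invariant subbundle, and the Chern--Weil integration showing $\deg F$ is a sum of non-positive terms (with equality forcing a Higgs-orthogonal splitting) is the standard and correct computation. The ``if'' direction --- reduction to the stable case, the heat flow for the Donaldson functional, and the Uhlenbeck--Yau/Simpson contradiction producing a destabilizing $\Phi$-invariant subsheaf from a divergent sequence of metrics --- is likewise the correct strategy and correctly locates where stability is used.

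That said, be aware that what you have written is a proof \emph{plan}, not a proof: the genuinely hard ingredients are only named. In particular, (a) the regularity theorem converting a weakly holomorphic projection into a coherent saturated subsheaf, (b) the coercivity (Main Estimate) for the Donaldson functional under stability, and (c) the $C^0$ bound via Moser iteration and the subsequent bootstrapping each occupy substantial portions of Simpson's paper, and the propagation of $\Phi$-invariance through the weak limit is not automatic. If this were to be submitted as a self-contained proof rather than a roadmap, each of these would need to be carried out or precisely cited. As a reconstruction of the approach taken in the sources the paper relies on, it is accurate.
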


\begin{rem}
Define a connection $D_{H}$ on $(E, \Phi, H)$ as 
\begin{align}
    D_{H}=\nabla_{{\bar{\partial}}_{E},H}^{0,1}+ \Phi+ {\Phi}^{*H}.
    \label{eq flatD}
\end{align}
$D_{H}$ is flat if and only if the Hitchin's equation is satisfied.
\end{rem}

We define Higgs bundles moduli space and de Rham moduli space as 
\begin{defn}
\begin{itemize}
    \item The space of gauge equivalence classes of polystable $\mathrm{SL}(n, \mathbb{C})$ Higgs bundles is called the moduli space of $\mathrm{SL}(n, \mathbb{C})$-Higgs bundles and is denoted by $\mathcal{M}_{Higgs}(\mathrm{SL}(n,\mathbb{C}))$. 
    \item The space of gauge equivalence classes of reductive flat $\mathrm{SL}(n, \mathbb{C})$ connections is called the de Rham moduli space and is denoted by $\mathcal{M}_{deRham}(\mathrm{SL}(n,\mathbb{C}))$. 
\end{itemize}
\end{defn}

\begin{rem}
The Hitchin-Simpson Theorem gives a 1-1 correspondence between $\mathcal{M}_{Higgs}(\mathrm{SL}(n,\mathbb{C}))$ and $\mathcal{M}_{deRham}(\mathrm{SL}(n,\mathbb{C}))$ from the above remark. It is also called Hitchin-Kobayashi correspondence.
\end{rem}

We will introduce Hitchin fibration and Hitchin section following Baraglia's work \cite{Baraglia_thesis}. We refer the reader to section 2 of \cite{Baraglia_thesis} for a more comprehensive exposition.

Given a principal 3-dimensional subalgebra $\mathfrak{s}=\text{span}\{x, e, \tilde{e}\}$ of $\mathfrak{sl}(n,\mathbb{C})$ consisting of a semisimple element $x$, regular nilpotent elements $e$ and $\tilde{e}$ with commutation relations:
$$[x,e]=e, \text{  } [x,\tilde{e}]=-\tilde{e}, \text{  } [e,\tilde{e}]=x,$$
the Lie algebra 
$\mathfrak{sl}(n,\mathbb{C})$ decomposes into a direct sum of irreducible subspaces under the adjoint representation of $\mathfrak{s}$:
$$\mathfrak{sl}(n,\mathbb{C})= \bigoplus\limits_{i=1}^{n-1} V_i.$$
We take $e_1,\cdots, e_{n-1}$ as highest weight elements of $V_1, \cdots, V_{n-1}$ where $e_1=e$. With these defined, there exists basis of $\mathrm{SL}(n,\mathbb{C})$-invariant homogeneous polynomials $p_{i}$ of degree $i$ on $\mathfrak{sl}(n,\mathbb{C})$, where $2\leq i\leq n$, such that for all elements $f\in\mathfrak{sl}(n,\mathbb{C})$ of the form
$$f= \tilde{e}+\alpha_2 e_1+ \cdots + \alpha_{n} e_{n-1},$$
wee have $p_i(f)=\alpha_{i}$.
\begin{defn}
The Hitchin fibration is a map from the moduli space of $\mathrm{SL}(n,\mathbb{C})$-Higgs bundles over $X$ to the direct sum of holomorphic differentials given by
\begin{align*}
    p:\mathcal{M}_{Higgs}(\mathrm{SL}(n,\mathbb{C})) &\longrightarrow \bigoplus\limits_{i=2}^{i=n} H^{0}(X,K^i),\\
    (E,\Phi) &\longmapsto  (p_{2}(\Phi),\cdots, p_{n}(\Phi)).
\end{align*}
Where $p_i$ are homogeneous invariant polynomials defined above.
\end{defn}

\begin{defn}
A Hitchin section $s$ of the Hitchin fibration is a map from $\bigoplus\limits_{i=2}^{i=n} H^{0}(X,K^i)$ back to $\mathcal{M}_{Higgs}(\mathrm{SL}(n,\mathbb{C}))$. For $q=(q_{2},q_{3},\cdots, q_{n})\in \bigoplus\limits_{i=2}^{i=n} H^{0}(X,K^i)$, we define $s(q)$ to be a Higgs bundle $E=K^{\frac{n-1}{2}}\bigoplus K^{\frac{n-3}{2}}\cdots \bigoplus K^{\frac{1-n}{2}}$
with its Higgs field given as follows,
$$\Phi(q)=\tilde{e}+q_{2}e_{1}+q_{3}e_{2}+ \cdots q_{n}e_{n-1}.$$
More explicitly, we have
 \[
  \Phi(q)=
  \begin{bmatrix}
   0 & & & r_1q_{2} &  & r_1 r_2 q_{3} & r_1 r_2 r_3 q_4 &\cdots & \prod\limits_{i=1}^{n-2} r_{i}q_{n-1} & \prod\limits_{i=1}^{n-1} r_{i}q_{n}\\
   1 & & & 0 &  & r_2q_{2} &  r_2 r_3 q_3 & \cdots & \cdots& \prod\limits_{i=2}^{n-1} r_{i}q_{n-1} \\
    0 & & & 1 &  &0 &  r_3 q_2 & r_3 r_4 q_3 & \cdots& \vdots \\[1.0em]
     \vdots & & & \vdots &  & \ddots &  \ddots & \ddots & \ddots& \vdots \\[0.8em]
       \vdots& & & \vdots &  &  \ddots & \ddots & \ddots& \vdots& \vdots \\[1.0em]
       0 & & & 0 &  &\cdots & 0  & 1 & 0 & r_{n-1}q_2 \\[1.5em]
       0  & & & 0 &  &\cdots & \cdots & 0 & 1 & 0 \\
  \end{bmatrix}
  : E \to E \otimes K
\] 
where $r_{i}=\frac{i(n-i)}{2}$ and $K^{\frac{1}{2}}$ is a holomorphic line bundle with its square to be the canonical line bundle $K$. The notation for $e_{i}$ we use here can be found in \cite{Baraglia_thesis} and \cite{Qiongling_Harmonic}.
\end{defn}

\begin{rem}
There exists an involutive automorphism $\sigma$ on $\mathfrak{sl}(n,\mathbb{C})$ such that
$$\sigma(e_i)=-e_i, \text{   } \sigma(\tilde{e})=-\tilde{e}$$
Composing with the compact real form $\rho$ on $\mathfrak{sl}(n,\mathbb{C})$ given by $\rho(X)=-X^*$, we can obtain the split real involution given by $\lambda= \rho\circ \sigma$. The fixed points set of $\lambda$ is the real split form $\mathfrak{sl}(n,\mathbb{R})$. A detailed exposition for this can be found in \cite{Baraglia_thesis}.

From the fact that $\lambda(\Phi(q))=\Phi(q)^{*}$, one can see the flat connection (\ref{eq flatD}) has holonomy in the split real form of $\mathfrak{sl}(n,\mathbb{C})$.  Hitchin therefore shows that the Higgs bundles in the image of the Hitchin section have holonomy in $\mathrm{SL}(n,\mathbb{R})$ (see \cite{HITCHIN_LieGrp_and_TeichmullerSpace}). The representation space of these Higgs bundles up to conjugacy equivalence form a connected component of the representation variety $Rep(\pi_1(S),\mathrm{SL}(n,\mathbb{R}))$, called the Hitchin component $\mathcal{H}_{n}(S)$. Here we recall that the representation variety $Rep(\pi_1(S),\mathrm{SL}(n,\mathbb{R}))$ is the space of conjugacy class of reductive representations from $\pi_1(S)$ to $\mathrm{SL}(n,\mathbb{R})$.
\end{rem}

\begin{rem}
The isomorphism between $\mathcal{H}_{n}(S)$ and $\bigoplus\limits_{i=2}^{i=n} H^{0}(X,K^i)$ yields a parametrization of the Hitchin component $\mathcal{H}_{n}(S)$. We call
$\bigoplus\limits_{i=2}^{i=n} H^{0}(X,K^i)$ the Hitchin base. In particular, the tangent space at Fuchsian point $X$ is identified with the Hitchin base.
\label{rem HitchinPara}
\end{rem}

Fixing $E=K^{\frac{n-1}{2}}\bigoplus K^{\frac{n-3}{2}}\cdots \bigoplus K^{\frac{1-n}{2}}$, we consider the following map  as an infinitesimal change of a family of Higgs fields $\Phi_{\epsilon}$ associated to $q$. 

\begin{align*}
\chi: \bigoplus\limits_{i=2}^{i=n} H^{0}(X,K^i) &\to \Omega^{1,0}(X, \mathfrak{sl(n,\mathbb{R})})\\
    \chi(q)&=\sum\limits_{i=2}^{n}q_{i}\otimes e_{i-1}.
\end{align*}

In particular, the infinitesimal change of a family of flat connections (\ref{eq flatD}) in $\mathcal{M}_{deRham}(\mathrm{SL}(n,\mathbb{C}))$ associated to $q$ defines an isomorphism of $\bigoplus\limits_{i=2}^{i=n} H^{0}(X,K^i)$ with the tangent space of the Hitchin component $T_{X}\mathcal{H}_{n}(S)$. Associated to $\chi(q)$, the deformation of flat connections which is the infinitesimal version of equation (\ref{eq flatD}) is

\begin{defn}
\label{defn HitchinDeformation}
At the Fuchsian point $X$, we define our Hitchin deformation associated to $q$ to be 
\begin{align*}
\varphi(q):= \chi(q)+\lambda(\chi(q)) ,   
\end{align*}
where $\lambda$ is the antilinear involution for the split real form of $\mathfrak{sl(n,\mathbb{C})}$ defined above.
\end{defn}

 This type of deformation will be the tangential objects we consider for the pressure metric.

\begin{rem}
\label{rem coordinate}
The Hitchin parametrization in Remark \ref{rem HitchinPara} gives a coordinate system for $\mathcal{H}_{n}(S)$ based at $X$. More explicitly, given a basis $\{q_{i}\}_{i=1}^{i=l}$ of $\bigoplus\limits_{i=2}^{i=n}H^{0}(X,K^i)$ with $l=2(n^2-1)(g-1)$, the coordinate system is given by
\begin{equation*}
    m(\xi)=\xi_{1}q_{1}+ \cdots \xi_{l}q_{l},
\end{equation*}
where $\xi=(\xi_{1}, \cdots, \xi_{l})\in \mathbb{R}^{l}$. Because of the isomorphism between $\mathcal{H}_{n}(S)$ and $\bigoplus\limits_{i=2}^{i=n} H^{0}(X,K^i)$, the vector $\xi=(\xi_{1}, \cdots, \xi_{l})$ provides local parameters on $\mathcal{H}_{n}(S)$ and the function $\xi_{i}: \mathcal{H}_{n}(S)\to \mathbb{R}$ is a coordinate function for $1\leq i \leq l$.
\end{rem}

\subsection{The pressure metric on Hitchin components}
We define the pressure metric for Hitchin components $\mathcal{H}_{n}(S)$ in this subsection and state some known results about it.

Recall $\mathcal{H}(UX)$ is  the space of pressure zero H{\"o}lder functions modulo Liv\v{s}ic coboundaries.
We relates $\mathcal{H}(UX)$ to the Hitchin component $\mathcal{H}_{n}(S)$ by the following thermodynamic mapping. 
\begin{defn}
The thermodynamic mapping $\Psi: \mathcal{H}_{n}(S) \longrightarrow \mathcal{H} (UX)$ from a Hitchin component $\mathcal{H}_{n}(S)$ to the space $\mathcal{H} (UX)$ of Liv\v{s}ic cohomology classes of pressure zero H{\"o}lder functions on $UX$ is defined as 
\begin{equation*}
     \Psi(\rho)=[-h(\rho) f_{\rho}],
\end{equation*}
where $h(\rho)=h(f_{\rho})=h(\Phi^{f_{\rho}})$ is the topological entropy of the reparametrized flow $\Phi^{f_{\rho}}$. 
\end{defn}
The mapping $\Psi$ admits local analytic lifts to the space $\mathcal{P}(UX)$ of pressure zero H{\"o}lder functions. In particular, the map $\tilde{\Psi}: \mathcal{H}_{n}(S) \longrightarrow \mathcal{P}(UX)$ given by $\tilde{\Psi}(\rho)=-h(\rho) f_{\rho}$ is an analytic local lift of $\Psi$. This enables us to pull back the pressure form on $\mathcal{P}(UX)$ to obtain a pressure form on $\mathcal{H}_{n}(S)$.

We will from now on denote $f_{\rho}^N=-h(\rho)f_{\rho}$ to be the normalized reparametrization function. 

Given  an analytic family $\{\rho_s\}_{s\in(-1,1)}$ of (conjugacy class of) representations in Hitchin component $\mathcal{H}_n(S)$, we denote $\dot{\rho}_{0}=\partial_{s}\rho_{0}=\partial_{s}\rho_{s}|_{s=0}$. Let $\{f_{\rho_{s}}\}_{s\in(-1,1)}$ be associated reparametrization functions, we pull back the pressure form on $\mathcal{P}(UX)$ as follow:
\begin{align*}
\langle\dot{\rho}_{0},\dot{\rho}_{0}\rangle_{\Sub P}&= 
\langle d\tilde{\Psi}(\dot{\rho}_{0}),d\tilde{\Psi}(\dot{\rho}_{0})\rangle_{\Sub P}\\
&=\langle\frac{\partial{(-h(\rho_s)f_{\rho_s}})}{\partial{s}}\bigg|_{s=0},\frac{\partial{(-h(\rho_s)f_{\rho_s})}}{\partial{s}}\bigg|_{s=0}\rangle_{\Sub P}\\
 &=\langle \partial_{s}(f_{\rho_s}^N)|_{s=0},\partial_{s}(f_{\rho_s}^N)|_{s=0}\rangle_{\Sub P} \\
    &=-\frac{Var(\partial_{s}(f_{\rho_s}^N)|_{s=0}, m_{f_{\rho_0}^N})}{\int_{\Sub UX}f_{\rho_{0}}^N dm_{f_{\rho_0}^N}}
\end{align*}
It is proved in \cite{PressureMetric-MainPaper} that the pull back pressure form is nondegenerate and thus defines a Riemanninan metric on $\mathcal{H}_{n}(S)$:

\begin{defn}
Suppose $\{\rho_s\}_{s\in(-1,1)}$ and $\{\eta_t\}_{t\in(-1,1)}$ are two analytic families of (conjugacy classes of) representations in Hitchin component $\mathcal{H}_n(S)$ such that $\rho_{0}=\eta_{0}$,  the pressure metric for $\dot{\rho_{0}},\dot{\eta_{0}}  \in T_{\rho_{0}} \mathcal{H}_{n}(S)$ is defined as: 
\begin{equation*}
    {\langle \dot{\rho_{0}},\dot{\eta_{0}} \rangle}_{\Sub P}=-\frac{Cov(\partial_{s}(f_{\rho_s}^N)|_{s=0}, \partial_{s}(f_{\eta_s}^N)|_{s=0}, m_{f_{\rho_0}^N})}{\int_{\Sub UX}f_{\rho_{0}}^N dm_{f_{\rho_0}^N}}
\end{equation*}
\end{defn}

For simiplicity, later we will also denote $\partial_{s}(f_{\rho_s}^N)|_{s=0}=\partial_{s}f_{\rho_0}^N$ and $\partial_{s}(f_{\eta_s}^N)|_{s=0}=\partial_{s}f_{\eta_0}^N$. The principle is, we always first normalize a family of reparametrization functions to be pressure zero and then take derivatives.
 
Because of the identification of $\bigoplus\limits_{i=2}^{i=n} H^{0}(X,K^i)$ with the tangent space of the Hitchin component $T_{X}\mathcal{H}_{n}(S)$, our Hitchin deformation $\varphi(q)$ introduced in Definition \ref{defn HitchinDeformation} can be thought as tangent vectors in $T_{X}\mathcal{H}_{n}(S)$. With this understood, we introduce the following important results from \cite{Variation_along_FuchsianLocus} by Labourie and Wentworth:

Let $q_i$ be a holomorphic differential of degree $k$ on $X$ and let $\varphi(q_i)$ be the associated Hitchin deformation. Labourie and Wentworth in \cite{Variation_along_FuchsianLocus} show the pressure metric satisfies:

\begin{equation*}
     \langle \varphi(q_i), \varphi(q_i)\rangle_{\Sub P}=C(n,k){\langle q_i,q_i\rangle}_{X},
\end{equation*}
where $C(n,k)>0$ is a constant that does not depend on $\sigma$ and $\langle q_{i},q_{i}\rangle_{\Sub X}$ is the Petersson pairing:
\begin{align*}
    \langle q_{i},q_{i}\rangle_{\Sub X}=\int_{X} q_{i}\bar{q_{i}} \sigma^{-k}(z)\mathrm{d}A_\sigma
\end{align*}
with $\mathrm{d}A_\sigma=\sigma(z)\mathrm{d}x\wedge \mathrm{d}y$ denoting the area form for the hyperbolic metric $\sigma$.

If $q_i, q_j$ are holomorphic differentials of the same degree, then
\begin{align*}
     \langle \varphi(q_i), \varphi(q_j)\rangle_{\Sub P}&=
    \frac{1}{4}[ \langle \varphi(q_i+q_j), \varphi(q_i+q_j) \rangle_{\Sub P} - \langle\varphi(q_i-q_j), \varphi(q_i-q_j) \rangle_{\Sub P}]\\
    &=\frac{C(n,k)}{4}{\langle q_i+q_j,q_i+q_j \rangle}_{\Sub X}- \frac{C(n,k)}{4}{\langle  q_i-q_j,q_i-q_j \rangle}_{\Sub X}\\
    &=C(n,k){\langle q_i,q_j \rangle}_{\Sub X}
\end{align*}
If $q_i, q_j$ are holomorphic differentials of different degrees on $X$, Labourie and Wentworth show in \cite{Variation_along_FuchsianLocus} that
\begin{align}
    \langle \varphi(q_i), \varphi(q_j)\rangle_{\Sub P}=0
\end{align}

We denote the pressure metric components with respect to the coordinates introduced in Remark \ref{rem coordinate} as $g_{ij}$. Equivalently, the metric tensor $g_{ij}(\xi)$ means that the pressure metric $ \langle \cdot , \cdot \rangle_{\Sub P}$ is evaluated at $\xi$ with tangential vectors parallel to $q_{i}$-axis and $q_{j}$-axis. In particular, at the point $X$, we have $g_{ij}(0)=g_{ij}(\sigma)= \langle \varphi(q_i), \varphi(q_j)) \rangle_{\Sub P}$. It is always possible to choose an orthonormal basis $\{q_i\}$ with respect to our pressure metric from the vector space $\bigoplus\limits_{i=2}^{i=n} H^{0}(X,K^i)$ so that $ g_{ij}(\delta)=\delta_{ij}$.

\section{More Thermodynamic Formalism}

Bowen and Ruelle's work (\cite{Bowen-Markov-Partition}, \cite{Bowen_symbolicHyperFlow}, \cite{Bowen-Ruelle}) guarantee that many of the results in the thermodynamic formalism proved for subshifts of finite type by Ruelle operator still hold for Axiom A diffeomorphisms and Axiom A flows. We adopt this idea of simplifying the rather complicated object "flow" by discretizing it and studying a relative simple object "shift" given by symbolic coding. We will compute the formula of third derivatives of pressure functions using subshifts of finite type. The reader can find an introduction for modelling hyperbolic diffeomorphisms by subshifts of finite type and modelling hyperbolic flows by suspension flows through  Markov partition and symbolic dynamics in section 3, section 4 of \cite{Bowen-LectureNotes} and Appendix III of \cite{Pollicot_Zeta}.

The subsection \ref{subsection thermoAgain} is devoted to Ruelle operator and Ruelle-Perron-Frobenius Theorem. These are important tools for studying subshifts of finite types. Then in subsection \ref{subsection thirdDev},  we will compute third derivatives of pressure functions in Lemma \ref{lem 3rd}. It will be important for the proof of the main theorem in the next section.

\subsection{Ruelle operator and others}

\label{subsection thermoAgain}

We start with a cursory introduction to the elements of thermodynamic formalism for subshifts of finite types. A complete description is in \cite{McMullen2008} and \cite{Pollicot_Zeta}.

\begin{defn}
Let $A$ be a $k\times k$ matrix of zeros and ones, we define the associated two-sided shift of finite type $(\Sigma, \sigma_{A})$ where $\Sigma$ is the set of sequences
\begin{align*}
    \Sigma=\{x=(x_{n})_{n=-\infty}^{\infty}:x_{n}\in\{1,\cdots,k\},n\in \mathbb{Z}, A(x_n,x_{n+1})=1\}
\end{align*}
and $\sigma_{A}: \Sigma \to \Sigma$ is defined by $\sigma_{A}(x)=y$, where $y_n=x_{n+1}$.
\end{defn}

If instead, we consider $x=(x_{n})_{n=0}^{\infty}$ with the same restriction given by matrix $A$ and $\sigma(x)=y$, i.e. $y_n=x_{n+1}$ for $n\geq 0$, then we obtain a one-sided shift of finite type.

 The set $\{1, \cdots, k\}$ is equipped with the discrete topology and the two-sided (one-sided) shift space $\Sigma_{A}$ is equipped with the associated product topology.
 
 Given $\alpha\in(0,1)$, we can metrize the topology on the two-sided shift space $\Sigma$ by defining a metric $d_{\alpha}(x,y)=\alpha^N$ where $N$ is the largest non-negative integer such that $x_i=y_i$ for $|i|<N$.
 Similarly, we have a metric $d_{\alpha}$ defined for one-sided shift space. 
 
 We let $C(\Sigma)$ be the space of real-valued continuous functions on $\Sigma$ and $C^{\alpha}(\Sigma)$ be the space of real-valued H{\"o}lder functions on $\Sigma$ with H{\"o}lder exponent $\alpha$ with respect to $d_{\alpha}$.

The two-sided (one-sided) shift of finite type $(\Sigma,\sigma_{A})$ is called a subshift of finite type if $\sigma_{A}$ is topological transitive.

We define the pullback operator on $C^{\alpha}(\Sigma)$ by $(\sigma_{A}^{*}f)(y)=f(\sigma_{A}(y))$.
Similarly to Definition \ref{defn Livsic}, we define 

\begin{defn}
$f_{1}$ and $f_{2}$ in $C^{\alpha}(\Sigma)$ are (Liv\v{s}ic) cohomologous if 
\begin{align*}
    f_{1}-f_{2}=f_{3}-\sigma_{A}^{*}f_{3}.
\end{align*}
for some $f_{3}\in C^{\alpha}(\Sigma)$.
\end{defn}

From now on, we assume our subshift of finite type $(\Sigma, \sigma_{A})$ to be one-sided unless otherwise specified.

\begin{defn}
Given $w \in C^{\alpha}(\Sigma)$, the Ruelle operator (or transfer operator) on $f \in C^{\alpha}(\Sigma)$ is defined by 
\begin{align*}
    \mathcal{L}_{w}(f)(x)=\sum\limits_{\sigma_{A}(y)=x}e^{w(y)}f(y).
\end{align*}
\end{defn}

We have that the following holds for Ruelle operator $ \mathcal{L}_{w}$.

\begin{thm}(Ruelle-Perron-Frobenius)\hfill
\label{thm Ruelle-Frobenius}

Suppose $(\Sigma, \sigma_{A})$ is topologically mixing (i.e. $A_{i,j}^{M}>0\text{ } \forall i,j$, for some $M>0$, also called irreducible and aperiodic) and $w \in C^{\alpha}(\Sigma)$, then 
\begin{enumerate}
    \item  There is a simple maximal positive eigenvalue $\rho(\mathcal{L}_{w})$ of $\mathcal{L}_{w}: C^{\alpha}(\Sigma) \to C^{\alpha}(\Sigma)$ with a corresponding strictly positive eigenfunction $e^{\psi}$:
    \begin{align*}
        \mathcal{L}_{w}(e^{\psi})=\rho(\mathcal{L}_{w}) e^{\psi}.
    \end{align*}
     \item The remainder of the spectrum of $\mathcal{L}_{w}$ (excluding $\rho(\mathcal{L}_{w})$) is contained in a disc of radius strictly smaller than $\rho(w)$. 
    \item There is a unique probability measure  $\mu_{w}$ on $\Sigma$ so that 
    \begin{align*}
       {\mathcal{L}_{w}}^{*}\mu_{w}=e^{\psi} \mu_{w}.
    \end{align*}
\end{enumerate}
\end{thm}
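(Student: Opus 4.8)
The plan is to establish the three assertions in the order (3), (1), (2), reserving the spectral gap for last since it carries the genuine content. Throughout write $\lambda=\rho(\mathcal{L}_{w})$ for the leading eigenvalue to be produced. First I would obtain the eigenmeasure of part (3) by a soft duality argument. The dual operator $\mathcal{L}_{w}^{*}$ acts on the space $\mathcal{M}(\Sigma)$ of Borel probability measures on $\Sigma$, which is convex and weak-$*$ compact. Since $\mathcal{L}_{w}$ is a positive operator, the normalized map $T(\mu)=\mathcal{L}_{w}^{*}\mu/(\mathcal{L}_{w}^{*}\mu)(\mathbf{1})$ sends $\mathcal{M}(\Sigma)$ continuously into itself, so the Schauder--Tychonoff fixed point theorem furnishes a probability measure $\nu$ with $\mathcal{L}_{w}^{*}\nu=\lambda\nu$ where $\lambda=(\mathcal{L}_{w}^{*}\nu)(\mathbf{1})>0$. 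This produces the measure of part (3), whose uniqueness I defer to the end.

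Next, normalizing so that $\lambda=1$ (replacing $w$ by $w-\log\lambda$), I would produce the strictly positive H\"older eigenfunction $e^{\psi}$ of part (1) by a Birkhoff cone-contraction argument. The key input is a bounded-distortion estimate: for $f$ in the cone
\[
\mathcal{C}_{b}=\{\, f\in C^{\alpha}(\Sigma): f>0,\ |\log f(x)-\log f(y)|\le b\, d_{\alpha}(x,y)\ \text{whenever}\ x_{0}=y_{0}\,\},
\]
the H\"older regularity of $w$ together with the uniform contraction of $\sigma_{A}$ on cylinders forces $\mathcal{L}_{w}$ to map $\mathcal{C}_{b}$ into $\mathcal{C}_{b'}$ with $b'<b$ once $b$ is large. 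Topological mixing --- the hypothesis $A^{M}_{i,j}>0$ for all $i,j$ --- then guarantees that $\mathcal{L}_{w}^{M}(\mathcal{C}_{b})$ has finite diameter $\Delta$ in the Hilbert projective metric $\Theta$ on $\mathcal{C}_{b}$. By Birkhoff's theorem $\mathcal{L}_{w}^{M}$ contracts $\Theta$ by a factor $\tanh(\Delta/4)<1$, hence has a unique projective fixed point, i.e. a unique positive ray $\mathbb{R}_{>0}\,e^{\psi}$ with $\mathcal{L}_{w}e^{\psi}=\lambda e^{\psi}$; uniqueness of the ray gives simplicity of $\lambda$. The regularity built into $\mathcal{C}_{b}$ forces $e^{\psi}\in C^{\alpha}(\Sigma)$ and bounded away from $0$.

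The hard part will be part (2), the spectral gap. I would pass to the normalized operator $\widehat{\mathcal{L}}f=\lambda^{-1}e^{-\psi}\mathcal{L}_{w}(e^{\psi}f)$, which satisfies $\widehat{\mathcal{L}}\mathbf{1}=\mathbf{1}$ and preserves the probability measure $m_{w}=e^{\psi}\nu/\nu(e^{\psi})$. It then suffices to show $\widehat{\mathcal{L}}$ is eventually a strict contraction on the codimension-one subspace $\{\,f:\int f\,dm_{w}=0\,\}$. This I would establish through a Lasota--Yorke (Doeblin--Fortet) inequality, bounding the H\"older seminorm by $\theta^{n}\|f\|_{\alpha}+C\|f\|_{\infty}$ for some $\theta<1$, together with the compactness of the inclusion $C^{\alpha}(\Sigma)\hookrightarrow C(\Sigma)$; the decisive use of aperiodicity is to exclude peripheral spectrum on the circle $|z|=\lambda$, so that the remainder of the spectrum lies strictly inside the disc of radius $\rho(\mathcal{L}_{w})=\lambda$. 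Equivalently, the $\Theta$-contraction rate from the cone step already encodes this gap, and I would transfer it to an operator-norm estimate on the quotient. Finally, the spectral gap forces uniqueness of $\nu$ and ergodicity of $m_{w}$, completing part (3).
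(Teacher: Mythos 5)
The paper does not prove this theorem: it is quoted verbatim as the classical Ruelle--Perron--Frobenius theorem, with the surrounding text deferring to Bowen's lecture notes and Parry--Pollicott for the thermodynamic-formalism background, so there is no in-paper argument to compare against. Judged on its own, your outline is a correct sketch of one of the standard proofs, but it follows the modern cone/Lasota--Yorke route (Liverani, Baladi) rather than the older construction in the sources the paper actually cites, where the eigenmeasure is likewise obtained from Schauder--Tychonoff but the eigenfunction is extracted as a limit point of the normalized iterates $\lambda^{-n}\mathcal{L}_{w}^{n}\mathbf{1}$ via a bounded-distortion/equicontinuity (Arzel\`a--Ascoli) argument, and the spectral gap is derived afterwards. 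Your Birkhoff projective-metric step buys uniqueness of the positive eigenray and an explicit contraction rate in one stroke, which is genuinely cleaner. Two points you should tighten if you write this up: (a) the fixed-point argument only produces \emph{some} positive eigenvalue $\lambda$ of $\mathcal{L}_{w}^{*}$; identifying it with the spectral radius requires the positive eigenfunction (so that $\lambda^{-n}\mathcal{L}_{w}^{n}$ is uniformly bounded), and your forward declaration $\lambda=\rho(\mathcal{L}_{w})$ hides this; (b) ``uniqueness of the ray gives simplicity'' is too quick --- uniqueness inside the cone does not by itself exclude eigenvectors outside the cone or Jordan blocks, and the standard fix is to run the comparison argument $\sup\{t: e^{\psi}-tg\ge 0\}$ for a putative second eigenfunction $g$, or to extract simplicity from the spectral gap you establish in part (2). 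Neither is a fatal gap, but both are places where the outline asserts more than it has shown. Note also that the statement as printed in the paper has a typo in part (3) (the dual eigenvalue should be the scalar $\rho(\mathcal{L}_{w})$, not the function $e^{\psi}$); your version $\mathcal{L}_{w}^{*}\nu=\lambda\nu$ is the correct one.
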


The pressure $\mathbf{P}(w)$ of $w$, which can be defined in an analogous way as the pressure of functions on $UX$ by variational principle \ref{prop defn_pressure}, turns out to be related to the spectral radius of the Ruelle operator: $\mathbf{P}(w)=\log \rho(\mathcal{L}_{w})$
(see \cite[Theorem.1.22]{Bowen-LectureNotes}).

Associated to $\mu_w$ is another measure $m_w=e^{\psi}{\mu_w}$. It is called the equilibrium measure of $w$. It is an  $\sigma_{A}$-invariant and ergodic probability measure and satisfied ${\mathcal{L}_{w}}^{*}m_{w}= m_{w}$.

We will from now on assume $\mathbf{P}(w)=0$. As pressure functions and equilibrium measures depend only on cohomology class, we can modify $w$ by a coboundary so that $\mathcal{L}_{w}(1)=1$ and $\mu_{w}=m_{w}$. One notices this implies $\mathcal{L}_{w}(\sigma_{A}^{*}f)=f$.

Fixing $m_{w}$, we denote an inner product $<f_{1},f_{2}>:= \int_{\Sigma}f_1 f_2 \mathrm{d}m_{w}$ on the Banach space $C^{\alpha}(\Sigma)$.

For the convenience of notation, we also denote $S_{n}(f,x)= \sum\limits_{i=0}^{n-1}f(\sigma_{A}^{i}x)$.

The following two lemmas are application of Ruelle operators and will be useful in the next subsection.
\begin{lem}(McMullen, \cite[Theorem.3.2, Theorem.3.3]{McMullen2008})\hfill

For any $g\in C(\Sigma)$ and $f\in C^{\alpha}(\Sigma)$ with  $\int_{\Sigma} f \mathrm{d}m_{w}=0$, we have
\label{thm:McMullen}
\begin{equation*}
    \lim\limits_{n\to\infty}\langle g, S_{n}(f)^{2}/n \rangle = Var(f,m_{w}) \int_{\Sigma}g \mathrm{d}m_{w}=0
\end{equation*}
where $Var(f,m_{w})= \lim\limits_{n\to \infty} \frac{1}{n} \langle S_{n}(f), S_{n}(f) \rangle $. 
\end{lem}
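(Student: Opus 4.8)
The plan is to prove the identity first for H\"older test functions $g\in C^\alpha(\Sigma)$ and then extend it to all continuous $g$ by density. The main tools are the normalization in force ($P(w)=0$, so $\mathcal{L}_w 1 = 1$ and $\mathcal{L}_w^* m_w = m_w$), the module identity $\mathcal{L}_w^k(\phi\cdot(\psi\circ\sigma_A^k)) = \psi\cdot\mathcal{L}_w^k(\phi)$, and the spectral gap from Theorem~\ref{thm Ruelle-Frobenius}: there exist $C>0$ and $\theta\in(0,1)$ with $\|\mathcal{L}_w^k h - \int_\Sigma h\,\mathrm{d}m_w\|_\alpha \le C\theta^k\|h\|_\alpha$ for every $h\in C^\alpha(\Sigma)$ (the integral read as a constant function). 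Once the H\"older case is settled, the specialization $g\equiv 1$ shows that $\int_\Sigma h_n\,\mathrm{d}m_w$ stays bounded, where $h_n := S_n(f)^2/n\ge 0$; then for continuous $g$ and H\"older $\tilde g$ with $\|g-\tilde g\|_\infty$ small, the elementary bound $|\langle g-\tilde g, h_n\rangle|\le \|g-\tilde g\|_\infty\int_\Sigma h_n\,\mathrm{d}m_w$ transfers the limit from $\tilde g$ to $g$.

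For the H\"older case I would expand $S_n(f)^2 = \sum_{i,j=0}^{n-1}(f\circ\sigma_A^i)(f\circ\sigma_A^j)$ and analyze the triple correlations $I_{ij} := \int_\Sigma g\,(f\circ\sigma_A^i)(f\circ\sigma_A^j)\,\mathrm{d}m_w$. Writing $\bar g = \int_\Sigma g\,\mathrm{d}m_w$ and $c_k = \int_\Sigma f\,(f\circ\sigma_A^k)\,\mathrm{d}m_w$, the target of this step is the estimate $|I_{ij} - \bar g\,c_{j-i}| \le C\theta^{j}$ for $0\le i<j$. Setting $m=j-i$ and using $(f\circ\sigma_A^i)(f\circ\sigma_A^j) = [f\,(f\circ\sigma_A^m)]\circ\sigma_A^i$, one applies $\mathcal{L}_w^i$ together with the invariance $\int_\Sigma\mathcal{L}_w^i(\cdot)\,\mathrm{d}m_w = \int_\Sigma(\cdot)\,\mathrm{d}m_w$ to get $I_{ij} = \int_\Sigma \mathcal{L}_w^i(g)\,f\,(f\circ\sigma_A^m)\,\mathrm{d}m_w$. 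Splitting $\mathcal{L}_w^i(g) = \bar g + \eta_i$ with $\|\eta_i\|_\alpha\le C\theta^i\|g\|_\alpha$ isolates the main term $\bar g\,c_m$ and a remainder $\int_\Sigma \eta_i\,f\,(f\circ\sigma_A^m)\,\mathrm{d}m_w$, which I would decorrelate a second time by rewriting it as $\int_\Sigma f\cdot\mathcal{L}_w^m(\eta_i f)\,\mathrm{d}m_w$ and expanding $\mathcal{L}_w^m(\eta_i f)$ through the gap.

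The crucial point, and the step I expect to be the main obstacle, is this error control. A single decorrelation yields only a per-term bound of size $O(\theta^i)$, whose contribution $\tfrac1n\sum_{i<j}\theta^i$ is $O(1)$ and does \emph{not} vanish. The iterated decorrelation produces instead the product bound $\theta^m\theta^i = \theta^{j}$, but this improvement hinges on the hypothesis $\int_\Sigma f\,\mathrm{d}m_w = 0$: the constant part of $\mathcal{L}_w^m(\eta_i f)$ pairs against $\int_\Sigma f\,\mathrm{d}m_w=0$, leaving only the exponentially small remainder. With $|I_{ij} - \bar g\,c_{j-i}|\le C\theta^{j}$ established, the off-diagonal error is dominated by $\tfrac2n\sum_{j=1}^{n-1} j\,\theta^{j}\to 0$.

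Finally I would assemble the pieces. The diagonal gives $\tfrac1n\sum_{i=0}^{n-1}I_{ii}\to \bar g\,c_0$ by the same splitting, while the off-diagonal main term is the Ces\`aro average $\tfrac{2\bar g}{n}\sum_{k=1}^{n-1}(n-k)c_k = 2\bar g\sum_{k=1}^{n-1}(1-\tfrac kn)c_k$, which converges to $2\bar g\sum_{k\ge1}c_k$ by dominated convergence using $|c_k|\le C\theta^k$. Summing, $\langle g, h_n\rangle \to \bar g\,(c_0 + 2\sum_{k\ge 1}c_k)$. Identifying $c_0 + 2\sum_{k\ge1}c_k$ with $\mathrm{Var}(f,m_w)$ is exactly the $g\equiv1$ instance (where $\bar g=1$), which reproduces $\lim_n \tfrac1n\langle S_n(f),S_n(f)\rangle$; hence $\lim_n\langle g, S_n(f)^2/n\rangle = \mathrm{Var}(f,m_w)\int_\Sigma g\,\mathrm{d}m_w$, as claimed.
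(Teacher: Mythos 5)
Your argument is correct: the module identity for $\mathcal{L}_w$, the invariance $\int\mathcal{L}_w(\cdot)\,\mathrm{d}m_w=\int(\cdot)\,\mathrm{d}m_w$, and the iterated use of the spectral gap (where the mean-zero hypothesis on $f$ kills the constant term after the second decorrelation, upgrading the per-term error from $O(\theta^i)$ to $O(\theta^{j})$) are exactly the ingredients of McMullen's proof, and your density step handles merely continuous $g$ properly since $\int S_n(f)^2/n\,\mathrm{d}m_w$ is bounded. The paper itself gives no proof of this lemma --- it is cited from \cite{McMullen2008} --- but your reconstruction coincides with the technique the paper does spell out for the third-moment analogue in Lemma \ref{thm:ConvergenceOf3}, via the operator $T_w=\mathcal{L}_w\circ P_{m_w}$ with spectral radius $r<1$. (Note the trailing ``$=0$'' in the paper's statement of the lemma is a typo; the identity you prove, $\lim_n\langle g,S_n(f)^2/n\rangle=\mathrm{Var}(f,m_w)\int_\Sigma g\,\mathrm{d}m_w$, is the one actually used later.)
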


\begin{lem}
\label{thm:ConvergenceOf3}
For any $f\in C^{\alpha}(\Sigma)$ with  $\int_{\Sigma} f \mathrm{d}m_{w}=0$, 
\begin{equation*}
    \lim\limits_{n\to\infty}\frac{1}{n} \int_{\Sigma} (S_{n}(f))^{3} \mathrm{d}m_{w} < \infty.
\end{equation*}
\end{lem}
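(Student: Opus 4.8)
The plan is to expand the cube, use $\sigma_A$-invariance of $m_w$ to turn each term into a correlation depending only on the gaps between the three indices, and then kill these correlations with the spectral gap of the Ruelle operator coming from Theorem~\ref{thm Ruelle-Frobenius}. Writing $(S_n(f))^3 = \sum_{i,j,k=0}^{n-1}(f\circ\sigma_A^i)(f\circ\sigma_A^j)(f\circ\sigma_A^k)$ and integrating, I would first reduce to the case $i\le j\le k$: by invariance each such summand equals $W(p,q):=\int_\Sigma f\cdot(f\circ\sigma_A^p)(f\circ\sigma_A^q)\,\mathrm{d}m_w$ with $p=j-i\le q=k-i$, while triples with two coinciding indices contribute two-point correlations of the form $\int_\Sigma f^2\cdot(f\circ\sigma_A^m)\,\mathrm{d}m_w$ and the fully degenerate triple contributes $n\int_\Sigma f^3\,\mathrm{d}m_w$.

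The heart of the argument is the exponential decay estimate $|W(p,q)|\le C\theta^{q}\,\|f\|_\alpha^3$ for some $\theta\in(0,1)$. Here I would exploit the normalization $\mathcal{L}_w(1)=1$, $m_w=\mu_w$, which yields the duality $\int_\Sigma(\psi\circ\sigma_A^n)\,\phi\,\mathrm{d}m_w=\int_\Sigma\psi\,\mathcal{L}_w^n(\phi)\,\mathrm{d}m_w$ (a consequence of $\mathcal{L}_w^*m_w=m_w$), together with the spectral decomposition $\mathcal{L}_w^n(\phi)=\big(\int_\Sigma\phi\,\mathrm{d}m_w\big)\cdot 1 + N^n(\phi)$, where $\|N^n\|_{\alpha\to\alpha}\le C\theta^n$ by part (2) of Theorem~\ref{thm Ruelle-Frobenius}. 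Grouping $(f\circ\sigma_A^p)(f\circ\sigma_A^q)=\big[f\cdot(f\circ\sigma_A^{q-p})\big]\circ\sigma_A^p$ and peeling off $\sigma_A^p$ gives $W(p,q)=\int_\Sigma N^p(f)\cdot f\cdot(f\circ\sigma_A^{q-p})\,\mathrm{d}m_w$, since $\int_\Sigma f\,\mathrm{d}m_w=0$ annihilates the leading projection term. Peeling off $\sigma_A^{q-p}$ in the same way, and again using $\int_\Sigma f\,\mathrm{d}m_w=0$, produces the two factors $\theta^p$ and $\theta^{q-p}$, hence the bound $C\theta^{q}\|f\|_\alpha^3$. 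The same two-step peeling applied to $\int_\Sigma f^2\cdot(f\circ\sigma_A^m)\,\mathrm{d}m_w$ gives a bound $C\theta^{m}$.

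Finally I would count and pass to the limit. For fixed gaps $(p,q)$ with $q\le n-1$ there are exactly $n-q$ admissible starting indices, and summing over the $3!$ orderings and the degenerate cases gives
\begin{equation*}
\frac1n\Big|\int_\Sigma (S_n(f))^3\,\mathrm{d}m_w\Big|\le \frac{C}{n}\sum_{0\le p\le q\le n-1}(n-q)\,\theta^{q}\,\|f\|_\alpha^3 + O(1)\le C'\sum_{q\ge 0}(q+1)\theta^{q}\,\|f\|_\alpha^3<\infty,
\end{equation*}
which is bounded uniformly in $n$. Replacing $(n-q)/n$ by its limit $1$ and dominating by the summable sequence $(q+1)\theta^q$, dominated convergence over the gap sums shows the limit exists and equals a finite constant (an absolutely convergent series in the correlations $W(p,q)$, the two-point terms, and $\int_\Sigma f^3\,\mathrm{d}m_w$). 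The main obstacle is the decay estimate for $W(p,q)$: because composition with the one-sided shift $\sigma_A$ \emph{increases} the H\"older seminorm, one cannot naively bound the H\"older norm of the product $f\cdot(f\circ\sigma_A^{q-p})$. This is precisely why the shifts must be peeled in the correct order, so that each transfer-operator power $\mathcal{L}_w^{p}$, $\mathcal{L}_w^{q-p}$ supplies a genuine smoothing factor $N^{(\cdot)}$ with controlled norm before the next shift is exposed.
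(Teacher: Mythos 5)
Your proposal is correct and follows essentially the same route as the paper: expand the cube, reduce by $\sigma_A$-invariance to correlations indexed by the two gaps, transfer each shift onto the test function via $\mathcal{L}_w^*m_w=m_w$, and extract exponential decay in both gaps from the spectral gap before summing the resulting geometric series. The only differences are cosmetic — you phrase the smoothing via the decomposition $\mathcal{L}_w^n=\Pi+N^n$ where the paper uses $T_w=\mathcal{L}_w\circ P_{m_w}$, and you are slightly more explicit about the degenerate index triples and the existence (rather than mere boundedness) of the limit.
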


\begin{proof}
This proof is similar to Theorem.3.3 of \cite{McMullen2008}.
\begin{align*}
  \frac{1}{n} \int_{\Sigma}(S_{n}(f))^{3}\mathrm{d}m=&\frac{1}{n} \sum\limits_{i=0}^{n-1}\sum\limits_{j=0}^{n-1}\sum\limits_{k=0}^{n-1}\langle f\circ \sigma_{A}^{i}\cdot f\circ \sigma_{A}^{j}, f\circ \sigma_{A}^{k} \rangle
\end{align*}
When $k>j>i$,
\begin{align*}
    &\langle f\circ \sigma_{A}^{i} \cdot f\circ \sigma_{A}^{j}, f\circ \sigma_{A}^{k}\rangle\\
    =&\langle \sigma_{A}^{*i}(f\cdot f\circ \sigma_{A}^{j-i}),\sigma_{A}^{*i}(f\circ \sigma_{A}^{k-i})\rangle \\
    =&\langle f\cdot f\circ \sigma_{A}^{j-i}, f\circ \sigma_{A}^{k-i}\rangle  \hspace{.5in}\text{$\sigma_{A}$-invariance of $m_{w}$}\\
    =&\langle f, f\circ \sigma_{A}^{j-i} \cdot f\circ \sigma_{A}^{k-i} \rangle\\
    =& \langle f,  \sigma_{A}^{*(j-i)}(f \cdot f\circ \sigma_{A}^{k-j}) \rangle\\
    =&\langle \mathcal{L}_{w}^{j-i}(f), f\cdot f\circ {\sigma_{A}}^{k-j}\rangle  \hspace{.5in}\text{$\mathcal{L}_{w}(\sigma_{A}^{*}f)=f$ and ${\mathcal{L}_{w}}^{*}m_{w}= m_{w}$}\\
     =&\langle f\cdot\mathcal{L}_{w}^{j-i}(f), f\circ {\sigma_{A}}^{k-j} \rangle
\end{align*}
We define a projection operator on $C^{\alpha}(\Sigma)$ by $P_{m_{w}}(h)(x)=h(x)- \int_{\Sigma}h\mathrm{d}m_{w}$. Because $P_{m_{w}}(h)$ has mean zero with respect to $m_{w}$. The spectrum of the operator $T_{w}=\mathcal{L}_{w}\circ P_{m_{w}}$ lies in a disk of radius $r<1$ by Ruelle-Perron-Frobenius Theorem.

One has 
\begin{equation}
    \langle h_{1}, h_{2}\circ \sigma \rangle = \langle T_{w}(h_{1}), h_{2} \rangle
    \label{eq tw}
\end{equation}
whenever $h_{1}$ or $h_{2}$ has mean zero. 

Because $f$ is mean zero with respect to $m_{w}$. $T_{w} (f)= \mathcal{L}_{w}(f)$. Moreover, we have
\begin{align*}
    &\langle f\cdot\mathcal{L}_{w}^{j-i}(f), f\circ {\sigma_{A}}^{k-j} \rangle \\
    =&\langle f\cdot T_{w}^{j-i}(f), f\circ {\sigma_{A}}^{k-j} \rangle\\
    =&\langle T_{w}^{k-j}(f\cdot T_{w}^{j-i}(f)), f \rangle   \hspace{.5in} \text{by equation (\ref{eq tw})}\\
    \leq & \norm{T^{k-j}}\norm{T^{j-i}}\norm{f}^{3}\\
    \leq & C r^{k-i} \hspace{.5in} \text{for some $C > 0$}
\end{align*}
where the norm for $T$ is the operator norm.

Thus 
\begin{align*}
 &\frac{1}{n} \sum\limits_{0\leq i<j<k\leq n-1}\langle f\circ \sigma_{A}^{i}\cdot f\circ \sigma_A^{j}, f\circ \sigma_A^{k} \rangle \\
 \leq& \frac{C}{n} \sum\limits_{k=0}^{n-1}\sum\limits_{i=0}^{k}(k-i)r^{k-i} \hspace{.5in} \text{by the estimate above}\\
 =&\frac{C}{n} \sum\limits_{k=0}^{n-1}\sum\limits_{s=0}^{k}sr^{s}\\
 =& \frac{Cr}{1-r}(1-\frac{1}{n}\sum\limits_{k=1}^{n}r^{k}) < \infty  \hspace{.5in} \text{when $n\to \infty$}
\end{align*}
This shows  $\lim\limits_{n\to\infty}\frac{1}{n} \int_{\Sigma} (S_{n}(f))^{3} \mathrm{d}m_{w} < \infty$.

\end{proof}

\subsection{Third derivatives of pressure functions}
\label{subsection thirdDev}
Our goal in this subsection is to compute third derivatives of pressure functions in Lemma \ref{lem 3rd}. For this, we first need to compute third derivatives of pressure functions for subshifts of finite types by the method of the Ruelle operator and generalize it to our setting of suspension flows.  

We start from introducing suspension flows. We will also recall Bowen's celebrated results applying to our setting that suspension flows efficiently model the geodesic flow on $UX$.

\begin{defn}
Suppose $(\Sigma,\sigma_A)$ is a two-sided shift of finite type. Given a roof function $r: \Sigma \to\mathbb{R}^{+}$, the suspension flow of $(\Sigma,\sigma_A)$ under $r$ is the quotient space
\begin{align*}
    {\Sigma}_{r}=\{(x,t)\in  \Sigma \times \mathbb{R}: 0\leq t\leq r(x), x\in \Sigma\}/(x,r(x))\sim (\sigma_A(x),0) 
\end{align*}
equipped with the natural flow $\sigma^{r}_{A,s}(x,t)=(x,t+s)$
\end{defn}
Any $\sigma_A$-invariant probability measure $m$ on $\Sigma$ induces a natural $\sigma^{r}_{A,s}$-invariant probability measure on ${\Sigma}_{r}$
\begin{equation}
\mathrm{d}m_{r}=\frac{\mathrm{d}m\mathrm{d}t}{\int_{\Sigma}r\mathrm{d}m}. \label{eq measureCor}
\end{equation}
This correspondence gives a bijection between $\sigma_A$-invariant probability measures and $\sigma^{r}_{A,s}$-invariant probability measures. 

Bowen shows in \cite{Bowen-Markov-Partition} the construction of Markov partitions for Axiom A diffeomorphism. He then shows how to model Axiom A flows via the Markov partition and symbolic dynamics in \cite{Bowen_symbolicHyperFlow}. We illustrate the version of this celebrated result in our context (see also \cite{Ratner1973}): the geodesic flow $\Phi$ admits a Markov coding $({\Sigma}_{A}, \pi, r)$ where $({\Sigma}_{A},\sigma_{A})$ is a topological mixing two-sided shift of finite type, the roof function $r: \Sigma_{A} \to \mathbb{R}^{+}$ is a H{\"o}lder continuous and the map $\pi: \Sigma_{A} \to UX$ is also H{\"o}lder continuous. The suspension flow   $\sigma^{r}_{A,t}$ models $\Phi_{t}$ effectively in the following sense:
\begin{itemize}
    \item $\pi$ is surjective;
    \item $\pi$ is one-to-one on a set of full measure (for any ergodic measure of full support) and on a residual set;
    \item $\pi$ is finite-to-one;
    \item $\pi\sigma^{r}_{A,t}=\Phi_{t}\pi$  \text{for all $t\in\mathbb{R}$}.
\end{itemize}

Now we are able to state and prove the major proposition in this subsection.

\begin{lem}
\label{lem 3rd}
Let $F_{s}$ be a smooth family in $C^h(UX)$ such that $\mathbf{P}(F_{0})=0$ and $\partial_{s}\mathbf{P}(F_{s})|_{s=0}$. Then

\begin{align}
\begin{split}
    \frac{\mathrm{d}^{3}\mathbf{P}(F_{s})}{\mathrm{d} s^{3}}\bigg|_{s=0}
    &=
    \int_{\Sub UX} \partial_{s}^{3} F_{0}(x)\mathrm{d}m_{F_{0}}(x)\\
    &+ \lim_{r\to \infty} \frac{1}{r} \Bigg( 3 \int_{\Sub UX} \int_{0}^{r} \partial_{s}{F_{0}}(\Phi_{t}(x)) \mathrm{d}t \int_{0}^{r} \partial_{s}^{2}{F_{0}}(\Phi_{t}(x)) \mathrm{d}t  \mathrm{d}m_{F_{0}}(x)\\
    &+ \int_{\Sub UX} \left(\int_{0}^{r}\partial_{s}{F_{0}}(\Phi_{t}(x))\mathrm{d}t \right)^3\mathrm{d}m_{\Sub F_{0}}(x)\Bigg).
    \label{eq:third dev of PII}
    \end{split}
\end{align}

In particular, if $F(u,v,w)$ is a smooth three-parameter family of H{\"o}lder functions on $UX$ such that $\mathbf{P}(F(0,0,0))=0$ and all of the first variations of $\mathbf{P}(F(u,v,w))$ are zero, then

\begin{align}
\begin{split}
&\frac{\partial^{3}\mathbf{P}(F(u,v,w))}{\partial u \partial v \partial w }\bigg|_{u=v=w=0}
=
  \int_{\Sub UX} \partial_{u}\partial_{v} \partial_{w} F(0)(x)\mathrm{d}m_{F(0)}(x)
  \\
  &\indentEq+\lim_{r\to \infty}\frac{1}{r}\Bigg(
  \int_{\Sub UX} \left(\int_{0}^{r}\partial_{u}F(0)(\Phi_{t}(x))\mathrm{d}t \right) \left(\int_{0}^{r}\partial_{v}F(0)(\Phi_{t}(x))\mathrm{d}t \right) \left(\int_{0}^{r}\partial_{w}F(0)(\Phi_{t}(x))\mathrm{d}t \right)\mathrm{d}m_{\Sub F(0)}(x)
  \\
&\indentEq+ \int_{\Sub UX} \left(\int_{0}^{r}\partial_{u}F(0)(\Phi_{t}(x))\mathrm{d}t \right) \left(\int_{0}^{r}\partial_{vw}F(0)(\Phi_{t}(x))\mathrm{d}t \right)\mathrm{d}m_{\Sub F(0)}(x)
\\
&\indentEq+\int_{\Sub UX} \left(\int_{0}^{r}\partial_{v}F(0)(\Phi_{t}(x))\mathrm{d}t \right) \left(\int_{0}^{r}\partial_{uw}F(0)(\Phi_{t}(x))\mathrm{d}t \right)\mathrm{d}m_{\Sub F(0)}(x)
\\
&\indentEq+\int_{\Sub UX} \left(\int_{0}^{r}\partial_{w}F(0)(\Phi_{t}(x))\mathrm{d}t \right) \left(\int_{0}^{r}\partial_{uv}F(0)(\Phi_{t}(x))\mathrm{d}t \right)\mathrm{d}m_{\Sub F(0)}(x)\Bigg).
\label{eq:third dev of P}
\end{split}
\end{align}

\end{lem}

\begin{proof}\hfill

The proof proceeds in two steps. In the first step, we find a formula for third derivatives of pressure functions for topological mixing shifts of finite type. In the second step, we show how the computation can be carried to geodesic flows through symbolic coding and suspension flows.
\begin{itemize}
    \item Step 1.
    
 The computation of first and second derivatives of pressure functions for aperiodic shifts of finite type are shown in Parry and Pollicott's book \cite{Pollicot_Zeta} by Ruelle operator. We will give a computation of the third derivative by the same method and then generalize it to our flow case.
 
Let $(\Sigma_{A},\sigma_{A})$ be a (either one-sided or two-sided) shift of finite type that is topological mixing. We assume $f_{s}$ is a smooth family of functions on $C^{\alpha}(\Sigma_{A})$ such that $ \mathbf{P}(f_{0})=0$ and $\partial_{s}\mathbf{P}(f_{s})|_{s=0}$. We will prove 
\begin{align}
    \partial_{s}^{3}\mathbf{P}(f_{s})\bigg|_{s=0}=\lim_{n\to\infty}\frac{1}{n}\int_{\Sub X}(S_{n}(\partial_{s}f_{0}))^{3}\mathrm{d}m_{f_{0}}+\lim_{n\to\infty}\frac{3}{n}\int_{\Sub X}S_{n}(\partial_{s}f_{0})S_{n}(\partial_{s}^{2}f_{0})\mathrm{d}m_{f_{0}}+\int_{\Sub X} \partial_{s}^{3} f_{0} \mathrm{d} m_{f_{0}}
    \label{eq partial3}.
\end{align}

 Any H{\"o}lder function on a two-sided shift space is colomologous to a H{\"o}lder function depending only on the corresponding one-sided shift space (see \cite[Proposition 1.2]{Pollicot_Zeta}). It suffices to prove equation \eqref{eq partial3} for one-sided shifts of finite type. We assume $(\Sigma_A, \sigma_A)$ is one-sided and $f_{s}$ is a smooth family of H{\"o}lder function (with possibly a different H{\"o}lder exponent from $\alpha$) on $\Sigma_A$.

We change $f_{0}$ in its cohomology class so that $\mathcal{L}_{f_{0}}(1)=1$. 

Following the method in \cite{Pollicot_Zeta}, let $Q(s)$ be a projection-valued function which is analytic in $s$ and satisfies
\begin{equation*}
    \mathcal{L}_{f_s}Q(s)=Q(s)\mathcal{L}_{f_{s}}.
\end{equation*}

Let $w(s): \Sigma_{A}\to \mathbb{R}$ be $w(s)(x):=Q(s)\cdot 1$. So
\begin{equation}
    \mathcal{L}_{f_s}w(s)=e^{\mathbf{P}(f_s)}w(s)
    \label{eq:Ruelle}
\end{equation}
and $w(0)(x)=Q(0)\cdot 1= 1$.

Iterate equation \text{\eqref{eq:Ruelle}} n-times and take 3rd s-derivatives of both sides at $s=0$.
\begin{equation}
  \partial_{s}^{3}\left (\sum\limits_{\sigma_A y=x}e^{S_{n}(f_s)(y)} w(s)(y)\right )\bigg|_{s=0} =\partial_{s}^{3}(e^{n\mathbf{P}(f_s)}w(s))|_{s=0}.
  \label{eq:Ruelle-3dev}
\end{equation}

Notice $\mathbf{P}(f_0)=0$, $\partial_{s}\mathbf{P}(f_{s})|_{s=0}=0$ and $\int_{\Sub UX} \partial_{s}f_{0} \mathrm{d}m_{\Sub f_{0}}=0$. Integrating both sides of equation \eqref{eq:Ruelle-3dev} with respect to $m_{f_{0}}$ yields,
\begin{align*}
    &3n\partial_{s}^{2}\mathbf{P}(f_{s})\bigg|_{s=0}\int_{\Sub X} \partial_{s}w(0)\mathrm{d}m_{f_{0}}+n\partial_{s}^{3}\mathbf{P}(f_{s})\bigg|_{s=0}\\
    =&\int_{\Sub X}S_{n}(\partial_{s}^{3} f_{0})\mathrm{d}m_{f_{0}}+
    3\int_{\Sub X} (S_{n}(\partial_{s}f_{0})^{2}+S_{n}(\partial_{s}^{2}f_{0}))\partial_{s}w(0)\mathrm{d}m_{f_{0}}\\
    +&3\int_{\Sub X} S_{n}(\partial_{s} f_{0}) \partial_{s}^{2}w(0)\mathrm{d}m_{f_{0}}+3 \int_{\Sub X} S_{n}(\partial_{s}f_{0}) S_{n}(\partial_{s}^{2}f_{0})\mathrm{d}m_{f_{0}}+\int_{\Sub X}S_{n}(\partial_{s}f_{0})^{3}\mathrm{d}m_{f_{0}}.
\end{align*}
Divide by $n$ and take $n \to \infty$. From ergodicity of $m_{f_{0}}$, we may evaluate two of the resulting terms:
\begin{align*}
    &\lim_{n\to\infty}\frac{1}{n}\int_{\Sub X} S_{n}(\partial_{s}f_{0})\partial_{s}^{2}w(0)\mathrm{d}m_{f_{0}}= \int_{\Sub X} \partial_{s}f_{0}\mathrm{d}m_{f_0} \int_{\Sub X}\partial_{s}^{2}w(0)\mathrm{d}m_{f_{0}}=0.\\
     &\lim_{n\to\infty}\frac{1}{n}\int_{\Sub X} S_{n}(\partial_{s}^{2}f_{0})\partial_{s}w(0)\mathrm{d}m_{f_{0}}= \int_{\Sub X} \partial_{s}^{2}f_{0}\mathrm{d}m_{f_0}  \int_{\Sub X}\partial_{s}w(0)\mathrm{d}m_{f_{0}}.
\end{align*}
We also notice that by applying Lemma \ref{thm:McMullen} and the formula for second derivatives of pressure functions, we have the following equality
\begin{equation*}
    \partial_{s}^{2}\mathbf{P}(f_{s})\bigg|_{s=0}\int_{\Sub X}\partial_{s}w(0)\mathrm{d}m_{f_{0}}=\lim_{n\to\infty}\frac{1}{n}\int_{\Sub X}S_n(\partial_{s}f_{0})^{2}\partial_{s}w(0)\mathrm{d}m_{f_{0}}+\lim_{n\to\infty}\frac{1}{n}\int_{\Sub X} S_{n}(\partial_{s}^{2}f_{0})\partial_{s}w(0)\mathrm{d}m_{f_{0}}
\end{equation*}

Therefore we obtain a formal expression
\begin{align*}
    \partial_{s}^{3}\mathbf{P}(f_{s})\bigg|_{s=0}=\lim_{n\to\infty}\frac{1}{n}\int_{\Sub X}(S_{n}(\partial_{s}f_{0}))^{3}\mathrm{d}m_{f_{0}}+\lim_{n\to\infty}\frac{3}{n}\int_{\Sub X}S_{n}(\partial_{s}f_{0})S_{n}(\partial_{s}^{2}f_{0})\mathrm{d}m_{f_{0}}+\int_{\Sub X} \partial_{s}^{3} f_{0} \mathrm{d} m_{f_{0}}
\end{align*}

We observe each term of the right-hand side converges.
$\lim\limits_{n\to\infty}\frac{3}{n}\int_{\Sub X}S_{n}(\partial_{s}f_{0})S_{n}(\partial_{s}^{2}f_{0})\mathrm{d}m_{f_{0}} < \infty$ is guaranteed by Corollary \ref{cor:CovNotMeanZero} and $\lim\limits_{n\to\infty}\frac{1}{n}\int_{\Sub X}(S_{n}(\partial_{s}f_{0}))^{3}\mathrm{d}m_{f_{0}} < \infty$ has been shown in Lemma \ref{thm:ConvergenceOf3}.
\vspace{5mm}

\item Step 2.
 We now explain how we obtain the flow version of the above formula.

Suppose $F_{s}$ is a smooth family of functions in $C^h(UX)$ such that $\mathbf{P}(F_{s})=0$. We have a topologically mixing Markov coding $(\Sigma_{A}, \pi, r)$ for $UX$. Because of the conjugacy $\pi\sigma^{r}_{A,t}=\Phi_{t}\pi$ between geodesic flow and the suspension flow of $(\Sigma_{A}, \pi,r)$, it suffices to prove equation (\ref{eq:third dev of PII}) for $F_{s}\circ \pi: \Sigma_{A,r} \to \mathbb{R}$ on suspension space with pull back measure $\pi^{*}m_{F_{0}}$. For simplicity, we still denote $F_{s}\circ \pi$ as $F_{s}$ and $\pi^{*}m_{F_{0}}$ as $m_{F_{0}}$. 

We then want to reduce the problem of proving equation (\ref{eq:third dev of PII}) for suspension flows to proving it for subshifts of finite type.
We construct a function $\hat{F}_{s}:\Sigma_{A} \to \mathbb{R}$ from the function $F_{s}$ on the suspension space as:

\begin{equation}
\hat{F}_{s}(x)=\int_{0}^{r(x)} F_{s}(x,t)\mathrm{d}t.
\label{eq funCor}
\end{equation}

As $F_{s}$ and $r$ are H{\"o}lder on $\Sigma_{A,r}$ and $\Sigma_{A}$ respectively, the function $\hat{F}_{s}$ is clearly H{\"o}lder. Denoting the set of $\sigma_{A}^{r}$-invariant probability measures as $\mathcal{M}^{\sigma_{A}^{r}}$ and the set of $\sigma_{A}$-invariant probability measures as $\mathcal{M}^{\sigma_{A}}$, we have: 
\begin{align*}
     \mathbf{P}(\sigma^{r}_{A,t}, F_{s})=&\sup_{m_r\in \mathcal{M}^{\sigma_{A}^{r}}}\left(h(\sigma^{r}_{A,1},m_r)+ \int_{\Sigma_{A,r}}F_{s} \mathrm{d}m_r\right)\\
     =&\sup_{m \in \mathcal{M}^{\sigma_{A}}}\frac{h(\sigma_{A},m)+ \int_{\Sigma_{A}}F_{s} \mathrm{d}m}{\int_{\Sigma_{A}}r \mathrm{d}m}.
\end{align*}

Let $c_{s}=\mathbf{P}(\sigma^{r}_{A,t}, F_{s})$, we have the following relation between the pressure function of $F_{s}$ and the pressure function of $\hat{F}_{s}$ (also see \cite{Bowen-Ruelle}) 
\begin{align}
    \mathbf{P}(\sigma_{A}, \hat{F}_{s}-c_{s}r)=0
    \label{eq RelationForPressures}
\end{align}
 
 Denote $\partial_{s}c_{0}=\partial_{s}(c_s)|_{s=0}$ and $\partial_{ss}c_{0}=\partial_{s}^{2}(c_s)|_{s=0}$.
 
We have the assumption $\partial_{s}c_{0}=0$. Without loss of generality, we can also assume $\partial_{s}^{2}c_{0}=0$. Otherwise we consider the family of functions $\tilde{F}_{s}:=F_{s}-\frac{1}{2}s^{2}\partial_{s}^{2}c_{0}$. Clearly $\partial_{s}\mathbf{P}(\tilde{F}_{s})|_{s=0}= \partial_{s}^{2}\mathbf{P}(\tilde{F}_{s})|_{s=0}=0$ and $\partial_{s}^{3}\mathbf{P}(\tilde{F}_{s})|_{s=0}=\partial_{s}^{3}\mathbf{P}(F_{s})|_{s=0}$.

Now let's take the third $s$-derivative of equation \text{\eqref{eq RelationForPressures}} with the assumptions $\partial_{s}c_{0}=\partial_{s}^{2}c_{0}=0$. By equation \text{\eqref{eq partial3}}, 
 
\begin{align*}
    0=&\partial_{s}^{3}\mathbf{P}(\hat{F}_{s}-c_{s}r)\bigg|_{s=0}\\
    =&\lim_{n\to\infty}\frac{1}{n}\int_{\Sub \Sigma_{A}}(S_{n}(\partial_{s}\hat{F}_{0}))^{3}\mathrm{d}m_{\hat{F}_{0}}+\lim_{n\to\infty}\frac{3}{n}\int_{\Sub \Sigma_{A}}S_{n}(\partial_{s}\hat{F}_{0})S_{n}(\partial_{s}^{2}\hat{F}_{0})\mathrm{d}m_{\hat{F}_{0}}\\
    +&\int_{\Sub \Sigma_{A}} (\partial_{s}^{3} \hat{F}_{0}-\partial_{s}^{3}c_{0}r)\mathrm{d} m_{\hat{F}_{0}}.
\end{align*}

This yields 
\begin{align*}
    \partial_{s}^{3}c_{0}=\partial_{s}^{3}\mathbf{P}(\sigma^{r}_{A,t}, F_{s})\bigg|_{s=0}=\left(\int_{\Sigma_{A}}r\mathrm{d}m_{\hat{F}_{0}}\right)^{-1}\partial^{3}_{s}\mathbf{P}(\sigma_{A}, \hat{F}_{s})\bigg|_{s=0}.
\end{align*}

Therefore proving equation (\ref{eq:third dev of PII}) for $F_s$ is equivalent to proving the following
\begin{align*}
&\lim_{r\to\infty}\frac{1}{r}\int_{\Sub \Sigma_{A,r}}\left(\int_{0}^{r}\partial_{s}F_{0} \mathrm{d}t\right)^{3}\mathrm{d}m_{F_{0}}+\lim_{r\to\infty}\frac{3}{r}\int_{\Sub \Sigma_{A,r}}\int_{0}^{r}\partial_{s}F_{0}\mathrm{d}t\int_{0}^{r}\partial_{ss}F_{0}\mathrm{d}t\mathrm{d}m_{F_{0}}+\int_{\Sub \Sigma_{A,r}} \partial_{s}^{3} F_{0} \mathrm{d} m_{F_{0}}\\
=&\left(\int_{\Sigma_{A}}r\mathrm{d}m_{\hat{F}_{0}}\right)^{-1}\bigg(\lim_{n\to\infty}\frac{1}{n}\int_{\Sub \Sigma_{A}}(S_{n}(\partial_{s}\hat{F}_{0}))^{3}\mathrm{d}m_{\hat{F}_{0}}+\lim_{n\to\infty}\frac{3}{n}\int_{\Sub \Sigma_{A}}S_{n}(\partial_{s}\hat{F}_{0})S_{n}(\partial_{s}^{2}\hat{F}_{0})\mathrm{d}m_{\hat{F}_{0}}\bigg)\\
+&\left(\int_{\Sigma_{A}}r\mathrm{d}m_{\hat{F}_{0}}\right)^{-1}\int_{\Sub \Sigma_{A}} \partial_{s}^{3} \hat{F}_{0} \mathrm{d} m_{\hat{F}_{0}}.
\end{align*}

Each term on the left is actually equal to the corresponding term on the right. We show here how to obtain
\begin{equation}
    \lim_{r\to\infty}\frac{1}{r}\int_{\Sub \Sigma_{A,r}}\left(\int_{0}^{r}\partial_{s}F_{0}(\sigma_{t}^{r}(y)) \mathrm{d}t\right)^{3}\mathrm{d}m_{F_{0}}(y) =\left(\int_{\Sigma_{A}}r\mathrm{d}m_{\hat{F}_{0}}\right)^{-1}\lim_{n\to\infty}\frac{1}{n}\int_{\Sub \Sigma_{A}}(S_{n}(\partial_{s}\hat{F}_{0})(x))^{3}\mathrm{d}m_{\hat{F}_{0}}(x) \label{eq ShiftToFlow}\\
\end{equation}
The other two terms follow a similar analysis.

To see equation \eqref{eq ShiftToFlow}, we begin by noting the following identity (\cite{Pollicott_OntheRateOfMixing}) where we denote $y=(x,u)$,
\begin{align*}
    \partial_{s}F_{0}(\sigma_{A,t}^{r}(x,u))=\sum\limits_{n\in\mathbb{Z}}\left(\int_{0}^{r(\sigma_A^{n}x)}\partial_{s}F_{0}(\sigma_A^{n}x,v)\delta(u+t-v-r^{n}(x))\mathrm{d}v\right),
\end{align*}
where $r^{n}(x)=r(x)+r(\sigma_A x) +\cdots + r(\sigma_A^{n-1}x)$ for $n> 0$ and $r^{0}(x)=0$ and $r^{-n}(x)=-(r({\sigma_A}^{-1} x) +\cdots + r(\sigma_A^{-n}x))$ for $n\geq 1$.

One has from Definition \ref{defn Cov}, measure correspondence (\ref{eq measureCor}) and formula (\ref{eq funCor}) that
\begin{align*}
 & \lim_{r\to\infty}\frac{1}{r}\int_{\Sub \Sigma_{A,r}}\left(\int_{0}^{r}\partial_{s}F_{0}(\sigma_{A,t}^{r}(y)) \mathrm{d}t\right)^{3}\mathrm{d}m_{F_{0}}(y)\\
 =&\int_{-\infty}^{\infty}\int_{-\infty}^{\infty}\int_{\Sub \Sigma_{A,r}}\partial_{s}F_{0}(y)\partial_{s}F_{0}(\sigma_{A,t}^{r}(y))\partial_{s}F_{0}(\sigma_{A,v}^{r}(y)) \mathrm{d}m_{F_{0}}(y)\mathrm{d}t\mathrm{d}v\\
 =&\left(\int_{\Sigma_{A}}r\mathrm{d}m_{\hat{F}_{0}}\right)^{-1}\int_{-\infty}^{\infty}\int_{-\infty}^{\infty}\int_{\Sub \Sigma_{A}}\int_{0}^{r(x)}\partial_{s}F_{0}(x,u)\partial_{s}F_{0}(\sigma_{A,t}^{r}(x,u))\partial_{s}F_{0}(\sigma_{A,v}^{r}(x,u))\mathrm{d}u  \mathrm{d}m_{\hat{F}_{0}}(x)\mathrm{d}t\mathrm{d}v\\
 =&(\int_{\Sigma_{A}}r\mathrm{d}m_{\hat{F}_{0}})^{-1}\sum\limits_{m,n\in\mathbb{Z}}\int_{\Sub \Sigma_{A}}\mathrm{d}m_{\hat{F}_{0}}(x)\int_{0}^{r(x)}\partial_{s}F_{0}(x,u)\mathrm{d}u\int_{0}^{r(\sigma_A^{n}x)}\partial_{s}F_{0}(\sigma_A^{n}x,v)\mathrm{d}v\int_{0}^{r(\sigma_A^{m}x)}\partial_{s}F_{0}(\sigma_A^{m}x,v)\mathrm{d}v\\
 =&\left(\int_{\Sigma_{A}}r\mathrm{d}m_{\hat{F}_{0}}\right)^{-1}\sum\limits_{m,n\in\mathbb{Z}}\int_{\Sub \Sigma_{A}}\partial_{s}\hat{F}_{0}(x)\partial_{s}\hat{F}_{0}(\sigma_A^{n}x)\partial_{s}\hat{F}_{0}(\sigma_A^{m}x)\mathrm{d}m_{\hat{F}_{0}}(x)\\
 =&\left(\int_{\Sigma_{A}}r\mathrm{d}m_{\hat{F}_{0}}\right)^{-1}\lim_{n\to\infty}\frac{1}{n}\int_{\Sub \Sigma_{A}}(S_{n}(\partial_{s}\hat{F}_{0})(x))^{3}\mathrm{d}m_{\hat{F}_{0}}(x).
\end{align*}

We therefore obtain a suspension flow version of equation \text{\eqref{eq partial3}} for $F_{s}$.
\end{itemize}

The arguments for three-parameters families are the same as the one-parameter case. In fact, since the operator $\partial_{u}\partial_{v}\partial_{w}$ is a symmetric multi-linear map in $u,v,w$ that is completely characterized by its values on the diagonal, one can deduce equation \ref{eq:third dev of P} for multi-variable cases directly from equation \ref{eq:third dev of PII} for one-parameter family.
\end{proof}

Next we introduce a formula for taking derivatives of integrals over varying measures by tools of thermodynamic formalism. This formula will be very useful in later proofs.

\begin{lem}
Suppose $\{f_{s}\}_{s\in(-1,1)}$ is a smooth family of pressure zero H{\"o}lder functions over $UX$ and suppose $\{m_{\Sub f_{s}}\}_{s\in(-1,1)}$ is the associated family of equilibrium states. Suppose furthermore that $\{w_{s}\}_{s\in(-1,1)}$ is another smooth family of H{\"o}lder functions over $UX$. Then

\begin{equation}
    \partial_{s}\left(\int_{\Sub UX} w_{s} \mathrm{d}m_{\Sub f_{s}} 
    \right)\bigg|_{s=0}=Cov(w_{0},\partial_{s}f_{0}, m_{\Sub f_{0}})+ \int_{\Sub UX} \partial_{s}w_{0} \mathrm{d}m_{\Sub f_{0}}
    \label{eq: VS}
\end{equation}
\end{lem}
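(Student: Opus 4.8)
The plan is to realize the quantity $\int_{\Sub UX} w_s \,\mathrm{d}m_{\Sub f_s}$ as a \emph{first} derivative of a pressure function, so that differentiating in $s$ produces a \emph{mixed second} derivative of pressure, to which the formula \eqref{eq: Second dev of P} already applies. Concretely, for each fixed $s$ I consider the one-parameter family $t\mapsto f_s+t\,w_s$ and apply the first-derivative formula \eqref{eq: First dev of P}: since the equilibrium state of $f_s+t\,w_s$ at $t=0$ is $m_{\Sub f_s}$, one obtains
\[
\frac{\partial}{\partial t}\mathbf{P}(f_s+t\,w_s)\bigg|_{t=0}=\int_{\Sub UX} w_s\,\mathrm{d}m_{\Sub f_s}.
\]
Hence the left-hand side of \eqref{eq: VS} equals the mixed partial $\frac{\partial^2}{\partial s\,\partial t}\mathbf{P}(f_s+t\,w_s)\big|_{s=t=0}$ of the two-parameter family $F(s,t)=f_s+t\,w_s$.

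Next I would apply the mixed second-derivative formula \eqref{eq: Second dev of P} to $F$. At the base point $F(0,0)=f_0$ the relevant variations are $\partial_s F(0)=\partial_s f_0$, $\partial_t F(0)=w_0$ and $\partial_{st}F(0)=\partial_s w_0$, giving
\[
\frac{\partial^2 \mathbf{P}(F)}{\partial s\,\partial t}\bigg|_{0}=Cov\big(P_{\Sub m_{\Sub f_0}}(\partial_s f_0),\,P_{\Sub m_{\Sub f_0}}(w_0),\,m_{\Sub f_0}\big)+\int_{\Sub UX}\partial_s w_0\,\mathrm{d}m_{\Sub f_0}.
\]
It remains to simplify the covariance term. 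Because every $f_s$ has pressure zero, differentiating $\mathbf{P}(f_s)\equiv 0$ and using \eqref{eq: First dev of P} gives $m_{\Sub f_0}(\partial_s f_0)=0$, so $P_{\Sub m_{\Sub f_0}}(\partial_s f_0)=\partial_s f_0$. Corollary \ref{cor:CovNotMeanZero} then lets me drop the remaining projection, $Cov(\partial_s f_0,P_{\Sub m_{\Sub f_0}}(w_0),m_{\Sub f_0})=Cov(\partial_s f_0,w_0,m_{\Sub f_0})$, and symmetry of the covariance in its two function slots rewrites this as $Cov(w_0,\partial_s f_0,m_{\Sub f_0})$, which is precisely the claimed formula \eqref{eq: VS}.

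The main obstacle is not the algebra but the analytic justification of the two reductions above: that $\mathbf{P}$ is jointly smooth (indeed real-analytic) on the two-parameter family, so that the mixed partials exist and may be computed in either order, and that the Parry--Pollicott--McMullen first- and second-derivative formulas genuinely apply to $F(s,t)$. This rests on the analytic dependence of the leading eigendata of the Ruelle operator $\mathcal{L}_{w}$ on the potential (Theorem \ref{thm Ruelle-Frobenius}), which yields analyticity of the pressure function and of the equilibrium states along smooth families of H\"older potentials; with this in hand the interchange of $\partial_s$ and $\partial_t$ is legitimate, the convergence of the covariance is controlled as in Corollary \ref{cor:CovNotMeanZero}, and the computation above becomes rigorous.
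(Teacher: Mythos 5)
Your proposal is correct and follows essentially the same route as the paper: both realize $\int_{UX} w_s\,\mathrm{d}m_{f_s}$ as $\partial_t\mathbf{P}(f_s+tw_s)|_{t=0}$ via \eqref{eq: First dev of P}, identify the left-hand side with the mixed partial of pressure, apply \eqref{eq: Second dev of P}, and strip the projections using $m_{f_0}(\partial_s f_0)=0$ together with Corollary \ref{cor:CovNotMeanZero}. Your closing remarks on analyticity of the pressure along the two-parameter family are a reasonable justification that the paper leaves implicit.
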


\begin{proof}
\begin{align*}
    \partial_{s}\left(\int_{\Sub UX} w_{s} \mathrm{d}m_{\Sub f_{s}}
    \right)\bigg|_{s=0}&=\partial_{s}\left( {\frac{\partial\mathbf{P}(f_{s}+tw_{s})}{\partial t} \bigg|_{t=0}} \right)\bigg|_{s=0}
    \hspace{.5in}\text{by formula \eqref{eq: First dev of P}}\\
    &= \frac{\partial^{2}\mathbf{P}(f_{s}+tw_{s})}{\partial s \partial t} \bigg|_{s=t=0} \\
    &=Cov(P_{m_{f_{0}}}(w_{0}), P_{m_{f_{0}}}(\partial_{s}f_{0}), m_{\Sub f_{0}})+ \int_{\Sub UX} \partial_{s}w_{0} \mathrm{d}m_{\Sub f_{0}}\text{ \hspace{.5in} by equation \eqref{eq: Second dev of P}}\\     
    &=Cov(P_{m_{f_{0}}}(w_{0}),\partial_{s}f_{0} , m_{\Sub f_{0}})+ \int_{\Sub UX} \partial_{s}w_{0} \mathrm{d}m_{\Sub f_{0}}\\
     &=Cov(w_{0},\partial_{s}f_{0} , m_{\Sub f_{0}})+ \int_{\Sub UX} \partial_{s}w_{0} \mathrm{d}m_{\Sub f_{0}}   \hspace{.5in}\text{by Corollary  \ref{cor:CovNotMeanZero}}
\end{align*}
\end{proof}

\section{Proof of the main theorem: initial steps}

We first restate our main theorem. Recall
\cref{thm main}
\main*
We want to show  $\partial_{k}g_{ij}(\sigma)=0$ for the pressure metric components $g_{ij}$ with respect to the coordinates introduced before in Remark \ref{rem coordinate} for all possible $i,j,k$.   

\subsection{Some geometrical observation}

In this subsection, we conclude some derivatives of metric tensors vanish by some geometric observation. Starting from the next section, we will develop a general method to compute first derivatives of the pressure metric by the thermodynamic formalism.

From now on, we restrict ourselves to the Hitchin component $\mathcal{H}_{3}(S)$. Suppose $\{q_{i}\}$ is a basis of holomorphic differentials in $H^{0}(X,K^2)\bigoplus H^{0}(X,K^3)$ and suppose $\{\varphi(q_{i})\}$ is the associated Hitchin deformation given in Definition \ref{defn HitchinDeformation}. Recall we use the notation $g_{ij}(\sigma)= \langle \varphi(q_i), \varphi(q_j)) \rangle_{\Sub P}$ to emphasize the metric tensor is evaluated at $\sigma \in \mathcal{T}(S)$. We also assume $ g_{ij}(\delta)=\delta_{ij}$. 

Futhermore, instead of using only the English letters $i,j,k$ to denote arbitrary holomorphic differentials of degree 2 and 3, we let the English letters $i,j,k$ to only refer to quadratic differentials $q_{i}, q_{j}, q_{k} \in H^{0}(X,K^2)$ from now on. Therefore the corresponding Hitchin deformations $\varphi(q_i),\varphi(q_j),\varphi(q_k)$ are tangential directions to Fuchsian locus in $T_X\mathcal{H}_{3}(S)$. We also use the Greek letters $\alpha, \beta, \gamma$  to refer to cubic differentials $q_{\alpha}, q_{\beta}, q_{\gamma} \in H^{0}(X,K^3)$.  Then the corresponding Hitchin deformation $\varphi(q_{\alpha}), \varphi(q_{\beta}), \varphi(q_{\gamma}) $ are normal directions to the Fuchsian locus in $T_X\mathcal{H}_{3}(S)$ with respect to the pressure metric.

With the above notation understood, we have in total six types of first derivatives of metric tensors that need to be considered: $\partial_{k}g_{ij}, \partial_{j}g_{i \alpha}, \partial_{\alpha}g_{ij}, \partial_{i}g_{\alpha \beta},\partial_{\beta}g_{i\alpha}, \partial_{\gamma}g_{\alpha \beta}$. Our goal is to prove they all vanish.

We first notice the following facts.

\begin{enumerate}
\item $\partial_{k}g_{ij}(\sigma)=0$.

To see this, note that the pressure metric is a constant multiple of the Weil-Petersson metric on Teichm{\"u}ller space $\mathcal{T}(S)$.
Because the coordinates system in terms of quadratic differentials from the Hitchin reparametrization agrees with Bers coordinates through second order in the case of $\mathcal{T}(S)$ (\cite[Corollary 5.2, Corollary 5.4]{Mike_Harmonic}). The Bers coordinates are geodesic (\cite{Some_remarks_on_TeichSpace}) for the Weil-Petersson metric implies that for the pressure metric: $\partial_{k}g_{ij}(\sigma)=0$.   

\item
$\partial_{j}g_{\alpha i}(\sigma)=0 \Longrightarrow \partial_{\alpha}g_{ij}(\sigma)=0$.

The contragredient involution $\kappa: \mathrm{PSL}(3,\mathbb{R})\to \mathrm{PSL}(3,\mathbb{R})$ given by $\kappa(g)=(g^{-1})^{t}$ induces an involution $\hat{\kappa}$ on $\mathcal{H}_{3}(S)$ by $\hat{\kappa}(\rho)(\gamma)=\kappa(\rho(\gamma))$. Because $\hat{\kappa}$ is an isometry of $\mathcal{H}_{3}(S)$ with respect to the pressure metric and the fixed points set of $\hat{\kappa}$ is $\mathcal{T}(S)$, the Fuchsian locus is in fact totally geodesic in $\mathcal{H}_{3}(S)$( see \cite{SimpleLengthRigidity}). So for  $\tilde{\nabla}$ the Levi-Civita connection of the pressure metric and any $X,Y \in T_{\sigma} \mathcal{T}(S)$, we have
\begin{equation}
    \Pi(X,Y)=(\tilde{\nabla}_XY)^{\bot}=0.
\end{equation}
 Thus the Christoffel symbols for connection $\tilde{\nabla}$ satisfy: $\Gamma_{ij}^{\alpha}(\sigma)=0$ and because
 \begin{align*}
   \Gamma_{ij}^{\alpha}&=\frac{1}{2}g^{\beta \alpha}(\partial_{j}g_{i\beta}+\partial_{i}g_{j\beta}-\partial_{\beta}g_{ji}) \hspace{.5 in}\text{since  $g_{ k \alpha}(\sigma)=0,g^{ k \alpha}(\sigma)=0$} \\
   &=\frac{1}{2}g^{\alpha \alpha}(\partial_{j}g_{i\alpha}+\partial_{i}g_{j\alpha}-\partial_{\alpha}g_{ji}) \hspace{.5 in}\text{since  $g_{\alpha \beta}=\sigma_{\alpha \beta}$}
 \end{align*}
 It suffices to know $\partial_{j}g_{i\alpha}(\sigma)=0$ and $\partial_{i}g_{j\alpha}(\sigma)=0 $ to conclude $\partial_{\alpha}g_{ij}(\sigma)=0$.

\item 
$\partial_{\beta}g_{\alpha \alpha}(\sigma)=0 \Longrightarrow \partial_{\gamma}g_{\alpha \beta}(\sigma)=0$.\\
$\partial_{i}g_{\alpha \alpha}(\sigma)=0 \Longrightarrow \partial_{i}g_{\alpha \beta}(\sigma)=0$.

This is because
\begin{align*}
    \partial_{\gamma}g_{\alpha\beta}& =\frac{1}{2}(\partial_{\gamma}g_{\alpha+\beta, \alpha+\beta}-\partial_{\gamma}g_{\alpha \alpha}-\partial_{\gamma}g_{\beta \beta})\\
     \partial_{i}g_{\alpha\beta}&=\frac{1}{2}(\partial_{i}g_{\alpha+\beta, \alpha+\beta}-\partial_{i}g_{\alpha \alpha}-\partial_{i}g_{\beta \beta})
\end{align*}
\end{enumerate}

The remaining four cases left to prove are as follows.
 \begin{enumerate}[label=(\roman*)]
    \item $\partial_{\beta}g_{\alpha \alpha}(\sigma)=0$ ;
    \item $\partial_{i}g_{\alpha \alpha}(\sigma)=0$;
    \item $\partial_{j}g_{\alpha i}(\sigma)=0$;
    \item $\partial_{\beta}g_{\alpha i}(\sigma)=0$.
\end{enumerate}
We will have a general method to prove them. We first give a general formula for first derivatives of the pressure metric in the next subsection. The computation for the model case $\partial_{\beta}g_{\alpha \alpha}(\sigma)$ will be shown in Section 5 and Section 6. The other three cases will be discussed in Section 7.

\subsection{ First derivatives of the pressure metric}

This subsection is devoted to a formula of first derivatives of pressure metric. We also prove we have some freedom to choose representatives for variations of reparametrization functions from the Liv\v{s}ic cohomologous class.

Suppose $\{\rho(u,v,w)\}_{(u,v,w)\in\{(-1,1)\}^{3}}$ is an analytic three-parameter family of representations in the Hitchin component $\mathcal{H}_n(S)$ with base point $\rho(0,0,0)\in \mathcal{T}(S)$ corresponding to $X$. Suppose $\{f_{\rho(u,v,w)}\}_{(u,v,w)\in\{(-1,1)\}^{3}}$ are associated reparametrization functions. For simplicity of notation, we denote the renormalized reparametrization functions as
$$F(u,v,w)=f_{\rho(u,v,w)}^N=-h(\rho(u,v,w))f_{\rho(u,v,w)}$$

We also denote $F(0)=F(0,0,0)$ and  $\rho(0)=\rho(0,0,0)$. 

In the case of the Fuchsian representation, the topological entropy and the reparameterization function are simple. We have $h(\rho(0))=1$ (See  Theorem \ref{thm Entropy}). Since $\Phi_{\rho(0)}=\Phi$, the reparametrization function $f_{\rho(0)}$ can be chosen to be $1$ in the Liv\v{s}ic cohomologous class. Therefore one can choose $F(0)=-1$. 

The following characterization of the equilibrium measure for $F(0)$ is important.
\begin{lem}
The equilibrium state $m_{F(0)}$ for $F(0)$ is the Liouville measure $m_{L}$.
\end{lem}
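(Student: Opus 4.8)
The plan is to observe that $F(0)$ is nothing more than a constant function and then to invoke the variational principle together with the facts already recorded about equilibrium states and the Liouville measure. As noted in the paragraph preceding the statement, at the Fuchsian point we have $h(\rho(0))=1$ (Theorem \ref{thm Entropy}) and the reparametrization function $f_{\rho(0)}$ may be taken to be the constant $1$ in its Liv\v{s}ic cohomology class, since $\Phi_{\rho(0)}=\Phi$. Hence $F(0)=-h(\rho(0))f_{\rho(0)}=-1$.

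The central observation is that adding a constant to a H\"older function leaves its equilibrium state unchanged. Indeed $F(0)-0=-1$ is a constant, hence (trivially) Liv\v{s}ic cohomologous to a constant, so by Remark \ref{rem SameEquilState} the functions $F(0)$ and $0$ share the same equilibrium state. Equivalently, one can argue directly from the variational principle (Proposition \ref{prop defn_pressure}): for every probability measure $m\in\mathcal{M}^{\Phi}$ we have $\int_{\Sub UX}(-1)\,\mathrm{d}m=-1$, so
\begin{equation*}
\mathbf{P}(F(0))=\sup_{m\in\mathcal{M}^{\Phi}}\left(h(\Phi,m)-1\right)=h(\Phi)-1,
\end{equation*}
and the supremum is attained precisely at the measure maximizing $h(\Phi,\cdot)$, which is independent of the additive constant. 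In either formulation, $m_{F(0)}=m_{0}$.

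By definition, the equilibrium state for $f=0$ is the Bowen-Margulis measure $m_{\Phi}$, the probability measure of maximal entropy. Therefore $m_{F(0)}=m_{0}=m_{\Phi}$. Finally, Remark \ref{rem Liouville} identifies the measure of maximal entropy for the geodesic flow on the hyperbolic surface $(X,\sigma)$ with the Liouville measure, so $m_{\Phi}=m_{L}$, and combining these equalities gives $m_{F(0)}=m_{L}$, as claimed. I do not anticipate any genuine obstacle here: the only point requiring care is the reduction to the constant function, which rests on the entropy normalization $h(\rho(0))=1$ and on the invariance of equilibrium states under additive constants; everything else is a direct citation of the background results assembled above.
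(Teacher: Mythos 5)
Your proof is correct and follows essentially the same route as the paper: both reduce to the observation that $F(0)=-1$ is constant and then apply the variational principle (the paper computes $\mathbf{P}(F(0))=\sup_{m}(h(\Phi,m))-1=h(\Phi,m_{\Phi})-1=0$ exactly as in your second formulation) together with Remark \ref{rem Liouville} identifying $m_{\Phi}$ with $m_{L}$. Your additional appeal to Remark \ref{rem SameEquilState} is a harmless restatement of the same point.
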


\begin{proof}
Since the Liouville measure $m_{L}$ coincides with the Bowen-Margulis measure (Remark \ref{rem Liouville}). This follows easily from the variational principle (Proposition \ref{prop defn_pressure}).
\end{proof}
  
The measure $m_{L}$ is both $\Phi_t$ invariant and rotationally invariant on $UX$, i.e. $(e^{i\theta})^{*}m_{L}=m_{L}$ where $e^{i\theta}$ acts on $UX$ by usual multiplication. We will repeatedly use these important properties of the Liouville measure for our proofs later.

\begin{prop}
\label{prop formula_1st_dev}
The first derivatives of the pressure metric at $\rho(0)$ satisfy
\begin{align*}
&\partial_{w} \Bigg( {\langle \partial_{u}\rho(0,0,w), \partial_{v}\rho(0,0,w) \rangle}_{\Sub P}\Bigg) \Bigg|_{w=0}\\
&=\lim_{r\to \infty}\frac{1}{r}\Bigg(
  \int_{\Sub UX} \int_{0}^{r}\partial_{u}F(0)\mathrm{d}t \int_{0}^{r}\partial_{v}F(0)\mathrm{d}t \int_{0}^{r}\partial_{w}F(0)\mathrm{d}t\mathrm{d}m_{0}\\
  &+\int_{\Sub UX} \int_{0}^{r}\partial_{u}F(0)\mathrm{d}t\int_{0}^{r}\partial_{wv}F(0)\mathrm{d}t \mathrm{d}m_{0}
+\int_{\Sub UX}\int_{0}^{r}\partial_{v}F(0)\mathrm{d}t \int_{0}^{r}\partial_{wu}F(0)\mathrm{d}t\mathrm{d}m_{0}\Bigg),
\end{align*}
where the flow $\Phi_{t}(x)$ is omitted for simplicity.

\end{prop}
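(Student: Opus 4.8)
The plan is to write the metric tensor at the base point $\rho(0,0,w)$ as the quotient
$$\langle \partial_u\rho(0,0,w), \partial_v\rho(0,0,w)\rangle_{\Sub P} = -\frac{Cov(\partial_u F(0,0,w), \partial_v F(0,0,w), m_{F(0,0,w)})}{\int_{UX} F(0,0,w)\, dm_{F(0,0,w)}}$$
and then differentiate in $w$ at $w=0$. The structural fact driving everything is that each $F(u,v,w)$ is a normalized reparametrization function, so $\mathbf{P}(F(u,v,w)) \equiv 0$; hence every partial derivative of $\mathbf{P}(F(u,v,w))$ vanishes identically, and in particular each first variation $\partial_u F$, $\partial_v F$, $\partial_w F$ is mean zero against the corresponding equilibrium measure by \eqref{eq: First dev of P}. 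I would first reduce the covariance in the numerator to a single integral, then show the denominator contributes nothing to first order, and finally feed the result into the third-derivative formula of Proposition \ref{prop 3rd}.

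For the numerator, applying the second-derivative formula \eqref{eq: Second dev of P} at the base point $\rho(0,0,w)$, using $\partial^2\mathbf{P}/\partial u\partial v \equiv 0$ and the mean-zero property (so the projections $P_{m}$ act trivially), gives for every $w$
$$Cov(\partial_u F(0,0,w), \partial_v F(0,0,w), m_{F(0,0,w)}) = -\int_{UX} \partial_{uv} F(0,0,w)\, dm_{F(0,0,w)}.$$
For the denominator I would use the Fuchsian normalization $F(0) = -1$ and $m_{F(0)} = m_L$, so its value at $w=0$ is $-1$; its $w$-derivative, computed from Lemma \eqref{eq: VS}, vanishes because $\int_{UX} \partial_w F(0)\, dm = 0$ and $Cov(F(0), \partial_w F(0), m_{F(0)}) = Cov(P_{m_{F(0)}}(F(0)), \partial_w F(0), m_{F(0)}) = 0$ by Corollary \ref{cor:CovNotMeanZero}, the projection annihilating the constant $F(0) = -1$. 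The quotient rule then collapses to $\partial_w\langle \partial_u\rho, \partial_v\rho\rangle_{\Sub P}\big|_{w=0} = -\,\partial_w\big(\int_{UX} \partial_{uv}F\, dm_F\big)\big|_{w=0}$.

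Differentiating this last integral by Lemma \eqref{eq: VS} yields
$$\partial_w\langle\partial_u\rho,\partial_v\rho\rangle_{\Sub P}\big|_{w=0} = -Cov(\partial_{uv}F(0), \partial_w F(0), m_{F(0)}) - \int_{UX}\partial_{uvw}F(0)\, dm_{F(0)}.$$
Now I would invoke Proposition \ref{prop 3rd}: since $\partial_u\partial_v\partial_w\mathbf{P}(F) \equiv 0$, equation \eqref{eq:third dev of P} expresses $\int_{UX}\partial_{uvw}F(0)\, dm$ as minus the $\tfrac{1}{r}$-limit of the triple integral together with the three mixed correlation integrals in $\partial_{uv}$, $\partial_{uw}$, $\partial_{vw}$. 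Simultaneously, because $\partial_w F(0)$ is mean zero, Corollary \ref{cor:CovNotMeanZero} identifies $Cov(\partial_{uv}F(0), \partial_w F(0), m_{F(0)})$ with $\lim_{r}\tfrac{1}{r}\int_{UX}\big(\int_0^r\partial_{uv}F\big)\big(\int_0^r\partial_w F\big)\, dm$, the constant part of $\partial_{uv}F(0)$ dropping out against the mean-zero factor. Substituting both, the mixed $\partial_{uv}$-term cancels and exactly the triple integral together with the two mixed terms in $\partial_{uw}$ and $\partial_{vw}$ remain, which is the asserted formula.

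The main obstacle, and the step needing the most care, is the legitimacy of differentiating the covariance through the varying equilibrium measure $m_{F(0,0,w)}$ and interchanging the $w$-derivative with the $r\to\infty$ limits; the reduction to a single integral via \eqref{eq: Second dev of P} and then Lemma \eqref{eq: VS} is precisely what lets me avoid this directly, since those statements already encode the analytic dependence of equilibrium states on parameters and the decay-of-correlations estimates that furnish convergence. A secondary bookkeeping point is that $\partial_{uv}F(0)$ is \emph{not} mean zero, so every covariance containing it must be read through Corollary \ref{cor:CovNotMeanZero}; tracking which factor carries the mean-zero property is exactly what forces the $\partial_{uv}$-contributions to cancel cleanly rather than leaving a spurious remainder.
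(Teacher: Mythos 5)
Your proposal is correct and follows essentially the same route as the paper: rewrite the covariance in the numerator via the second-derivative formula \eqref{eq: Second dev of P}, kill the denominator's contribution using $F(0)=-1$ and \eqref{eq: VS}, differentiate the remaining integral by \eqref{eq: VS}, and then substitute the third-derivative formula \eqref{eq:third dev of P} so that the $\partial_{uv}$-covariance cancels against the corresponding mixed term. The only cosmetic difference is that you set the vanishing pressure derivatives to zero at the outset, whereas the paper carries $\partial_w\partial_v\partial_u\mathbf{P}(F(0))$ symbolically and expands it at the last step; the algebra and the cancellations are identical.
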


\begin{proof}

Starting from the Fuchsian point $\rho(0)$, along the ray parameterized by $\{(0,0,w)\}_{w\in (-1,1)}$, the pressure metric 
$
\langle \cdot,\cdot \rangle_{P}: T_{(0,0,w)}\mathcal{H}_{n}(S)\times T_{(0,0,w)}\mathcal{H}_{n}(S)\longrightarrow \mathbb{R} 
$
satisfies:
 \begin{align*}
  {\langle \partial_{u}\rho(0,0,w), \partial_{v}\rho(0,0,w) \rangle}_{\Sub P}
  &= 
  -\frac{\mathrm{Cov}(\partial_{u}F(0,0,w),\partial_{v}F(0,0,w), m_{F(0,0,w)})}{\int_{\Sub UX}F(0,0,w) \mathrm{d}m_{F(0,0,w)} }\\
  &=-\frac{ \partial_{v} \partial_{u} \mathbf{P}(F(0,0,w))-\int_{\Sub UX} \partial_{uv}F(0,0,w) \mathrm{d}m_{F(0,0,w)}}{\int_{\Sub UX}F(0,0,w) \mathrm{d}m_{F(0,0,w)}} \hspace{.2in}\text{equation (\ref{eq: Second dev of P})}
\end{align*}
We  first notice $\int_{\Sub UX}F(0) \mathrm{d}m_{0}=-1$ and from equation (\ref{eq: VS}),
\begin{equation*}
    \partial_{w}\bigg|_{w=0}(\int_{\Sub UX} F(0,0,w)\mathrm{d}m_{F(0,0,w)})= 
    Cov(F(0),\partial_{w}F(0),m_{0})+\int_{\Sub UX}\partial_{w}F(0)\mathrm{d}m_{0}=0
\end{equation*}

Therefore, 
 \begin{align*}
  &\partial_{w} \Bigg( {\langle \partial_{u}\rho(0,0,w), \partial_{v}\rho(0,0,w) \rangle}_{\Sub P}\Bigg) \Bigg|_{w=0}\\
  =&\partial_{w}\partial_{v}\partial_{u}\mathbf{P}(F(0))-\partial_{w}\int_{\Sub UX}\partial_{uv}F(0)\mathrm{d}m_{\Sub F(0,0,w)}\bigg|_{w=0}\\
  =&\partial_{w}\partial_{v}\partial_{u} \mathbf{P}(F(0))-Cov(\partial_{uv}F(0), \partial_{w}F(0), m_{0})-\int_{\Sub UX}\partial{uvw}F(0)\mathrm{d}m_{0}  \hspace{.25in}\text{by equation \eqref{eq: VS}}\\
 =&\lim_{r\to \infty}\frac{1}{r}\Bigg(
  \int_{\Sub UX} \int_{0}^{r}\partial_{u}F(0)\mathrm{d}t \int_{0}^{r}\partial_{v}F(0)\mathrm{d}t \int_{0}^{r}\partial_{w}F(0)\mathrm{d}t\mathrm{d}m_{0}\\
  +&\int_{\Sub UX} \int_{0}^{r}\partial_{u}F(0)\mathrm{d}t\int_{0}^{r}\partial_{vw}F(0)\mathrm{d}t \mathrm{d}m_{0}
+\int_{\Sub UX}\int_{0}^{r}\partial_{v}F(0)\mathrm{d}t \int_{0}^{r}\partial_{wu}F(0)\mathrm{d}t\mathrm{d}m_{0}\Bigg)\hspace{.25in}\text{by equation \eqref{eq:third dev of P}} 
\end{align*}
\end{proof}

\begin{prop}
The formula for the first derivatives of the pressure metric in Proposition \ref{prop formula_1st_dev} only depends on the Liv\v{s}ic class of each component function: $\partial_{u}F(0)$, $\partial_{v}F(0)$, $\partial_{w}F(0)$, $\partial_{wv}F(0)$, and $\partial_{wu}F(0)$. 
\label{prop: firstdev}
\end{prop}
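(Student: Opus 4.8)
The plan is to treat the three summands of the formula in Proposition~\ref{prop formula_1st_dev} separately. Comparing with Definitions~\ref{defn Cov} and~\ref{defn higherCov}, the first summand is exactly the higher covariance $Cov(\partial_u F(0),\partial_v F(0),\partial_w F(0),m_0)$ of three \emph{mean-zero} functions (the hypothesis that all first variations of the pressure vanish forces $m_0(\partial_u F(0))=m_0(\partial_v F(0))=m_0(\partial_w F(0))=0$), while the second and third summands are the ordinary covariances $Cov(\partial_u F(0),\partial_{wv}F(0),m_0)$ and $Cov(\partial_v F(0),\partial_{wu}F(0),m_0)$. Since perturbing several arguments can be done one at a time, it suffices to show each (higher) covariance is unchanged when a \emph{single} argument is replaced by a Liv\v{s}ic cohomologous function. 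The fact driving everything is that a coboundary has bounded ergodic integral: if $g\sim g'$, then by Definition~\ref{defn Livsic} we write $g-g'=\frac{d}{dt}\big|_{t=0}V(\Phi_t(\cdot))$ for a H\"older $V$ differentiable along the flow, whence
\[
  \int_0^T (g-g')(\Phi_s(x))\,\mathrm{d}s = V(\Phi_T(x))-V(x),
\]
which is bounded by $2\|V\|_\infty$ uniformly in $x$ and $T$. A coboundary also has mean zero (its integrand is a flow derivative and $m_0$ is $\Phi$-invariant), so by Corollary~\ref{cor:CovNotMeanZero} I may freely replace the non-mean-zero arguments $\partial_{wv}F(0),\partial_{wu}F(0)$ by their mean-zero projections.

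For the two ordinary covariance terms the estimate is immediate. Writing $g-g'=\frac{d}{dt}\big|_{t=0}V(\Phi_t(\cdot))$ and letting $h$ be the (mean-zero) other argument, the change in the covariance is
\[
  \lim_{T\to\infty}\frac1T\int_{\Sub UX}\big(V(\Phi_T(x))-V(x)\big)\left(\int_0^T h(\Phi_s(x))\,\mathrm{d}s\right)\mathrm{d}m_0(x),
\]
and Cauchy--Schwarz together with finiteness of the variance (Lemma~\ref{thm:McMullen}) bounds this by $\tfrac{2\|V\|_\infty}{T}\sqrt{T\,\mathrm{Var}_T(h)}=O(T^{-1/2})\to 0$. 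By symmetry of the covariance this handles perturbing either argument, so both $Cov(\partial_u F(0),\partial_{wv}F(0),m_0)$ and $Cov(\partial_v F(0),\partial_{wu}F(0),m_0)$ are Liv\v{s}ic invariant.

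The genuinely delicate case, and the main obstacle, is the triple term. Here the naive bound fails: replacing $\partial_u F(0)$ by a cohomologous function produces
\[
  \lim_{T\to\infty}\frac1T\int_{\Sub UX}\big(V(\Phi_T(x))-V(x)\big)\left(\int_0^T \partial_v F(0)(\Phi_s x)\,\mathrm{d}s\right)\left(\int_0^T \partial_w F(0)(\Phi_s x)\,\mathrm{d}s\right)\mathrm{d}m_0(x),
\]
and since each remaining ergodic integral grows like $\sqrt T$, Cauchy--Schwarz only yields an $O(1)$ bound, so real cancellation must be exhibited. My plan is to pass to the centered form of the higher covariance (Definition~\ref{defn higherCov}), write the coboundary as $\frac{d}{dt}\big|_{t=0}V(\Phi_t(\cdot))$, and integrate by parts along the flow: since $\int_{\Sub UX}\frac{d}{dt}\big|_{t=0}W(\Phi_t(x))\,\mathrm{d}m_0=0$ for any flow-differentiable $W$ by invariance of $m_0$, the flow derivative can be transferred from $V$ onto the product $\tilde G_2\tilde G_3$, where $\tilde G_i(x)=\int_{-T/2}^{T/2} g_i(\Phi_s(x))\,\mathrm{d}s$, and the flow derivative of $\tilde G_i$ is the \emph{difference of endpoint values} $g_i(\Phi_{T/2}(x))-g_i(\Phi_{-T/2}(x))$. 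The resulting boundary integrals are three-point correlations of the mean-zero functions (the centered $V$, an endpoint value $g_i(\Phi_{\pm T/2}(\cdot))$, and the bulk integral of the other $g_j$); by the exponential multiple mixing for the geodesic flow (\cite{Pollicott_MultipleMixing}) the two endpoint contributions each converge, and the crucial point is that they converge to the \emph{same} limit, so they cancel thanks to the difference structure $g_i(\Phi_{T/2}(\cdot))-g_i(\Phi_{-T/2}(\cdot))$. Carrying out this cancellation—controlling the correlations uniformly over the time window so the limit may legitimately be taken term by term—is where the real work lies; once established, the triple covariance is seen to be unchanged and the proposition follows.
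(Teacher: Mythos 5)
Your treatment of the two ordinary covariance terms is correct and is actually more elementary than the paper's: the paper proves Liv\v{s}ic-invariance of $Cov(\cdot,\cdot,m_0)$ by differentiating the flow-shifted integral and invoking exponential decay of correlations, whereas your observation that a coboundary contributes only the bounded quantity $V(\Phi_T(x))-V(x)$ to the ergodic integral, combined with Cauchy--Schwarz and $\norm{\int_0^T h(\Phi_s\cdot)\,\mathrm{d}s}_{L^2}=O(\sqrt{T})$, gives the same conclusion with an explicit $O(T^{-1/2})$ rate and no mixing input. The reduction of the non-mean-zero arguments via Corollary \ref{cor:CovNotMeanZero} is also handled correctly.

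The gap is in the triple term, and you have located it yourself: the argument stops at ``carrying out this cancellation \dots is where the real work lies.'' The skeleton you propose (pass to the centered form, transfer the flow derivative from $V$ onto $\tilde G_2\tilde G_3$ by invariance of $m_0$, reduce to boundary correlations at times $\pm T/2$) is exactly the paper's computation, but the cancellation mechanism you then assert --- that for a single $g_i$ the two endpoint contributions converge to a common limit and cancel against each other --- is not the one that closes the argument, and is false as stated. Heuristically, $\int V(x)\,g_2(\Phi_{T/2}x)\,\tilde G_3(x)\,\mathrm{d}m_0$ picks up its mass from $s\approx T/2$ and converges to $\bigl(\int V\,\mathrm{d}m_0\bigr)\int_0^\infty\!\int g_3\,(g_2\circ\Phi_u)\,\mathrm{d}m_0\,\mathrm{d}u$, while the $-T/2$ endpoint converges to $\bigl(\int V\,\mathrm{d}m_0\bigr)\int_0^\infty\!\int g_2\,(g_3\circ\Phi_u)\,\mathrm{d}m_0\,\mathrm{d}u$; these one-sided correlation integrals need not agree, so the pairwise difference need not vanish. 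What does work is either (a) normalizing $V$ by a constant so that $\int V\,\mathrm{d}m_0=0$ (a constant shift does not change the coboundary), after which each endpoint term tends to zero individually --- this is in effect how the paper argues, using decay of correlations of the mean-zero endpoint factor against the rest --- or (b) summing the boundary terms produced by both factors $\tilde G_2$ and $\tilde G_3$ in the product rule, where the mismatched one-sided integrals cancel across the two brackets. Either way, one still has to supply the uniform-in-$t$ control of these boundary correlations that justifies exchanging $\lim_{T\to\infty}$ with $\partial_t$; that estimate is the substance of the paper's proof and is absent from your proposal.
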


\begin{proof}
We know from the proof of Proposition \ref{prop formula_1st_dev} that

\begin{align*}
&\partial_{w} \Bigg( {\langle \partial_{u}\rho(0,0,w), \partial_{v}\rho(0,0,w) \rangle}_{\Sub P}\Bigg) \Bigg|_{w=0}\\
=&\partial_{w}\partial_{v}\partial_{u} \mathbf{P}(F(0))-\int_{\Sub UX}\partial{uvw}F(0)\mathrm{d}m_{0}-Cov(\partial_{uv}F(0), \partial_{w}F(0), m_{0})
\end{align*}
 
By Remark \ref{eq: Second dev of P}, in general, if we take two mean zero H{\"o}lder functions $h_1$ and $h_2$ with respect to $m_0$, then

\begin{align*}
&Cov(h_1, h_2, m_{0})\\
=&\partial_{u}\partial_{v}\mathbf{P}(F(0)+uh_1+v h_2)\bigg|_{u,v=0}
\end{align*}

As the value of the pressure function $\mathbf{P}$ only depends on the Liv\v{s}ic class, we see changing $h_1$ and $h_2$ in its cohomology class does not change $Cov(h_1, h_2, m_{0})$. In particular, this holds for  $Cov(\partial_{uv}F(0),\partial_{w}F(0), m_{0})$.

Similarly, from equation \ref{eq:third dev of PII}, it is clear that

\begin{align*}
&\partial_{w}\partial_{v}\partial_{u} \mathbf{P}(F(0))-\int_{\Sub UX}\partial{uvw}F(0)\mathrm{d}m_{0}\\
=&\partial_{w}\partial_{v}\partial_{u} \mathbf{P}(F(0) + u\partial_{u}F(0) +v \partial_{v}F(0)+w\partial_{w} F(0)+uv\partial_{uv} F(0)+uw\partial_{uw} F(0)+vw\partial_{vw}F(0))\bigg|_{u,v,w=0}
\end{align*}

Again the above pressure function $\mathbf{P}$ does not change value if we change each component function. So together we know the first derivatives of the pressure metric only depends on the Liv\v{s}ic class of each component function: $\partial_{u}F(0)$, $\partial_{v}F(0)$, $\partial_{w}F(0)$, $\partial_{wv}F(0)$, and $\partial_{wu}F(0)$. 
\end{proof}

\subsection{A gauge theoretical formula}
\label{subsection gauge}

In \cite{Variation_along_FuchsianLocus}, Labourie and Wentworth show the variation of reparametrization functions can be expressed by a gauge-theoretical formula. This formula will be crucial for our computation in the next section. We include the formula and its proof here for completeness. We add some assumptions which are natural for our case of Hitchin components $\mathcal{H}_{n}(S)$.

We consider $(E,H)$ a rank $n$ Hermitian bundle over the surface $S$ equipped with a Riemannian metric $g$. We let $\gamma$ be a closed curve on $S$ with arc length parametrization $\gamma(t)$. 
Suppose $D_{A^{0}}$ is a flat connection on $E$ so that the holonomy of it has distinct eigenvalues along $\gamma$. Suppose $\lambda_{\gamma}$ is one eigenvalue with a corresponding eigenline $\mathcal{L}_{\gamma}$ and $\mathcal{H}_{\gamma}$ is the complementary hyperplane stablized by the holonomy. We denote by $\mathcal{L}_{\gamma}(t)$ the line generated by  the parallel transports of $\mathcal{L}_{\gamma}$ along $\gamma$ at time t, by $\mathcal{H}_{\gamma}(t)$ the hyperplane generated by complementary eigenvectors and by $\pi(t)$ the projection on $\mathcal{L}_{\gamma}(t)$ along $\mathcal{H}_{\gamma}(t)$. Then we have

\begin{prop} (Labourie-Wentworth, \cite{Variation_along_FuchsianLocus} )
 \label{thm:Tr}

For $D_{\Sub A^{s}}$  a smooth one parameter family of flat connections, we have a unique smooth function $\lambda_{\gamma}(s)$ so that for $s$ small enough, $\lambda_{\gamma}(s)$ is the eigenvalue of the holonomy of $D_{A^{s}}$ with $\lambda_{\gamma}(0)=\lambda_{\gamma}$. Moreover,

\begin{equation}
 \frac{d\log \lambda_{\gamma}(s)}{ds}\bigg|_{s=0}= -\int_{0}^{l_{\gamma}} Tr(\partial_{s}{D}_{\Sub A^0}(t) \cdot \pi(t))dt, 
 \label{eq:tr}
\end{equation}
 Here the notation is $\partial_{s}{D}_{A^0}(t):=\partial_s  D_{A^{s}}(\dot{\gamma}(t)\frac{\partial}{\partial t})|_{s=0}$, where $\partial_{s}{D}_{A^0}$ is a $End(E)$-valued $1$-form and $\dot{\gamma}(t)\frac{\partial}{\partial t}$ is the tangent vector field along $\gamma(t)$. 
\end{prop}

\begin{proof}
We prove equation \ref{eq:tr} here.

Let $\{g_{s}\}$ be a family of gauge transformation acting on $\{D_{A^{s}}\}$ with $g_{0}=id$. Denote the new connection 1-forms ${\tilde{A}}^{s}:=g_s^*A^{s}$.
We first prove:
\begin{equation*}
   \int_{0}^{l_{\gamma}}Tr(\partial_{s}{D}_{\Sub A^{0}}(t) \cdot \pi(t))dt=\int_{0}^{l_{\gamma}}Tr(\partial_{s}{D}_{\tilde{A^{0}}}(t) \cdot \pi(t))dt,  
\end{equation*}

Note here $\partial_{s}{D}_{A^0}(t)$ is a $0$-form since we have contracted the $1$-form $\partial_{s}{D}_{A^s}|_{s=0}$ with the tangential vector field. Therefore $Tr(\partial_{s}{D}_{\Sub A^{0}}(t)\cdot \pi(t))$ is a function in $t$ or in $\dot{\gamma}(t)$.

Taking a derivative of ${\tilde{A}}^{s}:=g_s^*A^{s}$ at $s=0$ yields

\begin{equation*}
 \partial_{s}{D}_{\tilde{A^{0}}}=\partial_{s}{D}_{A^0}+D_{A^{0}} \dot{g}
\end{equation*}
    
where $\dot{g}$ denoting $\frac{\partial g_s}{\partial_s}\bigg|_{s=0}$ is a section of $End(E)$ and the connection $D_{A^{0}}$ acts on $\dot{g}$ as $D_{A^{0}} \dot{g}=d\dot{g}+[A^{0},\dot{g}]$.

We want to show
\begin{equation*}
    \int_{0}^{l_{\gamma}}Tr((D_{A^0}\dot{g})\pi)dt=0
\end{equation*}

To simplify the notation, we will always omit the variable $t$ when writing our formulas. For example, here $(D_{A^0}\dot{g})\pi:=(D_{A^0}(t)\dot{g}(t))\pi(t)$.

We start from proving that $\pi$ is a $D_{A^{0}}$-parallel section in $End(E)$. Given any section $v\in \Gamma(E)$, we can write it as a linear combination of eigenvectors of holonomy. Set
$
v(t)=\sum_{i=1}^{n}a_{i}(t)e_{i}(t) 
$
where $e_{i}(t)$ satisfies parallel transport equation $D_{A_{0}}e_{i}=0$ with boundary conditions
   $  e_{i}(l_{\gamma})=\lambda^{i}_{\gamma}e_{i}(0), 
    ||e_{i}(0)||=1 
$. In particular, we assume $\lambda^{1}_{\gamma}=\lambda_{\gamma}$ and $\mathcal{L}_{\gamma}(t)$ is generated by $e_1(t)$.
 Then  
\begin{align*}
    (D_{A^{0}} \pi)(v)&=[D_{A^{0}},\pi]v\\
    &=D_{A^{0}}(\pi v)-\pi(D_{A^{0}}v)\\
    &=D_{A^{0}}(a_{1}(t)e_{1}(t))-\pi(\sum_{i=1}^{n}(da_{i}(t)e_{i}(t)+a_{i}(t)D_{A^{0}}e_{i}(t)))\\
    &=da_{1}(t)e_1(t)-da_{1}(t)e_1(t)\\
    &=0
\end{align*}
Thus 
\begin{align*}
    \int_{0}^{l_{\gamma}}\frac{d}{d t}(Tr(\dot{g}\cdot \pi))dt&=
    \int_{0}^{l_{\gamma}}Tr(\frac{\partial}{\partial t}(\dot{g}\pi))dt\\
    &=  \int_{0}^{l_{\gamma}}Tr(D_{A^0}(\dot{g}\pi))dt  \hspace{.5in}\text{since $Tr([A^{0},\dot{g}\pi])=0$}\\
    &= \int_{0}^{l_{\gamma}}Tr([D_{A^0}, \dot{g}\pi])dt  \hspace{.5in}\text{notice $\dot{g}\pi\in \Gamma(End(E))$}\\
    &= \int_{0}^{l_{\gamma}}Tr([D_{A^0},\dot{g}]\pi+\dot{g}[D_{A^{0}},\pi])dt \\
    &= \int_{0}^{l_{\gamma}}Tr((D_{A^0}\dot{g})\pi+\dot{g}(D_{A^{0}}\pi))dt \hspace{.5in}\text{action of a connection on $\Gamma(End(E))$}\\
    &=\int_{0}^{l_{\gamma}}Tr((D_{A^0}\dot{g})\pi)dt \hspace{.5in}\text{Since $D_{\Sub A^{0}}\pi =0$}\\
\end{align*}
 So
 \begin{align*}
\int_{0}^{l_{\gamma}}Tr((D_{A^0}\dot{g})\pi)dt
&=\int_{0}^{l_{\gamma}}\frac{d}{dt}(Tr(\dot{g}\cdot\pi))dt\\
&=Tr(\dot{g}(l_{\gamma})\pi(l_{\gamma}))-Tr(\dot{g}(0)\pi(0))=0
\end{align*}

As $s$ varies, the eigenline $\mathcal{L}_{\gamma}^{s}(t)$ corresponding to $\lambda_{\gamma}(s)$ varies according to s and so is the complementary hyperplane $\mathcal{H}_{\gamma}^{s}(t)$ . By picking suitable gauges $\{g_{s}\}$, we can assume, for ${\tilde{A}}^{s}:=g_s^*A^{s}$, the eigenlines $\tilde{\mathcal{L}}_{\gamma}^{s}(t)$ and complementary hyperplanes $\tilde{\mathcal{H}}_{\gamma}^{s}(t)$ satisfy  $\tilde{\mathcal{L}}_{\gamma}^{s}(t)=\mathcal{L}_{\gamma}(t)$ and $\tilde{\mathcal{H}}_{\gamma}^{s}(t)=\mathcal{H}_{\gamma}(t)$.

Without lose of generality, we assume $D_{A^{s}}$ is itself the connection after suitable gauge and the set $\{e_{i}^{s}\}$ are eigenvectors for $A^{s}$ with $e_{1}^{s}$ corresponding to $\mathcal{L}_{\gamma}^{s}$ . Thus we have the following equations.

\begin{equation*}
    \begin{cases}
      D_{A^{s}}e_{i}^{s}(t)=0\\
      e_{i}^{s}(l_{\gamma})=\lambda_{\gamma}^{i}(s)e_{i}^{s}(0)
    \end{cases}
  \end{equation*}
  
In particular, we can assume
\begin{equation*}
    \begin{cases}
      D_{A^{s}}e_{1}^{s}(t)=0, \hspace{.25in} e_{1}^{s}(l_{\gamma})=\lambda_{\gamma}^{1}(s)e_{1}^{s}(0)\\
      e_{1}^{s}(t)=c_{s}(t)e_{1}^{0}(t), \hspace{.25in}  e_{1}^{s}(0)=e_{1}^{0}(0)
    \end{cases}
  \end{equation*}
So \begin{equation*}
    e_{1}^{s}(l_{\gamma})=c_{s}(l_{\gamma})e_{1}^{0}(l_{\gamma})=c_{s}(l_{\gamma})\lambda_{\gamma}^{1}(0)e_{1}^{0}(0)=\lambda_{\gamma}^{1}(s)e_{1}^{s}(0)=\lambda_{\gamma}^{1}(s)e_{1}^{0}(0)
\end{equation*}
and thus
$c_{s}(l_{\gamma})= \frac{\lambda_{\gamma}^{1}(s)}{\lambda_{\gamma}^{1}(0)}$
and $c_{0}(l_{\gamma})=1$.
Notice 
\begin{equation*}
\frac{H(e_{1}^{0}(t), D_{A^{s}}e_{1}^{0}(t))}{H(e_{1}^{0}(t), e_{1}^{0}(t))}
=\frac{H(e_{1}^{0}(t), D_{A^{s}}(\frac{e_{1}^{s}(t)}{c_{s}(t)}))}{H(e_{1}^{0}(t), \frac{e_{1}^{s}(t)}{c_{s}(t)})}
=\frac{\partial_{t}(\frac{1}{c_{s}(t)})}{\frac{1}{c_{s}(t)}}
=-\frac{\partial(\log c_{s}(t))}{\partial t}
\end{equation*}
So 
\begin{equation*}
\int_{0}^{l_{\gamma}} Tr(\partial_{s}{D}_{A^{0}}\pi)dt=\int_{0}^{l_{\gamma}}\frac{H(e_{1}^{0}(t), \partial_{s}{D}_{A^{0}}e_{1}^{0}(t))}{H(e_{1}^{0}(t), e_{1}^{0}(t))}dt = -\int_{0}^{l_{\gamma}}\frac{\partial}{\partial s}\bigg|_{s=0} \left(\frac{\partial(\log c_{s}(t))}{\partial t}\right)dt=-\frac{d\log \lambda_{\gamma}^{1}(s)}{ds}\bigg|_{s=0}
\end{equation*}

\end{proof}

\section{Computation of variation of reparametrization functions for a model case}

In this and the next sections, we consider the model case $\partial_{\beta}g_{\alpha \alpha}(\sigma)$. Note the treatment of this case will involve all the steps needed for the other cases. This justifies the expositional decision that we consider it here first and in isolation.

In this case, we are given parameters $(u,v)\in \{(-1,1)\}^{2}$ with (conjugacies classes of) representations \{$\rho(u,v)\}$ in $\mathcal{H}_{3}(S)$ corresponding to $\{(0,uq_{\alpha}+vq_{\beta})\}\subset H^{0}(X,K^2)\bigoplus H^{0}(X,K^3)$ by Hitchin parametrization (see Remark \ref{rem HitchinPara}). In particular, at the Fuchsian point $\rho(0)=X$, we identify $\partial_{u}\rho(0,0)$ with $\varphi(q_{\alpha})$ and $\partial_{v}\rho(0,0)$ with $\varphi(q_{\beta})$, where $\varphi(q_{\alpha})$ and  $\varphi(q_{\beta})$ are the Hitchin deformation given in Definition \ref{defn HitchinDeformation}. We suppose $\{f_{\rho(u,v)}\}$ is an associated two-parameter family of reparametrization functions. 
By Proposition \ref{prop formula_1st_dev}, the formula for $\partial_{\beta}g_{\alpha \alpha}(\sigma)$ is
\begin{align*}
    \partial_{\beta}g_{\alpha \alpha}(\sigma)&=\partial_{v} \Bigg( {\langle \partial_{u}\rho(0,v), \partial_{u}\rho(0,v) \rangle}_{\Sub P}\Bigg) \Bigg|_{v=0}\\
    &=\lim\limits_{r\to \infty}\frac{1}{r}\left[\int_{\Sub UX}\left(\int_{0}^{r} \partial_{u}f_{\rho(0)}^{N}\mathrm{d}t \right)^2\int_{0}^{r}\partial_{v}f_{\rho(0)}^{N}\mathrm{d}t\mathrm{d}m_{0}+
    2\int_{\Sub UX} \int_{0}^{r}\partial_{u}f_{\rho(0)}^{N} \mathrm{d}t\int_{0}^{r} \partial_{uv}f_{\rho(0)}^{N}\mathrm{d}t\mathrm{d}m_{0}\right].
\end{align*}

Because $\partial_{u}h(\rho(u,0))=\partial_{v}h(\rho(0,v))=0$ on Fuchsian locus $\mathcal{T}(S)$. By Theorem \ref{thm Entropy}, the variations of reparametrization functions that need to be computed are the following:

\begin{enumerate}[label=(\roman*)]
    \item $\partial_{u}f_{\rho(0)}^{N}=-\partial_{u}f_{\rho(0)}$;
    \item $\partial_{v}f_{\rho(0)}^{N}=-\partial_{v}f_{\rho(0)}$;
    \item $\partial_{uv}f_{\rho(0)}^{N}=-\partial_{uv}h(\rho(0))-\partial_{uv}f_{\rho(0)}$.
\end{enumerate}

Before proceeding to compute (i), (ii) and (iii), we explain our general strategies to compute variations of reparametrization functions. Our computation will be based on Proposition \ref{thm:Tr} and tools from Higgs bundles theory. Let us first set up our Higgs bundles. 

In the component $\mathcal{H}_{3}(S)$ we are considering, the rank-$3$ holomorphic vector bundle is fixed as $E=K \bigoplus\mathcal{O}\bigoplus K^{-1}$. Associated to a representation $\rho$ in $\mathcal{H}_{3}(S)$ is a Hermitian metric $H$ on $E$ that solves Hitchin's equation (\ref{eq Hitchin'sEquation}) and a flat connection $D_{H}=\nabla_{{\bar{\partial}}_{E},H} + \Phi+ {\Phi}^{*H}$ where $\nabla_{{\bar{\partial}}_{E},H}$ is the chern connection (see Theorem \ref{thmChernConnection}). 

Given a parameter $s\in(-1,1)$, suppose we are considering a family of conjugacy classes of representations $\{\rho_{s} \}$ in $\mathcal{H}_{3}(S)$. On the one hand, there is a family of flat connections $\{D_{\Sub H(s)}\}$ given by equation (\ref{eq flatD}) assiociated to $\{\rho_{s} \}$. On the other hand, there is a family of reparametrization functions $\{f_{\rho_{s}}\}_{s\in(-1,1)}$  assiociated to $\{\rho_{s} \}$ from the thermodynamical point of view. Recall our notation (\ref{eq notation1}), (\ref{eq notation2}). For a family of flat connection $\{D_{H(s)}\}$, we denote
 \begin{align*}
     \partial_{s}D_{H(0)}=\frac{\partial}{\partial_s}\bigg|_{s=0}D_{H(s)}.
 \end{align*}
 and for a family of reparametrization functions $\{f_{\rho_{s}}\}$, we denote
 \begin{align*}
     \partial_{s}f_{\rho_{0}}=\frac{\partial}{\partial_s}\bigg|_{s=0}f_{\rho_{s}}.
 \end{align*}

By Proposition \ref{thm:Tr} and Liv\v{s}ic's Theorem, we know the H{\"o}lder function $-\Tr(\partial_{s}D_{H(0)}\pi)(x)$ and $\partial_{s}f_{\rho_{0}}(x)$ are in the same Liv\v{s}ic cohomology class. Recall our notation in Definition \ref{defn Livsic}, 
\begin{align}
 \partial_{s}f_{\rho_{0}}(x)\sim -\Tr(\partial_{s}D_{H(0)}\pi)(x).
    \label{eq firstVar}
\end{align}

Here we define $\Tr(\partial_{s}D_{H(0)}\pi)(\Phi_t(x)):=\Tr(\partial_{s}D_{H(0)}(t)\pi(t))$ following Proposition  \ref{thm:Tr}. The curve $\gamma(t)$ in Proposition  \ref{thm:Tr} from now on will be a unit speed geodesic starting from $x$. Therefore, we have $x=\dot{\gamma}(0)\frac{\partial}{\partial t}$ and $\Phi_t(x)=\dot{\gamma}(t)\frac{\partial}{\partial t}$.

Proposition \ref{prop: firstdev} allows us to consider first and second variations of reparametrization functions in terms of Liv\v{s}ic cohomology class instead of individual functions. From now on, for first and second variations of reparametrization functions, we will no longer distinguish cohomologous elements.

Because $X$ is a hyperbolic surface and the geodesic flow is Anosov. The vectors tangent to periodic geodesics are dense in $TX$. To recover the information of $\partial_{s}f_{\rho_{0}}$, it suffices to compute $\Tr(\partial_{s}D_{H(0)}\pi)$ on each closed geodesic. Similarly, to compute the second variations of reparametrization functions, it suffices to compute them on each closed geodesic.

Now we start to give a complete computation of the first and second variations of reparametrization functions for the case $\partial_{\beta}g_{\alpha \alpha}(\sigma)$. The steps of our argument are divided into different subsections:
\begin{enumerate}
    \item
          We set up coordinates adapted to the closed geodesics we study and conclude special properties of affine metrics with respect to chosen coordinates on these geodesics.
    \item
         We first construct a homogeneous ODE arising from the parallel transport equation for the base flat connection at $\rho(0)=\sigma \in \mathcal{T}(S)$. This leads formulas fro  first variation of reparametrization functions proved in \cite{Variation_along_FuchsianLocus}.
    \item 
         We consider a family of parallel transport equations associated to a family of flat connections by solving Hitchin's equations based at $\rho(0)=\sigma \in \mathcal{T}(S)$. The variation of this family of parallel transport equations at $\sigma$ gives rise to some nonhomogeneous ODEs and yields solutions to second variations of reparametrization functions on the closed geodesics we consider. 
    \item
        We extend our computation from the closed geodesics to the surface.
\end{enumerate}

\subsection{Setting up coordinates on surfaces}

In this subsection, we set up coordinates adapted to the closed geodesics we study. We will obtain some important properties for the affine metric after setting up the coordinates. They can be used in computation of first and second variations of reparametrization functions in the following sections. The first variations have been computed in \cite{Variation_along_FuchsianLocus} by advanced Lie theoretic methods.

The convention we use for a Hermitian metric $H$ on $E$ is: it is $\mathbb{C}$-linear in the second variable and conjugate-linear in the first variable.
Suppose on a coordinate chart $(U,z)$, the bundle $E=K \bigoplus\mathcal{O}\bigoplus K^{-1}$ is trivialized as $E|_{U} \cong U \times \mathbb{C}^3$. Locally we have a holomorphic frame $(s_1,s_2,s_3)$ on $U$. With respect to the local holomorphic frame and our convention of the Hermitian metric, the $(1,0)$-part of the Chern connection $\nabla_{{\bar{\partial}}_{E},H}$ is $H^{-1}\partial H$. The Hermitian conjugate is $\Phi^{*H}=H^{-1}\bar{\Phi}^{t}H$. The connection one form $A$ of the flat connection $D_{H}$ is thus
\begin{align*}
    A= H^{-1}\partial H + \Phi+ \Phi^{*H}.
\end{align*}
Associated to representations $\{\rho(u,v)\}$ are a two-parameter family of flat connections $\{D_{H(u,v)}\}$. We will study their connection one-forms in holomorphic frames with respect to some carefully chosen coordinates on the surface $X$.
 
 When the Higgs field is 
 \[
  \Phi(u,v)=
\begin{bmatrix}
   0 & 0 & uq_{\alpha}+vq_{\beta} \\
   1 & 0 & 0 \\
   0 & 1 & 0 
  \end{bmatrix},
\]
 Baraglia proves the Hermitian metric $H(u,v)$ that solves Hitchin's equation (\ref{eq Hitchin'sEquation}) is diagonal (see \cite{Baraglia_thesis}). Following Baraglia's notation in $\cite{Baraglia_thesis}$, we denote the Hermitian metric as $H(u,v)=e^{2\Omega(u,v)}$. We have
 
 \[
  H(u,v)=
  \begin{bmatrix}
   h(u,v)^{-1} & 0 & 0 \\
   0 & 1 & 0 \\
   0 & 0 & h(u,v) 
  \end{bmatrix},
\]
where $h=h(u,v)$ is a section of $\bar{K}\otimes K$ and 

\[
  \Omega(u,v)=
  \begin{bmatrix}
   -\omega(u,v) & 0 & 0 \\
   0 & 0 & 0 \\
   0 & 0 & \omega(u,v) \\
  \end{bmatrix}
\]

with $\omega(u,v)=\frac{1}{2}\log h(u,v)$.

 We denote the corresponding flat connection by $D_{H(u,v)}=\nabla_{{\bar{\partial}}_{E},H(u,v)}+\Phi(u,v) + \Phi(u,v)^{*H(u,v)}$. The connection one-form $A(u,v)\in \Gamma(T^{*}X \otimes EndE)$ is thus
 
\begin{equation}
  A(u,v)=
  \left[ {\begin{array}{ccc}
   -2\partial \omega(u,v) & h(u,v) & uq_{\alpha}+vq_{\beta} \\
   1 & 0 & h(u,v) \\
   h^{-2}(u\bar{q}_{\alpha}+v\bar{q}_{\beta}) & 1 & 2\partial \omega(u,v) \\
  \end{array} } \right]
\label{eq:connection}
\end{equation}

In fact $2h(u,v)$ is an affine metric for some hyperbolic affine sphere in the conformal class of $\sigma$ (see \cite{Baraglia_thesis}).

We denote $\phi=\log \frac{2h}{\sigma}$. Note $\phi=\phi(u,v,z)$ is actually a globally well-defined function on $X$ that does not depend on coordinate systems. The Hitchin's equation  (\ref{eq Hitchin'sEquation}), also the integrability condition for affine sphere (see \cite{Loftin_SurveyOnAffine}) can be written as a equation of $\phi$

\begin{equation}
 \Delta_{\sigma}\phi + 16\norm{uq_{\alpha}+vq_{\beta}}^2_{\sigma}e^{-2\phi} -2e^{\phi}+2=0
\label{eq:affinePDE1Global}
 \end{equation}
where $\norm{\cdot}_{\sigma}$ is the induced norm on cubic differentials. It satisfies $\norm{q}_{\sigma}^2=\frac{|q|^2}{{\sigma^3}}$. The notation we adopt for Laplacian is $\Delta_{\sigma}=\frac{  4\partial_{\bar{z}}\partial_{z}}{\sigma}$.

For simplicity of notation, we sometimes omit variables and write $\phi$ as $\phi(u,v)$ or $\phi(z)$ depending on our needs.

We have the following observation from equation (\ref{eq:affinePDE1Global}):
\begin{itemize}
    \item 
    When $(u,v)=(0,0)$, the only solution of the equation (\ref{eq:affinePDE1Global}) is $\phi=\phi(0,0)=0$. The affine metric $2h=\sigma$ is indeed the hyperbolic metric of constant curvature $-1$.
    \item
    Taking $u$-derivative or $v$-derivative of equation (\ref{eq:affinePDE1Global}) at $(u,v)=(0,0)$ yields
    \begin{align}
        \Delta_{\sigma}\phi_{u}-2e^{\phi}\phi_{u}&=0, \label{eq:affinePDE1GlobalDev_u}\\
        \Delta_{\sigma}\phi_{v}-2e^{\phi}\phi_{v}&=0.\label{eq:affinePDE1GlobalDev_v}
    \end{align}
    Therefore, at $(u,v)=(0,0)$, the fact that $\phi=\phi(0,0)=0$ implies $\phi_{u}=\phi_{u}(0,0)=0$ and $\phi_{v}=\phi_{v}(0,0)=0$. 
\end{itemize}
We now choose a special coordinate system that facilitates the study of holonomy problems on a closed geodesic.  Let $z$ be a local holomorphic coordinate on $X$. Suppose the affine metric in this coordinate is $e^{\psi(u,v,z)}|dz|^2$ and the hyperbolic metric in this coordinate is $\sigma=e^{\delta(z)}|dz|^{2}$. Suppose $\gamma(t)$ is any closed geodesic with respect to the hyperbolic metric $\sigma$ on the Riemann surface $X$. Then written in the $z$-coordinate, it is

\begin{equation*}
    \gamma(t)=z(t)=\Re\gamma(t)+ i \Im\gamma(t)
\end{equation*}
and
\begin{equation*}
    \dot{\gamma}(t)\frac{\partial}{\partial t}= (\Re\dot{\gamma}(t)+ i \Im\dot{\gamma}(t))\frac{\partial}{\partial z}+ (\Re\dot{\gamma}(t)- i \Im\dot{\gamma}(t))\frac{\partial}{\partial \bar{z}}.
\end{equation*}

In particular, we can model $\gamma(t)$ on a strip $S=\{x+iy| \text{ } |y|< \frac{\pi}{2} \}$ with the hyperbolic metric $ds=\frac{|dz|}{\cos y}$ and $\gamma(t)=(t,0)$. This coordinate around $\gamma$ is called a Fermi coordinate and satisfies $\Re\dot{\gamma}(t)=1$, $\Im\dot{\gamma}(t)=0$. Thus it's easy to check on $\gamma$ one has $\gamma^{*}ds=|dz|$ and $\delta(z)=0$.

The variable $t$ is then the arc length parameter for our choice of coordinates. Therefore if one denotes $\dot{\gamma}(0)\frac{\partial}{\partial t}=x \in UX$, then $\dot{\gamma}(t)\frac{\partial}{\partial t}= \Phi_{t}(x)$. We will always assume $\dot{\gamma}(0)\frac{\partial}{\partial t}=x$ in our discussion.

With the Fermi coordinate understood, from the fact that the only solution of equation (\ref{eq:affinePDE1Global}) is $\phi=0$, we conclude
\begin{align*}
     \psi(z)=\phi(z)+\delta(z)=\delta(z)=0
\end{align*}
From equation \ref{eq:affinePDE1GlobalDev_u} together with equation \ref{eq:affinePDE1GlobalDev_v}  and their solutions $\phi_u=\phi_v=0$, we obtain
\begin{align*}
   &\psi_{u}(z)=\phi_{u}(z)=0, \\
  &\psi_{v}(z)=\phi_{v}(z)=0. 
\end{align*}

Also $\psi(z)=0$ implies
\begin{align*}
\psi_{z}(z)=\delta_{z}(z)=0
\end{align*}

All these information about the affine metric $\psi$ with respect to the Fermi coordinate will be important in computation in later sections.

\subsection{Homogeneous ODEs for holonomy and first variations of reparametrization functions}

In this subsection, we show formula of first variations of reparametrization functions from \cite{Variation_along_FuchsianLocus}. We also construct homogeneous ODEs arising from the parallel transport equations for the base flat connection at $\sigma \in \mathcal{T}(S)$. These serve as the first step for the computation of second variations in later subsections.

We first explain our notation. For $q_i=q_i(z)dz^2$ any quadratic differential and $q_{\alpha}=q_{\alpha}(z)dz^3$ any cubic differential, we also use $q_{i}$ and $q_{\alpha}$ to denote H{\"o}lder functions on unit tangent bundle $UX$ as follows. We let $q_i:UX \to \mathbb{C}$ and $q_{\alpha}: UX \to \mathbb{C}$ be 
\begin{equation}
    q_i(x):=q_i(z)dz^2(x,x)=q_i(z)(dz(x))^2
    \label{eq quadratic}
\end{equation}
\begin{equation}
    q_{\alpha}(x):=q_{\alpha}(z)dz^3(x,x,x)=q_{\alpha}(z)(dz(x))^3\\ \label{eq cubic}
\end{equation}

First variations of reparametrization functions for our cases have been computed in \cite{Variation_along_FuchsianLocus} as follows.
\begin{prop}\cite[Theorem 4.0.2]{Variation_along_FuchsianLocus} \hfill

\label{prop case1rep1}
The first variations of reparametrization functions $\partial_{u}f_{\rho(0)}:UX \to \mathbb{R}$ and $\partial_{v}f_{\rho(0)}:UX \to \mathbb{R}$ for our model case $\partial_{\beta}g_{\alpha \alpha}(\sigma)$ satisfy 
 \begin{align*}
        &-\partial_{u}f_{\rho(0)}(x) \sim \Re q_{\alpha}(x),\\ &-\partial_{v}f_{\rho(0)}(x) \sim  \Re q_{\beta}(x).
\end{align*}
where the notation $\sim$ denote Liv\v{s}ic equivalance (Definition \ref{defn Livsic}).
\end{prop}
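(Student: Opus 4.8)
The plan is to reduce both cohomology relations to the gauge-theoretic formula and then to an explicit constant-coefficient linear ODE along each closed geodesic. By the relation \eqref{eq firstVar} coming from Proposition \ref{thm:Tr}, we have $\partial_u f_{\rho(0)} \sim -\Tr(\partial_u D_{H(0)}\,\pi)$ and $\partial_v f_{\rho(0)} \sim -\Tr(\partial_v D_{H(0)}\,\pi)$; combined with Proposition \ref{prop: firstdev}, which lets us work modulo Liv\v{s}ic coboundaries, it suffices to evaluate $\Tr(\partial_u D_{H(0)}\pi)$ and $\Tr(\partial_v D_{H(0)}\pi)$ along an arbitrary closed geodesic $\gamma$ and identify them, as functions of $\dot\gamma(t)$, with $\Re q_\alpha$ and $\Re q_\beta$.

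First I would record the base connection at the Fuchsian point. In the Fermi coordinate of the previous subsection the properties $\psi \equiv \psi_z \equiv 0$ on $\gamma$ give $h \equiv \tfrac12$ and $\partial\omega \equiv 0$ along $\gamma$ at $(u,v)=(0,0)$, so evaluating the one-form \eqref{eq:connection} on $\dot\gamma$ produces a \emph{constant} matrix $M := A(0,0)(\dot\gamma)$. The parallel transport equation $\dot v + M v = 0$ is then a homogeneous system with constant coefficients, which I would solve by diagonalizing $M$: its characteristic polynomial is $-\mu(\mu^2-1)$, so the exponents are $0,\pm 1$, matching the holonomy eigenvalue data $e^{l_\gamma},1,e^{-l_\gamma}$ of a Fuchsian representation. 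Because $M$ is constant, the eigenline $\mathcal{L}_\gamma(t)$ and the complementary hyperplane $\mathcal{H}_\gamma(t)$ do not move, so $\pi(t)\equiv\pi$ is a single constant rank-one projection onto the eigenline of the largest holonomy eigenvalue $e^{l_\gamma}$ along the other two eigendirections; I would compute $\pi$ explicitly from the eigenvectors of $M$ and the dual left-eigenvector basis.

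Next I would linearize the connection. Differentiating \eqref{eq:connection} in $u$ and $v$ at $(0,0)$, the diagonal and the two $h$-entries depend on $(u,v)$ only through $\omega$ and $h$, hence through $\phi_u,\phi_v$; but the maximum principle applied to the linearized affine-sphere equation $\Delta_\sigma\phi_u - 2\phi_u = 0$ (and likewise for $\phi_v$) forces $\phi_u\equiv\phi_v\equiv 0$, so every such entry drops out. Only the two off-diagonal corner terms survive: along $\dot\gamma$ the variation $\partial_u D_{H(0)}$ has $(1,3)$-entry $q_\alpha$ and $(3,1)$-entry $h^{-2}\bar q_\alpha = 4\bar q_\alpha$ (and the $v$-variation has $q_\beta$, $4\bar q_\beta$), where I use \eqref{eq cubic} together with $dz(\dot\gamma)=1$ to identify the coordinate coefficient with the $UX$-function $q_\alpha(x)$. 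Pairing against $\pi$ and taking the trace then collapses to $\tfrac12 q_\alpha + \tfrac12\bar q_\alpha = \Re q_\alpha$ along $\gamma$ (and $\Re q_\beta$ for $v$); with \eqref{eq firstVar} this gives $-\partial_u f_{\rho(0)}\sim\Re q_\alpha$ and $-\partial_v f_{\rho(0)}\sim\Re q_\beta$ on every periodic orbit.

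The last step is to upgrade these pointwise-on-$\gamma$ identities to a genuine Liv\v{s}ic cohomology on all of $UX$. Here the DPG property is essential: the resulting equality of integrals over \emph{all} closed geodesics, together with the fact that $\Re q_\alpha$ and $\Tr(\partial_u D_{H(0)}\pi)$ extend to H\"older functions off the periodic orbits, lets me invoke Liv\v{s}ic's theorem to promote the relation to a cohomology on $UX$. I expect this extension --- verifying that the natural extensions of the trace functions are H\"older and flow-equivariant away from the closed geodesics, so that the closed-orbit computation actually determines the $UX$-function up to coboundary --- to be the main point requiring care, whereas the constant-coefficient ODE and the trace bookkeeping are routine once the Fermi coordinates are fixed.
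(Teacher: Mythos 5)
Your proposal is correct and follows essentially the same route as the paper: reduce via the Labourie--Wentworth trace formula \eqref{eq firstVar} to computing $\Tr(\partial_{u}D_{H(0)}\pi)$ along closed geodesics, observe that in the Fermi-coordinate holomorphic frame the parallel transport matrix is constant so that $\pi$ is an explicit constant projection, use $\phi_{u}=\phi_{v}=0$ to see that only the corner entries of $\partial_{u}D_{H(0)}$ survive, and conclude $\Tr(\partial_{u}D_{H(0)}\pi)=\tfrac12 q_{\alpha}+\tfrac12\bar q_{\alpha}=\Re q_{\alpha}$, then extend from periodic orbits to $UX$ by the DPG property and H\"older continuity. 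The paper handles the last step by noting that two H\"older functions agreeing on the dense set of periodic vectors agree everywhere, which is marginally more direct than your matching-of-periods Liv\v{s}ic argument, but the two are equivalent here.
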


We refer the reader to \cite{Variation_along_FuchsianLocus} for a proof of Proposition \ref{prop case1rep1}. It is an implement of the formula (\ref{eq firstVar}).

We then study parallel transport equations for the connection $D_{H(0)}$ arising from holonomy problems based at $\rho(0)\in\mathcal{T}(S)$. With the coordinates introduced in last section, they become homogenous ODE systems that are easy to solve. We list some important computations involved here. These will be important for second variation of reparametrzation functions. 

The parallel transport equation for the connection $D_{H(0)}$ on the closed geodesic $\gamma$ is as follows,
\begin{equation}
    D_{H(0),\dot{\gamma}}V=0,
    \label{eq:parallel}
\end{equation}
where $V\in \Gamma(E)$ is a parallel section with boundary conditions:
\begin{equation}
    V(l_{\gamma})= \lambda_{i}(\gamma,\rho(0)) V(0). \label{eq:Boundary}
\end{equation}
Here $\lambda_{i}(\gamma,\rho(0))$ is one of the eigenvalues for holonomy of $D_{H(0)}$ on $\gamma$ for $i=1,2,3$.
We want to write the equation (\ref{eq:parallel}) on a specific holomorphic frame which can be constructed as follows.

We cover $\gamma$ by $m$ charts $\{(U_i,z_i)\}_{i=1}^{m}$ so that $z_{i}: U_{i}\to z_{i}(U_{i})\subset\mathbb{C}$ is a diffeomorphism for $1\leq i \leq m$. We assume our holomorphic bundle $E$ is trivialized on each $U_{i}$. Furthermore we assume the transition map on every overlap is either the identity or a hyperbolic translation viewed on the universal cover $\mathbb{D}$. Since $dz_{i}$ is a local holomorphic section of $K$ on $U_{i}$ and $\frac{\partial}{\partial{z_{i}}}$ is a local holomorphic section of $K^{-1}$ on $U_{i}$, we can define a local holomorphic frame $s^i=(s_1^i,s_2^i,s_3^i)$ for $E=K \bigoplus\mathcal{O}\bigoplus K^{-1}$ on $U_i$, where $s_1^{i}=dz_{i}$ and $s_2^{i}=1$ and $s_3^{i}=\frac{\partial}{\partial z_{i}}$. Setting $(U_{m+1},z_{m+1})=(U_{1},z_{1})$ and $s_{j}^{m+1}=s_{j}^{1}$, this yields a well-defined holomorphic frame for $\gamma$. Because on each overlap and for $j=1,2,3$, we have $s_{j}^{i}=s_{j}^{i+1}$  on $\gamma|_{U_i} \cap \gamma|_{U_i+1}$ with $1\leq i \leq m$.

We will simply write the holomorphic frame on $\gamma$ as $s_j$ for $j=1,2,3$.
With respect to this frame given by $(s_1, s_2, s_3)$, the parallel transport equation for $V(t)=\sum\limits_{i=1}^{3} V_{i}(t)s_{i}(t)$ becomes
\begin{align*}
  \partial_{t}\left[ \begin{array}{c} V^1(t) \\ V^2(t) \\ V^3(t) \end{array} \right] + \begin{bmatrix} 0 & \frac{1}{2} & 0\\ 1 & 0 & \frac{1}{2} \\ 0 & 1 &0 \end{bmatrix}  \left[ \begin{array}{c} V^{1}(t) \\ V^2(t) \\ V^3(t) \end{array} \right]=0.
\end{align*}

There are three eigenvalues for this ordinary differential equation system: $\lambda_{1}(\gamma,\rho(0))=e^{l_\gamma}$, $\lambda_{2}(\gamma,\rho(0))=1$ and  $\lambda_{3}(\gamma,\rho(0))= e^{-l_{\gamma}}$. The solutions for $V$ (assuming norm 1 at the starting point with respect to the Hermitian metric $H(0)$), denoted as $e_{i}$ corresponding to $\lambda_{i}(\gamma)$ for $i=1,2,3$, are

\begin{equation*}
    e_{1}=\frac{\sqrt{2}}{2}e^{t}\left[\begin{array}{c} \frac{1}{2}\\ -1 \\ 1 \end{array}  \right], \hspace{.25in}
    e_{2}=\frac{1}{2} \left[\begin{array}{c} -1\\ 0 \\ 2 \end{array}  \right],\hspace{.25in}
     e_{3}=\frac{\sqrt{2}}{2}e^{-t}\left[\begin{array}{c} \frac{1}{2}\\ 1 \\ 1 \end{array}  \right].
\end{equation*} 

We note at the Fuchsian point $\rho(0)\in\mathcal{T}(S)$, the eigenvectors $e_{1}, e_{2}$ and $e_{3}$ are orthogonal. In our holomorphic frame, the projection $\pi(0)=\pi(\rho(0))$ can be computed as 

\[
  \pi(0)=\frac{1}{2}
  \left[ {\begin{array}{ccc}
   \dfrac{1}{2} & -\dfrac{1}{2} & \dfrac{1}{4} \\[1ex]
   -1 & 1 & -\dfrac{1}{2} \\[1ex]
   1 & -1 & \dfrac{1}{2} \\[1ex]
  \end{array} } \right].
\]

The eigenvections $e_i$ and projection $\pi$ will play important roles in later sections.

\subsection{Inhomogenous ODEs and the second variation of the reparametrization functions}

We wil compute the second variation of the reparametrization functions $\partial_{uv}f_{\rho(0)}$ in this and next subsection. With our formula (\ref{eq firstVar}), we have
\begin{align*}
 \partial_{uv}f_{\rho(0)}& \sim -\partial_{v}(\Tr(\partial_{u}D_{H(0,v)} \pi(0,v)))|_{v=0}\\
 &=-\Tr(\frac{\partial^2 D_{H(0)}}{\partial u \partial v}\pi(0)) - \Tr(\partial_{u}D_{H(0)}\partial_{v}\pi(0)).
\end{align*}
 In this subsection, we compute $ \partial_{uv}f_{\rho(0)}$ along a closed geodesic. We study variation of holomony problems along a closed geodesic and construct associated inhomogeneous ODEs. In the next subsection, we extend the computation of $ \partial_{uv}f_{\rho(0)}$ to the whole surface.

The computation of $\partial_{v}(\Tr(\partial_{u}D_{H(0,v)} \pi(0,v)))|_{v=0}$ in this subsection along a closed geodesic is divided into computations of $\Tr(\frac{\partial^2 D_{H(0)}}{\partial u \partial v}\pi(0))$ and of $\Tr(\partial_{u}D_{H(0)}\partial_{v}\pi(0))$:

\begin{itemize}
    \item  Compute $\Tr(\frac{\partial^2 D_{H(0)}}{\partial u \partial v}\pi(0))$.

With the holomorphic frames and Fermi coordinates set up as before, one obtains on $\gamma$
\begin{align*}
  \partial_{uv}D_{H(0)}(x)=
   \begin{bmatrix}
   -(\psi_{z})_{uv}(z) & \frac{1}{2}\psi_{uv}(z) & 0\\[1ex]
   0 & 0 & \frac{1}{2}\psi_{uv}(z) \\[1ex]
   0 & 0 & (\psi_{z})_{uv}(z) \\[1ex]
  \end{bmatrix}. 
\end{align*}

Thus
\begin{align*}
   \Tr(\frac{\partial^2 D_{H(0)}}{\partial u \partial v}(x)\pi(0))=-\frac{1}{2}\psi_{uv}(z).
\end{align*}
More explicitly, we have $\Tr(\frac{\partial^2 D_{H(0)}}{\partial u \partial v}\pi(0)): UX \to \mathbb{R}$ satisfies
\begin{align*}
 \Tr(\frac{\partial^2 D_{H(0)}}{\partial u \partial v}\pi(0))(x)=-\frac{1}{2}\psi_{uv}(z(p(x)))=-\frac{1}{2}\phi_{uv}(p(x)) ,  
\end{align*}

where $p: UX \to X$ is the projection from the unit tangent bundle to our surface and $z$ is the Fermi coordinate we choose evaluating at the point $p(x)\in X$. We remark here the affine metric $\psi$ is always real and  $\phi=\psi-\sigma$ does not depend on coordinates we choose.

\item Compute $\Tr(\partial_{u}D_{H(0)}\partial_{v}\pi(0))$.

To study $\partial_{v}\pi(0)$ takes some effort. We set $u=0$ and take a family of flat connections $ \{D_{H(v)}\}$ with connection one forms $A(0,v)$ (recall equation (\ref{eq:connection})). Associated to each of them is a parallel transport equation along the closed geodesic $\gamma$ on $(S,\sigma)$:
\begin{equation}
    D_{H(v), \dot{\gamma}}V(v,t)=0
    \label{eq:FamilyOfParallel}
\end{equation}
with the assumption $\norm{V(v,0)}_{H(0)}=1$.

In \cite{Labourie_AnosovFlows}, Labourie proves images of every Hitchin representation are purely loxodromic. For $\rho(0,v)$ in $\mathcal{H}_{3}(S)$, we know $\rho(0,v)(\gamma)$ has distinct eigenvalues: $\lambda_{1}(\gamma,v)>\lambda_{2}(\gamma,v)>\lambda_{3}(\gamma,v)$. The holonomy problem for $\rho(0,v)$ has three distinct eigenvectors which are parallel sections $\{e_{i}(v,t)\}_{i=1}^{3}$ along $\gamma(t)$. Each section $V(v,t)=e_{i}(v,t)$ satisfies equation (\ref{eq:FamilyOfParallel}). In addition to the norm 1 condition at starting point : $\norm{V(v,0)}_{H(0)}=1$, we also impose another boundary condition in order to guarantee these are eigenvectors. The boundary conditions are for $i=1,2,3$,
\begin{enumerate}[label=(\roman*)]
    \item      $\norm{e_i(v,0)}_{H(0)}=1$;
    \item   $e_{i}(v,l_{\gamma})=\lambda_{i}(\gamma,v)e_{i}(v,0)$.
\end{enumerate}

The reader may notice that up to now, there are two frames for $E$ along $\gamma$ mentioned, the holomorphic frame $(s_{1},s_{2},s_{3})$ and the frame spanned by eigenvectors $(e_{1},e_{2}, e_{3})$. On the one hand, we can write our holomorphic frames as linear combinations of eigenvectors: $s_{i}(t)=\sum\limits_{j=1}^{3}a_{ij}(v,t)e_{j}(v,t)$ for i=1,2,3. On the other hand, we can write the eigenvectors as linear combinations of our holomorphic frames: $e_{j}(v,t)=\sum\limits_{k=1}^{3}e_{jk}(v,t)s_{k}(t)$ for $j=1,2,3$. We have the following observation:

With respect to the holomorphic frame $(s_{1},s_{2},s_{3})$, the projection onto $e_{1}$ along the hyperplane spanned by $(e_{2},e_{3})$ in matrix form is,
\begin{align*}
\pi(v,t)&=
\left[ \begin{array}{ccc}\pi(v,t)s_1(t),&\pi(v,t)s_2(t),&\pi(v,t)s_3(t)\end{array} \right]\\
\hfill\\
&=\left[  \begin{array}{ccc}a_{11}(v,t)e_{1}(v,t),&a_{21}(v,t)e_{1}(v,t),&a_{31}(v,t)e_{1}(v,t)\end{array} \right]\\
\hfill\\
&=\begin{bmatrix} a_{11}(v,t)e_{11}(v,t) & a_{21}(v,t)e_{11}(v,t) & a_{31}(v,t)e_{11}(v,t)\\ a_{11}(v,t)e_{12}(v,t) & a_{21}(v,t)e_{12}(v,t) & a_{31}(v,t)e_{12}(v,t) \\ a_{11}(v,t)e_{13}(v,t) & a_{21}(v,t)e_{13}(v,t) &a_{31}(v,t)e_{13}(v,t) \end{bmatrix}. 
\end{align*}

To understand $\partial_{v}\pi(0)$, we need to know $\partial_{v}e_{1}(0)$ and $\partial_{v}\alpha_{i1}(0)$ for $i=1,2,3$.
One can check in the holomorphic frame,
\begin{equation}
 \Tr(\partial_{u}D_{A(0)}\partial_{v}\pi(0))=q_{\alpha}(\partial_{v}a_{11}(0)e_{13}(0)+a_{11}(0)\partial_{v}e_{13}(0))+4\bar{q}_{\alpha}(\partial_{v}a_{31}(0)e_{11}(0)+a_{31}(0)\partial_{v}e_{11}(0)),
 \label{eq traceFormula}
\end{equation}

where $e_{11}(0)$ and $e_{13}(0)$ are known. Thus we need to compute $\partial_{v}e_{1}(0)$ and $\partial_{v}a_{11}(0)$ and $\partial_{v}a_{31}(0)$.
 
We first show how to obtain $\partial_{v}e_{1}(0)$. $\partial_{v}e_{1}(0,t)$ as the solution of an inhomogeneous ODE system arising from taking $v$-derivative for a family of parallel transport equations $(\ref{eq:FamilyOfParallel})$ at $v=0$,

\begin{align*}
  \partial_{t}\left[ \begin{array}{c} \partial_{v}e_{11}(0,t) \\ \partial_{v}e_{12}(0,t)\\ \partial_{v}e_{13}(0,t) \end{array} \right] + \begin{bmatrix} 0 & \frac{1}{2} & 0\\ 1 & 0 & \frac{1}{2} \\ 0 & 1 &0 \end{bmatrix}  \left[ \begin{array}{c} \partial_{v}e_{11}(0,t) \\ \partial_{v}e_{12}(0,t) \\ \partial_{v}e_{13}(0,t)  \end{array} \right]= -\frac{\sqrt{2}}{2}e^{t} \left[ \begin{array}{c} q_{\beta}(\Phi_{t}(x)) \\ 0 \\ 2\overline{q_{\beta}(\Phi_{t}(x))} \end{array} \right].
\end{align*}
with boundary conditions
\begin{align*}
    &H(\partial_{v}e_{1}(0,0),e_{1}(0,0))=0;\\
   &\partial_{v}e_{1}(0,l_{\gamma})=-e^{l_{\gamma}}\left(\int_{0}^{l_{\gamma}}\Re q_{\beta}(\Phi_{s}(x))\mathrm{d}s\right) e_{1}(0,0)+e^{l_{\gamma}}\partial_{v}e_{1}(0,0).\\ 
\end{align*}
The boundary conditions arise from taking $v$-derivative for boundary conditions $(i)$ and $(ii)$ of the parallel transport equation \eqref{eq:FamilyOfParallel} that the maximum eigenvector $e_1$ satisfies.

With these boundary conditions, we solve
\begin{align*}
  \left[\begin{array}{c} \partial_{v}e_{11}(t)\\ \partial_{v}e_{12}(t) \\ \partial_{v}e_{13}(t) \end{array}  \right]&=\left[\begin{array}{c} -\frac{\sqrt{2}}{2}\int_{0}^{t}e^s(\cosh(t-s)\Re q_{\beta}+i \Im q_{\beta})\mathrm{d}s\\[1ex]
  \sqrt{2}\int_{0}^{t}e^s(\sinh(t-s)\Re q_{\beta}\mathrm{d}s \\[1ex] -\sqrt{2}\int_{0}^{t}e^s(\cosh(t-s)\Re q_{\beta}-i \Im q_{\beta})\mathrm{d}s \end{array}  \right]\\[1ex]
  &+\left[\begin{array}{c}   -\frac{\sqrt{2}}{4}(e^{2l_{\gamma}}-1)^{-1}\int_{0}^{l_{\gamma}}e^{2s-t}\Re q_{\beta}\mathrm{d}s- \frac{\sqrt{2}}{2}i(e^{l_{\gamma}}-1)^{-1}
  \int_{0}^{l_{\gamma}}e^{s}\Im q_{\beta}\mathrm{d}s \\[1ex] -\frac{\sqrt{2}}{2}(e^{2l_{\gamma}}-1)^{-1}\int_{0}^{l_{\gamma}}e^{2s-t}\Re q_{\beta}\mathrm{d}s \\[1ex] -\frac{\sqrt{2}}{2}(e^{2l_{\gamma}}-1)^{-1}\int_{0}^{l_{\gamma}}e^{2s-t}\Re q_{\beta}\mathrm{d}s+ \sqrt{2}i(e^{l_{\gamma}}-1)^{-1}
  \int_{0}^{l_{\gamma}}e^{s}\Im q_{\beta}\mathrm{d}s \end{array}  \right].
\end{align*}
Here $ q_{\beta}$ refers to $q_{\beta}(\Phi_{s}(x))$ defined in notation \eqref{eq cubic}.

We continue to compute $\partial_{v}\alpha_{11}(0)$ and $\partial_{v}\alpha_{31}(0)$.
Combining $e_{j}(v,t)=\sum\limits_{k=1}^{3}e_{jk}(v,t)s_{k}(t)$ and $s_{i}(t)=\sum\limits_{j=1}^{3}a_{ij}(v,t)e_{j}(v,t)$ gives
\begin{equation}
     \sum\limits_{j=1}^{j=3}a_{ij}(v,t)e_{jk}(v,t)=\sigma_{ik}.
     \label{eq: ae}
\end{equation}

Recall $e_{jk}(0,t)$ are known:
\begin{align*}
    e_{1}(0,t)&=\left[\begin{array}{c} e_{11}(0,t)\\ e_{12}(0,t) \\ e_{13}(0,t) \end{array}  \right]=\frac{\sqrt{2}}{2}e^{t}\left[\begin{array}{c} \frac{1}{2}\\ -1 \\ 1 \end{array}  \right],\\
    e_{2}(0,t)&=\left[\begin{array}{c} e_{21}(0,t)\\ e_{22}(0,t) \\ e_{23}(0,t) \end{array}  \right]=\frac{1}{2} \left[\begin{array}{c} -1\\ 0 \\ 2 \end{array}  \right],\\
     e_{3}(0,t)&=\left[\begin{array}{c} e_{31}(0,t)\\ e_{32}(0,t) \\ e_{33}(0,t) \end{array}  \right]=\frac{\sqrt{2}}{2}e^{-t}\left[\begin{array}{c} \frac{1}{2}\\ 1 \\ 1 \end{array}  \right].
\end{align*} 

Then one obtains
\begin{equation*}
a(0,t)=\begin{bmatrix} a_{11} & a_{12} & a_{13}\\ a_{21} & a_{22} & a_{23} \\ a_{31} & a_{32} &a_{33} \end{bmatrix}=\begin{bmatrix} \frac{\sqrt{2}}{2}e^{-t} & -1 & \frac{\sqrt{2}}{2}e^{t}\\ -\frac{\sqrt{2}}{2}e^{-t} & 0 & \frac{\sqrt{2}}{2}e^{t} \\ \frac{\sqrt{2}}{4}e^{-t} & \frac{1}{2} &\frac{\sqrt{2}}{4}e^{t} \end{bmatrix}.
\end{equation*}

Taking the $v$-derivative of equation (\ref{eq: ae}) at $v=0$,
\begin{equation*}
  \sum\limits_{j=1}^{j=3}\partial_{v}a_{ij}(0,t)e_{jk}(0,t)+ \sum\limits_{j=1}^{j=3}a_{ij}(0,t)\partial_{v}e_{jk}(0,t)=0.
\end{equation*}

Solutions of $\partial_{v}a_{ij}(0,t)$ can be expressed in terms of $\partial_{v}e_{1}(0,t)$, $\partial_{v}e_{2}(0,t)$ and $\partial_{v}e_{3}(0,t)$. We have just solved $\partial_{v}e_{1}$. Similarly,$\partial_{v}e_{2}(0)$ and $\partial_{v}e_{3}(0)$ are solutions of another two systems of nonhomogeneous ODEs deduced from equation (\ref{eq:FamilyOfParallel}). We now proceed to solve $\partial_{v}e_{2}(0,t)$ and $\partial_{v}e_{3}(0,t)$.

\begin{enumerate}
    \item
For $\partial_{v}e_{2}(0,t)$, we have
\begin{align*}
  \partial_{t}\left[ \begin{array}{c} \partial_{v}e_{21}(0,t)\\ \partial_{v}e_{22}(0,t) \\ \partial_{v}e_{23}(0,t) \end{array} \right] + \begin{bmatrix} 0 & \frac{1}{2} & 0\\ 1 & 0 & \frac{1}{2} \\ 0 & 1 &0 \end{bmatrix}  \left[ \begin{array}{c} \partial_{v}e_{21}(0,t) \\ \partial_{v}e_{32}(0,t) \\ \partial_{v}e_{23}(0,t) \end{array} \right]=  \left[ \begin{array}{c} -q_{\beta}(\Phi_{t}(x))\\ 0 \\ 2\overline{q_{\beta}(\Phi_{t}(x))} \end{array} \right]
\end{align*}
with boundary conditions
\begin{align*}
    &H(\partial_{v}e_{2}(0,0),e_{2}(0,0))=0;\\
   &\partial_{v}e_{2}(0,l_{\gamma})=2\left(\int_{0}^{l_{\gamma}}\Re q_{\beta}(\Phi_{s}(x))\mathrm{d}s\right) e_{2}(0,0)+\partial_{v}e_{2}(0,0).\\ 
\end{align*}

\item
For $\partial_{v}e_{3}(0,t)$, we get
\begin{align*}
  \partial_{t}\left[ \begin{array}{c} \partial_{v}e_{31}(0,t) \\ \partial_{v}e_{32}(0,t) \\ \partial_{v}e_{33}(0,t) \end{array} \right] + \begin{bmatrix} 0 & \frac{1}{2} & 0\\ 1 & 0 & \frac{1}{2} \\ 0 & 1 &0 \end{bmatrix}  \left[ \begin{array}{c} \partial_{v}e_{31}(0,t) \\ \partial_{v}e_{32}(0,t) \\ \partial_{v}e_{33}(0,t) \end{array} \right]= -\frac{\sqrt{2}}{2}e^{-t} \left[ \begin{array}{c} q_{\beta}(\Phi_{t}(x)) \\ 0 \\ 2\overline{q_{\beta}(\Phi_{t}(x))} \end{array} \right]
\end{align*}
with boundary conditions
\begin{align*}
    &H(\partial_{v}e_{3}(0,0),e_{3}(0,0))=0;\\
   &\partial_{v}e_{3}(0,l_{\gamma})=-e^{-l_{\gamma}}\left(\int_{0}^{l_{\gamma}}\Re q_{\beta}(\Phi_{s}(x))\mathrm{d}s\right) e_{3}(0,0)+e^{-l_{\gamma}}\partial_{v}e_{3}(0,0).\\ 
\end{align*}
\end{enumerate}
We obtain solutions respectively as follows
\begin{align*}
  \left[\begin{array}{c} \partial_{v}e_{21}(t)\\ \partial_{v}e_{22}(t) \\ \partial_{v}e_{23}(t) \end{array}  \right]&=\left[\begin{array}{c} -\int_{0}^{t}\Re q_{\beta}+ i\cosh(t-s)\Im q_{\beta} \mathrm{d}s\\ 2\int_{0}^{t}i \sinh(t-s)\Im q_{\beta}\mathrm{d}s \\[1ex]
  2\int_{0}^{t}\Re q_{\beta}- i\cosh(t-s)\Im q_{\beta} \mathrm{d}s  \end{array} \right]\\[1ex]
  &+\left[\begin{array}{c}   -\frac{i}{2}\int_{0}^{l_{\gamma}}\Im q_{\beta}((1-e^{
 l_{\gamma}})^{-1}e^{l_{\gamma}+t-s}+(1-e^{
 -l_{\gamma}})^{-1}e^{-l_{\gamma}-t+s}) \mathrm{d}s \\[1ex] i\int_{0}^{l_{\gamma}}\Im q_{\beta}((1-e^{
 l_{\gamma}})^{-1}e^{l_{\gamma}+t-s}-(1-e^{
 -l_{\gamma}})^{-1}e^{-l_{\gamma}-t+s})\mathrm{d}s \\[1ex] -i\int_{0}^{l_{\gamma}}\Im q_{\beta}((1-e^{
 l_{\gamma}})^{-1}e^{l_{\gamma}+t-s}+(1-e^{
 -l_{\gamma}})^{-1}e^{-l_{\gamma}-t+s})\mathrm{d}s \end{array}  \right]
 \end{align*} 
 
 and
 
\begin{align*}
  \left[\begin{array}{c} \partial_{v}e_{31}(t)\\ \partial_{v}e_{32}(t) \\ \partial_{v}e_{33}(t) \end{array}  \right]&=\left[\begin{array}{c} -\frac{\sqrt{2}}{2}\int_{0}^{t}e^{-s}(\cosh(t-s)\Re q_{\beta}+ i\Im q_{\beta}) \mathrm{d}s\\ \sqrt{2}\int_{0}^{t} e^{-s}\sinh(t-s)\Re q_{\beta}\mathrm{d}s \\[1ex]
  -\sqrt{2}\int_{0}^{t}e^{-s}(\cosh(t-s)\Re q_{\beta}- i\Im q_{\beta}) \mathrm{d}s  \end{array} \right]\\[1ex]
  &+\left[\begin{array}{c}  -\frac{\sqrt{2}}{4}(e^{-2l_{\gamma}}-1)^{-1}\int_{0}^{l_{\gamma}}e^{t-2s}\Re q_{\beta} \mathrm{d}s -\frac{\sqrt{2}}{2}i(e^{-l_{\gamma}}-1)^{-1}\int_{0}^{l_{\gamma}}e^{-s}\Im q_{\beta} \mathrm{d}s  \\[1ex] \frac{\sqrt{2}}{2}(e^{-2l_{\gamma}}-1)^{-1}\int_{0}^{l_{\gamma}}e^{t-2s}\Re q_{\beta} \mathrm{d}s \\[1ex] -\frac{\sqrt{2}}{2}(e^{-2l_{\gamma}}-1)^{-1}\int_{0}^{l_{\gamma}}e^{t-2s}\Re q_{\beta} \mathrm{d}s +\sqrt{2}i(e^{-l_{\gamma}}-1)^{-1}\int_{0}^{l_{\gamma}}e^{-s}\Im q_{\beta} \mathrm{d}s \end{array}  \right],
 \end{align*}  
 
where again $q_{\beta}$ in the solutions again refers to $q_{\beta}(\Phi_{s}(x))$ defined in notation \eqref{eq cubic}.

We are therefore able to solve $\partial_{v}a_{ij}(0,t)$ from $\partial_{v}e_{1}(0,t)$, $\partial_{v}e_{2}(0,t)$ and $\partial_{v}e_{3}(0,t)$. For a closed geodesic $\gamma$ of length $l_{\gamma}$ starting from $\dot{\gamma}(0)=x$, we compute from equation \eqref{eq traceFormula}, 
\begin{align}
&\Tr(\partial_{u}D_{A(0)}\partial_{v}\pi(0))(\Phi_{t}(x))\nonumber  \\
=&\Re q_{\alpha}(\Phi_{t}(x))\int_{0}^{t}(e^{2(t-s)}-e^{2(s-t)})\Re q_{\beta}(\Phi_{s}(x))\mathrm{d}s\nonumber \\
+&2\Im q_{\alpha}(\Phi_{t}(x))\int_{0}^{t}(e^{t-s}-e^{s-t})\Im q_{\beta}(\Phi_{s}(x))\mathrm{d}s\nonumber \\
+&\Re q_{\alpha}(\Phi_{t}(x))\int_{0}^{l_{\gamma}}(\frac{e^{2(t-s)}}{e^{-2l_{\gamma}}-1}-\frac{e^{2(s-t)}}{e^{2l_{\gamma}}-1})\Re q_{\beta}(\Phi_{s}(x))\mathrm{d}s\nonumber \\
+&2\Im q_{\alpha}(\Phi_{t}(x))\int_{0}^{l_{\gamma}}(\frac{e^{t-s}}{e^{-l_{\gamma}}-1}-\frac{e^{s-t}}{e^{l_{\gamma}}-1})\Im q_{\beta}(\Phi_{s}(x))\mathrm{d}s. \label{eq TrMixI}
\end{align}

In particular, at $t=0$,
\begin{align}
&\Tr(\partial_{u}D_{A(0)}\partial_{v}\pi(0))(x)\nonumber \\ 
=&\Re q_{\alpha}(x)\int_{0}^{l_{\gamma}}(\frac{e^{-2s}}{e^{-2l_{\gamma}}-1}-\frac{e^{2s}}{e^{2l_{\gamma}}-1})\Re q_{\beta}(\Phi_{s}(x))\mathrm{d}s \nonumber\\ 
+&2\Im q_{\alpha}(x)\int_{0}^{l_{\gamma}}(\frac{e^{-s}}{e^{-l_{\gamma}}-1}-\frac{e^{s}}{e^{l_{\gamma}}-1})\Im q_{\beta}(\Phi_{s}(x))\mathrm{d}s.
\label{eq TrMixII}
\end{align}

\begin{rem}
One may notice every point on the closed geodesic $\gamma$ plays equivalent roles. We can always let $y=\Phi_{t}(x)$ be the initial point of our $\gamma$ and set up boundary conditions for our ODEs based at $y$ instead of $x$. The solution of this new ODE system is  equation \eqref{eq TrMixII} treating $y=\Phi_{t}(x)$ as the initial point. It is in fact the same as starting from $x$ and obtain $\Tr(\partial_{u}D_{A(0)}\partial_{v}\pi(0))(\Phi_{t}(x))$ from equation \eqref{eq TrMixI}.
\end{rem}

\subsection{H{\"o}lder extension to the surface}
The holonomy problems are only able to be solved on closed geodesics as they can be simplified as linear ODEs with boundary conditions. However it is still possible to extend the computation for second variations of reparametrization functions from closed geodesics to the Riemann surface $X$. This will be our goal in this subsection. In particular, We will prove  in the end of this subsection the main proposition about second variation of reparametrization functions.

\begin{prop}
\label{prop secondVar}
The second variation of reparametrization functions $\partial_{uv}f_{\rho(0)}:UX \to \mathbb{R}$ for our model case $\partial{\beta}g_{\alpha \alpha}(\sigma)$ satisfies
\begin{align*}
\partial_{uv}f_{\rho(0)}(x)\sim&\frac{1}{2}\phi_{uv}(p(x)) \nonumber \\
 +&\Re q_{\alpha}(x)\int_{0}^{\infty}e^{-2s}\Re q_{\beta}(\Phi_{s}(x))\mathrm{d}s+\Re q_{\alpha}(x)\int_{-\infty}^{0}e^{2s}\Re q_{\beta}(\Phi_{s}(x))\mathrm{d}s \nonumber \\
    +&2\Im q_{\alpha}(x)\int_{0}^{\infty}e^{-s}\Im q_{\beta}(\Phi_{s}(x))\mathrm{d}s+2\Im q_{\alpha}(x)\int_{-\infty}^{0}e^{s}\Im q_{\beta}(\Phi_{s}(x))\mathrm{d}s,
\end{align*}
where we recall that $\phi$ is defined in equation \eqref{eq:affinePDE1Global} and  $p: UX \to X$ is the projection from the unit tangent bundle $UX$ to our Riemann surface $X$.
\end{prop}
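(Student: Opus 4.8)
The plan is to assemble the two trace computations carried out above into a single Hölder function on $UX$ and then pass from closed geodesics to all of $UX$ by the DPG property, exactly as in the proof of Proposition~\ref{prop case1rep1}. Starting from the decomposition
$\partial_{uv}f_{\rho(0)} \sim -\Tr(\tfrac{\partial^{2}D_{H(0)}}{\partial u\partial v}\pi(0))-\Tr(\partial_{u}D_{H(0)}\partial_{v}\pi(0))$
recorded at the opening of this subsection (obtained by differentiating \eqref{eq firstVar}), it suffices, since the periodic tangent vectors $W$ are dense and both sides are Hölder, to identify the right-hand side pointwise on $W$ and then to check that the resulting candidate is itself Hölder on $UX$.

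First I would substitute the computed value of each trace along a closed geodesic. The first bullet computation gives $-\Tr(\tfrac{\partial^{2}D_{H(0)}}{\partial u\partial v}\pi(0))(x)=\tfrac12\phi_{uv}(p(x))$ for every $x$ tangent to a closed geodesic, and since $\phi=\psi-\sigma$ is a global function independent of the chart this holds at every point of $W$. For the second trace, the formula \eqref{eq TrMixII} is by construction the value $\psi(x,l_{\gamma_{x}})$ of the two-variable function introduced just before \eqref{eq eta}; invoking the identity $\psi(x,l_{\gamma_{x}})=\eta(x)$ then yields $\Tr(\partial_{u}D_{H(0)}\partial_{v}\pi(0))(x)=\eta(x)$ on $W$, by the remark allowing any $\Phi_{t}(x)$ to be taken as the base point. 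Combining the two pieces gives $\partial_{uv}f_{\rho(0)}(x)\sim \tfrac12\phi_{uv}(p(x))-\eta(x)$ at every $x\in W$, which, after expanding $\eta$ via \eqref{eq eta}, is exactly the asserted formula.

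It then remains to upgrade this identity from $W$ to all of $UX$. The candidate $\tfrac12\,\phi_{uv}\circ p-\eta$ is Hölder on $UX$: the term $\eta$ was shown to extend to a Hölder function in the preceding proposition, while $\phi_{uv}$ is (at least Hölder, indeed smooth) by elliptic regularity for the linearization of the affine sphere equation \eqref{eq:affinePDE1Global} at the Fuchsian point, where the linearized operator is $\Delta_{\sigma}-2$ and is invertible; hence $\phi_{uv}\circ p$ is smooth on $UX$. Since a representative of $\partial_{uv}f_{\rho(0)}$ is also Hölder and the two functions agree on the dense set $W$, they agree on all of $UX$. Equivalently, one may argue on periods: both Hölder functions have the same period $\partial_{uv}\log\lambda_{1}(\gamma,\rho(0))$ over every closed orbit (this being the second variation of the top log-eigenvalue, computed by differentiating Proposition~\ref{thm:Tr} in $v$), so Livšic's theorem forces their difference to be a coboundary. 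Either route gives $\partial_{uv}f_{\rho(0)}\sim \tfrac12\,\phi_{uv}\circ p-\eta$, as claimed.

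The main obstacle is precisely this last passage from periodic orbits to the whole unit tangent bundle, since it requires knowing in advance that both sides are genuinely Hölder on $UX$; this is why the real labor sits in the preceding results — solving the inhomogeneous transport ODEs for $\partial_{v}e_{i}(0)$, proving $\psi(x,l_{\gamma_{x}})=\eta(x)$, and establishing the Hölder continuity of $\eta$. A secondary technical point to handle with care is the differentiation of the eigenvalue formula of Proposition~\ref{thm:Tr} in the parameter $v$: this presupposes the smooth dependence of the eigenprojection $\pi$ on the family of flat connections, so that $\partial_{v}\pi(0)$ is well defined and indeed assembles, via \eqref{eq traceFormula}, into the function $\eta$.
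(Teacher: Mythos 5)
Your proposal is correct and follows essentially the same route as the paper: it assembles the two trace computations, identifies $\Tr(\partial_{u}D_{H(0)}\partial_{v}\pi(0))$ with the H{\"o}lder function $\eta$ on the dense set of periodic tangent vectors, and extends to all of $UX$ by H{\"o}lder continuity. The extra remarks you add (elliptic regularity for $\phi_{uv}$ and the alternative Liv\v{s}ic-periods argument) are sound but not needed beyond what the paper already records.
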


Let's define the second part in the above formula $\partial_{uv}f_{\rho(0)}$ as a function $\eta:UX \to \mathbb{R}$,
\begin{align}
    \eta(x)
    =&-\Re q_{\alpha}(x)\int_{0}^{\infty}e^{-2s}\Re q_{\beta}(\Phi_{s}(x))\mathrm{d}s-\Re q_{\alpha}(x)\int_{-\infty}^{0}e^{2s}\Re q_{\beta}(\Phi_{s}(x))\mathrm{d}s \nonumber \\
    -&2\Im q_{\alpha}(x)\int_{0}^{\infty}e^{-s}\Im q_{\beta}(\Phi_{s}(x))\mathrm{d}s-2\Im q_{\alpha}(x)\int_{-\infty}^{0}e^{s}\Im q_{\beta}(\Phi_{s}(x))\mathrm{d}s.
    \label{eq eta}
\end{align}.

We will prove that $\eta(x)$ coincides with $\Tr(\partial_{u}D_{A(0)}\partial_{v}\pi(0))(x)$ on periodic orbits and that $\eta(x)$ is a H{\"o}lder function. Denoting the subset of $UX$ that consists of all unit tangent vectors to closed geodesics as $W$, we first show

\begin{prop}
For any $x\in W$, $\eta(x)=\Tr(\partial_{u}D_{A(0)}\partial_{v}\pi(0))(x)$.
\label{prop eta}
\end{prop}

To prove Proposition \ref{prop eta}, from the computation of $\Tr(\partial_{u}D_{A(0)}\partial_{v}\pi(0))(x)$ in equation \eqref{eq TrMixII}, we introduce an intermediate function $\psi: W \times \mathbb{R}^{+} \longrightarrow \mathbb{R}$ by
\begin{align*}
&\psi(x,r)=\Re q_{\alpha}(x)\int_{0}^{r}(\frac{e^{-2s}}{e^{-2r}-1}-\frac{e^{2s}}{e^{2r}-1})\Re q_{\beta}(\Phi_{s}(x))\mathrm{d}s \nonumber\\ 
+&2\Im q_{\alpha}(x)\int_{0}^{r}(\frac{e^{-s}}{e^{-r}-1}-\frac{e^{s}}{e^{r}-1})\Im q_{\beta}(\Phi_{s}(x))\mathrm{d}s.
\end{align*}

Given $x\in W$, if we denote the closed geodesic that $x$ is tangential to as ${\gamma}_{\Sub x}$ with length $l_{{\gamma}_{\Sub x}}$, then it is clear that $\Tr(\partial_{u}D_{A(0)}\partial_{v}\pi(0))(x)=\psi(x,l_{\gamma_x})$. To prove Proposition \ref{prop eta} for the set $W$, we need the following Lemma \ref{lem kl}. It states that $\psi(x,r)$ attains the same value when $r$ is any positive integer multiple of $l_{\gamma_x}$,
\begin{lem}
$\psi(x,kl_{\gamma_x}) =\psi(x,l_{\gamma_x})  \hspace{.5 in}    \forall x\in W,\text{ } \forall k\in \mathbb{Z}^{+}$.
\label{lem kl}
\end{lem}

\begin{proof}
For any $k\in \mathbb{Z}^{+}$, we have
\begin{align*}
 &\int_{0}^{kl_{\gamma_x}}(\frac{e^{-2s}}{e^{-2kl_{\gamma_x}}-1}-\frac{e^{2s}}{e^{2kl_{\gamma_x}}-1})\Re q_{\beta}(\Phi_{s}(x))\mathrm{d}s \nonumber\\
 =&\sum\limits_{i=1}^{k}\int_{(i-1)l_{\gamma_x}}^{il_{\gamma_x}}(\frac{e^{-2s}}{e^{-2kl_{\gamma_x}}-1}-\frac{e^{2s}}{e^{2kl_{\gamma_x}}-1})\Re q_{\beta}(\Phi_{s}(x))\mathrm{d}s \nonumber\\
  =&\frac{1}{e^{-2kl_{\gamma_x}}-1}\sum\limits_{i=1}^{k} \int_{(i-1)l_{\gamma_x}}^{il_{\gamma_x}}e^{-2s}\Re q_{\beta}(\Phi_{s}(x))\mathrm{d}s- \frac{1}{e^{2kl_{\gamma_x}}-1}\sum\limits_{i=1}^{k} \int_{(i-1)l_{\gamma_x}}^{il_{\gamma_x}}e^{2s}\Re q_{\beta}(\Phi_{s}(x))\mathrm{d}s\nonumber\\
  =&\int_{0}^{l_{\gamma_x}}(\frac{e^{-2s}}{e^{-2l_{\gamma_x}}-1}-\frac{e^{2s}}{e^{2l_{\gamma_x}}-1})\Re q_{\beta}(\Phi_{s}(x))\mathrm{d}s.
\end{align*}
Similar arguments hold for $\int_{0}^{l_{\gamma_x}}(\frac{e^{-s}}{e^{-l_{\gamma_x}}-1}-\frac{e^{s}}{e^{l_{\gamma_x}}-1})\Im q_{\beta}(\Phi_{s}(x))\mathrm{d}s$.

Thus we obtain $\psi(x,kl_{\gamma_x}) =\psi(x,l_{\gamma_x})$.
\end{proof}

\begin{rem}
This equality is clear if one understands that $\psi(x,kl_{\gamma})$ is the solution of the holonomy problem that goes around our closed geodesic $\gamma$ $k$-times with the same boundary conditions.
\end{rem}

Now we are able to prove Proposition \ref {prop eta}.
\begin{proof}
[Proof of Proposition \ref {prop eta}]\hfill

Instead of flowing from $x$ to $\Phi_{l_{\gamma_x}}(x)$, we view $x$ as our midpoint and consider our flow from $\Phi_{-\frac{l_{\gamma_x}}{2}}(x)$ to $x$ and then from $x$ to  $\Phi_{\frac{l_{\gamma_x}}{2}}(x)$. From this point of view, we can write $\psi(x,l_{\gamma_x})$ as
\begin{align*}
\psi(x,l_{\gamma_x})=&\Re q_{\alpha}(x)\int_{0}^{\frac{l_{\gamma_x}}{2}}(\frac{e^{-2s}}{e^{-2l_{\gamma_x}}-1}-\frac{e^{2s}}{e^{2l_{\gamma_x}}-1})\Re q_{\beta}(\Phi_{s}(x))\mathrm{d}s \\ 
+&\Re q_{\alpha}(x)\int_{-\frac{l_{\gamma_x}}{2}}^{0}(\frac{e^{2s}}{e^{-2l_{\gamma_x}}-1}-\frac{e^{-2s}}{e^{2l_{\gamma_x}}-1})\Re q_{\beta}(\Phi_{s}(x))\mathrm{d}s \\ 
+&2\Im q_{\alpha}(x)\int_{0}^{\frac{l_{\gamma_x}}{2}}(\frac{e^{-s}}{e^{-l_{\gamma_x}}-1}-\frac{e^{s}}{e^{l_{\gamma_x}}-1})\Im q_{\beta}(\Phi_{s}(x))\mathrm{d}s\\
+&2\Im q_{\alpha}(x)\int_{-\frac{l_{\gamma_x}}{2}}^{0}(\frac{e^{s}}{e^{-l_{\gamma_x}}-1}-\frac{e^{-s}}{e^{l_{\gamma_x}}-1})\Im q_{\beta}(\Phi_{s}(x))\mathrm{d}s.
\end{align*}
The above also holds if we replace $l_{\gamma_x}$ by $kl_{\gamma_x}$.
We now show $\eta(x)=\Tr(\partial_{u}D_{A(0)}\partial_{v}\pi(0))(x)$ for $x\in W$ by taking $k\to \infty$ in the above formula.

Suppose $\max\limits_{x\in UX}\left\{|\Re q_{\alpha}(x)|, |\Im q_{\alpha}(x)|, |\Re q_{\beta}(x)|, |\Im q_{\beta}(x)|\right\}=M$. Then notice
\begin{align*}
    &|\psi(x,kl_{\gamma_x})-\eta(x)|\\
    \leq&2M^{2}\int_{\frac{kl_{\gamma_x}}{2}}^{\infty}e^{-2s}\mathrm{d}s
    +4M^{2}\int_{\frac{kl_{\gamma_x}}{2}}^{\infty}e^{-s}\mathrm{d}s\\
+&2M^{2}\int_{0}^{\frac{kl_{\gamma_x}}{2}}\left|\frac{e^{-2s}}{e^{-2kl_{\gamma_x}}-1}-\frac{e^{2s}}{e^{2kl_{\gamma_x}}-1}+e^{-2s}\right|\mathrm{d}s \\ 
+&4M^{2}\int_{0}^{\frac{kl_{\gamma_x}}{2}}\left|\frac{e^{-s}}{e^{-kl_{\gamma_x}}-1}-\frac{e^{s}}{e^{kl_{\gamma_x}}-1}+e^{-s}\right|\mathrm{d}s\\
\longrightarrow&{0}   \hspace{.5in} \text{when $k\to\infty$}
\end{align*}

Thus by Lemma \ref{lem kl}, we obtain for any $x \in W$,
\begin{align*}
  \Tr(\partial_{u}D_{A(0)}\partial_{v}\pi(0))(x)=\psi(x,l_{\gamma_x})=\lim\limits_{k\to\infty}\psi(x,kl_{\gamma_x})=\eta(x). 
\end{align*}
\end{proof}

We also need the following proposition about regularity of the function $\eta$.
\begin{prop}    
$\eta(x): UX \to \mathbb{R}$ is a H{\"o}lder function. \label{prop Holder}
\end{prop}
\begin{proof}

We starting by showing $\int_{0}^{\infty}e^{-s}\Im q_{\beta}(\Phi_{s}(x))\mathrm{d}s$ is H{\"o}lder. Let $x$ and $y$ be close so that $d(x,y)= \epsilon<<1$. It is classical for a hyperbolic surface $(S,\sigma)$, we have standard ODE estimates on the geodesic flow as follows.
\begin{align*}
    d(\Phi_{s}(x), \Phi_{s}(y)) \leq N e^{s}d(x,y) = \epsilon N e^s,
\end{align*} 
where $N>0$ is some constant and the distance function $d$ on $UX$ is induced from the canonical (Sasaki) metric $\langle \cdot,\cdot \rangle$ on $UX$.

Consider $T=-\log(\epsilon)$. Then dividing the integral into two parts, from $0$ to $T$ and from $T$ to $\infty$, yields

\begin{align*}
    &|\int_{0}^{\infty}e^{-s}\Im q_{\beta}(\Phi_{s}(x))\mathrm{d}s-\int_{0}^{\infty}e^{-s}\Im q_{\beta}(\Phi_{s}(y))\mathrm{d}s|\\
    =&\bigg|\int_{0}^{T}e^{-s}\bigg(\Im q_{\beta}(\Phi_{s}(x))-\Im q_{\beta}(\Phi_{s}(y))\bigg)\mathrm{d}s\bigg|+\bigg|\int_{T}^{\infty}e^{-s}\bigg(\Im q_{\beta}(\Phi_{s}(x))-\Im q_{\beta}(\Phi_{s}(y))\bigg)\mathrm{d}s\bigg|\\
    \leq& \int_{0}^{T} e^{-s}N_1N\epsilon e^s\mathrm{d}s+ 2N_2e^{-T}\\
    \leq& -N_1N\epsilon\log(\epsilon) + 2N_2\epsilon\\
     \leq& (N_1N+2N_2)d(x,y)^{\frac{1}{2}}
\end{align*}

Here we use the fact that $\Im q_{\beta}$ is smooth. So we can assume its Lipschitz constant to be $N_1$. We also use that $UX$ is compact and we assume $\sup\limits_{x\in UX} \Im q_{\beta}(x)=N_2$. 

It then follows easily that $\Im q_{\alpha}(x)\int_{0}^{\infty}e^{-2s}\Im q_{\beta}(\Phi_{s}(x))\mathrm{d}s$ is also a H{\"o}lder function. The arguments to show that the other three terms in $\eta(x)$ are H{\"o}lder are the same. We therefore conclude that $\eta(x)$ is a H{\"o}lder function. 
\end{proof}

Finally with Proposition \ref{prop eta} and Proposition \ref{prop Holder}, we are able to prove Proposition \ref{prop secondVar} about second variations of reparametrization functions on the Riemann surface $X$.

\begin{proof}[Proof of Proposition \ref{prop secondVar}]
We have done most of necessary elements for this proof in previous estimates. We assemble everything together here. 
Because $\Tr(\partial_{u}D_{A(0)}\partial_{v}\pi(0))(\Phi_{t}(x))$ is a H{\"o}lder function, and because it equals to the H{\"o}lder function $\eta(x)$ \eqref{eq eta} on a dense subset of $UX$. We conclude it coincides everywhere with $\eta(x)$ on $UX$.
We obtain
\begin{align*}
  \partial_{uv}f_{\rho(0)}\sim& -\partial_{u}(\Tr(\partial_{v}D_{H(0)} \pi(0)))\\
 =&-\Tr(\frac{\partial^2 D_{H(0)}}{\partial u \partial v}\pi(0)) - \Tr(\partial_{v}D_{H(0)}\partial_{u}\pi(0))\\
 =&\frac{1}{2}\phi_{uv}(p(x)) \nonumber \\
 &+\Re q_{\alpha}(x)\int_{0}^{\infty}e^{-2s}\Re q_{\beta}(\Phi_{s}(x))\mathrm{d}s+\Re q_{\alpha}(x)\int_{-\infty}^{0}e^{2s}\Re q_{\beta}(\Phi_{s}(x))\mathrm{d}s \nonumber \\
    &+2\Im q_{\alpha}(x)\int_{0}^{\infty}e^{-s}\Im q_{\beta}(\Phi_{s}(x))\mathrm{d}s+2\Im q_{\alpha}(x)\int_{-\infty}^{0}e^{s}\Im q_{\beta}(\Phi_{s}(x))\mathrm{d}s.
\end{align*}
where we recall here $\phi=\log \frac{2h}{\sigma}$ is a globally well-defined function defined in \eqref{eq:affinePDE1Global} evaluating at the point $p(x)\in X$ and $p: UX \to X$ is the projection from the unit tangent bundle to our surface.

\end{proof}

\end{itemize}

\section{Evaluation on Poincar\'e disk for the model case}
After the computation of first and second variations of reparametrization functions on $UX$ in the last two section, we are able to evaluate $\partial_{\beta}g_{\alpha \alpha}(\sigma)$. Our goal in this section is to show the following,

\begin{prop}
\label{prop Model}
 For $\sigma\in\mathcal{T}(S),\partial_{\beta} g_{\alpha \alpha}(\sigma)=0$.
\end{prop}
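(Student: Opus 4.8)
The plan is to substitute the variations computed in Propositions \ref{prop case1rep1} and \ref{prop secondVar} into the model-case formula
\[
\partial_{\beta}g_{\alpha\alpha}(\sigma)=\lim_{r\to\infty}\frac1r\left[\int_{UX}\Big(\int_0^r\partial_u f_{\rho(0)}^N\Big)^2\int_0^r\partial_v f_{\rho(0)}^N\,\mathrm{d}m_0+2\int_{UX}\int_0^r\partial_u f_{\rho(0)}^N\int_0^r\partial_{uv}f_{\rho(0)}^N\,\mathrm{d}m_0\right],
\]
and to show that each of the two resulting terms vanishes. By Proposition \ref{prop: firstdev} I may replace every factor by any Liv\v{s}ic-cohomologous representative, so I use $\partial_u f_{\rho(0)}^N\sim\Re q_\alpha$, $\partial_v f_{\rho(0)}^N\sim\Re q_\beta$, and $\partial_{uv}f_{\rho(0)}^N\sim -\partial_{uv}h(\rho(0))-\tfrac12\phi_{uv}\circ p-G$, where $G$ denotes the iterated-integral expression of Proposition \ref{prop secondVar}. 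The first term then becomes the higher covariance $Cov(\Re q_\alpha,\Re q_\alpha,\Re q_\beta,m_L)$ and the second becomes $2\,Cov(\Re q_\alpha,\partial_{uv}f_{\rho(0)}^N,m_L)$.

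First I would dispose of the trivial pieces. The constant $\partial_{uv}h(\rho(0))$ contributes a covariance of the mean-zero function $\Re q_\alpha$ against a constant, which vanishes by $\Phi$-invariance of $m_L$ (exactly the computation in Corollary \ref{cor:CovNotMeanZero}). Since $\phi_{uv}\circ p$ and $G$ need not have zero mean, I invoke Corollary \ref{cor:CovNotMeanZero} to replace them by their projections $P_{m_L}(\cdot)$, which only shifts them by constants and leaves all subsequent symmetry properties intact.

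The conceptual heart is a parity (time-reversal) symmetry of the Liouville measure. Let $\iota\colon UX\to UX$ be the flip $\iota(x)=-x$; it is the special case $\theta=\pi$ of the rotational symmetry, so $\iota^*m_L=m_L$, and it intertwines the flow by $\Phi_s\circ\iota=\iota\circ\Phi_{-s}$. Because $q_\alpha,q_\beta$ are cubic, the functions $\Re q_\alpha,\Im q_\alpha,\Re q_\beta,\Im q_\beta$ are all \emph{odd} under $\iota$ (each $dz$ contributes a sign and $(-1)^3=-1$). A direct check shows that under $\iota$ the two forward iterated integrals in $G$ (those over $[0,\infty)$) are exchanged with the two backward ones (those over $(-\infty,0]$), so that $G$ is \emph{even} under $\iota$; likewise $\phi_{uv}\circ p$ depends only on the base point and is even. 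Consequently the triple covariance has three odd factors while $Cov(\Re q_\alpha,\partial_{uv}f_{\rho(0)}^N)$ is of the form (odd)$\times$(even); in both cases the integrand is odd under the measure-preserving involution $\iota$, forcing the integral to equal its own negative, hence to vanish. This also clarifies the role of the first derivative emphasized in the introduction: the metric itself is a covariance of two odd factors (even, and positive), whereas inserting a third factor to form its first derivative makes it odd.

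To make the symmetry argument rigorous — in particular to license exchanging the $r\to\infty$ limit with the involution in the presence of the non-compact, exponentially weighted $s$-integrals defining $G$ — I would carry out the evaluation concretely on the Poincar\'e disk, following \cite{Variation_along_FuchsianLocus}. Fixing a point of $UX$, I identify the disk as a coordinate chart with the underlying geodesic as a diameter (the Fermi coordinates of Section 5), expand $q_\alpha(z),q_\beta(z)$ in convergent power series on the chart, and integrate against $m_L=\mathrm{d}A\,\mathrm{d}\theta$. Each monomial of the expansion transforms under the rotation $z\mapsto e^{i\theta}z$ with a definite frequency, and the rotational average confirms that no zero-frequency term survives, recovering the parity cancellation quantitatively. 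I expect the main obstacle to be precisely this bookkeeping: verifying that every term produced by the three nested (and doubly infinite) iterated integrals carries a nonzero net rotational frequency, and controlling convergence so that the $r\to\infty$ limit, the integration over $UX$, and the power-series expansion may be interchanged. Once that is in place, $\partial_\beta g_{\alpha\alpha}(\sigma)=0$ follows.
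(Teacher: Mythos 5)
Your proposal is correct, and its core mechanism is genuinely different from the paper's. The paper proves the stronger pointwise statement (Lemma \ref{lem 3cubic}) that $\int_{UX}\Re q_{\alpha}(x)\Re q_{\alpha}(\Phi_{t}(x))\Re q_{\beta}(\Phi_{s}(x))\,\mathrm{d}m_{0}=0$ for \emph{every} pair $(t,s)$, by expanding the cubic differentials on the Poincar\'e disk, extracting Fourier coefficients $A_n,B_n$, and forcing them all to vanish via the special flow times $s=t$ and $s=t/2$; the flip $x\mapsto -x$ appears there only as a tool to generate linear relations among the coefficients. You instead promote that flip to the main actor: since every factor in sight is either odd under $\iota(x)=-x$ (the cubic data $\Re q_{\alpha},\Im q_{\alpha},\Re q_{\beta},\Im q_{\beta}$) or even (the constant $\partial_{uv}h$, $\phi_{uv}\circ p$, and the iterated-integral part $G$, whose forward and backward halves are exchanged by $\iota$ exactly as you say), both terms of $\partial_{\beta}g_{\alpha\alpha}$ are integrals of odd functions against the $\iota$-invariant measure $m_{L}$. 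Two remarks. First, you undersell your own argument at the end: no interchange of the $r\to\infty$ limit with the involution is needed, because writing $F_{r}(x)=\int_{0}^{r}\Re q_{\alpha}(\Phi_{t}(x))\,\mathrm{d}t$ and $K_{r}(x)=\int_{0}^{r}W(\Phi_{t}(x))\,\mathrm{d}t$ with $W$ even, the identities $F_{r}(-x)=-F_{r}(\Phi_{-r}(x))$ and $K_{r}(-x)=K_{r}(\Phi_{-r}(x))$ together with flow- and flip-invariance of $m_{L}$ give $\int_{UX}F_{r}^{2}H_{r}\,\mathrm{d}m_{0}=0$ and $\int_{UX}F_{r}K_{r}\,\mathrm{d}m_{0}=0$ exactly, for every finite $r$ (the $s$-integrals in $G$ are absolutely convergent with exponential weights, so $G$ is a bounded H\"older function and the change of variables is elementary); the fallback to the disk expansion is therefore not needed for this case. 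Second, the trade-off explaining the paper's heavier route: your parity argument is special to the all-cubic case, since quadratic differentials are \emph{even} under $\iota$, so for $\partial_{i}g_{\alpha\alpha}$, $\partial_{j}g_{\alpha i}$ and $\partial_{\beta}g_{\alpha i}$ the integrands are even and parity yields nothing, whereas the Fourier-coefficient method applies uniformly to all four cases and also delivers the pointwise vanishing of the correlations.
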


Let's first write down the expression for $\partial_{\beta}g_{\alpha \alpha}(\sigma)$,
\begin{align*}
 \partial_{\beta}g_{\alpha \alpha}(\sigma)&=\partial_{v} \Bigg( {\langle \partial_{u}\rho(0,v), \partial_{u}\rho(0,v) \rangle}_{\Sub P}\Bigg) \Bigg|_{v=0}\\
    &=\lim\limits_{r\to \infty}\frac{1}{r}\left[\int_{\Sub UX}\left(\int_{0}^{r} \partial_{u}f_{\rho(0)}^{N}\mathrm{d}t \right)^2\int_{0}^{r}\partial_{v}f_{\rho(0)}^{N}\mathrm{d}t\mathrm{d}m_{0}+
    2\int_{\Sub UX} \int_{0}^{r}\partial_{u}f_{\rho(0)}^{N} \mathrm{d}t\int_{0}^{r} \partial_{uv}f_{\rho(0)}^{N}\mathrm{d}t\mathrm{d}m_{0}\right]\\
     &=\lim\limits_{r\to \infty}\frac{1}{r}\int_{\Sub UX}\left(\int_{0}^{r} \Re q_{\alpha}(\Phi_{t}(x))\mathrm{d}t \right)^2\int_{0}^{r}\Re q_{\beta}(\Phi_{t}(x))\mathrm{d}t\mathrm{d}m_{0}\\
     &+\lim\limits_{r\to \infty}\frac{1}{r}\int_{\Sub UX} 2\int_{0}^{r}\Re q_{\alpha}(\Phi_{t}(x)) \mathrm{d}t\int_{0}^{r} -\partial_{uv}h(\rho(0))-\partial_{uv}f_{\rho(0)}(\Phi_{t}(x))\mathrm{d}t\mathrm{d}m_{0}\\
     &=\mathrm{I}+\mathrm{II}
\end{align*}
Here the first term is denoted as $\mathrm{I}$ and the second term is denoted as $\mathrm{II}$. The formula for $\partial_{uv}f_{\rho(0)}$ is given in Proposition (\ref{prop secondVar}).

We aim to prove both $\mathrm{I}$ and $\mathrm{II}$ are zero for Proposition \ref{prop Model}. The following lemma will be crucial.
\begin{lem} \label{lem 3cubic}

For any $t,s\in \mathbb{R}$, we have
\begin{align}
&\int_{\Sub UX}\Re q_{\alpha}(x)\Re q_{\alpha}(\Phi_{t}(x))\Re q_{\beta}(\Phi_{s}(x))\mathrm{d}m_{0}(x)=0.\label{eq 3cubic1} \\
&\int_{\Sub UX}\Re q_{\alpha}(x)\Im q_{\alpha}(\Phi_{t}(x))\Im q_{\beta}(\Phi_{s}(x))\mathrm{d}m_{0}(x)=0. 
\label{eq 3cubic2}
\end{align}
\end{lem}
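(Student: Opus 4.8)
The plan is to lift the integral to the Poincaré disk and exploit the two symmetries of the Liouville measure $m_0=m_L$ recorded above: its invariance under the geodesic flow $\Phi$ and its rotational invariance $(e^{i\theta})^{*}m_L=m_L$. Recall from \eqref{eq cubic} that a cubic differential, viewed as a function on $UX$, carries rotational weight three: writing a unit tangent vector at $p$ as $dz(x)=\sigma^{-1/2}e^{i\phi}$, one has $q_{\alpha}(x)=q_{\alpha}(z)\sigma^{-3/2}e^{3i\phi}$, and similarly for $q_{\beta}$. First I would fix the base point $p=p(x)$, use an isometry of $\mathbb{D}$ to send $p$ to the center $0$, and arrange the geodesic determined by $x$ to run along the diameter in the direction $e^{i\phi}$ of $x$. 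Here geodesic-flow invariance is what lets me describe the three evaluation points $x,\Phi_t x,\Phi_s x$ uniformly as the points at arc length $0,t,s$ on this diameter, located at $e^{i\phi}\tanh(t/2)$ and $e^{i\phi}\tanh(s/2)$.

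Next I would insert the globally convergent power series $q_{\alpha}(z)=\sum_{m\ge0}a_m z^m$ and $q_{\beta}(z)=\sum_{k\ge0}b_k z^k$ of the lifted holomorphic differentials on this chart. A short computation then presents each factor as a Fourier series in the fiber angle,
\[
q_{\alpha}(\Phi_t x)=c_t\sum_{m\ge0}a_m\tanh^m(t/2)\,e^{i(m+3)\phi},\qquad q_{\beta}(\Phi_s x)=c_s\sum_{k\ge0}b_k\tanh^k(s/2)\,e^{i(k+3)\phi},
\]
with real factors $c_t,c_s$, while the unflowed factor contributes only the single minimal phase $q_{\alpha}(x)=\tfrac18 a_0 e^{3i\phi}$. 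Writing $\Re=\tfrac12(\,\cdot+\overline{\,\cdot\,})$ and expanding the triple product, integration over the fiber $\phi\in S^1$ annihilates every term of nonzero total phase, by rotational invariance of $m_L$. Because $q_{\alpha}(x)$ is pinned at the minimal weight $\pm3$ and all other phases have absolute value $\ge3$, vanishing of the total phase forces exactly one of the three factors to be conjugated, with the indices matched as $k=m+3$ or $m=k+3$.

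Finally I would integrate the surviving terms over $p\in X$, where each becomes a Petersson-type pairing of the form $\langle q_{\alpha}\,\mathcal{D}^{m}q_{\alpha},\,\mathcal{D}^{m+3}q_{\beta}\rangle$ (or with $\alpha,\beta$ interchanged), with $\mathcal{D}$ the weight-raising covariant derivative whose value recovers, up to normalization, the centered Taylor coefficients $a_m,b_k$. The key point is that the formal adjoint $\mathcal{D}^{*}$ is a weight-lowering operator that annihilates holomorphic sections and passes through the holomorphic factor $q_{\alpha}$; integrating by parts thus lowers the order of the $q_{\alpha}$- and $q_{\beta}$-derivatives together, step by step, until one reaches $\langle q_{\alpha}^2,\,\mathcal{D}^3 q_{\beta}\rangle=\langle \mathcal{D}^{*}(q_{\alpha}^2),\,\mathcal{D}^2 q_{\beta}\rangle=0$ since $q_{\alpha}^2$ is holomorphic. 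Hence every surviving coefficient vanishes, giving \eqref{eq 3cubic1}; and \eqref{eq 3cubic2} follows by the identical computation with $\Im=\tfrac1{2i}(\,\cdot-\overline{\,\cdot\,})$ in place of $\Re$, which changes only signs and harmless factors of $i$ and leaves both the phase-matching and the integration by parts intact.

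I expect the last step to be the main obstacle: organizing the bookkeeping of the Fourier terms that survive the fiber integration and recognizing them as pairings that collapse under the lowering operator. This is precisely where holomorphicity of $q_{\alpha}$ and $q_{\beta}$ is indispensable, and care is needed to justify the termwise integration, the validity of the centered power-series expansion over the flowed configuration, and the commutation identities $\mathcal{D}^{*}\mathcal{D}^m q=\kappa_m\,\mathcal{D}^{m-1}q$ on the hyperbolic surface.
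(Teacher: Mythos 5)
Your first half coincides with the paper's: lift to the Poincar\'e disk, expand each flowed factor as a Fourier series in the fiber angle, and use rotational invariance of $m_{L}$ so that only the phase-matched coefficients $A_{n}=\int_{UX}\Re(a_{0}a_{n}\bar{b}_{n+3})\,\mathrm{d}m_{0}$ and $B_{n}=\int_{UX}\Re(a_{0}\bar{a}_{n+3}b_{n})\,\mathrm{d}m_{0}$ survive (your parity argument pinning exactly one conjugated factor is the same bookkeeping). Where you genuinely diverge is in killing these coefficients. The paper stays inside the dynamics: it evaluates the triple correlation at the special flow times $s=t$ and $s=\tfrac{t}{2}$, uses $\Phi_{t}$-invariance together with the antipodal symmetry $\Phi_{-t}(x)=-\Phi_{t}(-x)$ and $(e^{i\pi})^{*}m_{0}=m_{0}$ to produce linear recursions among the $A_{n},B_{n}$, and solves those recursions by induction (with a separate case analysis for negative times, and different flow-time choices $s=2t,3t,mt$ in the later lemmas). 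You instead identify $a_{m}(x)$ with the $m$-th Maass raising operator applied to $q_{\alpha}$ at $p(x)$, recognize $A_{n}$ and $B_{n}$ as Petersson pairings $\langle q_{\alpha}\mathcal{D}^{n}q_{\alpha},\mathcal{D}^{n+3}q_{\beta}\rangle$ and $\langle q_{\alpha}\mathcal{D}^{n}q_{\beta},\mathcal{D}^{n+3}q_{\alpha}\rangle$, and integrate by parts against the lowering operator $L=\sigma^{-1}\bar{\partial}$ until holomorphicity of $q_{\alpha}^{2}$ (respectively $q_{\alpha}q_{\beta}$ --- note your second family is not literally ``$\alpha,\beta$ interchanged'') forces the pairing to vanish. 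This works: at the center of the normalized chart the Christoffel symbols and all their holomorphic derivatives vanish, so $a_{m}=\tfrac{1}{m!}\mathcal{D}^{m}q$; the adjoint relation $\langle f,\mathcal{D}h\rangle=-\langle Lf,h\rangle$ holds on a closed surface; $L$ passes through holomorphic factors; and $[L,\mathcal{D}_{n}]=-\tfrac{n}{2}\,\mathrm{id}$ on curvature $-1$ gives $L\mathcal{D}^{m}q=\kappa_{m}\mathcal{D}^{m-1}q$. What your route buys is uniformity: one mechanism disposes of all index patterns, of negative $t,s$ (the series converges on the whole disk and the signs $(-1)^{m}$ are harmless), and of the analogous Lemmas 6.2, 7.2, 7.3 with mixed degrees, whereas the paper must tailor the flow-time trick to each case. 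What it costs is setting up the raising/lowering calculus, which the paper avoids entirely; the paper's argument is longer but uses nothing beyond the two symmetries of $m_{L}$ already established. Your proof is correct modulo writing out these standard identities, which you correctly flag as the remaining work.
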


We use the methods in  \cite{Variation_along_FuchsianLocus} to show the integrals are zero. Similarly to the proof of Theorem 6.3.1 in \cite{Variation_along_FuchsianLocus}, the key is to use the symmetries properties of the Liouville measure $m_{0}=m_{L}$ and homogeneity of holomorphic differentials viewed as functions on $UX$. We transfer the problem of evaluating the integrals in equation \text{\eqref{eq 3cubic1}} and equation \text{\eqref{eq 3cubic2}} to analyzing the Fourier coefficients of holomorphic differentials.

Before we start our proof, we first explain the coordinates we will use to do the computation following \cite{Variation_along_FuchsianLocus}. We take Poincar\'e disk as our charts. Pick a point $x\in UX$. We identify the universal cover of $(X,\sigma)$ with $\mathbb{D}$ by the unique isometry
that takes $\pi(x)\in X$ to $0\in \mathbb{D}$ and identify the vector $x\in UX$ with vector $(1,0)\in T_{0}\mathbb{D}$. 

We express our holomorphic differentials in these coordinates. For the holomorphic cubic differential $q_{\alpha}$, it has the following analytic expansion in the coordinate based on $x$,
\begin{equation*}
    q_{\alpha,x}(z)=\sum\limits_{n=1}^{\infty}a_{n}(x)z^{n}dz^{3}.
\end{equation*}
Recall the hyperbolic distance $d_{H}$ in the Poincar\'e disk model satisfies,
\begin{equation*}
    d_{H}(0,Re^{i\theta})=r(R)=\frac{1}{2}\log (\frac{1+R}{1-R}).
\end{equation*}
Thus $\frac{\partial}{\partial_{r}}=(1-R^{2})\frac{\partial}{\partial R}$ and 
\begin{equation*}
    dz(\frac{\partial}{\partial r})\bigg|_{Re^{i\theta}}=(1-R^{2})e^{i\theta}.
\end{equation*}

Denoting $\tilde{q}_{\alpha,x}(z):=\Re \left(q_{\alpha,x}(z)(\frac{\partial}{\partial r},\frac{\partial}{\partial r},\frac{\partial}{\partial r})\right)$, one has
\begin{equation}
   \Re q_{\alpha}(\Phi_{r}(e^{i\theta}x))= \tilde{q}_{\alpha,x}(R e^{i\theta})=\Re \Bigg(\sum\limits_{n=0}^{\infty}a_{n}(x)R^{n}(1-R^2)^{3}e^{i(n+3)\theta} \Bigg).
   \label{eq: analyticExI}
\end{equation}
In particular,
 when $r=0$, 
 \begin{equation*}
    \lim\limits_{R\to 0} dz(\frac{\partial}{\partial r})\bigg|_{R e^{i\theta}}=e^{i\theta}.
\end{equation*}

Therefore,
\begin{equation}
   \Re q_{\alpha}(e^{i\theta}x)= \tilde{q}_{\alpha,x}(0\cdot e^{i\theta})=
   \lim\limits_{R\to 0}\Re \left(q_{\alpha,x}(R e^{i\theta})(\frac{\partial}{\partial r},\frac{\partial}{\partial r},\frac{\partial}{\partial r})\right)=\Re(a_{0}(x)e^{i3\theta}).
  \label{eq: analyticExIS} 
\end{equation}

Suppose the coefficients of the analytic expansion for $q_{\beta}$ are $b_{n}$, then
\begin{equation}
   \Re q_{\beta}(\Phi_{r}(e^{i\theta}x))= \tilde{q}_{\beta,x}(Re^{i\theta})=\Re \Bigg(\sum\limits_{n=0}^{\infty}b_{n}(x)R^{n}(1-R^2)^{3}e^{i(n+3)\theta} \Bigg).
   \label{eq: analyticExII}
\end{equation}
For the convenience of computation later for other cases, we also write down here two analytic expansions for holomorphic quadratic differentials $q_{i}, q_{j}$ with coefficients $c_{n}$ and $d_{n}$ respectively.
\begin{align}
\Re q_{i}(\Phi_{r}(e^{i\theta}x))&= \tilde{q}_{i,x}(Re^{i\theta})=\Re \Bigg(\sum\limits_{n=0}^{\infty}c_{n}(x)R^{n}(1-R^2)^{2}e^{i(n+2)\theta} \Bigg).
   \label{eq: analyticExIII}\\
   \Re q_{j}(\Phi_{r}(e^{i\theta}x))&= \tilde{q}_{j,x}(Re^{i\theta})=\Re \Bigg(\sum\limits_{n=0}^{\infty}d_{n}(x)R^{n}(1-R^2)^{2}e^{i(n+2)\theta} \Bigg).
   \label{eq: analyticExIIII}
\end{align}

\begin{proof}[Proof of Lemma \ref {lem 3cubic}]\hfill

We begin with showing equation \text{\eqref{eq 3cubic1}}. 
 
The proof of it will be divided into two cases: 
\begin{itemize}
    \item  $t\geq 0$ and $s \geq 0$; 
    \item $t < 0$ or $s<0$.
\end{itemize}

In the first case,  we work with the analytic expansions (\ref{eq: analyticExI}) and (\ref{eq: analyticExII}). We choose for two special situations: $s=t$ and $s=\frac{t}{2}$. We observe some symmetries in these two situations and argue from these symmetries that equation \text{\eqref{eq 3cubic1}} holds for the first case. We then apply the results for the first case to the second case by flow invariant properties of $m_{L}$. Equation \text{\eqref{eq 3cubic2}} then follows easily from equation \text{\eqref{eq 3cubic1}} once we find the relation between them.

Since $m_{0}=m_{L}$ is rotationally invariant, i.e. $(e^{i\theta})^{*}m_{L}=m_{L}$, we have
\begin{align*}
&\int_{\Sub UX}\Re q_{\alpha}(x)\Re q_{\alpha}(\Phi_{t}(x))\Re q_{\beta}(\Phi_{s}(x))\mathrm{d}m_{0}(x)\\
=&\frac{1}{2\pi}\int_{0}^{2\pi}\int_{\Sub UX}\Re q_{\alpha}(e^{i\theta}x)\Re q_{\alpha}(\Phi_{t}(e^{i\theta}x))\Re q_{\beta}(\Phi_{s}(e^{i\theta}x))\mathrm{d}m_{0}(x)\mathrm{d}\theta.
\end{align*}

\begin{enumerate}
\item 
We restrict ourselves to the case $t,s \geq 0$ of equation \text{\eqref{eq 3cubic1}} so that we can work with the analytic expansions (\ref{eq: analyticExI}) and (\ref{eq: analyticExII}). 

 We denote $t(T)=\frac{1}{2}\log (\frac{1+T}{1-T})$ and $s(S)=\frac{1}{2}\log (\frac{1+S}{1-S})$. We first consider $t>0$ and $s>0$. 
Then if we first integral over the $\theta$-variable, in terms of the analytic expansion, we get
\begin{align}
&\int_{\Sub UX}\Re q_{\alpha}(x)\Re q_{\alpha}(\Phi_{t}(x))\Re q_{\beta}(\Phi_{s}(x))\mathrm{d}m_{0}(x)\nonumber \\
=&\frac{1}{4}\sum\limits_{n=0}^{\infty}\bigg(\int_{UX}\Re(a_{0}a_{n}\bar{b}_{n+3})\mathrm{d}m_{0}T^{n}(1-T^{2})^{3}S^{n+3}(1-S^{2})^{3}\nonumber \\
+&\int_{UX}\Re(a_{0}\bar{a}_{n+3}b_{n})\mathrm{d}m_{0}T^{n+3}(1-T^{2})^{3}S^{n}(1-S^{2})^{3}\bigg).
\label{eq writeOutExpansion}
\end{align}
We denote $A_n= \int_{UX}\Re(a_{0}a_{n}\bar{b}_{n+3})\mathrm{d}m_{0}$ and $B_n=\int_{UX}\Re(a_{0}\bar{a}_{n+3}b_{n})\mathrm{d}m$. To show equation \eqref{eq 3cubic1} holds for $t,s \geq 0$, it suffices to prove for $n\geq0$,
\begin{equation}
   A_n=B_n=0.
    \label{eq coeff}
\end{equation}

If $t=0$ or $s=0$, equation \text{\eqref{eq 3cubic1}} is equivalent to the follows which are included in equation \text{\eqref{eq coeff}}:
 \begin{equation*}
    A_0=B_0=0. 
 \end{equation*}

To prove equation \text{\eqref{eq coeff}}, we consider two special cases of equation \text{\eqref{eq 3cubic1}}: flow time $s=t$ and $s=\frac{t}{2}$.

 By the $\Phi_{t}$-invariance of $m_{0}$, flow time $s=t$ satisfies 
 \begin{align*}
 &\int_{\Sub UX}\Re q_{\alpha}(x)\Re q_{\alpha}(\Phi_{t}(x))\Re q_{\beta}(\Phi_{t}(x))\mathrm{d}m_{0}(x)\\
 =&\int_{\Sub UX}\Re q_{\alpha}(\Phi_{-t}(x))\Re q_{\alpha}(x)\Re q_{\beta}(x)\mathrm{d}m_{0}(x).
\end{align*}

 A convenient observation is flowing from $x$ backwards for time $t$ is the opposite of flowing forwards for time $t$ from $-x$, i.e.
 $\Phi_{-t}(x)=-\Phi_{t}(-x)$.  Let $y=-x$ and notice $(e^{i\pi})^{*}m_{0}=m_{0}$, we have
 \begin{align*}
 &\int_{\Sub UX}\Re q_{\alpha}(\Phi_{-t}(x))\Re q_{\alpha}(x)\Re q_{\beta}(x)\mathrm{d}m_{0}(x)\\
 =&-\int_{\Sub UX}\Re q_{\alpha}(\Phi_{t}(y))\Re q_{\alpha}(y)\Re q_{\beta}(y)\mathrm{d}m_{0}(y).
 \end{align*}

 Therefore 
 \begin{align*}
&\int_{\Sub UX}\Re q_{\alpha}(x)\Re q_{\alpha}(\Phi_{t}(x))\Re q_{\beta}(\Phi_{t}(x))\mathrm{d}m_{0}(x)\\
 =&-\int_{\Sub UX}\Re q_{\alpha}(x)\Re q_{\alpha}(\Phi_{t}(x))\Re q_{\beta}(x)\mathrm{d}m_{0}(x).
 \end{align*}
 This implies
 \begin{align}
 &\sum\limits_{n=0}^{\infty}\bigg(A_n +B_n \bigg) T^{2n+3}(1-T^{2})^{6}\nonumber \\
 &=-B_0 T^{3}(1-T^{2})^{3}. \label{eq comb1}
 \end{align}

 The coefficient of $T^{0}$ yields 
\begin{equation}
  A_{0}+2B_{0}=0 \label{eq 1stComparison}  
\end{equation}

Similarly when flow time $s=\frac{t}{2}$. We let $y=-x$ and again use the fact $(e^{i\pi})^{*}m_{0}=m_{0}$.
\begin{align*}
&\int_{\Sub UX}\Re q_{\alpha}(x)\Re q_{\alpha}(\Phi_{t}(x))\Re q_{\beta}(\Phi_{\frac{1}{2}t}(x))\mathrm{d}m_{0}(x)\\
=&\int_{\Sub UX}\Re q_{\alpha}(\Phi_{-t}(x))\Re q_{\alpha}(x)\Re q_{\beta}(\Phi_{-\frac{1}{2}t}(x))\mathrm{d}m_{0}(x)\\
=&-\int_{\Sub UX}\Re q_{\alpha}(\Phi_{t}(-x))\Re q_{\alpha}(-x)\Re q_{\beta}(\Phi_{\frac{1}{2}t}(-x))\mathrm{d}m_{0}(x)\\
=&-\int_{\Sub UX}\Re q_{\alpha}(\Phi_{t}(y))\Re q_{\alpha}(y)\Re q_{\beta}(\Phi_{\frac{1}{2}t}(y))\mathrm{d}m_{0}(y).
\end{align*}
Thus $\int_{\Sub UX}\Re q_{\alpha}(x)\Re q_{\alpha}(\Phi_{t}(x))\Re q_{\beta}(\Phi_{\frac{1}{2}t}(x))\mathrm{d}m_{0}(x)=0$. 

Recall $t(T)=\frac{1}{2}\log (\frac{1+T}{1-T})$ and $s=\frac{1}{2}\log (\frac{1+S}{1-S})$. In the case $s=\frac{1}{2}t$, we have $T=\frac{2S}{S^{2}+1}$.
The analytic expansion for $\int_{\Sub UX}\Re q_{\alpha}(x)\Re q_{\alpha}(\Phi_{t}(x))\Re q_{\beta}(\Phi_{\frac{1}{2}t}(x))\mathrm{d}m_{0}(x)=0$ with condition $T=\frac{2S}{S^{2}+1}$ simplifies to 
\begin{equation*}
   \sum\limits_{n=0}^{\infty}(A_{n}(S^{2}+1)^{3}+8B_{n})(\frac{2S^{2}}{S^{2}+1})^{n}=0. 
\end{equation*}
Denote $W=\frac{S^2}{S^2+1}$ where $0<W<\frac{1}{2}$. Then the above is equivalent to 
\begin{align*}
    \sum\limits_{n=0}^{\infty}(A_{n}\sum_{k=0}^{\infty}\frac{1}{2}(k+1)(k+2)W^k+8B_{n})2^{n}W^{n}=0. 
\end{align*}
This give relations
\begin{align*}
    2^{n+3}B_{n}+\sum\limits_{k=0}^{n}(n-k+1)(n-k+2)2^{k-1}A_{k}=0, \hspace{.5in} \text{$n\geq 0$.}
\end{align*}
When $n=0$, combining with equation \text{\eqref{eq comb1}}, we obtain $A_{0}=B_{0}=0$. Then equation \text{\eqref{eq comb1}} with right hand side zero yields $A_{n}+B_{n}=0$ for all $n\in \mathbb{N}$. This fact combining with the above formula gives $A_{n}=B_{n}=0$ and equation \eqref{eq 3cubic1} holds for $t,s \geq 0$. 

\item
We then move on to $t < 0$ or $s < 0$, there are three cases we need to discuss.
\begin{itemize}
    \item If $t\leq s$ and $t < 0$, then as $m_{0}$ is $\Phi_{t}$-invariant,
\begin{align*}
&\int_{\Sub UX}\Re q_{\alpha}(x)\Re q_{\alpha}(\Phi_{t}(x))\Re q_{\beta}(\Phi_{s}(x))\mathrm{d}m_{0}(x)\\
=&\int_{\Sub UX}\Re q_{\alpha}(\Phi_{-t}(x))\Re q_{\alpha}(x)\Re q_{\beta}(\Phi_{s-t}(x))\mathrm{d}m_{0}(x).\\
\end{align*}
This is the same as $s,t\geq 0$ case.

   \item If $s< t \leq 0$, then
   
\begin{align*}
&\int_{\Sub UX}\Re q_{\alpha}(x)\Re q_{\alpha}(\Phi_{t}(x))\Re q_{\beta}(\Phi_{s}(x))\mathrm{d}m_{0}(x)\\
=&\int_{\Sub UX}\Re q_{\alpha}(\Phi_{-t}(x))\Re q_{\alpha}(x)\Re q_{\beta}(\Phi_{s-t}(x))\mathrm{d}m_{0}(x)  \\
=&-\int_{\Sub UX}\Re q_{\alpha}(\Phi_{-t}(x))\Re q_{\alpha}(x)\Re q_{\beta}(\Phi_{t-s}(-x))\mathrm{d}m_{0}(x)=0.
\end{align*}
This is from the observation that the analytic expansion of $\Re q_{\beta}(\Phi_{r}(-e^{i\theta}x))$ based at $x$ for $r > 0$ is
\begin{equation*}
   \Re q_{\beta}(\Phi_{r}(-e^{i\theta}x))=\Re  q_{\beta}(\Phi_{r}(e^{i(\theta+\pi)}x))= \tilde{q}_{\beta,x}(Re^{i(\theta+\pi)})=\Re \Bigg(\sum\limits_{n=0}^{\infty}b_{n}(x)R^{n}(1-R^2)^{3}e^{i(n+3)(\theta+\pi)} \Bigg).
\end{equation*}
and that for $ n \geq 0$
\begin{align*}
    e^{-i (n+6) \pi}\int_{UX}\Re(a_{0}a_{n}\bar{b}_{n+3})\mathrm{d}m_{0}&=0,\\
    e^{i (n+3) \pi}\int_{UX}\Re(a_{0}\bar{a}_{n+3}b_{n})\mathrm{d}m_{0}&=0.
\end{align*}

\item If $s<0\leq t $, then we consider 
\begin{align*}
    &\int_{\Sub UX}\Re q_{\alpha}(x)\Re q_{\alpha}(\Phi_{t}(x))\Re q_{\beta}(\Phi_{s}(x))\mathrm{d}m_{0}(x)\nonumber \\
    =&\int_{\Sub UX}\Re q_{\alpha}(\Phi_{t}(-x))\Re q_{\alpha}(x)\Re q_{\beta}(\Phi_{t-s}(-x))\mathrm{d}m_{0}(x)=0. 
\end{align*}
The argument is essentially the same as other cases. This finishes the proof of equation (\ref{eq 3cubic1}).
\end{itemize}
\end{enumerate}

Equation (\ref{eq 3cubic2}) follows easily from equation (\ref{eq 3cubic1}) because of the facts that for all $ t,s \in \mathbb{R}$,
\begin{align*}
&\Re\bigg(\int_{\Sub UX}\Re q_{\alpha}(x) q_{\alpha}(\Phi_{t}(x)) q_{\beta}(\Phi_{s}(x))\mathrm{d}m_{0}(x)\bigg)\\
=&\int_{\Sub UX}\Re q_{\alpha}(x)\Re q_{\alpha}(\Phi_{t}(x))\Re q_{\beta}(\Phi_{s}(x))\mathrm{d}m_{0}(x)-\int_{\Sub UX}\Re q_{\alpha}(x)\Im q_{\alpha}(\Phi_{t}(x))\Im q_{\beta}(\Phi_{s}(x))\mathrm{d}m_{0}(x).
\end{align*}
 
and 
\begin{equation}
\int_{\Sub UX}\Re q_{\alpha}(x) q_{\alpha}(\Phi_{t}(x)) q_{\beta}(\Phi_{s}(x))\mathrm{d}m_{0}(x)=0. 
\label{eq 1Re2q}
\end{equation}
This is easy to see from the fact that $\int_{0}^{2\pi}\Re(a_{0}(x)e^{i3\theta})e^{i(n+3)\theta}e^{i(m+3)\theta}\mathrm{d}\theta=0$ for all $n,m \geq 0$
and thus for $t,s > 0$
\begin{align*}
&\int_{\Sub UX}\Re q_{\alpha}(x) q_{\alpha}(\Phi_{t}(x)) q_{\beta}(\Phi_{s}(x))\mathrm{d}m_{0}(x) \\
=& \frac{1}{2\pi}\int_{0}^{2\pi}\int_{\Sub UX}\Re q_{\alpha}(e^{i\theta}x) q_{\alpha}(\Phi_{t}(e^{i\theta}x)) q_{\beta}(\Phi_{s}(e^{i\theta}x))\mathrm{d}m_{0}(x)\mathrm{d}\theta \\
=&\frac{1}{2\pi}\sum\limits_{m,n\geq 0}\int_{\Sub UX}\int_{0}^{2\pi} \Re(a_{0}(x)e^{i3\theta})\Bigg(a_{n}(x)T^{n}(1-T^2)^{3}e^{i(n+3)\theta} \Bigg)\Bigg(b_{m}(x)S^{m+3}(1-S^2)^{3}e^{i(m+3)\theta} \Bigg)\mathrm{d}\theta\mathrm{d}m_{0}(x)\\
=&0
\end{align*}
The argument for $t\leq0$ or $s\leq0$ can be transferred back to $t>0$ and $s>0$ cases. One needs the observation that $-\Phi_{-t}(-x)=\Phi_{t}(x)$ and $-e^{i\theta}x=e^{i(\theta+\pi)}x$. We conclude equation \eqref{eq 1Re2q} holds for all $t,s \in \mathbb{R}$ and thus equation (\ref{eq 3cubic2}) holds.

\end{proof}

With these preliminaries accomplished, we can now prove Proposition \ref{prop Model}.

\begin{proof}[Proof of Proposition \ref{prop Model}]\hfill

 We start to show $\mathrm{I}=\mathrm{II}=0$.
 
 $\mathrm{I}=0$ reduces to equation \text{\eqref{eq 3cubic1}} of Lemma ($\ref{lem 3cubic}$) if we take $r \to \infty$ for the following 
\begin{align*}
&\frac{1}{r}\int_{\Sub UX}\left(\int_{0}^{r} \Re q_{\alpha}(\Phi_{t}(x))\mathrm{d}t \right)^2\int_{0}^{r}\Re q_{\beta}(\Phi_{t}(x))\mathrm{d}t\mathrm{d}m_{0} \\
=&\frac{1}{r}\int_{0}^{r}\int_{0}^{r}\int_{0}^{r}\int_{\Sub UX}\Re q_{\alpha}(\Phi_{t}(x))\Re q_{\alpha}(\Phi_{s}(x))\Re q_{\beta}(\Phi_{\mu}(x))\mathrm{d}m_{0}\mathrm{d}\mu\mathrm{d}t\mathrm{d}s \hspace{.25in} \text{Fubini's theorem}\\
=&\frac{1}{r}\int_{0}^{r}\int_{0}^{r}\int_{0}^{r}\int_{\Sub UX}\Re q_{\alpha}(\Phi_{t-s}(x))\Re q_{\alpha}(x)\Re q_{\beta}(\Phi_{\mu-s}(x))\mathrm{d}m_{0}\mathrm{d}\mu\mathrm{d}t\mathrm{d}s \hspace{.25in} \text{since $m_{0}$ is $\Phi_{t}$-invariant}\\
=&0.
\end{align*}
We next look into $\mathrm{II}$.
\begin{align*}
\mathrm{II}=&\lim\limits_{r\to \infty}\frac{1}{r}\int_{\Sub UX} 2\int_{0}^{r}\Re q_{\alpha}(\Phi_t(x)) \mathrm{d}t\int_{0}^{r} -\partial_{uv}h(\rho(0))-\partial_{uv}f_{\rho(0)}(\Phi_t(x))\mathrm{d}t\mathrm{d}m_{0} \\ 
=&-\lim\limits_{r\to \infty}\frac{1}{r}\int_{\Sub UX} 2\int_{0}^{r}\Re q_{\alpha}(\Phi_t(x)) \mathrm{d}t\int_{0}^{r} \partial_{uv}h(\rho(0))\mathrm{d}t\mathrm{d}m_{0}\\
-&\lim\limits_{r\to \infty}\frac{1}{r}\int_{\Sub UX} 2\int_{0}^{r}\Re q_{\alpha}(\Phi_t(x)) \mathrm{d}t\int_{0}^{r}Tr(\frac{\partial^2 D_{A(0)}}{\partial u \partial v}\pi(0))(\Phi_t(x))\mathrm{d}t\mathrm{d}m_{0}\\
+ &\lim\limits_{r\to \infty}\frac{1}{r}\int_{\Sub UX} 2\int_{0}^{r}\Re q_{\alpha}(\Phi_t(x))  Tr(\partial_{v}D_{A(0)}\partial_{u}\pi(0))(\Phi_t(x))\mathrm{d}t\mathrm{d}m_{0}.
\end{align*}
There are three terms here. Since $\partial_{uv}h(\rho(0))$ is a constant, the first term is
\begin{align*}
    &\lim\limits_{r\to \infty}\frac{1}{r}\int_{\Sub UX} 2\int_{0}^{r}\Re q_{\alpha}(\Phi_{t}(x)) \mathrm{d}t\int_{0}^{r} \partial_{uv}h(\rho(0))\mathrm{d}t\mathrm{d}m_{0}\\
    =&\lim\limits_{r\to \infty}2\partial_{uv}h(\rho(0))\int_{UX}\int_{0}^{r}\Re q_{\alpha}(\Phi_{t}(x)) \mathrm{d}t\mathrm{d}m_{0}.
\end{align*}
Recall our expressions given by formula (\ref{eq: analyticExI}) and formula (\ref{eq: analyticExIII}). Then
\begin{align*}
&\int_{UX}\int_{0}^{r}\Re q_{\alpha}(\Phi_{t}(x)) \mathrm{d}t\mathrm{d}m_{0}\\
=&\int_{0}^{r}\int_{UX}\Re q_{\alpha}(\Phi_{t}(x))\mathrm{d}m_{0}\mathrm{d}t\\
=&\int_{0}^{r}\int_{UX}\Re q_{\alpha}(x)\mathrm{d}m_{0}\mathrm{d}t \hspace{.35in} \text{since $m_{0}$ is $\Phi_{t}$-invariant}\\
=&\frac{1}{2\pi}\int_{0}^{r}\int_{UX}\int_{0}^{2\pi}\Re q_{\alpha}(e^{i\theta}x)\mathrm{d}\theta \mathrm{d}m_{0} \mathrm{d}t \hspace{.35in} \text{since $m_{0}$ is rotationally invariant} \\
=&\frac{r}{2\pi}\int_{UX}\int_{0}^{2\pi} \Re(a_{0}(x)e^{i3\theta})\mathrm{d}\theta\mathrm{d}m_{0}\\
=&0.
\end{align*}
The second term in $\mathrm{II}$ is 
\begin{align*}
 &-\lim\limits_{r\to \infty}\frac{1}{r}\int_{\Sub UX} 2\int_{0}^{r}\Re q_{\alpha}(\Phi_t(x)) \mathrm{d}t\int_{0}^{r}\Tr(\frac{\partial^2 D_{A(0)}}{\partial u \partial v}\pi(0))(\Phi_t(x)) \mathrm{d}t\mathrm{d}m_{0}\\
 =&\lim\limits_{r\to \infty}\frac{1}{r}\int_{\Sub UX} 2\int_{0}^{r}\Re q_{\alpha}(\Phi_t(x)) \mathrm{d}t\int_{0}^{r} \frac{1}{2}\phi_{uv}(\Phi_t(x)) \mathrm{d}t\mathrm{d}m_{0}, 
\end{align*}
recalling that $\phi$ is a globally well-defined function on $X$ (see formula \eqref{eq:affinePDE1Global}).
$$\frac{1}{2}\phi_{uv}(p(\Phi_{t}(x)))=\frac{1}{2}\phi_{uv}(p(\Phi_{t}(e^{i\theta}x))).$$
So
\begin{align*}
 &\frac{1}{r}\int_{\Sub UX} 2\int_{0}^{r}\Re q_{\alpha}(\Phi_{t}(x)) \mathrm{d}t\int_{0}^{r} \frac{1}{2}\phi_{uv}(\Phi_{t}(x)) \mathrm{d}t\mathrm{d}m_{0}\\
 =&\frac{1}{r}\int_{\Sub UX}\int_{0}^{2\pi} 2\int_{0}^{r}\Re q_{\alpha}(\Phi_{t}(e^{i\theta}x)) \mathrm{d}t\int_{0}^{r} \frac{1}{2}\phi_{uv}(p(\Phi_{t}(e^{i\theta}x))) \mathrm{d}t \mathrm{d}\theta \mathrm{d}m_{0}\\
 =&\frac{1}{r}\int_{\Sub UX}\int_{0}^{2\pi} 2\int_{0}^{r}\Re q_{\alpha}(\Phi_{t}(e^{i\theta}x)) \mathrm{d}t\int_{0}^{r} \frac{1}{2}\phi_{uv}(p(\Phi_{t}(x))) \mathrm{d}t \mathrm{d}\theta \mathrm{d}m_{0}\\
  =&\frac{1}{r} \int_{0}^{r}\int_{0}^{r}\int_{\Sub UX}\phi_{uv}(p(\Phi_{t-s}(x)))\int_{0}^{2\pi}\Re q_{\alpha}(e^{i\theta}x)  \mathrm{d}\theta \mathrm{d}m_{0}\mathrm{d}s\mathrm{d}t.
\end{align*}
Again by the fact $ \int_{0}^{2\pi}\Re q_{\alpha}(e^{i\theta}x)\mathrm{d}\theta
=\int_{0}^{2\pi}\Re(a_{0}(x)e^{i3\theta})\mathrm{d}\theta=0$, we conclude 
\begin{align*}
\lim\limits_{r\to \infty}\frac{1}{r}\int_{\Sub UX} 2\int_{0}^{r}\Re q_{\alpha} \mathrm{d}t\int_{0}^{r}\Tr(\frac{\partial^2 D_{A(0)}}{\partial u \partial v}\pi(0)) \mathrm{d}t\mathrm{d}m_{0}=0.
\end{align*}
It remains to show
\begin{align*}
\lim\limits_{r\to \infty}\frac{1}{r}\int_{\Sub UX} 2\int_{0}^{r}\Re q_{\alpha} \mathrm{d}t\int_{0}^{r} \Tr(\partial_{v}D_{A(0)}\partial_{u}\pi(0))\mathrm{d}t\mathrm{d}m_{0}=0
\end{align*}
This is
\begin{align*}
&\lim\limits_{r\to \infty}\frac{1}{r}\int_{\Sub UX} 2\int_{0}^{r}\Re q_{\alpha}(\Phi_{t}(x)) \mathrm{d}t\int_{0}^{r} \eta(\Phi_{t}(x))\mathrm{d}t\mathrm{d}m_{0}\\
=&-\lim\limits_{r\to \infty}\frac{1}{r}\Bigg(\int_{\Sub UX} 2\int_{0}^{r}\Re q_{\alpha}(\Phi_{t}(x)) \mathrm{d}t\int_{0}^{r}\Re q_{\alpha}(\Phi_{\mu}(x))\int_{0}^{\infty}e^{-2s}\Re q_{\beta}(\Phi_{\mu+s}(x))\mathrm{d}s\mathrm{d}\mu\mathrm{d}m_{0}\\
&+\int_{\Sub UX} 2\int_{0}^{r}\Re q_{\alpha}(\Phi_{t}(x)) \mathrm{d}t\int_{0}^{r}\Re q_{\alpha}(\Phi_{\mu}(x))\int_{-\infty}^{0}e^{2s}\Re q_{\beta}(\Phi_{\mu+s}(x))\mathrm{d}s\mathrm{d}\mu\mathrm{d}m_{0}\\
&+\int_{\Sub UX} 2\int_{0}^{r}\Re q_{\alpha}(\Phi_{t}(x)) \mathrm{d}t\int_{0}^{r}2\Im q_{\alpha}(\Phi_{\mu}(x))\int_{0}^{\infty}e^{-s}\Im q_{\beta}(\Phi_{\mu+s}(x))\mathrm{d}s\mathrm{d}\mu \mathrm{d}m_{0}\\
&+\int_{\Sub UX} 2\int_{0}^{r}\Re q_{\alpha}(\Phi_{t}(x)) \mathrm{d}t\int_{0}^{r}2\Im q_{\alpha}(\Phi_{\mu}(x))\int_{-\infty}^{0}e^{s}\Im q_{\beta}(\Phi_{\mu+s}(x))\mathrm{d}s\mathrm{d}\mu\mathrm{d}m_{0}\bigg).
\end{align*}
We have estimates for the following tail terms,
\begin{align*}
    &\frac{1}{r}\int_{\Sub UX} 2\int_{0}^{r}\Re q_{\alpha}(\Phi_{t}(x)) \mathrm{d}t\int_{0}^{r}\Re q_{\alpha}(\Phi_{\mu}(x))\int_{r}^{\infty}e^{-2s}\Re  q_{\beta}(\Phi_{\mu+s}(x))\mathrm{d}s\mathrm{d}\mu\mathrm{d}m_{0}\\
    +&\frac{1}{r}\int_{\Sub UX} 2\int_{0}^{r}\Re q_{\alpha}(\Phi_{t}(x)) \mathrm{d}t\int_{0}^{r}\Re q_{\alpha}(\Phi_{\mu}(x))\int_{-\infty}^{-r}e^{2s}\Re q_{\beta}(\Phi_{\mu+s}(x))\mathrm{d}s\mathrm{d}\mu\mathrm{d}m_{0}\\
    \leq&\frac{4M^{3}}{r}r^{2}\int_{r}^{\infty}e^{-2s}\mathrm{d}s=2M^{3}r e^{-2r} \xrightarrow{r\to\infty} 0.
\end{align*}
The other two tail terms with integrals involving $\Im q_{\alpha}$ and $\Im q_{\beta}$ also go to zeros for the same reason. So in fact
\begin{align*}
&\lim\limits_{r\to \infty}\frac{1}{r}\int_{\Sub UX} 2\int_{0}^{r}\Re q_{\alpha} \mathrm{d}t\int_{0}^{r} \Tr(\partial_{v}D_{A(0)}\partial_{u}\pi(0))\mathrm{d}t\mathrm{d}m_{0}\\
=&-\lim\limits_{r\to \infty}\frac{1}{r}\Bigg(\int_{\Sub UX} 2\int_{0}^{r}\Re q_{\alpha}(\Phi_{t}(x)) \mathrm{d}t\int_{0}^{r}\Re q_{\alpha}(\Phi_{\mu}(x))\int_{0}^{r}e^{-2s}\Re q_{\beta}(\Phi_{\mu+s}(x))\mathrm{d}s\mathrm{d}\mu\mathrm{d}m_{0}\\
&+\int_{\Sub UX} 2\int_{0}^{r}\Re q_{\alpha}(\Phi_{t}(x)) \mathrm{d}t\int_{0}^{r}\Re q_{\alpha}(\Phi_{\mu}(x))\int_{-r}^{0}e^{2s}\Re q_{\beta}(\Phi_{\mu+s}(x))\mathrm{d}s\mathrm{d}\mu\mathrm{d}m_{0}\\
&+\int_{\Sub UX} 2\int_{0}^{r}\Re q_{\alpha}(\Phi_{t}(x)) \mathrm{d}t\int_{0}^{r}2\Im q_{\alpha}(\Phi_{\mu}(x))\int_{0}^{r}e^{-s}\Im q_{\beta}(\Phi_{\mu+s}(x))\mathrm{d}s\mathrm{d}\mu \mathrm{d}m_{0}\\
&+\int_{\Sub UX} 2\int_{0}^{r}\Re q_{\alpha}(\Phi_{t}(x)) \mathrm{d}t\int_{0}^{r}2\Im q_{\alpha}(\Phi_{\mu}(x))\int_{-r}^{0}e^{s}\Im q_{\beta}(\Phi_{\mu+s}(x))\mathrm{d}s\mathrm{d}\mu\mathrm{d}m_{0}\bigg).
\end{align*}
Similar to $\mathrm{I}$, the above equals to $0$ reduces to equation (\ref{eq 3cubic2}). This finishes our proof of Proposition (\ref{prop Model}) and so concludes the discussion of the model case $\partial_{\beta}g_{\alpha \alpha}(\sigma)$.
\end{proof}

\section{The remaining cases}

We will show in this section the proofs of the remaining three cases, i,e. $\partial_{i}g_{\alpha \alpha}(\sigma)=0$, $\partial_{j}g_{\alpha i}(\sigma)=0$ and $\partial_{\beta}g_{\alpha i}(\sigma)=0$. They provide a complete proof of Theorem \ref{thm main}.

\subsection{The case of $\partial_{i}g_{\alpha \alpha}(\sigma)$}

In this case, given parameters $(u,v)\in \{(-1,1)\}^{2}$, we obtain a family of (conjugacy classes of) representations $\{\rho(u,v)\}$ in $\mathcal{H}_{3}(S)$ corresponding to $\{(vq_{i},uq_{\alpha})\}\subset H^{0}(X,K^2)\bigoplus H^{0}(X,K^3)$ by the Hitchin parametrization. In particular, we have $\partial_{u}\rho(0,0)$ is identifed with $\varphi(q_{\alpha})$ and $\partial_{v}\rho(0,0)$ is identified with $\varphi(q_{i})$. The formula for $\partial_{i}g_{\alpha \alpha}(\sigma)$ is
\begin{align*}
    \partial_i g_{\alpha \alpha}(\sigma)&=\partial_{v} \Bigg( {\langle \partial_{u}\rho(0,v), \partial_{u}\rho(0,v) \rangle}_{\Sub P}\Bigg) \Bigg|_{v=0}\\
    =&\lim\limits_{r\to \infty}\frac{1}{r}\left[\int_{\Sub UX}(\int_{0}^{r} \partial_{u}f_{\rho(0)}^{N}\mathrm{d}t)^2\int_{0}^{r}\partial_{v}f_{\rho(0)}^{N}\mathrm{d}t\mathrm{d}m_{0}+
    2\int_{\Sub UX} \int_{0}^{r}\partial_{u}f_{\rho(0)}^{N} \mathrm{d}t\int_{0}^{r} \partial_{uv}f_{\rho(0)}^{N}\mathrm{d}t\mathrm{d}m_{0}\right].
\end{align*}
where the first and second variations are 
\begin{enumerate}[label=(\roman*)]
    \item $\partial_{u}f_{\rho(0)}^{N}=-\partial_{u}f_{\rho(0)};$
    \item $\partial_{v}f_{\rho(0)}^{N}=-\partial_{v}f_{\rho(0)};$
    \item $\partial_{uv}f_{\rho(0)}^{N}=-\partial_{uv}h(\rho(0))-\partial_{vu}f_{\rho(0)};$
\end{enumerate}
\subsubsection{First and second variations of reparametrization functions}

We compute first and second variations for the case of $\partial_{i}g_{\alpha \alpha}(\sigma)$ in this subsection.

We have Higgs fields 

 \[
  \Phi(u,v)=
  \left[ {\begin{array}{ccc}
   0 & vq_{i} & uq_{\alpha} \\
   1 & 0 & vq_{i} \\
   0 & 1 & 0 \\
  \end{array} } \right].
\]

Following the steps and methods for our model case $\partial_{\beta}g_{\alpha \alpha}(\sigma)$ in section 5, we show in this subsection

\begin{prop}
\label{prop case1rep2}
The first variation of reparametrization functions $\partial_{u}f_{\rho(0)}:UX \to \mathbb{R}$ and $\partial_{v}f_{\rho(0)}:UX \to \mathbb{R}$ for the case $\partial_{i}g_{\alpha \alpha}(\sigma)$ satisfy 
 \begin{align*}
        &\partial_{u}f_{\rho(0)}(x) \sim -Re q_{\alpha}(x),\\ &\partial_{v}f_{\rho(0)}(x) \sim 2Re q_{i}(x).
\end{align*}
and the second variation of reparametrization function $\partial_{vu}f_{\rho(0)}:UX \to \mathbb{R}$ for the case $\partial_{i}g_{\alpha \alpha}(\sigma)$ satisfies
\begin{align*}
  \partial_{uv}f_{\rho(0)}(x) \sim \frac{1}{2}\Re y_{21}(x)-2\Im q_{\alpha}(x)\left(\int_{0}^{\infty}\Im q_{i}(\Phi_{s}(x))e^{-s}\mathrm{d}s+\int_{-\infty}^{0}\Im q_{i}(\Phi_{s}(x))e^{s}\mathrm{d}s\right).
\end{align*}
where $p: UX \to X$ is the projection from the unit tangent bundle $UX$ to our Riemann surface $X$. Understanding a section of $End(E)$ as a  linear map on each fiber of $E=K \bigoplus\mathcal{O}\bigoplus K^{-1}$ over a point of $X$, the element $y_{21}$ is the component of the section $Y=H^{-1}\partial_{uv}H$ that takes $K$ to $\mathcal{O}$. As a function on $UX$,  $y_{21}$ transforms as $y_{21}(e^{i\theta}x)=e^{-i\theta}y_{21}(x)$.

\end{prop}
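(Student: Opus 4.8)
The plan is to reproduce the four-step architecture of the model case $\partial_\beta g_{\alpha\alpha}(\sigma)$ from Section 5, modifying each step to accommodate the new Higgs field in which the quadratic differential $q_i$ occupies the two slots $(1,2)$ and $(2,3)$. First I would fix the Fermi coordinates and the holomorphic frame $(s_1,s_2,s_3)$ along an arbitrary closed geodesic $\gamma$ exactly as before, so that at the Fuchsian point $\rho(0)$ the parallel transport system, the eigenvectors $e_1,e_2,e_3$, and the projection $\pi(0)$ are literally the same matrices already computed. The essential new input is the harmonic metric: because of the term $vq_i$, the solution $H(u,v)$ of Hitchin's equation \eqref{eq Hitchin'sEquation} is no longer diagonal off the Fuchsian locus. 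I would therefore differentiate Hitchin's equation in $u$, in $v$, and in $uv$ at $(0,0)$, breaking the $\mathfrak{sl}(3)$-valued system into its nine scalar components. The linearized (first-order) system forces $\partial_u H(0)=\partial_v H(0)=0$ along $\gamma$, so the first-order connection variations come entirely from the Higgs field and its $H$-adjoint, while the mixed second-order information is packaged in $Y=H^{-1}\partial_{uv}H$, whose off-diagonal $\mathcal{K}\to\mathcal{O}$ entry $y_{21}$ is the only component of $\partial_{uv}H$ that survives into the trace.

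With $\pi(0)$ in hand, the first variations are immediate from formula \eqref{eq firstVar}: tracing $\partial_u D_{H(0)}(x)$ and $\partial_v D_{H(0)}(x)$ against $\pi(0)$ in the holomorphic frame gives $\Re q_\alpha(x)$ and $-2\Re q_i(x)$ respectively, so that $\partial_u f_{\rho(0)}\sim-\Re q_\alpha$ and $\partial_v f_{\rho(0)}\sim 2\Re q_i$; the factor $2$ records the two off-diagonal appearances of $q_i$ weighted by the corresponding entries of $\pi(0)$. For the second variation I would use, as in the model case,
\begin{align*}
\partial_{uv}f_{\rho(0)} \sim -\Tr\!\left(\frac{\partial^2 D_{H(0)}}{\partial u\partial v}\,\pi(0)\right) - \Tr\!\left(\partial_u D_{H(0)}\,\partial_v\pi(0)\right),
\end{align*}
and treat the two summands separately. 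Since $\partial_{uv}\Phi=0$, the first summand comes from $\partial_{uv}(H^{-1}\partial H)$ and $\partial_{uv}\Phi^{*H}$; after contracting with $\pi(0)$ it collapses to the single scalar $\tfrac12\Re y_{21}(x)$, the exact analogue of the $\tfrac12\phi_{uv}$ term in the model case but now off-diagonal.

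The second summand requires $\partial_v\pi(0)$, which I would obtain exactly as in Section 5 by differentiating the family of parallel transport equations \eqref{eq:FamilyOfParallel} at $v=0$. Because the $v$-direction is now governed by $q_i$, the inhomogeneous ODE systems for $\partial_v e_1,\partial_v e_2,\partial_v e_3$ carry right-hand sides built from $q_i$, and solving them under the normalization and eigenvalue boundary conditions produces, upon passing to the double-sided limit, the convolution integrals $\int_0^\infty \Im q_i(\Phi_s(x))e^{-s}\,ds$ and $\int_{-\infty}^0 \Im q_i(\Phi_s(x))e^{s}\,ds$ with prefactor $-2\Im q_\alpha(x)$. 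Assembling the two summands yields the claimed formula on the dense set $W$ of vectors tangent to closed geodesics, after which the Hölder-extension argument of Section 5 (via the Jacobi estimate $d(\Phi_s x,\Phi_s y)\le 2\sqrt2\,e^{s}d(x,y)$ and exponential decay) promotes it to all of $UX$. The hard part will be the analysis of $y_{21}$: unlike the model case, where the metric stayed diagonal and the mixed term was the single globally defined real function $\tfrac12\phi_{uv}$, here I must extract the off-diagonal entry of $\partial_{uv}H$ from a genuinely coupled second-order elliptic system, establishing its existence by maximum-principle and Bochner arguments and verifying its precise circle-equivariance $y_{21}(e^{i\theta}x)=e^{-i\theta}y_{21}(x)$ before it can be fed into the trace and, ultimately, integrated against the Liouville measure in the next section.
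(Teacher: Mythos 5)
Your proposal is correct and follows essentially the same route as the paper: the same two-term decomposition of $\partial_{uv}f_{\rho(0)}$ via formula \eqref{eq firstVar}, the same reduction of the mixed term to $Y=H^{-1}\partial_{uv}H$ by differentiating Hitchin's equation into nine scalar components and killing all entries except $y_{21}$ (and its conjugates) by maximum-principle and Bochner arguments, and the same inhomogeneous-ODE computation of $\partial_v\pi(0)$ followed by the H\"older extension to $UX$. The only cosmetic difference is that you derive $\partial_u H(0)=\partial_v H(0)=0$ from the linearized system, whereas the paper cites the Labourie--Wentworth generalization of Ahlfors' theorem for this fact.
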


\begin{proof}
First variation is in \cite{Variation_along_FuchsianLocus}. The computation of the second variation of reparametrization functions $\partial_{uv}f_{\rho(0)}\sim -\Tr(\frac{\partial^2 D_{H(0)}}{\partial u \partial v}\pi(0)) - \Tr(\partial_{u}D_{H(0)}\partial_{v}\pi(0))$ is again divided into computation of  $\Tr(\frac{\partial^2 D_{H(0)}}{\partial u \partial v}\pi(0))$ and computation of $\Tr(\partial_{u}D_{H(0)}\partial_{v}\pi(0))$.

\begin{itemize}
    \item
    Compute $\Tr(\frac{\partial^2 D_{H(0)}}{\partial u \partial v}\pi(0)).$
    
    The major difference between the case $\partial_{i}g_{\alpha \alpha}(\sigma)$ and $\partial_{\beta}g_{\alpha \alpha}(\sigma)$ is the computation of this term.
As before, our flat connection is $D_{H(u,v)}=\nabla_{{\bar{\partial}}_{E},H(u,v)}+\Phi(u,v) + \Phi(u,v)^{*H(u,v)}$. For the computation of $\partial_{u}f_{\rho(0)}$ and $\partial_{v}f_{\rho(0)}$, when $u=0$ or $v=0$, the harmonic metric $H(u,v)$ is diagonal and one obtains
\[
  \partial_{u}D_{H(0)}=
  \begin{bmatrix}
   0 & 0 & q_{\alpha}\\
   0 & 0 & 0 \\
   4\bar{q}_{\alpha} & 0 & 0 \\
  \end{bmatrix}
\]
\[
  \partial_{v}D_{H(0)}=
  \begin{bmatrix}
   0 & q_{i} & 0\\
   2\bar{q}_{i} & 0 & q_{i} \\
   0 & 2\bar{q}_{i} & 0 \\
 \end{bmatrix}
\]

However ,when $u\neq 0, v\neq 0$ both hold, the harmonic metric $H(u,v)$ corresponding to our Higgs field $\Phi(u,v)$ is not diagonal. The computation of $\frac{\partial^2 D_{H(0)}}{\partial u \partial v}$ requires an analysis of the Hitchin's equations.

 We start from a family of Hitchin's equations 
 \begin{equation}
     F_{D_{H(u,v)}}+[\Phi(u,v),\Phi(u,v)^{*H(u,v)}]=0
     \label{eq familyHitchin}
 \end{equation}

We take $u,v$-derivatives of Hitchin's equations \eqref{eq familyHitchin} at $u,v=0$,
 \begin{equation}
     \partial_{u}\partial_{v}|_{u,v=0}( F_{D_{H(u,v)}}+[\Phi(u,v),\Phi(u,v)^{*H(u,v)}])=0.
     \label{eq devFamilyHitchin}
 \end{equation}
 We consider taking $H^{-1}\partial_{vu}H$ as a variable. We define
\[
  Y=H^{-1}\partial_{vu}H=
  \left[ {\begin{array}{ccc}
   y_{11} & y_{12} & y_{13} \\
   y_{21} & y_{22} & y_{23} \\
   y_{31} & y_{32} & y_{33} \\
  \end{array} } \right].
\] 
 
 $Y=H^{-1}\partial_{uv}H$ is a section of $End(E)$.
 
We now work with local coordinates and local trivialization. When varing the $u,v$ real parameters, the holomorphic structure of our bundle $E$ does not change. Thus fixing a local holomorphic frame for all $\{(u,v)\}$, the Chern connection one-form under this frame compatible with the Hermitian metric $H(u,v)$ is $A(u,v)=H(u,v)^{-1}\partial{H(u,v)}$. The curvature term in our holomorphic frame is $$F_{D_{H(u,v)}}=dA(u,v)+A(u,v) \wedge A(u,v)=\bar{\partial}(H(u,v)^{-1}\partial{H(u,v)}).$$
 
The section $Y \in \Gamma(End(E))$ in a local holomorphic frame has the following properties:
\begin{enumerate}[label=(\roman*)]
    \item 
     $\Tr(Y)=0$.
    \item 
    $H(u,v)^{*}=H(u,v)$. Also because $u,v$ are real parameters, we have $\partial_{uv}H=\partial_{uv}(H^{*})=(\partial_{uv}H)^{*}$ and $Y^{*}=HYH^{-1}$.
\end{enumerate} 

we can express $\frac{\partial^2 D_{H(0)}}{\partial u \partial v}$ in terms of $Y$ on $\gamma$. With respect to the local holomorphic frame introduced in the model case adapted to Fermi coordinate, we have $\partial H=0$ on $\gamma$. So 
\begin{align}
    \partial_{uv}(D_{H(0)})|_{\gamma}&=\partial_{uv}(H(u,v)^{-1}\partial H(u,v)+\Phi(u,v)+\Phi(u,v)^{*H(u,v)})|_{u,v=0} \nonumber \\
    &=-YH^{-1}\partial H+ H^{-1}\partial H Y+\partial Y + {\Phi}^{*H}Y-Y{\Phi}^{*H}\nonumber \\
    &=\partial Y + {\Phi}^{*H}Y-Y{\Phi}^{*H}.  \label{eq relationY}
\end{align}   
    
 We want to simplify equation \eqref{eq devFamilyHitchin} as a equation about $Y$ and then solve $Y$ from equation \eqref{eq devFamilyHitchin}.
 
Before we continue, we first fix some notation. We will denote 
$$H=H(0,0),$$
$$\Phi=\Phi(0,0),$$
 $$\partial_{u}H=\frac{\partial H(u,v)}{\partial u}\bigg|_{u,v=0} \text{   },$$ $$\partial_{v}H=\frac{\partial H(u,v)}{\partial v}\bigg|_{u,v=0}\text{  },$$ $$\partial_{uv}H=\frac{\partial H(u,v)}{\partial u \partial v}\bigg|_{u,v=0}\text{  }.$$ 
 As a generalization of the classic result of Ahlfors, the first variation of the harmonic metric vanishes at the Fuchsian point (see \cite[Thm.3.5.1]{Variation_along_FuchsianLocus}). In particular,
 \begin{equation*}
     \partial_{u}H=\partial_{v}H=0.
 \end{equation*}
Taking $H^{-1}\partial_{uv}H$ as a variable, one can verify from equation \eqref{eq devFamilyHitchin} that
 \begin{align}
      0=&\bar{\partial}\partial(H^{-1}\partial_{uv}H)-H^{-1}\partial{H}\wedge\bar{\partial}(H^{-1}\partial_{uv}H)-\bar{\partial}(H^{-1}\partial_{uv}H)\wedge H^{-1}\partial{H}\nonumber \\
     +&\bar{\partial}(H^{-1}\partial H) H^{-1}\partial_{uv}H-H^{-1}\partial_{uv}H\bar{\partial}(H^{-1}\partial H)\nonumber \\
     +&[\partial_{u} \Phi, (\partial_{v}\Phi)^{*H}]+[\partial_{v} \Phi, (\partial_{u}\Phi)^{*H}]+[\Phi,[-H^{-1}\partial_{uv}H,\Phi^{*H}]]
     \label{eq Simp}
 \end{align}

Equation \eqref{eq Simp} can be simplified by the following observation.
\begin{align*}
   &\bar{\partial}(H^{-1}\partial H) H^{-1}\partial_{uv}H-H^{-1}\partial_{uv}H\bar{\partial}(H^{-1}\partial H)\nonumber+[\Phi,[-H^{-1}\partial_{uv}H,\Phi^{*H}]]\\ 
   =&[H^{-1}\partial_{uv}H,[\Phi,\Phi^{*H}]]-[\Phi,[H^{-1}\partial_{uv}H,\Phi^{*H}]]\hspace{.5 in} \text{Hitchin equation}\\
   =&[[H^{-1}\partial_{uv}H, \Phi], \Phi^{*H}] \hspace{.5 in} \text{Jacobi Identity}
\end{align*}

As $Y=H^{-1}\partial_{uv}H$, this yields 
\begin{align}
      &\bar{\partial}\partial Y+[\Phi^{*H},[Y,\Phi]]-H^{-1}\partial H \wedge \bar{\partial}Y-\bar{\partial}Y\wedge H^{-1}\partial H\nonumber \\
    =&-[\partial_{u}\Phi, (\partial_{v}\Phi)^{*H}]-[\partial_{v}\Phi, (\partial_{u}\Phi)^{*H}].  \label{eq HitchinFinal}
\end{align}

The PDE system \eqref{eq HitchinFinal} in local holomorphic frames is equivalent to the following nine scalar equations about $y_{ij}$.

\begin{enumerate}
    \item $\bar{\partial}\partial y_{11}+h(y_{22}-y_{11})=0;$
    \item $\bar{\partial}\partial y_{22}+h(y_{33}-2y_{22}+y_{11})=0;$
    \item $\bar{\partial}\partial y_{33}+h(y_{22}-y_{33})=0;$
    \item $\bar{\partial}\partial y_{21}+h (y_{32}-y_{21})+h^{-1}\partial h \bar{\partial}y_{21}=h^{-2}q_{i}\overline{{q}_{\alpha}};$
    \item $\bar{\partial}\partial y_{32}+h(y_{21}-y_{32})+h^{-1}\partial h \bar{\partial}y_{32}=-h^{-2}q_{i}\overline{q_{\alpha}};$
    \item $\bar{\partial}\partial y_{12}+h(y_{23}-2y_{12})-h^{-1}\partial h \bar{\partial}y_{12}=h^{-1}q_{\alpha}\overline{q_{i}};$
    \item $\bar{\partial}\partial y_{23}+h(y_{12}-2y_{23})-h^{-1}\partial h \bar{\partial}y_{23}=-h^{-1}q_{\alpha}\overline{q_{i}};$
    \item $\bar{\partial}\partial y_{31}+2h^{-1}\partial h \bar{\partial}y_{31}=0;$
    \item $\bar{\partial}\partial y_{13}+2h y_{13}-2h^{-1}\partial h \bar{\partial}y_{13}=0.$
\end{enumerate}

 From porperty (ii) of $Y$, one can thus verify
(4) is equivalent to (6). (5) is equivalent to (7). (8) is equivalent to (9). Thus it suffices to consider the following six equations.
\begin{itemize}
    \item $\bar{\partial}\partial y_{11}+h(y_{22}-y_{11})=0;$
    \item $\bar{\partial}\partial y_{22}+h(y_{33}-2y_{22}+y_{11})=0;$
    \item $\bar{\partial}\partial y_{33}+h(y_{22}-y_{33})=0;$
    \item $\bar{\partial}\partial y_{21}+h(y_{32}-y_{21})+h^{-1}\partial h \bar{\partial}y_{21}=h^{-2}q_{i}\overline{q_{\alpha}};$
    \item $\bar{\partial}\partial y_{32}+h(y_{21}-y_{32})+h^{-1}\partial h \bar{\partial}y_{32}=-h^{-2}q_{i}\overline{q_{\alpha}};$
    \item $\bar{\partial}\partial y_{31}+2h^{-1}\partial h \bar{\partial}y_{31}=0.$
\end{itemize}

We first take a look at the first three equations. We deduce from them
\begin{align*}
    \bar{\partial}\partial (y_{11}+y_{22}+y_{33})&=0;\\
    \bar{\partial}\partial (y_{11}-y_{33})-h(y_{11}-y_{33})&=0;\\
    \bar{\partial}\partial (y_{11}+y_{33})+h(2y_{22}-(y_{11}+y_{33}))&=0.
\end{align*}

As $Y=H^{-1}\partial_{uv}H$ is a section of $End(E)$, the components $y_{ii}\in \Gamma(\mathcal{O})$ are acturally just functions on the surface $X$ for $i=1,2,3$. Recall our notation $\Delta_{\sigma}=\frac{4\partial_{z}\partial_{\bar{z}}}{\sigma}$ and the fact $h=h(0,0)=\frac{1}{2}\sigma$, the above equations can be written independent of coordinate charts on our surface as follows,                                                                                                                                               
\begin{align*}
    \Delta_{\sigma}(y_{11}+y_{22}+y_{33})&=0;\\
    \Delta_{\sigma}(y_{11}-y_{33})-2(y_{11}-y_{33})&=0;\\
    \Delta_{\sigma}(y_{11}+y_{33})+2(2y_{22}-(y_{11}+y_{33}))&=0.
\end{align*}

We have the following observations,
\begin{itemize}
    \item From the first equation, we obtain $y_{11}+y_{22}+y_{33}=C$ where $C$ is a constant.
    \item Since all eigenvalues of $\Delta_{\sigma}$ should be non-positive, the second equation can hold only when $y_{11}-y_{33}=0$.
    \item The third equation is $\Delta_{\sigma}(y_{11}+y_{33})-6(y_{11}+y_{33})=-4C$. By a maximum principle argument, one gets $y_{11}+y_{33}=\frac{2}{3}C$.
\end{itemize}

Thus property (i) of $Y$ gives $y_{11}=y_{22}=y_{33}=0.$

We then continue on the other three equations. From them, we deduce
\begin{align*}
   \bar{\partial}\partial (y_{21}+y_{32})+h^{-1}\partial h \bar{\partial}(y_{21}+y_{32})&=0;\\
    \bar{\partial}\partial (y_{21}-y_{32})-2h(y_{21}-y_{32})+h^{-1}\partial h \bar{\partial}(y_{21}-y_{32})&=2h^{-2}q_{i}\overline{q_{\alpha}};\\
    \bar{\partial}\partial y_{31}+2h^{-1}\partial h \bar{\partial}y_{31}&=0.
\end{align*}
 
Let $w=y_{21}+y_{32}$. We want to compute $\Delta_{h}\norm{w}^{2}_{h}$ where the $h$-norm $\norm{.}_{h}$ is defined as: 
$$\norm{s}^{2}_{h}=h^{-i}s\bar{s}$$ 
for a section $s\in \Gamma(K^{i})$ and $i\in \mathbb{Z}$. 
 
Because $h=h(0,0)=\frac{1}{2}\sigma$ and $\sigma=e^{\delta(z)}|dz|^{2}$ is a hyperbolic metric with curvature $K(\sigma)=-\Delta_{\sigma}(\log \sigma)=-1$. we have h satisfies
\begin{equation}
    \bar{\partial}\partial h=\frac{\partial{h}\bar{\partial}h}{h}+\frac{1}{2}h^{2}. \label{eq metrich}
\end{equation}

Note $w\in \Gamma(K^{-1})$. The metric $h$ induces a Chern connection $\nabla^{h}$ on $K^{-1}$ and in our local holomorphic frames, one has formula: \begin{align*}
\nabla^{h, (1,0)}w=\partial w+h^{-1}\partial{h}w.
\end{align*}

One recognizes $\nabla^{h, (1,0)}w$ is a section of $\Omega^{(1,0)}(K^{-1})=\Gamma(\mathcal{O})$. Therefore,
\begin{align}
    \norm{\nabla^{h,(1,0)}w}^{2}_{h}=(\partial w+h^{-1}\partial{h}w)(\overline{\partial w+h^{-1}\partial{h}w})\label{eq forW}
\end{align}
Combining equation \eqref{eq metrich} and equation \eqref{eq forW} gives
 \begin{align*}
     \Delta_{h}\norm{w}^{2}_{h}&=\frac{4\bar{\partial}\partial(hw\bar{w})}{h}\\
     &=2\norm{w}^{2}_{h}+4\norm{\partial \bar{w}}_{h}+4\norm{\nabla^{h,(1,0)}w}^{2}_{h}\geq 0. 
 \end{align*}
This is an inequality independent of coordinates valid on the Riemann surface. By a maximum principle argument, $\norm{w}_{h}^{2}$ must be a constant $M$. If $M \neq 0$, then $0=\Delta_{h}(M)\geq 2M >0$ leading to a contradiction. Thus $M=0$ and $y_{21}+y_{32}=0$.

We have similar arguments for $\bar{\partial}\partial y_{31}+2h^{-1}\partial h \bar{\partial}y_{31}=0$. We begin with computing $\Delta_{h}\norm{y_{31}}^{2}_{h}$.

Since $y_{31}$ is a section of $\Gamma(K^{-2})$, in lcoal holomorphic frames, the Chern connection $\nabla^{h,}$ induced from $h$ in this case acts as $\nabla^{h,(1,0)}y_{31}=\partial y_{31}+ h^{-2}\partial(h^{2})y_{31}$.

We obtain
 \begin{align*}
     \Delta_{h}\norm{y_{31}}^{2}_{h}&=\frac{4\bar{\partial}\partial(h^{2}y_{31}\overline{y_{31}})}{h}\\
     &=\norm{y_{31}}^{2}_{h}+4\norm{\bar{\partial} y_{31}}_{h}+4\norm{\nabla^{h,(1,0)}y_{31}}^{2}_{h}\geq 0.
 \end{align*}
Similar to the argument for $w$, this leads to $y_{31}=0$. 

We conclude up to this point that $Y=H^{-1}\partial_{vu}H \in \Gamma(End(E))$ in our local frame is of the form
\[
  Y=H^{-1}\partial_{uv}H=
  \left[ {\begin{array}{ccc}
   0 & h\overline{y_{21}} & 0 \\
   y_{21} & 0 & -h\overline{y_{21}} \\
   0 & -y_{21} & 0 \\
  \end{array} } \right]
\]
with $\bar{\partial}\partial y_{21}-2hy_{21}+h^{-1}\partial h \bar{\partial}y_{21}=h^{-2}q_{i}\overline{q_{\alpha}}.$
 
With respect to the Fermi coordinate, we have $h(z)=\frac{1}{2}$ and $\partial_{z}h =0$ on $\gamma$.  Also, we know $Y^{*}=HYH^{-1}$, we finally obtain on $\gamma$ from equation \eqref{eq relationY} ,
\begin{equation*}
    \Tr(\frac{\partial^2 D_{H(0)}}{\partial u \partial v}\pi(0))(x)=\Tr(\frac{\partial^2 D_{H(0)}}{\partial u \partial v}(x)\pi(0))=-\frac{1}{2}\Re y_{21}(x).
\end{equation*}
\begin{rem}
We remark here $y_{21}(x)=y_{21}(z)$ where $x=\dot{\gamma}(0)$ is the starting point of $\gamma$. Recall $y_{21}$ is the component of $Y\in\Gamma(End(E))$ taking $\mathcal{K}$ to $\mathcal{O}$ and $y_{21}(z)$ is  $y_{21}$ evaluating at $p(x)$ in the trivialization given by the holomorphic frame adapted to the Fermi coordinate $z$ for $\gamma$.

In particular, if we consider another closed geodesic $\gamma_{2}$ starting from $\gamma_{2}'(0)=e^{i\theta}x$ with its Fermi coordinate around $\gamma_{2}$ to be $w$, then
$y_{21}(e^{i\theta}x)=y_{21}(w)$. We have $y_{21}(w)=y_{21}(z) \frac{dw}{dz}=y_{21}(z)e^{i\theta}$.

Because the vectors tangent to periodic orbits are dense in $TX$. We can extend $y_{21}$ to be everywhere defined on $UX$. We conclude as a function on $UX$, $y_{21}$ transfers in the following way:
$$y_{21}(e^{i\theta}x)=e^{-i\theta}y_{21}(x).$$

\end{rem}

This finishes the computation of  $ \Tr(\frac{\partial^2 D_{H(0)}}{\partial u \partial v}\pi(0))$ on $UX$. We now move to $\Tr(\partial_{u}D_{H(0)}\partial_{v}\pi(0))$ which together provides an expression for second variation of reparametrization functions.

\item  Compute $\Tr(\partial_{u}D_{H(0)}\partial_{v}\pi(0)).$

We have 
\begin{align*}
 &\Tr(\partial_{u}D_{H(0)}\partial_{v}\pi(0));\\
 =&q_{\alpha}(\partial_{v}a_{11}(0)e_{13}(0)+a_{11}(0)\partial_{v}e_{13}(0))+4\bar{q}_{\alpha}(\partial_{v}a_{31}(0)e_{11}(0)+a_{31}(0)\partial_{v}e_{11}(0)).
\end{align*}
Similar to the model case $\partial_{\beta}g_{\alpha \alpha}$, here $\partial_{v}e_{1}(0)=y$ is the solution of a nonhomogeneous ODE system which arises from taking a $v$-derivative on the system of parallel transport equation \text{\eqref{eq:FamilyOfParallel}} at $v=0$:

\begin{align*}
  \partial_{t}\left[ \begin{array}{c} y_1(t) \\ y_2(t) \\ y_3(t) \end{array} \right] + \begin{bmatrix} 0 & \frac{1}{2} & 0\\ 1 & 0 & \frac{1}{2} \\ 0 & 1 &0 \end{bmatrix}  \left[ \begin{array}{c} y_{1}(t) \\ y_2(t) \\ y_3(t) \end{array} \right]= \frac{\sqrt{2}}{2}e^{t} \left[ \begin{array}{c} q_{i}(\Phi_{t}(x)) \\ -2\Re q_{i}(\Phi_{t}(x)) \\ 2\overline{q_{i}(\Phi_{t}(x))} \end{array} \right].
\end{align*}

with boundary conditions  
\begin{align*}
    &H(y(0),e_{1}(0,0))=0;\\
   &y(l_{\gamma})=e^{l_{\gamma}}\left(\int_{0}^{l_{\gamma}}2\Re q_{i}(\Phi_{s}(x))\mathrm{d}s\right) e_{1}(0,0)+e^{l_{\gamma}}y(0).
\end{align*}
The boundary conditions are set up based on the same consideration as the case of $\partial_{\beta}g_{\alpha \alpha}(\sigma)$.
The solution is 
\begin{align*}
  \left[\begin{array}{c} \partial_{v}e_{11}(t)\\ \partial_{v}e_{12}(t) \\ \partial_{v}e_{13}(t) \end{array}  \right]&=\left[\begin{array}{c} \frac{\sqrt{2}}{2}\int_{0}^{t}(e^{t}\Re q_{i}+i e^{s}\Im q_{i})\mathrm{d}s\\[1ex]
 -\sqrt{2}\int_{0}^{t}e^{t}\Re q_{i}\mathrm{d}s \\[1ex] \sqrt{2}\int_{0}^{t}(e^{t}\Re q_{i}-i e^{s}\Im q_{i})\mathrm{d}s \end{array}  \right]\\[1ex]
  &+\left[\begin{array}{c}   \frac{\sqrt{2}}{2}(e^{l_{\gamma}}-1)^{-1}\int_{0}^{l_{\gamma}}ie^{s} \Im q_{i} \mathrm{d}s \\[1ex] 0\\[1ex]
 -\sqrt{2}(e^{l_{\gamma}}-1)^{-1}\int_{0}^{l_{\gamma}}ie^{s} \Im q_{i} \mathrm{d}s \end{array}  \right].
\end{align*}

Similarly, one can compute $\partial_{v}e_{2}(0)$ and $\partial_{v}e_{3}(0)$ by this method.
It turns out that
\begin{align*}
    &\Tr(\partial_{u}D_{H(0)}\partial_{v}\pi(0))(\Phi_{t}(x))\\
    =&2\Im q_{\alpha}(\Phi_{t}(x))\int_{0}^{t}(e^{s-t}-e^{t-s})\Im q_{i}(\Phi_{s}(x))\mathrm{d}s\\
    +&2\Im q_{\alpha}(\Phi_{t}(x))\int_{0}^{l_{\gamma}}(\frac{e^{s-t}}{e^{l_{\gamma}}-1}-\frac{e^{t-s}}{e^{-l_{\gamma}}-1})\Im q_{i}(\Phi_{s}(x))\mathrm{d}s.
\end{align*}

We therefore obtain, for a closed geodesic $\gamma$ of length $l_{\gamma}$ starting from $\dot{\gamma(0)}=x$,

\begin{align*}
    &\Tr(\partial_{u}D_H(0)\partial_{v}\pi(0))(x)\\
    =&2\Im q_{\alpha}(x)\int_{0}^{l_{\gamma}}(\frac{e^{s}}{e^{l_{\gamma}}-1}-\frac{e^{-s}}{e^{-l_{\gamma}}-1})\Im q_{i}(\Phi_{s}(x))\mathrm{d}s.
\end{align*}
\end{itemize}

Similar to our model case of $g_{\alpha\alpha,\beta}(\sigma)$, one can define a function $\eta: W\to \mathbb{R}$,
\begin{align*}
    \eta(x)=2\Im q_{\alpha}(x)\left(\int_{0}^{\infty}e^{-s}\Im q_{i}(\Phi_{s}(x))\mathrm{d}s+\int_{-\infty}^{0}e^{s}\Im q_{i}(\Phi_{s}(x))\mathrm{d}s\right).
\end{align*}
and we verify that $\eta(x)$ is H{\"o}lder such that $\Tr(\partial_{u}D_{H(0)}\partial_{v}\pi(0))(x)\equiv \eta(x)$ on $UX$.

We conclude
\begin{align*}
  \partial_{uv}f_{\rho(0)}(x) \sim &-\partial_{v}(\Tr(\partial_{u}D_{H(0)} \pi(0)))(x)\\
  =&\frac{1}{2}\Re y_{21}(x)-2\Im q_{\alpha}(x)\left(\int_{0}^{\infty}e^{-s}\Im q_{i}(\Phi_{s}(x))\mathrm{d}s+\int_{-\infty}^{0}e^{s}\Im q_{i}(\Phi_{s}(x))\mathrm{d}s\right).
\end{align*}
This finishes the proof of Proposition \ref{prop case1rep2}.
\end{proof}

\begin{rem} \hfill

Instead of starting from the first variation of reparametrization functions  $\partial_{u}f_{\rho(0)}(x)\sim -\Tr(\partial_{u}D_{H(0)} \pi(0)))(x)$, we can take the first variation of reparametrization functions to be $\partial_{v}f_{\rho(0)}(x)\sim -\Tr(\partial_{v}D_{H(0)} \pi(0)))(x)$ by formula (\ref{eq firstVar}) and consider:
\begin{align*}
\partial_{vu}f_{\rho(0)}(x)\sim&-\partial_{u}(\Tr(\partial_{v}D_{H(0)} \pi(0)))(x)\\
=&-\Tr(\frac{\partial^2 D_{H(0)}}{\partial v \partial u}\pi(0))(x) - \Tr(\partial_{v}D_{H(0)}\partial_{u}\pi(0))(x).
\end{align*}
By the same method, we get
\begin{align*}
    &\Tr(\partial_{v}D_{H(0)}\partial_{u}\pi(0))(\Phi_{t}(x))\\
    =&2\Im q_{i}(\Phi_{t}(x))\int_{0}^{t}(e^{s-t}-e^{t-s})\Im q_{\alpha}(\Phi_{s}(x))\mathrm{d}s\\
    +&2\Im q_{i}(\Phi_{t}(x))\int_{0}^{l_{\gamma}}(\frac{e^{s-t}}{e^{l_{\gamma}}-1}-\frac{e^{t-s}}{e^{-l_{\gamma}}-1})\Im q_{\alpha}(\Phi_{s}(x))\mathrm{d}s.
\end{align*}
One can verify by Fubini's theorem,
\begin{equation*}
    \int_{0}^{l_{\gamma}}\Tr(\partial_{u}D_{H(0)}\partial_{v}\pi(0))(\Phi_{t}(x))\mathrm{d}t= \int_{0}^{l_{\gamma}}\Tr(\partial_{v}D_{H(0)}\partial_{u}\pi(0))(\Phi_{t}(x))\mathrm{d}t.
\end{equation*}
This coincides with the fact that $\partial_{v}(\Tr(\partial_{u}D_{H(0)} \pi(0)))(x)$ and $\partial_{u}(\Tr(\partial_{v}D_{H(0)} \pi(0)))(x)$ should be in the same Liv\v{s}ic class by Liv\v{s}ic's Theorem.
\end{rem}

\subsubsection{Evaluation on Poincar\'e disk}

With the computation in last section, we have
\begin{align*}
    \partial_i g_{\alpha \alpha}(\sigma)=&\partial_{v} \Bigg( {\langle \partial_{u}\rho(0,v), \partial_{u}\rho(0,v) \rangle}_{\Sub P}\Bigg) \Bigg|_{v=0}\\
    =&\lim\limits_{r\to \infty}\frac{1}{r}\left[\int_{\Sub UX}(\int_{0}^{r} Re q_{\alpha}\mathrm{d}t)^2\int_{0}^{r}-2Re q_{i}\mathrm{d}t\mathrm{d}m_{0}-2\int_{\Sub UX} \int_{0}^{r}Re q_{\alpha} \mathrm{d}t\int_{0}^{r} \partial_{uv}f_{\rho(0)}^{N}\mathrm{d}t\mathrm{d}m_{0}\right],
\end{align*}
where $\partial_{uv}f_{\rho(0)}^{N}=-\partial_{uv}h(\rho(0))-\partial_{vu}f_{\rho(0)}$ and
\begin{align*}
  \partial_{uv}f_{\rho(0)}(x)\sim \frac{1}{2}\Re y_{21}(x)-2\Im q_{\alpha}(x)\left(\int_{0}^{\infty}\Im q_{i}(\Phi_{s}(x))e^{-s}\mathrm{d}s+\int_{-\infty}^{0}\Im q_{i}(\Phi_{s}(x))e^{s}\mathrm{d}s\right).
\end{align*}
We show in this subsection:

\begin{prop}
\label{prop Case2}
 For $\sigma\in\mathcal{T}(S),\partial_{i} g_{\alpha \alpha}(\sigma)=0$.
\end{prop}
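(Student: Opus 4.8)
The plan is to follow the architecture of the model case (Proposition \ref{prop Model}), writing $\partial_i g_{\alpha\alpha}(\sigma)=\mathrm{I}+\mathrm{II}$ as displayed above and showing each term vanishes by transferring the computation to the Poincar\'e disk and exploiting the rotational and flow invariance of $m_0=m_L$. Substituting the first and second variations from Proposition \ref{prop case1rep2}, term $\mathrm{I}$ and the $\Im q_\alpha$--$\Im q_i$ part of term $\mathrm{II}$ reduce, after Fubini and flow invariance (and, for the second variation, after discarding the $\int_r^\infty$ and $\int_{-\infty}^{-r}$ tails by the same $O(re^{-r})$ estimate used in the model case), to the two triple-correlation identities
\[
\int_{\Sub UX}\Re q_\alpha(x)\,\Re q_\alpha(\Phi_t(x))\,\Re q_i(\Phi_s(x))\,\mathrm{d}m_0 =0,\qquad
\int_{\Sub UX}\Re q_\alpha(x)\,\Im q_\alpha(\Phi_t(x))\,\Im q_i(\Phi_s(x))\,\mathrm{d}m_0 =0,
\]
the exact analogues of Lemma \ref{lem 3cubic} with one cubic factor replaced by the quadratic $q_i$. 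As in the model case, the second identity will follow from the first once one records $\int_{\Sub UX}\Re q_\alpha(x)\,q_\alpha(\Phi_t(x))\,q_i(\Phi_s(x))\,\mathrm{d}m_0=0$, which holds because the base factor $\Re q_\alpha(e^{i\theta}x)=\Re(a_0 e^{3i\theta})$ carries only frequencies $\pm 3$ and cannot resonate against the strictly positive angular frequencies of $q_\alpha$ and $q_i$.

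To prove the first identity I would first reduce to $t,s\ge 0$ using flow invariance together with the antipodal relation $\Phi_{-t}(x)=-\Phi_t(-x)$ and $(e^{i\pi})^{*}m_0=m_0$, exactly as in the three subcases of Lemma \ref{lem 3cubic}. On $\{t,s\ge 0\}$ I would insert the analytic expansions \eqref{eq: analyticExI} for the two cubic factors and \eqref{eq: analyticExIII} for $q_i$ and integrate first in $\theta$: rotational invariance selects only the resonant monomials, which here (frequencies $\pm3$, $\pm(n+3)$, $\pm(m+2)$) occur precisely when $m=n+4$ or $n=m+2$, leaving two families of coefficients $A_n=\int_{\Sub UX}\Re(a_0 a_n\bar c_{n+4})\,\mathrm{d}m_0$ and $B_n=\int_{\Sub UX}\Re(a_0\bar a_{n+2}c_n)\,\mathrm{d}m_0$. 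The goal is then to show $A_n=B_n=0$ for all $n$.

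The crucial structural input is the reflection symmetry $J(t,s)=J(t,t-s)$ for $J(t,s):=\int_{\Sub UX}\Re q_\alpha(x)\,\Re q_\alpha(\Phi_t(x))\,\Re q_i(\Phi_s(x))\,\mathrm{d}m_0$, which I obtain by shifting by $-t$, applying the antipodal map, and using that the two cubic factors each change sign while the quadratic factor is invariant. In terms of $T=\tanh t$ and $S=\tanh s$ this becomes the functional equation $g(S)(1-TS)^4=(1-T^2)^2\, g\!\left(\tfrac{T-S}{1-TS}\right)$ for the generating function $g(S)=\sum_n A_n T^nS^{n+4}+\sum_n B_n T^{n+2}S^{n}$; matching coefficients, with the boundary case $S=0$ (that is, $s=0$ versus $s=t$) giving $J(t,0)=J(t,t)$, forces all $A_n$ and $B_n$ to vanish. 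I expect this combinatorial closure to be the main obstacle, and the point where the argument genuinely departs from the model case: because the present configuration has an even number of odd-weight (cubic) factors, the antipodal symmetry no longer makes the half-time slice $s=t/2$ vanish identically, so the clean two-evaluation argument of Lemma \ref{lem 3cubic} is unavailable and one must instead extract the vanishing of every resonant coefficient from the full reflection identity.

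Finally, term $\mathrm{II}$ contains one genuinely new piece, the contribution of $\tfrac12\Re y_{21}$ coming from $\Tr\!\big(\tfrac{\partial^{2}D_{H(0)}}{\partial u\partial v}\pi(0)\big)$. I would dispose of it by pure frequency counting rather than by a resonance cancellation: after reducing by Fubini and flow invariance to $\int_{\Sub UX}\Re q_\alpha(x)\,\Re y_{21}(\Phi_s(x))\,\mathrm{d}m_0$ and moving $y_{21}$ to the base point, the transformation law $y_{21}(e^{i\theta}x)=e^{-i\theta}y_{21}(x)$ shows $\Re y_{21}$ carries only angular frequencies $\pm 1$, while $\Re q_\alpha(\Phi_{-s}(e^{i\theta}x))$ carries only frequencies of absolute value $\ge 3$; hence the $\theta$-average vanishes for every $s$. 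The constant contribution $-\partial_{uv}h(\rho(0))$ drops out because $\int_{\Sub UX}\Re q_\alpha\,\mathrm{d}m_0=0$. Assembling these, $\mathrm{I}=\mathrm{II}=0$ and $\partial_i g_{\alpha\alpha}(\sigma)=0$.
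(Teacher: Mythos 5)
Your overall architecture is exactly the paper's: the split $\partial_i g_{\alpha\alpha}(\sigma)=\mathrm{I}+\mathrm{II}$, the reduction of everything except the $y_{21}$-piece to the two correlations $\int_{\Sub UX}\Re q_\alpha\,\Re q_\alpha(\Phi_t)\,\Re q_i(\Phi_s)\,\mathrm{d}m_0$ and $\int_{\Sub UX}\Re q_\alpha\,\Im q_\alpha(\Phi_t)\,\Im q_i(\Phi_s)\,\mathrm{d}m_0$, the derivation of the second from the first via the no-resonance identity for $\Re q_\alpha\cdot q_\alpha\cdot q_i$, the disposal of $\tfrac12\Re y_{21}$ by frequency counting ($\pm1$ against $\pm(n+3)$), the constant term via $\int\Re q_\alpha\,\mathrm{d}m_0=0$, and the observation that the half-time slice $s=t/2$ is useless here all match the paper. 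The gap is precisely the step you flag as the main obstacle, and it is a real one: the reflection identity $J(t,s)=J(t,t-s)$, written as $g_T(S)(1-TS)^4=(1-T^2)^2\,g_T\!\left(\tfrac{T-S}{1-TS}\right)$ with $g_T(S)=\sum_n A_nT^nS^{n+4}+\sum_n B_nT^{n+2}S^n$, does \emph{not} force $A_n=B_n=0$. Indeed $g_T(S)=B_0T^2(1-TS)^{-2}$, i.e.\ $A_n=0$ and $B_n=(n+1)B_0$, is a nonzero solution for every $B_0$ (use $1-TS'=(1-T^2)/(1-TS)$, so both sides equal $B_0T^2(1-TS)^2$), and it lies in the admissible resonant class with real, subexponential coefficients. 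So no amount of coefficient matching in this single functional equation can rule out $B_0=\int_{\Sub UX}\Re(a_0\bar a_2 c_0)\,\mathrm{d}m_0\neq 0$; your two low-order checks ($A_0=0$, $B_1=2B_0$) are consistent with this residual family rather than evidence that the scheme closes.

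The missing ingredient is a \emph{second} expansion of the same correlation, obtained by flowing so that the quadratic differential sits at the base point. Because the two time-evolved factors are then the same cubic differential, rotational invariance collapses that expansion to a single, manifestly symmetric family: $\tfrac14\sum_n C_n\,T^nS^n(T^2+S^2)(1-T^2)^3(1-S^2)^3$ with $C_n=\int_{\Sub UX}\Re(c_0a_n\bar a_{n+2})\,\mathrm{d}m_0$, and $C_0=B_0$. The compatibility of the two expansions is extra information your single-center equation never sees. Concretely, the paper equates the $C_n$-expansion of the diagonal $\int\Re q_i(x)\Re q_\alpha(\Phi_t x)^2\,\mathrm{d}m_0$, which starts at order $T^2$ with coefficient $\tfrac12 C_0$, with the cubic-at-base expansion of $\int\Re q_\alpha(y)^2\Re q_i(\Phi_t y)\,\mathrm{d}m_0$, which starts at order $T^4$ with coefficient $\tfrac14 A_0$; this gives $C_0=0$ immediately and ties $C_1$ to $A_0$. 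The rays $s=2t$ and $s=3t$ (compared at order $T^4$) then yield $A_0=0$, after which the diagonal identity reads $\sum_n C_nT^{2n+2}(1-T^2)^6=0$ and forces $C_n=0$ for all $n$ — exactly killing the residual solution above. If you adopt this quadratic-at-base reformulation (or, equivalently, feed the relation between the $\{A_n,B_n\}$ and $\{C_n\}$ families into your functional equation), the rest of your argument goes through as written.
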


The argument for this proposition boils down to the following lemma. 
\begin{lem} \label{lem 3cubicII}
We have the following holds for any $t,s\in \mathbb{R}$,
\begin{align}
&\int_{\Sub UX}\Re q_{i}(x)\Re q_{\alpha}(\Phi_{t}(x))\Re q_{\alpha}(\Phi_{s}(x))\mathrm{d}m_{0}(x)=0. \label{eq 3cubic3}\\
&\int_{\Sub UX}\Re q_{\alpha}(x)\Re q_{\alpha}(\Phi_{t}(x))\Re q_{i}(\Phi_{s}(x))\mathrm{d}m_{0}(x)=0. \label{eq 3cubic4}\\
 &\int_{\Sub UX}\Re q_{\alpha}(x)\Im q_{\alpha}(\Phi_{t}(x))\Im q_{i}(\Phi_{s}(x))\mathrm{d}m_{0}(x)=0. \label{eq 3cubic5}
\end{align}
\end{lem}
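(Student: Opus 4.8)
\textbf{Proof plan for Lemma \ref{lem 3cubicII}.} The strategy mirrors the proof of Lemma \ref{lem 3cubic} in the model case: I transfer each integral to the Poincar\'e disk, expand the holomorphic differentials in their analytic series, and exploit the rotational and flow invariance of the Liouville measure $m_0 = m_L$ to kill all Fourier coefficients. The essential new feature is that one of the three factors is now a \emph{quadratic} differential $q_i$, so its analytic expansion carries angular factors $e^{i(n+2)\theta}$ (formula \eqref{eq: analyticExIII}) rather than $e^{i(n+3)\theta}$. This shifts the angular bookkeeping, and the heart of the argument is tracking which combinations of $e^{i3\theta}$ (from the $\Re q_\alpha(x)$ midpoint factor) and the remaining angular factors integrate to zero over $\theta \in [0,2\pi)$.

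First I would treat \eqref{eq 3cubic4}, which is the closest analogue of \eqref{eq 3cubic1}: the ``repeated'' factor is $q_\alpha$ and the distinguished factor $q_i$ sits at flow time $s$. As in the model case I reduce to $t,s \ge 0$ using $\Phi_t$-invariance together with the reflection identity $\Phi_{-t}(x) = -\Phi_t(-x)$ and $(e^{i\pi})^*m_0 = m_0$, then work with the analytic expansions \eqref{eq: analyticExI} and \eqref{eq: analyticExIII}. I expect to again extract coefficient relations $A_n, B_n$ by specializing to $s = t$ and $s = \tfrac{t}{2}$; the $s=t$ specialization gives an antisymmetry forcing a linear relation among the $A_n + B_n$, and the $s = \tfrac{t}{2}$ specialization (using $T = \frac{2S}{S^2+1}$) yields a triangular system that, combined with the first relation, forces all coefficients to vanish. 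The angular integral $\int_0^{2\pi} \Re(a_0 e^{i3\theta}) e^{i(n+3)\theta} e^{i(m+2)\theta}\,d\theta$ vanishes unless the total phase is $\pm 3$, which is exactly the selection rule that singles out the surviving coefficients; I must check that this rule produces the same combinatorial structure as before.

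For \eqref{eq 3cubic3} the distinguished factor $q_i$ is the midpoint factor at time $0$, so the roles are permuted; here I would instead use the vanishing of the pure angular integral directly. The key observation parallel to \eqref{eq 1Re2q} is that $\int_0^{2\pi} \Re(c_0(x) e^{i2\theta}) \, e^{i(n+3)\theta} e^{i(m+3)\theta}\,d\theta = 0$ for all $n,m \ge 0$, since the three phases $2, n+3, m+3$ can never cancel to zero (their sum is at least $8$, and the conjugate pairings give phases of the form $\pm 2 \pm(n+3)\pm(m+3)$, none of which is $0$ because $2$ has the wrong parity relative to $(n+3)+(m+3)$ and the magnitudes do not match). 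This makes \eqref{eq 3cubic3} the most direct of the three. Finally, \eqref{eq 3cubic5} follows formally from \eqref{eq 3cubic4} exactly as \eqref{eq 3cubic2} followed from \eqref{eq 3cubic1}: I would take the real part of $\int_{\Sub UX}\Re q_\alpha(x)\, q_\alpha(\Phi_t(x))\, q_i(\Phi_s(x))\,\mathrm{d}m_0(x)$, which splits into the $\Re\Re\Re$ term of \eqref{eq 3cubic4} minus the $\Re\Im\Im$ term of \eqref{eq 3cubic5}, and separately verify that the complex integral $\int_{\Sub UX}\Re q_\alpha(x)\, q_\alpha(\Phi_t(x))\, q_i(\Phi_s(x))\,\mathrm{d}m_0(x)$ vanishes by the same angular selection rule used for \eqref{eq 3cubic3}.

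The main obstacle I anticipate is verifying that the mixed degree-$2$/degree-$3$ angular selection rules really do eliminate every coefficient, rather than leaving a nontrivial residual relation. In the model case all three factors were cubic and the phase arithmetic was homogeneous; here the mismatch between $e^{i(n+2)\theta}$ and $e^{i(n+3)\theta}$ must be checked carefully to confirm that the triangular system from the $s = \tfrac{t}{2}$ specialization is still solvable with only the zero solution. If the parity mismatch instead made some angular integral automatically vanish (as I expect it does for \eqref{eq 3cubic3}), the argument simplifies; the delicate case is \eqref{eq 3cubic4}, where the surviving coefficients genuinely require the two-specialization argument, and I would need to confirm the combinatorial coefficients $(n-k+1)(n-k+2)$ appearing in the expansion of $(S^2+1)^{-3}$ behave as in the model computation.
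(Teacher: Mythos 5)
There are two genuine gaps here, both located exactly in the phase/parity bookkeeping that you flagged as the delicate point. First, your claim that \eqref{eq 3cubic3} follows from a pure angular selection rule is false. The integrand contains $\Re q_{\alpha}(\Phi_{t}(x))\,\Re q_{\alpha}(\Phi_{s}(x))$, and writing each real part as $\tfrac{1}{2}(w+\bar w)$ produces cross terms $q_{\alpha}(\Phi_{t})\overline{q_{\alpha}(\Phi_{s})}$ carrying angular factors $e^{i(n+3)\theta}e^{-i(m+3)\theta}$. Paired against $\Re(c_{0}e^{i2\theta})$ the total phase is $\pm 2+(n+3)-(m+3)=\pm 2+n-m$, which vanishes whenever $m=n\pm 2$; your parity argument only excludes the pairings in which both exponents carry the same sign. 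The surviving coefficients are exactly $C_{n}=\int_{UX}\Re(c_{0}a_{n}\bar a_{n+2})\,\mathrm{d}m_{0}$, and the expansion of \eqref{eq 3cubic3} is $\tfrac{1}{4}\sum_{n}C_{n}T^{n}S^{n}(1-T^{2})^{3}(1-S^{2})^{3}(S^{2}+T^{2})$, which is not formally zero --- proving $C_{n}=0$ is the real content of the lemma. (Your selection-rule reasoning is valid for the fully holomorphic integral $\int_{UX}\Re q_{\alpha}(x)\,q_{\alpha}(\Phi_{t}(x))\,q_{i}(\Phi_{s}(x))\,\mathrm{d}m_{0}$ used in your reduction of \eqref{eq 3cubic5}, where no conjugates appear; that part of your proposal is sound.)

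Second, the two-specialization scheme $s=t$, $s=\tfrac{t}{2}$ does not transfer to \eqref{eq 3cubic4}. The reflection trick $\Phi_{-t}(x)=-\Phi_{t}(-x)$ combined with $(e^{i\pi})^{*}m_{0}=m_{0}$ contributes a factor $-1$ for each cubic differential but $+1$ for each quadratic differential, since $q_{i}(-v)=q_{i}(v)$ for even degree. Applied to $\int_{UX}\Re q_{\alpha}(x)\Re q_{\alpha}(\Phi_{t}(x))\Re q_{i}(\Phi_{t/2}(x))\,\mathrm{d}m_{0}$ the total sign is $(-1)^{2}(+1)=+1$, so the manipulation returns the tautology that the integral equals itself and yields no relation. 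This is precisely why the paper's proof states that $s=\tfrac{t}{2}$ ``tells us nothing in this case'' and instead extracts the needed relations from the specializations $s=t$, $s=2t$ and $s=3t$, introducing the auxiliary coefficients $D_{n}=\int_{UX}\Re(a_{0}a_{n}\bar c_{n+4})\,\mathrm{d}m_{0}$ and first pinning down $D_{0}=0$ before concluding $C_{n}=0$. Note also that the paper's logical order is the reverse of yours: \eqref{eq 3cubic3} is proved first and \eqref{eq 3cubic4} is then deduced from it by $\Phi_{t}$-invariance of $m_{0}$, which is the cheap direction; as written, neither of your two main steps goes through.
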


\begin{proof}
The proof of this lemma is basically the same as the proof of Lemma \ref{lem 3cubic} except that flow time $s=\frac{1}{2}t$ tells us nothing in this case. We instead choose flow time to be the following three special cases: $s=t$, $s=2t$ and $s=3t$.
We recall our analytic expansions for $q_{i}$ and $q_{\alpha}$ are:
\begin{align*}
   q_{i}(\Phi_{r}(e^{i\theta}x))&=\Bigg(\sum\limits_{n=0}^{\infty}c_{n}(x)R^{n}(1-R^2)^{2}e^{i(n+2)\theta} \Bigg),\\
    q_{\alpha}(\Phi_{r}(e^{i\theta}x))&=\Bigg(\sum\limits_{n=0}^{\infty}a_{n}(x)R^{n}(1-R^2)^{3}e^{i(n+3)\theta} \Bigg).
\end{align*}

We have when $t,s > 0$
\begin{align*}
&\int_{\Sub UX}\Re q_{i}(x)\Re q_{\alpha}(\Phi_{t}(x))\Re q_{\alpha}(\Phi_{s}(x))\mathrm{d}m_{0}(x)\\
=&\frac{1}{2\pi}\int_{0}^{2\pi}\int_{\Sub UX}\Re q_{i}(e^{i\theta}x)\Re q_{\alpha}(\Phi_{t}(e^{i\theta}x))\Re q_{\alpha}(\Phi_{s}(e^{i\theta}x))\mathrm{d}m_{0}(x)\mathrm{d}\theta\\
=&\frac{1}{4}\sum\limits_{n=0}^{\infty}\int_{UX}\Re(c_{0}a_{n}\bar{a}_{n+2})\mathrm{d}m_{0}T^{n}S^n(1-T^{2})^{3}(1-S^{2})^{3}(S^2+T^2).
\end{align*}

Consider $t=s>0$. Then
\begin{align*}
&\int_{\Sub UX}\Re q_{i}(x)\Re q_{\alpha}(\Phi_{t}(x))\Re q_{\alpha}(\Phi_{t}(x))\mathrm{d}m_{0}(x)\\
=&\int_{\Sub UX}\Re q_{i}(\Phi_{-t}(x))\Re q_{\alpha}(x)\Re q_{\alpha}(x)\mathrm{d}m_{0}(x)\\
=&\int_{\Sub UX}\Re q_{i}(-\Phi_{t}(-x))\Re q_{\alpha}(-x)\Re q_{\alpha}(-x)\mathrm{d}m_{0}(x)\\
=&\int_{\Sub UX}\Re q_{i}(\Phi_{t}(y))\Re q_{\alpha}(y)\Re q_{\alpha}(y)\mathrm{d}m_{0}(y) \hspace{.5in}\text{with $y=-x$}
\end{align*}

The analytic expansions of left and right hand sides of the above equation gives

\begin{align}
&\frac{1}{2}\sum\limits_{n=0}^{\infty}\int_{UX}\Re(c_{0}a_{n}\bar{a}_{n+2})\mathrm{d}m_{0}T^{2n}(1-T^{2})^{6}T^2 \nonumber \\
=&\frac{1}{4}\int_{UX}\Re(a_{0}a_{0}\bar{c}_{4})\mathrm{d}m_{0}(1-T^{2})^{2}T^{4} \label{eq comb2}
\end{align}

We denote $C_{n}=\int_{UX} \Re(c_{0}a_{n}\bar{a}_{n+2})\mathrm{d}m_{0}$ and $D_{n}=\int_{UX} \Re(a_{0}a_{n}\bar{c}_{n+4})\mathrm{d}m_{0}$ for $n \geq 0$. We proceed to prove $C_n=0$ for $n \geq 0$. 

The coefficient of $T^0$ and $T^2$ yield the following equations
\begin{align*}
     C_0&=0,\\
    2C_1-8C_0&=D_0.
\end{align*}

 On the other hand, if we consider $s=2t$ and $s=3t$. They lead to the following two equations

 \begin{align*}
&\int_{\Sub UX}\Re q_{i}(x)\Re q_{\alpha}(\Phi_{t}(x))\Re q_{\alpha}(\Phi_{2t}(x))\mathrm{d}m_{0}(x)\\
=&-\int_{\Sub UX}\Re q_{i}(\Phi_{t}(x))\Re q_{\alpha}(x)\Re q_{\alpha}(\Phi_{t}(-x))\mathrm{d}m_{0}(y)
\end{align*}
 and 
 \begin{align*}
 &\int_{\Sub UX}\Re q_{i}(x)\Re q_{\alpha}(\Phi_{t}(x))\Re q_{\alpha}(\Phi_{3t}(x))\mathrm{d}m_{0}(x)\\
 =&-\int_{\Sub UX}\Re q_{i}(\Phi_{t}(x))\Re q_{\alpha}(x)\Re q_{\alpha}(\Phi_{2t}(-x))\mathrm{d}m_{0}(y).
 \end{align*}

 When $s=2t$, we have $S=\frac{2T}{T^2+1}$ and $S=2T+O(T^3)$.  When $s=3t$, we have $S=\frac{3T+T^3}{3T^2+1}$ and $S=3T+O(T^3)$.

Compare coefficients of $T^4$ of the analytic expansions of above two equations and use the relations $S=2T+O(T^3)$ and $S=3T+O(T^3)$ to obtain $D_0=0$. Therefore from equation \text{\eqref{eq comb2}} we conclude $C_{n}=0$ for $n\geq 0$ and  
equation \text{\eqref{eq 3cubic3}} holds for $t,s>0$.  For $s\leq 0$ or $t \leq 0$, the argument for equation \text{\eqref{eq 3cubic3}} to hold is an analogy of $\partial_{\beta}g_{\alpha \alpha}(\sigma)$ case. We omit it here.

Equation \text{\eqref{eq 3cubic4}} then follows from equation \text{\eqref{eq 3cubic3}} by a $\Phi_t$-invariance argument of $m_{0}$. To prove equation \text{\eqref{eq 3cubic5}}, we just need the following 
\begin{align*}
\int_{\Sub UX}\Re q_{\alpha}(x) q_{i}(\Phi_{t}(x)) q_{i}(\Phi_{s}(x))\mathrm{d}m_{0}(x)=0 \hspace{.5in}\text{$\forall t,s \in \mathbb{R}$}
\end{align*}
The argument is the same as the argument for Lemma \ref{lem 3cubic}. This finishes the proof of Lemma \ref{lem 3cubicII}.

\end{proof}

\begin{proof}[Proof of Proposition \ref{prop Case2}]\hfill

We begin by showing the following is zero by evaluation the integral on Poincar\'e disk.
\begin{align*}
\lim\limits_{r\to \infty}\frac{1}{r}\int_{\Sub UX} 2\int_{0}^{r}\partial_{u}f^{N}_{\rho(0)} \mathrm{d}t\int_{0}^{r}\Tr(\frac{\partial^2 D_{H(0)}}{\partial u \partial v}\pi(0)) \mathrm{d}t\mathrm{d}m_{0}=0.
\end{align*}
 Recall from the last subsection that $y_{21}$ is the solution of $\bar{\partial}\partial y_{21}-2hy_{21}+h^{-1}\partial h \bar{\partial}y_{21}=h^{-2}q_{i}\overline{q_{\alpha}}$. Because $q_{i}$ and $\overline{q_{\alpha}}$ are real analytic and because $h=h(0,0)=\frac{1}{2}\sigma$ is also real analytic, we know $y_{21}$ is real analytic by analytic elliptic regularity theory (\cite{Gilbarg_Elliptic}).

As discussed before, the function $y_{21}$ on $UX$ transfers as  $y_{21}(e^{i\theta}x)=e^{-i\theta}y_{21}(x)$. Similarly to the model case of $g_{\alpha \alpha, \beta}$, we write the real analytic expansion for $y_{21}$ in the coordinates given by Poincar\'e disck model based on $x$,
\begin{equation*}
    y_{21,x}(z)=\sum\limits_{n,m\geq 0}b_{n,m}(x)z^{n}{\bar{z}}^{m}\frac{\partial}{\partial z}.
\end{equation*}
Denote $\tilde{y}_{21,x}(z):=\Re \left(y_{21,x}(z)(dr)\right)$. Recall $r(R)=\frac{1}{2}\log(\frac{1-R}{1+R})$. one has
\begin{align*}
   y_{21}(\Phi_{r}(e^{i\theta}x))= \tilde{y}_{21,x}(Re^{i\theta})=\Re \Bigg(\sum\limits_{n,m\geq0}b_{n,m}(x)R^{n+m}(1-R^2)^{-1}e^{i(n-m-1)\theta} \Bigg).
\end{align*}
Thus
\begin{align*}
&\lim\limits_{r\to \infty}\frac{1}{r}\int_{\Sub UX} 2\int_{0}^{r}\partial_{u}f^{N}_{\rho(0)} \mathrm{d}t\int_{0}^{r}Tr(\frac{\partial^2 D_{H(0)}}{\partial u \partial v}\pi(0)) \mathrm{d}t\mathrm{d}m_{0}\\
=&\lim\limits_{r\to \infty}\frac{1}{r}\int_{\Sub UX} \int_{0}^{r}Re q_{\alpha}(\Phi_{t}(x)) \mathrm{d}t\int_{0}^{r} \Re y_{21}(\Phi_{t}(x)) \mathrm{d}t\mathrm{d}m_{0}\\
=&\lim\limits_{r\to \infty}\frac{1}{r}\int_{0}^{r}\int_{0}^{r}\int_{\Sub UX} \Re q_{\alpha}(\Phi_{t}(x))\Re y_{21}(\Phi_{s}(x))\mathrm{d}m_{0}\mathrm{d}t\mathrm{d}s\\
=&\lim\limits_{r\to \infty}\frac{1}{r}\int_{0}^{r}\int_{0}^{r}\int_{\Sub UX} \Re q_{\alpha}(\Phi_{t-s}(x))\Re y_{21}(x)\mathrm{d}m_{0}\mathrm{d}t\mathrm{d}s.
\end{align*}
When $\mu=t-s \geq0$,
\begin{align*}
    &\int_{\Sub UX} \Re q_{\alpha}(\Phi_{\mu}(x))\Re y_{21}(x)\mathrm{d}m_{0}\\
    =&\frac{1}{2\pi}\int_{\Sub UX} \int_{0}^{2\pi}\Re q_{\alpha}(\Phi_{\mu}(e^{i\theta}x))\Re y_{21}(e^{i\theta}x) \mathrm{d}\theta \mathrm{d}m_{0}.
\end{align*}
However $\int_{0}^{2\pi} \Re(e^{-i\theta}b_{0,0}) \Re(a_{n}e^{i(n+3)\theta})\mathrm{d}\theta=0$ for $\forall n\geq 0$ implies the above is zero.

It also holds for $\mu \leq 0$ by a simple observation that $\Re q_{\alpha}(\Phi_{-\mu}(-x))=-\Re q_{\alpha}(\Phi_{\mu}(x))$. Therefore we conclude 
\begin{align*}
\lim\limits_{r\to \infty}\frac{1}{r}\int_{\Sub UX} 2\int_{0}^{r}\partial_{u}f^{N}_{\rho(0)} \mathrm{d}t\int_{0}^{r}\Tr(\frac{\partial^2 D_{H(0)}}{\partial u \partial v}\pi(0)) \mathrm{d}t\mathrm{d}m_{0}=0.
\end{align*}

Arguments for other terms in $\partial_{i}g_{\alpha \alpha}(\sigma)$ to be equal to zero are analogous to the model case of  $\partial_{\beta}g_{\alpha \alpha}(\sigma)$. They all reduce to Lemma \ref{lem 3cubicII}. We therefore finish the proof of Proposition \ref{prop Case2}. 
\end{proof}

\subsection{The case of $\partial_{j}g_{\alpha i}(\sigma)$}

The proofs for the case of $\partial_{j}g_{\alpha i}(\sigma)$ in this subsection and the case of $\partial_{\beta}g_{\alpha i}(\sigma)$ in the next subsection are basically the same as the cases for $\partial_{\beta}g_{\alpha \alpha}(\sigma)$ and $\partial_{i}g_{\alpha \alpha}(\sigma)$. Although there's no more new ingredients in the proofs, we include them here for completeness.

For $\partial_{j}g_{\alpha i}(\sigma)$, we have three parameters $\{(u,v,w)\}\in \{(-1,1)\}^{3}$. The representations $\{\rho(u,v,w)\}$ in $\mathcal{H}_{3}(S)$ corresponds to $\{(vq_{i}+wq_{j},uq_{\alpha})\}\subset H^{0}(X,K^2)\bigoplus H^{0}(X,K^3)$ by Hitchin parametrization. In particular, we have $\partial_{u}\rho(0,0,0)$ is identified with $\varphi(q_{\alpha})$ and $\partial_{v}\rho(0,0,0)$ is identified with $\varphi(q_{i})$. Also  $\partial_{w}\rho(0,0,0)$ is identified with $\varphi(q_{j})$. The formula for $\partial_{j}g_{\alpha i}(\sigma)$ is
\begin{align*}
\partial_{j}g_{\alpha i}(\sigma)
=&\partial_{w} \Bigg( {\langle \partial_{u}\rho(0,0,w), \partial_{v}\rho(0,0,w) \rangle}_{\Sub P}\Bigg) \Bigg|_{w=0}\\
 =&\lim\limits_{r\to \infty}\frac{1}{r}\Bigg(\int_{\Sub UX}\int_{0}^{r} \partial_{u}f_{\rho(0)}^{N}\mathrm{d}t \int_{0}^{r}\partial_{v}f_{\rho(0)}^{N}\mathrm{d}t\int_{0}^{r} \partial_{w}f_{\rho(0)}^{N}\mathrm{d}t\mathrm{d}m_{0}\\
 +&\int_{\Sub UX} \int_{0}^{r}\partial_{u}f_{\rho(0)}^{N} \mathrm{d}t\int_{0}^{r} \partial_{vw}f_{\rho(0)}^{N}\mathrm{d}t\mathrm{d}m_{0}+\int_{\Sub UX} \int_{0}^{r}\partial_{v}f_{\rho(0)}^{N} \mathrm{d}t\int_{0}^{r} \partial_{uw}f_{\rho(0)}^{N}\mathrm{d}t\mathrm{d}m_{0}\Bigg)
 \end{align*}
where the first and second variations are 
\begin{enumerate}[label=(\roman*)]
    \item $\partial_{u}f_{\rho(0)}^{N}=-\partial_{u}f_{\rho(0)}$.
    \item $\partial_{v}f_{\rho(0)}^{N}=-\partial_{v}f_{\rho(0)}$.
    \item $\partial_{uw}f_{\rho(0)}^{N}=-\partial_{uw}h(\rho(0))-\partial_{uw}f_{\rho(0)}$.
    \item $\partial_{vw}f_{\rho(0)}^{N}=-\partial_{vw}h(\rho(0))-\partial_{vw}f_{\rho(0)}$.
\end{enumerate}

\subsubsection{First and second variations of reparametrization functions}

Our Higgs fields in this case are 

 \[
  \Phi(u,v,w)=
 \begin{bmatrix}
   0 & vq_{i}+wq_{j}  & uq_{\alpha} \\
   1 & 0 & vq_{i}+wq_{j} \\
   0 & 1 & 0 \\
  \end{bmatrix}
\]

Follow the steps and methods from cases $\partial_{\beta}g_{\alpha \alpha}(\sigma)$ and $\partial_{\beta}g_{\alpha \alpha}(\sigma)$, we have

\begin{prop}
\label{prop case1rep3}
The first variations of reparametrization functions $\partial_{u}f_{\rho(0)}:UX \to \mathbb{R}$, $\partial_{v}f_{\rho(0)}:UX \to \mathbb{R}$ and $\partial_{w}f_{\rho(0)}:UX \to \mathbb{R}$  for the case $\partial_{j}g_{\alpha i}(\sigma)$ satisfy 
 \begin{align*}
  \partial_{u}f_{\rho(0)}(x)& \sim -Re q_{\alpha}(x),\\  
  \partial_{v}f_{\rho(0)}(x)&\sim 2Re q_{i}(x),\\
   \partial_{w}f_{\rho(0)}(x)&\sim 2Re q_{j}(x).
\end{align*}
and the second variations of reparametrization functions $\partial_{uw}f_{\rho(0)}:UX \to \mathbb{R}$ and $\partial_{vw}f_{\rho(0)}:UX \to \mathbb{R}$ satisfy
\begin{align*}
&\partial_{uw}f_{\rho(0)}\sim\frac{1}{2}\Re y_{21}(x)-2\Im q_{\alpha}(x)\left(\int_{0}^{\infty}\Im q_{i}(\Phi_{s}(x))e^{-s}\mathrm{d}s+\int_{-\infty}^{0}\Im q_{i}(\Phi_{s}(x))e^{s}\mathrm{d}s\right)\\
 &\partial_{vw}f_{\rho(0)}(x)\sim \frac{1}{2}\phi_{vw}(p(x))+2\Im q_{i}(x)\left(\int_{0}^{\infty}\Im q_{j}(\Phi_{s}(x))e^{-s}\mathrm{d}s+\int_{-\infty}^{0}\Im q_{j}(\Phi_{s}(x))e^{s}\mathrm{d}s\right)
\end{align*}
where $p: UX \to X$ and $y_{21}$ are defined as before.

\end{prop}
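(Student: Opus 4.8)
The plan is to handle the three-parameter family $\{\rho(u,v,w)\}$ by decoupling it into the two interaction types already settled, exploiting the fact that each mixed second derivative couples only two of the three deformation directions. Reading off the Higgs field, $\partial_u\Phi$ inserts the cubic differential $q_\alpha$ in the corner while $\partial_v\Phi$ and $\partial_w\Phi$ insert the quadratic differentials $q_i$ and $q_j$ in the super-diagonal. Consequently $\partial_{uw}f_{\rho(0)}$ is a cubic–quadratic coupling, structurally identical to $\partial_{uv}f_{\rho(0)}$ in the case $\partial_i g_{\alpha\alpha}(\sigma)$ treated in Proposition \ref{prop case1rep2}, whereas $\partial_{vw}f_{\rho(0)}$ is a quadratic–quadratic coupling, which I will show behaves as in the model case because the relevant harmonic metric stays diagonal. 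Throughout, Proposition \ref{prop: firstdev} lets me work only up to Livšic coboundary, so I compute on an arbitrary closed geodesic and extend to $UX$ using the DPG property.

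\textbf{First variations.} I would apply formula (\ref{eq firstVar}), $\partial_s f_{\rho(0)}(x)\sim -\Tr(\partial_s D_{H(0)}(x)\,\pi(0))$. At the Fuchsian point the harmonic metric is diagonal, so $\pi(0)$ is the same explicit projection matrix found in the model case, and the matrices $\partial_u D_{H(0)}$, $\partial_v D_{H(0)}$, $\partial_w D_{H(0)}$ are read off exactly as before, with $q_\alpha$ in the $(1,3),(3,1)$ entries and $q_i$, $q_j$ in the $(1,2),(2,3),(2,1),(3,2)$ entries. Taking traces yields $\partial_u f_{\rho(0)}\sim-\Re q_\alpha$, $\partial_v f_{\rho(0)}\sim 2\Re q_i$, and $\partial_w f_{\rho(0)}\sim 2\Re q_j$.

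\textbf{The mixed term $\partial_{uw}$.} Following $\partial_i g_{\alpha\alpha}(\sigma)$, I split $\partial_{uw}f_{\rho(0)}\sim -\Tr(\partial_{uw}D_{H(0)}\pi(0))-\Tr(\partial_u D_{H(0)}\partial_w\pi(0))$. For the first piece the metric $H(u,0,w)$ is no longer diagonal, so I set $Y=H^{-1}\partial_{uw}H$ and substitute the mixed derivative of Hitchin's equation (\ref{eq familyHitchin}) into the nine scalar equations; here the forcing term couples $q_\alpha$ with the $w$-direction quadratic differential, and the maximum-principle and Bochner arguments run verbatim, forcing $y_{11}=y_{22}=y_{33}=0$, $y_{21}+y_{32}=0$, and $y_{31}=0$, so that $\Tr(\partial_{uw}D_{H(0)}\pi(0))=-\tfrac12\Re y_{21}$. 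For the second piece I would solve the inhomogeneous ODE systems for the eigenvector variations $\partial_w e_i(0,t)$ along $\gamma$, with boundary conditions coming from the norm-one and eigenvalue constraints, then extend the resulting expression to a Hölder function on $UX$ exactly as in Section 5; this produces the integral terms $-2\Im q_\alpha\bigl(\int_0^\infty e^{-s}\Im q\,ds+\int_{-\infty}^0 e^{s}\Im q\,ds\bigr)$ in the $w$-direction quadratic $q$.

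\textbf{The mixed term $\partial_{vw}$ and the main obstacle.} Since the subfamily $\{\rho(0,v,w)\}$ has vanishing cubic part it lies in the Fuchsian locus, so its harmonic metric $H(0,v,w)$ remains diagonal; hence $\partial_{vw}H$ is diagonal and $\Tr(\partial_{vw}D_{H(0)}\pi(0))=-\tfrac12\phi_{vw}(p(x))$, precisely as in the model case. The projection-variation term $\Tr(\partial_v D_{H(0)}\partial_w\pi(0))$ is again obtained from the inhomogeneous eigenvector ODEs and contributes $-2\Im q_i\bigl(\int_0^\infty e^{-s}\Im q_j\,ds+\int_{-\infty}^0 e^{s}\Im q_j\,ds\bigr)$. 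The only genuinely delicate step is the non-diagonal Hitchin analysis underlying the $\partial_{uw}$ term; but this is identical to the argument already carried out for $\partial_i g_{\alpha\alpha}(\sigma)$, up to relabeling the quadratic differential in the forcing term, so I expect no new analytic difficulty, and the remaining work (Hölder extension, boundary-condition bookkeeping) is routine and parallel to the earlier sections.
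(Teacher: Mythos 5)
Your proposal is correct and follows essentially the same route as the paper: the first variations and the cubic--quadratic term $\partial_{uw}f_{\rho(0)}$ are imported directly from the $\partial_{i}g_{\alpha\alpha}(\sigma)$ computation, while for $\partial_{vw}f_{\rho(0)}$ the vanishing of the cubic part keeps the harmonic metric $H(0,v,w)$ diagonal, so the model-case analysis of $\Tr(\partial_{vw}D_{H(0)}\pi(0))$ and the inhomogeneous eigenvector ODEs apply verbatim. One remark: your derivation correctly places the $w$-direction quadratic differential $q_{j}$ inside the integrals of the $\partial_{uw}$ formula, whereas the proposition as printed writes $q_{i}$; the identity $\int_{UX}\Re q_{i}(x)\,\Im q_{j}(\Phi_{t}(x))\,\Im q_{\alpha}(\Phi_{s}(x))\,\mathrm{d}m_{0}(x)=0$ used later in the evaluation confirms that $q_{j}$ is the intended term, so your version is the right one.
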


\begin{proof}
For the second variations of reparametrization functions, we have computed $\partial_{uw}f_{\rho(0)}$ in the $\partial_{i}g_{\alpha \alpha}(\sigma)$ case:
\begin{align*}
\partial_{uw}f_{\rho(0)}\sim\frac{1}{2}\Re y_{21}(x)-2\Im q_{\alpha}(x)\left(\int_{0}^{\infty}\Im q_{i}(\Phi_{s}(x))e^{-s}\mathrm{d}s+\int_{-\infty}^{0}\Im q_{i}(\Phi_{s}(x))e^{s}\mathrm{d}s\right)
\end{align*}
The computation of $\partial_{vw}f_{\rho(0)}\sim-\Tr(\frac{\partial^2 D_{A(0)}}{\partial w \partial v}\pi(0)) - \Tr(\partial_{v}D_{H(0)}\partial_{w}\pi(0))$ is divided into computation of $\Tr(\frac{\partial^2 D_{A(0)}}{\partial w \partial v}\pi(0))$ and computation of $\Tr(\partial_{v}D_{H(0)}\partial_{w}\pi(0))$.

\begin{itemize}
    \item  Compute $\Tr(\frac{\partial^2 D_{H(0)}}{\partial v \partial w}\pi(0))$

 we set $u=0$, the Higgs field is
\[
  \Phi(v,w)=
  \begin{bmatrix}
   0 & vq_{i}+wq_{j}  & 0 \\
   1 & 0 & vq_{i}+wq_{j} \\
   0 & 1 & 0 \\
  \end{bmatrix}
\]

The harmonic metric $H(v,w)$ is diagonalizable and the computation of $\partial_{vw}D_{H(0)}$ is the same as the model case of $\partial_{\beta}g_{\alpha \alpha}(\sigma)$.

With respect to the notation defined in the model case of $\partial_{\beta}g_{\alpha \alpha}(\sigma)$, one obtains:

\begin{align*}
   \Tr(\frac{\partial^2 D_{H(0)}}{\partial v \partial w}\pi(0))(x)=-\frac{1}{2}\psi_{vw}(z(p(x)))=-\frac{1}{2}\phi_{vw}(p(x))
\end{align*}
Where $p: UX \to X$ is the projection from the unit tangent bundle to our surface and $z$ is the Fermi coordinate we choose evaluating at the point $p(x)\in X$. 

\item  Compute $\Tr(\partial_{v}D_{H(0)}\partial_{w}\pi(0))$

Both $\partial_{v}D_{H(0)}$ and $\partial_{w}\pi(0)$ have been computed in $\partial_{i}g_{\alpha \alpha}(\sigma)$ case. One can check
\begin{align*}
   & \Tr(\partial_{v}D_{H(0)}\partial_{w}\pi(0))(\Phi_{t}(x))\\
   =&q_{i}(\partial_{w}a_{11}(0)e_{12}(0)+a_{11}(0)\partial_{w}e_{12}(0)+\partial_{w}a_{21}(0)e_{13}(0)+a_{21}(0)\partial_{w}e_{13}(0))\\
   +&2\bar{q}_{i}(\partial_{w}a_{21}(0)e_{11}(0)+a_{21}(0)\partial_{w}e_{11}(0)+\partial_{w}a_{31}(0)e_{12}(0)+a_{31}(0)\partial_{w}e_{12}(0))\\
   =&2\Im q_{i}(\Phi_{t}(x))\int_{0}^{t}\Im q_{j}(\Phi_{s}(x))(e^{t-s}-e^{s-t})\mathrm{d}s\\
    +&2\Im q_{i}(\Phi_{t}(x))\int_{0}^{l_{\gamma}}\Im q_{j}(\Phi_{s}(x))(\frac{e^{t-s}}{e^{-l_{\gamma}}-1}-\frac{e^{s-t}}{e^{l_{\gamma}}-1})\mathrm{d}s
\end{align*}
In particular, 
\begin{align*}
    &\Tr(\partial_{v}D_{H(0)}\partial_{w}\pi(0))(x)\\
    =&2\Im q_{i}(x)\int_{0}^{l_{\gamma}}\Im q_{j}(\Phi_{s}(x))(\frac{e^{-s}}{e^{-l_{\gamma}}-1}-\frac{e^{s}}{e^{l_{\gamma}}-1})\mathrm{d}s
\end{align*}
\end{itemize}

Similar to the cases of $\partial_{\beta} g_{\alpha \alpha}(\sigma)$ and $\partial_{i} g_{\alpha \alpha}(\sigma)$, one can then define a function $\eta: UX \to \mathbb{R}$ 
\begin{align*}
    \eta(x)=-2\Im q_{i}(x)\left(\int_{0}^{\infty}\Im q_{j}(\Phi_{s}(x))e^{-s}\mathrm{d}s+\int_{-\infty}^{0}\Im q_{j}(\Phi_{s}(x))e^{s}\mathrm{d}s\right)
\end{align*}
and verify that $\eta(x)$ is H{\"o}lder such that $\Tr(\partial_{v}D_{H(0)}\partial_{w}\pi(0))(x)\equiv \eta(x)$ on $UX$.

We finally obtain
\begin{align*}
 \partial_{vw}f_{\rho(0)}(x)\sim&-\Tr(\frac{\partial^2 D_{H(0)}}{\partial v \partial w}\pi(0))(x) - \Tr(\partial_{v}D_{H(0)}\partial_{w}\pi(0))(x)\\ 
 =&\frac{1}{2}\phi_{vw}(p(x))+2\Im q_{i}(x)\left(\int_{0}^{\infty}\Im q_{j}(\Phi_{s}(x))e^{-s}\mathrm{d}s+\int_{-\infty}^{0}\Im q_{j}(\Phi_{s}(x))e^{s}\mathrm{d}s\right)
\end{align*}
\end{proof}

\subsubsection{Evaluation on Poincar\'e disk}

We show in this subsection
\begin{prop}
\label{prop Case3}
 For $\sigma\in\mathcal{T}(S),\partial_{j} g_{\alpha i}(\sigma)=0$.
\end{prop}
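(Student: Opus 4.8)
The plan is to establish Proposition \ref{prop Case3} by the same two-step scheme already carried out for the model case (Proposition \ref{prop Model}) and for $\partial_i g_{\alpha\alpha}(\sigma)$ (Proposition \ref{prop Case2}): insert the first and second variations of Proposition \ref{prop case1rep3} into the displayed formula for $\partial_j g_{\alpha i}(\sigma)$, open the time integrals by Fubini together with the $\Phi_t$-invariance of $m_{0}$, and show that each resulting term is a constant multiple of a correlation integral that vanishes. Using $\partial_u f^N_{\rho(0)}\sim -\Re q_\alpha$, $\partial_v f^N_{\rho(0)}\sim 2\Re q_i$, $\partial_w f^N_{\rho(0)}\sim 2\Re q_j$, the triple first-variation term collapses to a mixed triple correlation of one cubic and two quadratic differentials, while the two cross terms $\int\partial_u f^N\int\partial_{vw}f^N$ and $\int\partial_v f^N\int\partial_{uw}f^N$ split, after substituting Proposition \ref{prop case1rep3}, into contributions built from $\phi_{vw}$, the constant $\partial_{vw}h(\rho(0))$ and $\Re y_{21}$, and contributions built from the exponentially weighted integrals of $\Im q_i,\Im q_j$ paired against $\Re q_\alpha$ or $\Re q_i$.

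The crux is the following mixed triple correlation lemma, the analogue of Lemmas \ref{lem 3cubic} and \ref{lem 3cubicII}: for all $t,s\in\mathbb{R}$,
\begin{align*}
\int_{UX}\Re q_\alpha(x)\,\Re q_i(\Phi_t(x))\,\Re q_j(\Phi_s(x))\,\mathrm{d}m_{0}(x)&=0,\\
\int_{UX}\Re q_\alpha(x)\,\Im q_i(\Phi_t(x))\,\Im q_j(\Phi_s(x))\,\mathrm{d}m_{0}(x)&=0,
\end{align*}
together with the specialization $q_j=q_i$ needed for the term $\int\partial_v f^N\int\partial_{uw}f^N$. I would prove the first identity exactly as in Lemma \ref{lem 3cubicII}: by rotational invariance $(e^{i\theta})^*m_L=m_L$ the integral equals its $\theta$-average, and since the time-$0$ factor contributes only the single angular mode $\Re(a_0(x)e^{3i\theta})$ while $\Re q_i(\Phi_t(e^{i\theta}x))$ and $\Re q_j(\Phi_s(e^{i\theta}x))$ have the Poincar\'e-disk expansions \eqref{eq: analyticExIII}--\eqref{eq: analyticExIIII}, the $\theta$-integration collapses the triple correlation to a generating series in $T,S$ with coefficients $A_n=\int_{UX}\Re(a_0 c_n\bar d_{n+3})\mathrm{d}m_{0}$ and $B_n=\int_{UX}\Re(a_0\bar c_{n+3}d_n)\mathrm{d}m_{0}$. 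Evaluating at the special flow times $s=t$ and $s=2t$ (and, if needed, $s=3t$) and using the antipodal symmetry $\Phi_{-t}(x)=-\Phi_t(-x)$ with $(e^{i\pi})^*m_{0}=m_{0}$ — under which a cubic differential changes sign and a quadratic one does not — yields a triangular linear system forcing $A_n=B_n=0$ for all $n\ge 0$; the cases $t\le 0$ or $s\le 0$ reduce to $t,s\ge 0$ as before. The second identity follows from the first together with the purely holomorphic vanishing $\int_{UX}\Re q_\alpha(x)\,q_i(\Phi_t(x))\,q_j(\Phi_s(x))\mathrm{d}m_{0}=0$, which holds because $q_i(\Phi_t(e^{i\theta}x))q_j(\Phi_s(e^{i\theta}x))$ carries only the angular modes $e^{i(n+m+4)\theta}$ with $n,m\ge0$, none of which can cancel the modes $\pm3$ of $\Re q_\alpha$. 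The specialization $q_j=q_i$ is immediate, so after a flow shift placing the cubic at time $0$ the term $\int\Re q_i\,\Im q_\alpha\,\Im q_i$ becomes an instance of the same lemma via the same real-part identity.

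With the lemma in hand the remaining terms are routine Poincar\'e-disk evaluations identical to those in Propositions \ref{prop Model} and \ref{prop Case2}. The $\phi_{vw}$ contribution and the constant $\partial_{vw}h(\rho(0))$ contribution vanish because $\int_0^{2\pi}\Re q_\alpha(e^{i\theta}x)\mathrm{d}\theta=\int_0^{2\pi}\Re(a_0(x)e^{3i\theta})\mathrm{d}\theta=0$, so the single nonzero mode of $\Re q_\alpha$ cannot pair with the mode-zero objects $\phi_{vw}(p(\cdot))$ or the constant. The $\Re y_{21}$ contribution, paired against $\partial_v f^N\sim 2\Re q_i$, vanishes by the argument of Proposition \ref{prop Case2}: after a flow shift placing $y_{21}$ at time $0$ only its $n=m=0$ term survives, so $\Re y_{21}(e^{i\theta}x)=\Re(b_{0,0}(x)e^{-i\theta})$ carries only the modes $\pm1$, which cannot cancel the modes $\pm(n+2)$ of $\Re q_i(\Phi_\mu(e^{i\theta}x))$, and the case $\mu<0$ follows from $\Re q_i(\Phi_{-\mu}(-x))=\Re q_i(\Phi_\mu(x))$. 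Finally the weighted mixed terms reduce to the lemma by Fubini, with the tails $\int_r^\infty$ controlled by the uniform bound $M$ on the moduli of $q_\alpha,q_i,q_j$ exactly as in the estimate $\tfrac{1}{r}r^2\int_r^\infty e^{-2s}\mathrm{d}s\to0$. Assembling these vanishings gives $\partial_j g_{\alpha i}(\sigma)=0$.

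The main obstacle I anticipate is purely combinatorial: verifying, for the degree pattern $(3,2,2)$ rather than $(3,3,3)$ or $(3,3,2)$, precisely which angular modes survive the $\theta$-integration, and checking that the relations produced by $s=t$ and $s=2t$ (with a third ratio in reserve) genuinely close the linear system in $A_n,B_n$. No analytic input beyond Proposition \ref{prop case1rep3} and the geodesic-flow and rotational symmetries of the Liouville measure is required.
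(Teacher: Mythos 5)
Your overall architecture matches the paper's: reduce everything to a mixed triple-correlation lemma plus routine mode-counting for the $\phi_{vw}$, constant, and $\Re y_{21}$ contributions, and your treatment of those routine pieces and of the tail estimates is fine. The gap is in the key lemma itself. You propose to prove $\int_{UX}\Re q_\alpha(x)\Re q_i(\Phi_t(x))\Re q_j(\Phi_s(x))\,\mathrm{d}m_0=0$ ``exactly as in Lemma \ref{lem 3cubicII}'' by expanding with the \emph{cubic} at the base point, obtaining coefficients $A_n=\int\Re(a_0c_n\bar d_{n+3})$, $B_n=\int\Re(a_0\bar c_{n+3}d_n)$, and closing a triangular system via $s=t,2t,3t$. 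But the flow-shift/antipodal manipulations do not keep you inside this family: for instance, shifting $\int\Re q_\alpha(x)\Re q_i(\Phi_t)\Re q_j(\Phi_{2t})$ by $-t$ and using $\Phi_{-t}(x)=-\Phi_t(-x)$ yields $-\int\Re q_j(x)\Re q_i(\Phi_t)\Re q_\alpha(\Phi_{2t})$, whose expansion is governed by the \emph{quadratic-at-base} coefficients $\int\Re(d_0c_n\bar a_{n+1})$, $\int\Re(d_0\bar c_{n+3}a_n)$ --- and the $q_i$-at-base versions appear from other shifts. Because $q_i\neq q_j$, there is no automatic exchange symmetry between these two new families, so your relations involve roughly twice as many unknowns as equations and do not close. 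The paper resolves exactly this by first proving the diagonal case $\int\Re q_i(x)\Re q_i(\Phi_t)\Re q_\alpha(\Phi_s)=0$ (using $s=t$ and $s=t/2$), polarizing over $q_i,q_j,q_i+q_j$ to get the exchange identity $\int\Re q_i(x)\Re q_j(\Phi_t)\Re q_\alpha(\Phi_s)+\int\Re q_j(x)\Re q_i(\Phi_t)\Re q_\alpha(\Phi_s)=0$, and only then running the $s=mt$ induction within a single coefficient family. That polarization step is the missing idea; you flag the closure of the system as your ``main anticipated obstacle'' but supply nothing to overcome it.

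A secondary error: for the term $\int\partial_vf^N\int\partial_{uw}f^N$ you must kill correlations of the form $\int\Re q_i(\cdot)\,\Im q_\alpha(\cdot)\,\Im q_{j}(\cdot)$, and you claim this follows ``after a flow shift placing the cubic at time $0$\ldots via the same real-part identity.'' It does not: placing the cubic at the base turns this into $\int\Im q_\alpha(x)\Re q_i(\Phi_{t'})\Im q_{j}(\Phi_{s'})$, whereas your real-part identity $\Re\bigl(\int\Re q_\alpha\,q_i\,q_j\bigr)=\int\Re q_\alpha\Re q_i\Re q_j-\int\Re q_\alpha\Im q_i\Im q_j$ only ever produces combinations with $\Re q_\alpha$. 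The correct route (the paper's identity \eqref{eq 3cubicMistake+7}) keeps a quadratic at the base: the holomorphic vanishing $\int\Re q_i(x)\,q_\alpha(\Phi_t)\,q_j(\Phi_s)\,\mathrm{d}m_0=0$ (modes $\pm2+(n+3)+(m+2)\neq0$) gives $\int\Re q_i\Im q_\alpha\Im q_j=\int\Re q_i\Re q_\alpha\Re q_j$, which then vanishes by the first identity. This is easily repaired, but as written the step fails.
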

For the same reasoning as before, the proof of the above proposition reduces to the following lemma.

\begin{lem} \label{lem 3cubicIII}
We have the following holds for any $t,s\in \mathbb{R}$,
\begin{align}
&\int_{\Sub UX}\Re q_{i}(x)\Re q_{j}(\Phi_{t}(x))\Re q_{\alpha}(\Phi_{s}(x))\mathrm{d}m_{0}(x)=0 \label{eq 3cubic6}\\
&\int_{\Sub UX}\Re q_{i}(x)\Im q_{j}(\Phi_{t}(x))\Im q_{\alpha}(\Phi_{s}(x))\mathrm{d}m_{0}(x)=0 \label{eq 3cubicMistake+7}\\
&\int_{\Sub UX}\Re q_{\alpha}(x)\Re q_{i}(\Phi_{t}(x))\Re q_{j}(\Phi_{s}(x))\mathrm{d}m_{0}(x)=0 \label{eq 3cubic7}\\
&\int_{\Sub UX}\Re q_{\alpha}(x)\Im q_{i}(\Phi_{t}(x))\Im q_{j}(\Phi_{s}(x))\mathrm{d}m_{0}(x)=0 \label{eq 3cubic8}
\end{align}
\end{lem}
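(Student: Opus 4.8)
The strategy is to run the identical machinery used for Lemma \ref{lem 3cubic} and Lemma \ref{lem 3cubicII}, namely to pass to the Poincar\'e disk coordinate based at a point $x\in UX$, use the analytic expansions \eqref{eq: analyticExI}, \eqref{eq: analyticExIII}, \eqref{eq: analyticExIIII} of the cubic differential $q_\alpha$ and the quadratic differentials $q_i,q_j$ as functions on $UX$, and exploit the rotational invariance $(e^{i\theta})^*m_L=m_L$ together with the flow invariance of $m_L=m_0$. First I would reduce \eqref{eq 3cubicMistake+7} and \eqref{eq 3cubic8} to the ``all-real'' integrals \eqref{eq 3cubic6} and \eqref{eq 3cubic7} exactly as in the previous lemmas: the relation
\begin{align*}
&\Re\!\left(\int_{\Sub UX}\Re q_i(x)\,q_j(\Phi_t(x))\,q_\alpha(\Phi_s(x))\,\mathrm{d}m_0(x)\right)\\
&=\int_{\Sub UX}\Re q_i(x)\Re q_j(\Phi_t(x))\Re q_\alpha(\Phi_s(x))\,\mathrm{d}m_0(x)-\int_{\Sub UX}\Re q_i(x)\Im q_j(\Phi_t(x))\Im q_\alpha(\Phi_s(x))\,\mathrm{d}m_0(x),
\end{align*}
combined with the vanishing of $\int_{\Sub UX}\Re q_i(x)\,q_j(\Phi_t(x))\,q_\alpha(\Phi_s(x))\,\mathrm{d}m_0(x)$ (which follows from the angular integral $\int_0^{2\pi}\Re(c_0 e^{i2\theta})e^{i(n+2)\theta}e^{i(m+3)\theta}\mathrm{d}\theta=0$ for all $n,m\ge0$), handles \eqref{eq 3cubicMistake+7}; the same device with the roles permuted handles \eqref{eq 3cubic8}. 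By the $\Phi_t$-invariance of $m_0$, \eqref{eq 3cubic7} is obtained from \eqref{eq 3cubic6} by a shift of flow times, so the entire lemma reduces to establishing \eqref{eq 3cubic6}.

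For \eqref{eq 3cubic6} I would imitate the proof of Lemma \ref{lem 3cubic} and \ref{lem 3cubicII}: restrict first to $t,s\ge 0$, integrate out the angular variable to write the integral as a power series in $T,S$ (with $t(T)=\tfrac12\log\tfrac{1+T}{1-T}$, $s(S)=\tfrac12\log\tfrac{1+S}{1-S}$) whose coefficients are the quantities
\begin{align*}
E_n=\int_{UX}\Re(c_0\, d_n\,\bar a_{n+4})\,\mathrm{d}m_0,\qquad F_n=\int_{UX}\Re(c_0\,\bar d_{n+?}\,a_{?})\,\mathrm{d}m_0,
\end{align*}
pairing the degree-$2$ coefficient $c_0$ of $q_i$ against degree-$2$ coefficients of $q_j$ and degree-$3$ coefficients of $q_\alpha$ so that the total angular weight vanishes. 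I would then extract relations among these coefficients by choosing several special flow-time ratios — here $s=t$, $s=2t$, $s=3t$ as in Lemma \ref{lem 3cubicII} — using the symmetry $\Phi_{-t}(x)=-\Phi_t(-x)$ and $(e^{i\pi})^*m_0=m_0$ to flip signs, and matching Taylor coefficients in $T$ (with $S=2T+O(T^3)$, $S=3T+O(T^3)$) to force all the coefficients to vanish. Finally the cases $t<0$ or $s<0$ are reduced to the nonnegative case by the flow- and rotation-invariance arguments already spelled out in the three bullet cases of Lemma \ref{lem 3cubic}, using the expansion of $\Re q_\bullet(\Phi_r(-e^{i\theta}x))$ and the vanishing of the corresponding phase-shifted integrals.

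The one genuinely new bookkeeping point, and the step I expect to be the main obstacle, is that the three differentials now carry \emph{three distinct} angular weights ($q_i,q_j$ of weight $2$ and $q_\alpha$ of weight $3$), rather than the two-equal-plus-one pattern of the earlier lemmas. I must therefore check carefully that, after the $\theta$-integration, only the correctly weight-balanced products survive and that the finite linear system produced by the special ratios $s=t,2t,3t$ is nondegenerate enough to conclude $E_n=F_n=0$ for every $n\ge0$; this is the place where the number of independent relations must be confirmed to match the number of unknown coefficients at each order. Once that linear-algebra step is verified the remainder is entirely parallel to the established arguments, and \eqref{eq 3cubic6} — hence the whole lemma — follows, after which Proposition \ref{prop Case3} ($\partial_j g_{\alpha i}(\sigma)=0$) is deduced exactly as Proposition \ref{prop Model} and Proposition \ref{prop Case2} were.
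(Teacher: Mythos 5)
Your reductions of \eqref{eq 3cubicMistake+7} and \eqref{eq 3cubic8} to the all-real integrals, and of \eqref{eq 3cubic7} to \eqref{eq 3cubic6} by flow invariance, are correct and match what the paper does. The gap is in your treatment of \eqref{eq 3cubic6} itself. You propose to run the special ratios $s=t,2t,3t$ directly on the integral $\int_{UX}\Re q_{i}(x)\Re q_{j}(\Phi_{t}(x))\Re q_{\alpha}(\Phi_{s}(x))\mathrm{d}m_{0}$, but the time-reversal symmetry $\Phi_{-t}(x)=-\Phi_{t}(-x)$ combined with $(e^{i\pi})^{*}m_{0}=m_{0}$ does not relate this integral back to itself when $q_i\neq q_j$: since $q_i$ and $q_j$ sit at different time slots, the sign-flip argument produces the integral with the roles of $q_i$ and $q_j$ \emph{exchanged}. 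Your linear system therefore involves two families of unknown coefficients (one for each ordering of $q_i,q_j$) and the relations from $s=t,2t,3t$ alone do not close it --- this is exactly the nondegeneracy issue you flagged as ``the main obstacle,'' and it is a real obstruction, not a bookkeeping check.

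The paper resolves it with an intermediate step you are missing: it first proves the diagonal case $q_i=q_j$, i.e.\ $\int_{UX}\Re q_{i}(x)\Re q_{i}(\Phi_{t}(x))\Re q_{\alpha}(\Phi_{s}(x))\mathrm{d}m_{0}=0$, which \emph{is} amenable to the two-ratio argument ($s=t$ and $s=t/2$, as in the model case) because the two equal-weight factors make the symmetry close on itself. Polarizing this identity over $q_i$, $q_j$ and $q_i+q_j$ yields the exchange relation
\begin{equation*}
\int_{UX}\Re q_{i}(x)\Re q_{j}(\Phi_{t}(x))\Re q_{\alpha}(\Phi_{s}(x))\mathrm{d}m_{0}+\int_{UX}\Re q_{j}(x)\Re q_{i}(\Phi_{t}(x))\Re q_{\alpha}(\Phi_{s}(x))\mathrm{d}m_{0}=0 ,
\end{equation*}
which is precisely what lets one convert the exchanged integral back into the original one. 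Only then does the paper run the ratios $s=mt$ for several $m\geq 2$, now obtaining a closed system in the single coefficient family $G_{n}=\int_{UX}\Re(c_{0}d_{n}\bar a_{n+1})\mathrm{d}m_{0}$, $H_{n}=\int_{UX}\Re(c_{0}\bar d_{n+3}a_{n})\mathrm{d}m_{0}$ (note also that your tentative index $\bar a_{n+4}$ is not the weight-balanced choice), and an induction on $n$ finishes the argument. Without the diagonal case plus polarization your proposed proof does not go through as written.
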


\begin{proof}
We just need to show equation \eqref{eq 3cubic6}. Equations \eqref{eq 3cubicMistake+7}, \eqref{eq 3cubic7} and \eqref{eq 3cubic8} follow easily using methods we developed in the former cases. 

We start from a special case of equation \text{\eqref{eq 3cubic6}} that $q_i=q_j$
\begin{align}
\int_{\Sub UX}\Re q_{i}(x)\Re q_{i}(\Phi_{t}(x))\Re q_{\alpha}(\Phi_{s}(x))\mathrm{d}m_{0}(x)=0 \hspace{.5in}\text{$\forall t,s\in \mathbb{R}$}
\label{eq 3cubicIV}
\end{align}

The proof of this case is an analogy of the case $\partial_{\beta}g_{\alpha \alpha}(\sigma)$ because of the following observations for flow time $s=t$ and $s=\frac{t}{2}$.
\begin{align*}
&\int_{\Sub UX}\Re q_{i}(x)\Re q_{i}(\Phi_{t}(x))\Re q_{\alpha}(\Phi_{t}(x))\mathrm{d}m_{0}(x) \nonumber \\
 =&-\int_{\Sub UX}\Re q_{i}(x)\Re q_{i}(\Phi_{t}(x))\Re q_{\alpha}(x)\mathrm{d}m_{0}(x) 
 \end{align*}

and
\begin{align*}
\int_{\Sub UX}\Re q_{i}(x)\Re q_{i}(\Phi_{t}(x))\Re q_{\alpha}(\Phi_{\frac{t}{2}}(x))\mathrm{d}m_{0}(x)=0
\end{align*}

For $t,s > 0$, recall our analytic expansions given in \eqref{eq: analyticExI} and  \eqref{eq: analyticExIII} lead to

\begin{align*}
&\int_{\Sub UX}\Re q_{i}(x)\Re q_{i}(\Phi_{t}(x))\Re q_{\alpha}(\Phi_{s}(x))\mathrm{d}m_{0}(x)\\
=&\frac{1}{2\pi}\int_{0}^{2\pi}\int_{\Sub UX}\Re q_{i}(e^{i\theta}x)\Re q_{i}(\Phi_{t}(e^{i\theta}x))\Re q_{\alpha}(\Phi_{s}(e^{i\theta}x))\mathrm{d}m_{0}(x)\mathrm{d}\theta\\
=&\frac{1}{4}\sum\limits_{n=0}^{\infty}\bigg(\int_{UX}\Re(c_{0}c_{n}\bar{a}_{n+1})\mathrm{d}m_{0}T^{n}(1-T^{2})^{2}S^{n+1}(1-S^{2})^{3}\\
+&\int_{UX}\Re(c_{0}\bar{c}_{n+3}a_{n})\mathrm{d}m_{0}T^{n+3}(1-T^{2})^{2}S^{n}(1-S^{2})^{3}\bigg)
\end{align*}

Denoting $E_{n}=\int_{UX}\Re(c_{0}c_{n}\bar{a}_{n+1})\mathrm{d}m_{0}$ and $F_{n}=\int_{UX}\Re(c_{0}\bar{c}_{n+3}a_{n})\mathrm{d}m_{0}$. We argue for $n\geq0$,
\begin{equation}
   E_n=F_n=0
   \label{eq coeffIII}
\end{equation}

The case $t=0$ or $s=0$ of equation \eqref{eq 3cubicIV} is included in the $n=0$ case of equation \eqref{eq coeffIII}.

When flow time $s=t$, we have

\begin{align}
&\sum\limits_{n=0}^{\infty}\bigg(E_n T^{2n+1}(1-T^{2})^{5}+F_nT^{2n+3}(1-T^{2})^{5}\bigg)\nonumber\\
&=-F_0T^3(1-T^2)^2 \label{eq relation2}
\end{align}
This implies
\begin{align*}
    E_0&=0\\
    E_1&=-2F_0
\end{align*}

When flow time $s=\frac{t}{2}$, we obtain
\begin{align*}
\sum\limits_{n=0}^{\infty}\bigg(E_n T^{n}(1-T^{2})^{2}S^{n+1}(1-S^{2})^{3}+F_nT^{n+3}(1-T^{2})^{2}S^{n}(1-S^{2})^{3}\bigg)=0
\end{align*}
where $T=\frac{2S}{1+S^2}$.

It simplifies to
\begin{align*}
    \sum_{n=0}^{\infty}(E_n+8F_n\frac{S^2}{(S^2+1)^3})(\frac{2S^2}{S^2+1})^n=0
\end{align*}
Let $W=\frac{S^2}{1+S^2}$, we have
\begin{align*}
    \sum_{n=0}^{\infty}(E_n\sum_{k=0}^{\infty}(k+1)W^k+8F_nW)(2W)^n=0
\end{align*}
This gives relations
\begin{align}
   E_0=&0\nonumber\\
   \sum_{k=0}^{n}2^k(n-k+1)E_k+2^{n+2}F_{n-1}=&0 \hspace{.5 in} \text{$n\geq 1$} \label{eq relation3}
\end{align}
Combing with equation\eqref{eq relation2}, we get $E_1=F_0=0$. Therefore the right hand side of equation\eqref{eq relation2} is zero and we obtain from it $E_{n+1}+F_n=0$ for $n\geq 0$. Combining it with \eqref{eq relation3} and by an induction argument, one concludes $E_n=F_n=0$. This proves equation \eqref{eq coeffIII} for $s,t \geq 0$. The case $s,t < 0$ is similar as before.  

Now we proceed to prove equation (\ref{eq 3cubic6}). The above case implies for $q_i\neq q_j$,
\begin{align*}
    \int_{UX}\Re q_{i}(x)\Re q_{i}(\Phi_{t}(x)) \Re q_{\alpha}(\Phi_{s}(x))\mathrm{d}m_{0}&=0\\
\int_{UX}\Re q_{j}(x)\Re q_{j}(\Phi_{t}(x)) \Re q_{\alpha}(\Phi_{s}(x))\mathrm{d}m_{0}&=0\\
    \int_{UX}\Re (q_{i}+q_{j})(x) \Re (q_{i}+ q_{j})(
   \Phi_{t}(x)) \Re q_{\alpha}(\Phi_{s}(x))\mathrm{d}m_{0}&=0
\end{align*} 

Therefore for all $t,s \in \mathbb{R}$,
\begin{equation}
    \int_{UX}\Re q_{i}(x)\Re q_{j}(\Phi_{t}(x)) \Re q_{\alpha}(\Phi_{s}(x))\mathrm{d}m_{0}+\int_{UX}\Re q_{j}(x)\Re q_{i}(\Phi_{t}(x)) \Re q_{\alpha}(\Phi_{s}(x))\mathrm{d}m_{0}=0
    \label{eq exchange}
\end{equation}

Recall the analytic expansion for $q_j$ is given in \eqref{eq: analyticExIIII}.  Consider $t,s > 0$.
\begin{align*}
&\int_{\Sub UX}\Re q_{i}(x)\Re q_{j}(\Phi_{t}(x))\Re q_{\alpha}(\Phi_{s}(x))\mathrm{d}m_{0}(x)\\
=&\frac{1}{2\pi}\int_{0}^{2\pi}\int_{\Sub UX}\Re q_{i}(e^{i\theta}x)\Re q_{j}(\Phi_{t}(e^{i\theta}x))\Re q_{\alpha}(\Phi_{s}(e^{i\theta}x))\mathrm{d}m_{0}(x)\mathrm{d}\theta\\
=&\frac{1}{4}\sum\limits_{n=0}^{\infty}\bigg(\int_{UX}\Re(c_{0}d_{n}\bar{a}_{n+1})\mathrm{d}m_{0}T^{n}(1-T^{2})^{2}S^{n+1}(1-S^{2})^{3}\\
+&\int_{UX}\Re(c_{0}\bar{d}_{n+3}a_{n})\mathrm{d}m_{0}T^{n+3}(1-T^{2})^{2}S^{n}(1-S^{2})^{3}\bigg)
\end{align*}
Denoting $G_{n}=\int_{UX}\Re(c_{0}d_{n}\bar{a}_{n+1})\mathrm{d}m_{0}$ and $H_{n}=\int_{UX}\Re(c_{0}\bar{d}_{n+3}a_{n})\mathrm{d}m_{0}$. We want to show $G_n=H_n=0$ for $n \geq 0$.

Let $m$ be an integer and $m\geq 2$. Let flow time $s=mt$. Observe we have
\begin{align*}
&\int_{\Sub UX}\Re q_{i}(x)\Re q_{j}(\Phi_{t}(x))\Re q_{\alpha}(\Phi_{mt}(x))\mathrm{d}m_{0}(x)\\
 =&\int_{\Sub UX}\Re q_{i}(\Phi_{-t}(x))\Re q_{j}(x)\Re q_{\alpha}(\Phi_{(m-1)t}(x))\mathrm{d}m_{0}(x)\\
 =&-\int_{\Sub UX}\Re q_{j}(y)\Re q_{i}(\Phi_{t}(y))\Re q_{\alpha}(\Phi_{-(m-1)t}(y))\mathrm{d}m_{0}(y) \hspace{.5in}\text{$y=-x$ and $\Phi_{t}(-y)=-\Phi_{-t}(y)$}\\
 =&\int_{UX}\Re q_{i}(y)\Re q_{j}(\Phi_{t}(y)) \Re q_{\alpha}(\Phi_{-(m-1)t}(y))\mathrm{d}m_{0}(y) \hspace{.5in}\text{by equation \eqref{eq exchange}}\\
=-&\int_{\Sub UX}\Re q_{j}(x)\Re q_{i}(\Phi_{t}(x))\Re q_{\alpha}(\Phi_{mt}(x))\mathrm{d}m_{0}(x)\hspace{.5in}\text{ exchange the roles of $q_i$ and $q_j$}\\
=&-\int_{UX}\Re q_{i}(x)\Re q_{j}(\Phi_{t}(x)) \Re q_{\alpha}(\Phi_{(m-1)t}(-x))\mathrm{d}m_{0}(x)
\end{align*}

When $s=mt$, we have $S=S(m)=\frac{(1+T)^m-(1-T)^m}{(1+T)^m+(1-T)^m}=mT+O(T^3)$. From the analytic expansion
\begin{align*}
&\sum\limits_{n=0}^{\infty}\bigg(G_{n}T^{n}S(m)^{n+1}(1-S(m)^{2})^{3}+G_{n}e^{in\pi}T^{n}S(m-1)^{n+1}(1-S(m-1)^{2})^{3}\bigg)\\
=&-\sum\limits_{n=0}^{\infty}\bigg(-H_{n}e^{in\pi}T^{n+3}S(m-1)^{n}(1-S(m-1)^{2})^{3}+H_{n}T^{n+3}S(m)^{n}(1-S(m)^{2})^{3}\bigg)
\end{align*}

The coefficients of $T^{1}$ and $T^{3}$ and $T^5$ yield the following respectively
\begin{align*}
   G_{0}&=0\\
   (m^2-(m-1)^2)G_{1}&=0\\
   (m^3+(m-1)^3)G_2&=-(2m-1)H_1+(6m-3)H_0
\end{align*}
The cases $m=2$, $m=3$ and $m=4$ together give $H_0=H_1=G_2=0$. By induction, assuming $G_{k}=H_{k-1}=0$ for $1\leq k<n$, the coefficient of $T^{2n+1}$ gives  
\begin{align*}
(m^{n+1}+e^{in\pi}(m-1)^{n+1})G_n=(e^{i(n-1)\pi}(m-1)^{n-1}-m^{n-1})H_{n-1}
\end{align*}
We conclude $G_n=H_n=0$ for $n\geq 0$ by choosing two different $m$. This finishes the proof of equation (\ref{eq 3cubic6}) for $t,s>0$.
Equation (\ref{eq 3cubic6}) for $t\leq 0$ and $s \leq 0$ can be proved similar to the former cases.

\end{proof}

\subsection{The case of $\partial_{\beta}g_{\alpha i}(\sigma)$}

This is the last case. In this case, the representations $\{\rho(u,v,w)\}$ in $\mathcal{H}_{3}(S)$ corresponds to $\{(vq_{i},uq_{\alpha}+wq_{\beta})\}\subset H^{0}(X,K^2)\bigoplus H^{0}(X,K^3)$ by Hitchin parametrization. Our metric tensor is
\begin{align*}
\partial_{\beta}g_{\alpha i}(\sigma)=&\partial_{w} \Bigg( {\langle \partial_{u}\rho(0,0,w), \partial_{v}\rho(0,0,w) \rangle}_{\Sub P}\Bigg) \Bigg|_{w=0}\\
 &=\lim\limits_{r\to \infty}\frac{1}{r}\Bigg(\int_{\Sub UX}\int_{0}^{r} \partial_{u}f_{\rho(0)}^{N}\mathrm{d}t \int_{0}^{r}\partial_{v}f_{\rho(0)}^{N}\mathrm{d}t\int_{0}^{r} \partial_{w}f_{\rho(0)}^{N}\mathrm{d}t\mathrm{d}m_{0}\\
 &+\int_{\Sub UX} \int_{0}^{r}\partial_{u}f_{\rho(0)}^{N} \mathrm{d}t\int_{0}^{r} \partial_{vw}f_{\rho(0)}^{N}\mathrm{d}t\mathrm{d}m_{0}+\int_{\Sub UX} \int_{0}^{r}\partial_{v}f_{\rho(0)}^{N} \mathrm{d}t\int_{0}^{r} \partial_{uw}f_{\rho(0)}^{N}\mathrm{d}t\mathrm{d}m_{0}\Bigg)
 \end{align*}
 where the first and second variations are 
\begin{enumerate}[label=(\roman*)]
    \item $\partial_{u}f_{\rho(0)}^{N}=-\partial_{u}f_{\rho(0)}$;
    \item $\partial_{v}f_{\rho(0)}^{N}=-\partial_{v}f_{\rho(0)}$;
    \item $\partial_{uw}f_{\rho(0)}^{N}=-\partial_{uw}h(\rho(0))-\partial_{uw}f_{\rho(0)}$;
    \item $\partial_{vw}f_{\rho(0)}^{N}=-\partial_{vw}h(\rho(0))-\partial_{vw}f_{\rho(0)}$.
\end{enumerate}

 \subsubsection{First and second variations of reparametrization functions}
 
 Our Higgs fields in this case are
\[
  \Phi(u,v,w)=
  \begin{bmatrix}
   0 & vq_{i} & uq_{\alpha}+wq_{\beta} \\
   1 & 0 & vq_{i} \\
   0 & 1& 0 \\
  \end{bmatrix}
\]

\begin{prop}
\label{prop case1rep4}
The first variations of reparametrization functions $\partial_{u}f_{\rho(0)}:UX \to \mathbb{R}$ and $\partial_{v}f_{\rho(0)}:UX \to \mathbb{R}$ for the case $\partial_{\beta}g_{\alpha i}(\sigma)$ satisfy 
 \begin{align*}
  \partial_{u}f_{\rho(0)}(x)& \sim -Re q_{\alpha}(x),\\  
  \partial_{v}f_{\rho(0)}(x)&\sim 2Re q_{i}(x),\\
   \partial_{w}f_{\rho(0)}(x)&\sim -Re q_{\beta}(x).
\end{align*}
and the second variations of reparametrization functions $\partial_{uw}f_{\rho(0)}:UX \to \mathbb{R}$ and $\partial_{vw}f_{\rho(0)}:UX \to \mathbb{R}$ satisfy
\begin{align*}
\partial_{uw}f_{\rho(0)}(x)\sim &\frac{1}{2}\phi_{uw}(p(x)) \nonumber \\
 +&\Re q_{\alpha}(x)\int_{0}^{\infty}e^{-2s}\Re q_{\beta}(\Phi_{s}(x))\mathrm{d}s+\Re q_{\alpha}(x)\int_{-\infty}^{0}e^{2s}\Re q_{\beta}(\Phi_{s}(x))\mathrm{d}s \nonumber \\
    +&2\Im q_{\alpha}(x)\int_{0}^{\infty}e^{-s}\Im q_{\beta}(\Phi_{s}(x))\mathrm{d}s+2\Im q_{\alpha}(x)\int_{-\infty}^{0}e^{s}\Im q_{\beta}(\Phi_{s}(x))\mathrm{d}s,\\
 \partial_{vw}f_{\rho(0)}(x)=\partial_{wv}f_{\rho(0)}(x)\sim& \frac{1}{2}\Re y_{21}(x)-2\Im q_{\beta}(x)\left(\int_{0}^{\infty}\Im q_{i}(\Phi_{s}(x))e^{-s}\mathrm{d}s+\int_{-\infty}^{0}\Im q_{i}(\Phi_{s}(x))e^{s}\mathrm{d}s\right).
\end{align*}
where $p: UX \to X$ and $y_{21}$ are defined as before.
\end{prop}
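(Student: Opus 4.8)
The plan is to exploit the fact that the three–parameter family here degenerates, for each mixed second derivative we actually need, into one of the two two–parameter families already treated. In $\Phi(u,v,w)$ the cubic differentials $q_\alpha,q_\beta$ occupy the $(1,3)$–entry (with coefficients $u$ and $w$) while the quadratic differential $q_i$ occupies the $(1,2)$– and $(2,3)$–entries (with coefficient $v$). Since a mixed partial $\partial_{uw}$ evaluated at the origin only sees the slice $v=0$, and $\partial_{vw}$ only sees the slice $u=0$, each second variation decouples from the remaining parameter, and so reduces to a computation carried out in full earlier.

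First I would record the first variations. Setting two of the three parameters to zero at a time, the harmonic metric is diagonal and the connection one–form is read off from \eqref{eq:connection} exactly as in the model case. Applying formula \eqref{eq firstVar} with the projection $\pi(0)$ computed earlier, the $(1,3)$–entries $q_\alpha$ and $q_\beta$ contribute $-\Re q_\alpha$ and $-\Re q_\beta$, while the $(1,2),(2,3)$–entries $q_i$ contribute $2\Re q_i$, exactly as in the cases $\partial_\beta g_{\alpha\alpha}(\sigma)$ and $\partial_i g_{\alpha\alpha}(\sigma)$.

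For $\partial_{uw}f_{\rho(0)}$ I would set $v=0$. Then $\Phi(u,0,w)$ is precisely the Higgs field of the model case with $uq_\alpha+vq_\beta$ replaced by $uq_\alpha+wq_\beta$, so by Baraglia's result \cite{Baraglia_thesis} the solving harmonic metric $H(u,0,w)$ is diagonal. Consequently the computations of $\Tr(\frac{\partial^2 D_{H(0)}}{\partial u\partial w}\pi(0))$ and of $\Tr(\partial_u D_{H(0)}\partial_w\pi(0))$ are identical to those in Proposition \ref{prop secondVar}: the former yields $\tfrac12\phi_{uw}(p(x))$, and the latter is solved through the same inhomogeneous ODE system for the eigenvector variations, whose solutions produce the $\int e^{\mp 2s}\Re q_\beta$ and $\int e^{\mp s}\Im q_\beta$ integrals appearing in the stated formula.

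For $\partial_{vw}f_{\rho(0)}$ I would set $u=0$, so that $\Phi(0,v,w)$ coincides with the Higgs field of the $\partial_i g_{\alpha\alpha}(\sigma)$ case upon replacing $(u,q_\alpha)$ by $(w,q_\beta)$. Here the harmonic metric $H(0,v,w)$ is no longer diagonal, and the term $\Tr(\frac{\partial^2 D_{H(0)}}{\partial v\partial w}\pi(0))$ must be extracted from the infinitesimal Hitchin equations; this is the one genuinely nontrivial step. The analysis is exactly that of Proposition \ref{prop case1rep2}: from the nine scalar equations for $Y=H^{-1}\partial_{vw}H$, maximum principle and Bochner arguments force $y_{11}=y_{22}=y_{33}=0$, $y_{21}+y_{32}=0$, and $y_{31}=0$, leaving $y_{21}$ as the only surviving component and producing the $\tfrac12\Re y_{21}(x)$ contribution. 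The remaining term $\Tr(\partial_v D_{H(0)}\partial_w\pi(0))$ is again the nonhomogeneous–ODE computation of the $\partial_i g_{\alpha\alpha}(\sigma)$ case with $q_\alpha\to q_\beta$, giving the $\Im q_\beta$--$\Im q_i$ integrals, and the identity $\partial_{vw}=\partial_{wv}$ follows from the Livšic–cohomology argument already noted. Since both slices reproduce work done earlier, the essential content is the bookkeeping that identifies each slice with its predecessor rather than any new estimate.
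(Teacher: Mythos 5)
Your proposal is correct and follows the same route as the paper, whose proof of this proposition consists precisely of the observation that all the computations were done in the former cases: the $v=0$ slice reproduces the model case $\partial_{\beta}g_{\alpha\alpha}(\sigma)$ and the $u=0$ slice reproduces the case $\partial_{i}g_{\alpha\alpha}(\sigma)$ with $q_{\alpha}$ replaced by $q_{\beta}$. Your write-up simply makes explicit the bookkeeping (that each mixed partial at the origin depends only on the corresponding two-parameter slice) which the paper leaves implicit.
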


\begin{proof}
All of the computations have been done in the former cases.
\end{proof}

\subsubsection{Evaluation on Poincar\'e disk}

We show in this subsection
\begin{prop}
\label{prop Case3}
 For $\sigma\in\mathcal{T}(S),\partial_{\beta} g_{\alpha i}(\sigma)=0$.
\end{prop}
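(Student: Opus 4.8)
The plan is to substitute the first and second variations recorded in Proposition \ref{prop case1rep4} into the formula for $\partial_{\beta}g_{\alpha i}(\sigma)$ and to show that every resulting term vanishes, exactly as in the model case $\partial_{\beta}g_{\alpha\alpha}(\sigma)$ (Proposition \ref{prop Model}) and in Proposition \ref{prop Case2}. Recall that on the Fuchsian locus the normalized first variations are $\partial_{u}f^{N}_{\rho(0)}\sim\Re q_{\alpha}$, $\partial_{v}f^{N}_{\rho(0)}\sim-2\Re q_{i}$, $\partial_{w}f^{N}_{\rho(0)}\sim\Re q_{\beta}$, while the mixed second variations $\partial_{uw}f^{N}_{\rho(0)}$ and $\partial_{vw}f^{N}_{\rho(0)}$ each split into a constant entropy piece $-\partial_{\bullet\bullet}h(\rho(0))$, a geometric piece ($\tfrac12\phi_{uw}$ or $\tfrac12\Re y_{21}$), and iterated-integral pieces built from products of $\Re$ and $\Im$ of the differentials. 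The first step is therefore purely bookkeeping: expand the three summands of the formula into a finite list of integrals according to this decomposition.

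The second step disposes of the ``easy'' terms by symmetry of the Liouville measure alone. The constant entropy pieces vanish because $\int_{\Sub UX}\Re q_{\alpha}\,\mathrm{d}m_{0}=\int_{\Sub UX}\Re q_{i}\,\mathrm{d}m_{0}=0$: rotational invariance $(e^{i\theta})^{*}m_{0}=m_{0}$ kills the nonzero angular modes $e^{i3\theta}$ and $e^{i2\theta}$ of the expansions \eqref{eq: analyticExI} and \eqref{eq: analyticExIII}. The piece carrying $\phi_{uw}$ pairs $\Re q_{i}$ against the fiberwise rotation-invariant function $\phi_{uw}\circ p$; after averaging over $\theta$ the factor $\int_{0}^{2\pi}\Re q_{i}(e^{i\theta}x)\,\mathrm{d}\theta=0$ forces it to vanish, just as in Proposition \ref{prop Model}. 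The piece carrying $\Re y_{21}$ pairs $\Re q_{\alpha}$ against $y_{21}$, which transforms as $y_{21}(e^{i\theta}x)=e^{-i\theta}y_{21}(x)$; the angular orthogonality $\int_{0}^{2\pi}\Re(b_{0,0}e^{-i\theta})\,\Re(a_{n}e^{i(n+3)\theta})\,\mathrm{d}\theta=0$ used in Proposition \ref{prop Case2} again gives zero.

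The third and decisive step reduces the remaining iterated-integral terms, after Fubini and the $\Phi$-invariance of $m_{0}$ (which lets us place any of the three factors at the base point), to a single vanishing lemma of the same shape as Lemmas \ref{lem 3cubic}, \ref{lem 3cubicII} and \ref{lem 3cubicIII}, namely
\[
\int_{\Sub UX}\Re q_{i}(x)\,\Re q_{\alpha}(\Phi_{t}x)\,\Re q_{\beta}(\Phi_{s}x)\,\mathrm{d}m_{0}(x)=0
\]
together with its $\Im$-variant, for all $t,s\in\mathbb{R}$. This is the $(2,3,3)$ analogue of the earlier identities, with \emph{distinct} cubic differentials $q_{\alpha}\neq q_{\beta}$. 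I would obtain it from the diagonal case $q_{\alpha}=q_{\beta}$, which is precisely equation \eqref{eq 3cubic3} of Lemma \ref{lem 3cubicII}, by polarizing in the cubic directions (replacing $q_{\alpha}$ by $q_{\alpha}\pm q_{\beta}$): this yields the symmetrized identity at once, and the leftover antisymmetric combination is killed using the special flow time $s=mt$ for integers $m\ge 2$ in combination with the reversal identity $\Phi_{-t}(x)=-\Phi_{t}(-x)$ and $(e^{i\pi})^{*}m_{0}=m_{0}$, exactly as equation \eqref{eq 3cubic6} was deduced from \eqref{eq 3cubicIV} in Lemma \ref{lem 3cubicIII}. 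The $\Im$-variant then follows from the $\Re$ identity through the complex relation between the two, as \eqref{eq 3cubic2} followed from \eqref{eq 3cubic1}.

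The main obstacle is this last lemma. Because the degree pattern $(2,3,3)$ with distinct cubics is not literally one of the cases already treated, one must redo the angular bookkeeping: with the quadratic $q_{i}$ at the base the surviving modes in $\epsilon_{1}\!\cdot\!2+\epsilon_{2}(n+3)+\epsilon_{3}(m+3)=0$ come from the index shifts $m=n+2$ and $n=m+2$, producing the two coefficient families $\int_{\Sub UX}\Re(c_{0}a_{n}\bar b_{n+2})\,\mathrm{d}m_{0}$ and $\int_{\Sub UX}\Re(c_{0}\bar a_{n+2}b_{n})\,\mathrm{d}m_{0}$. One has to verify that the linear relations generated by the chosen special flow times, combined with the polarization and the reversal symmetry, form a non-degenerate system that genuinely forces all these coefficients to zero rather than merely relating them. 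Once this is checked, all three summands of $\partial_{\beta}g_{\alpha i}(\sigma)$ vanish, completing the last of the four cases and hence the proof of Theorem \ref{thm main}.
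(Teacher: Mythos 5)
Your proposal is correct and follows essentially the same route as the paper: it reduces $\partial_{\beta}g_{\alpha i}(\sigma)$ to the vanishing of the mixed triple correlation of $q_{i}$ with the two distinct cubics, kills the entropy, $\phi_{uw}$, and $y_{21}$ pieces by the same angular-mode orthogonality, and proves the key lemma by polarizing the diagonal identity \eqref{eq 3cubic3} and then separating the two coefficient families with the flow times $s=mt$, $m\geq 2$, together with the reversal symmetry. The only (immaterial) difference is that you place the quadratic differential at the base point, so your surviving coefficients are $\Re(c_{0}a_{n}\bar b_{n+2})$ and $\Re(c_{0}\bar a_{n+2}b_{n})$ rather than the paper's $\Re(a_{0}b_{n}\bar c_{n+4})$ and $\Re(a_{0}\bar b_{n+2}c_{n})$; by $\Phi$-invariance of $m_{0}$ these parametrize the same statement.
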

For the same reasoning as before, the proof of the above proposition reduces to the following lemma.

\begin{lem} \label{lem 3cubicIII}
We have the following holds for any $t,s\in \mathbb{R}$,
\begin{align}
&\int_{UX}\Re q_{\alpha}(x) \Re q_{\beta}(\Phi_{t}(x)) \Re q_{i}(\Phi_{s}(x))\mathrm{d}m_{0}(x)=0 \label{eq 3cubic9}\\
&\int_{UX}\Im q_{\alpha}(x) \Im q_{\beta}(\Phi_{t}(x)) \Re q_{i}(\Phi_{s}(x))\mathrm{d}m_{0}(x)=0 \label{eq 3cubic10}\\
&\int_{UX}\Re q_{\alpha}(x) \Im q_{\beta}(\Phi_{t}(x)) \Im q_{i}(\Phi_{s}(x))\mathrm{d}m_{0}(x)=0 \label{eq 3cubic11}
\end{align}
\end{lem}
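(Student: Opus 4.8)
The plan is to prove \eqref{eq 3cubic9} first and then deduce \eqref{eq 3cubic10} and \eqref{eq 3cubic11} from it. Using the flow- and rotation-invariance of $m_0=m_L$ exactly as in the model case (Lemma \ref{lem 3cubic}), I would reduce to $t,s>0$, insert the Poincar\'e-disk analytic expansions \eqref{eq: analyticExI}, \eqref{eq: analyticExII} for the cubics $q_\alpha,q_\beta$ and \eqref{eq: analyticExIII} for the quadratic $q_i$, and integrate over the rotational variable $\theta$ first. Since $\Re q_\alpha(e^{i\theta}x)=\Re(a_0(x)e^{i3\theta})$, the phase-cancellation condition $\pm 3\pm(n+3)\pm(m+2)=0$ leaves only the resonances $m=n+4$ and $m=n-2$, so that \eqref{eq 3cubic9} becomes a power series in $T,S$ (where $t=t(T)$, $s=s(S)$) whose coefficients form two families $\{P_n\}$, $\{Q_n\}$ of integrals of the shape $\int_{UX}\Re(a_0 b_n\bar c_{n+4})\,\mathrm{d}m_0$ and $\int_{UX}\Re(a_0\bar b_{n+2}c_n)\,\mathrm{d}m_0$. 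It therefore suffices to show $P_n=Q_n=0$ for all $n\ge 0$.

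To do this I would first observe that the diagonal specialization $q_\beta=q_\alpha$ of \eqref{eq 3cubic9} is precisely equation \eqref{eq 3cubic4}, already established in Lemma \ref{lem 3cubicII}. Polarizing --- applying this vanishing to $q_\alpha$, $q_\beta$, and $q_\alpha+q_\beta$ and subtracting the pure terms --- yields the exchange relation
\[
\int_{UX}\Re q_\alpha(x)\Re q_\beta(\Phi_t(x))\Re q_i(\Phi_s(x))\,\mathrm{d}m_0+\int_{UX}\Re q_\beta(x)\Re q_\alpha(\Phi_t(x))\Re q_i(\Phi_s(x))\,\mathrm{d}m_0=0,
\]
the exact analogue of \eqref{eq exchange}. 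I would then feed this into the special flow times $s=mt$ for integers $m\ge 2$: combining flow invariance with the time-reversal identity $\Phi_{-t}(x)=-\Phi_t(-x)$, the parity of the differentials under $x\mapsto -x$ (cubics odd, quadratics even), the relation $(e^{i\pi})^*m_0=m_0$, and the exchange relation shows that the integral at $s=mt$ equals the negative of the integral at the reflected time built from $\Phi_{(m-1)t}(-x)$. Expanding both sides with $S(m)=mT+O(T^3)$ and comparing coefficients of successive odd powers of $T$ for several values of $m$ gives a recursion that, by induction, forces $P_n=Q_n=0$; this is the same bookkeeping carried out in the $\partial_j g_{\alpha i}(\sigma)$ case.

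Once \eqref{eq 3cubic9} is known for $t,s>0$, I would handle $t\le 0$ or $s\le 0$ by the flow-invariance and time-reversal reductions used throughout the earlier cases. Equation \eqref{eq 3cubic11} then follows because the fully holomorphic integral $\int_{UX}\Re q_\alpha(x)\,q_\beta(\Phi_t(x))\,q_i(\Phi_s(x))\,\mathrm{d}m_0$ vanishes identically --- its phases $\pm 3+(n+3)+(m+2)$ never cancel --- so taking real parts and using $\Re(q_\beta q_i)=\Re q_\beta\Re q_i-\Im q_\beta\Im q_i$ together with \eqref{eq 3cubic9} isolates \eqref{eq 3cubic11}. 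For \eqref{eq 3cubic10} I would instead examine $\Re\int_{UX}q_\alpha(x)\,q_\beta(\Phi_t(x))\,\Re q_i(\Phi_s(x))\,\mathrm{d}m_0$; its only resonant contributions are governed by the coefficients $P_n$, which now vanish, so this real part is zero, and since $\Re(q_\alpha q_\beta)=\Re q_\alpha\Re q_\beta-\Im q_\alpha\Im q_\beta$ the quantity equals \eqref{eq 3cubic9} minus \eqref{eq 3cubic10}, giving \eqref{eq 3cubic10}$=0$.

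The main obstacle I anticipate is the inductive coefficient comparison that kills $P_n$ and $Q_n$: setting up the recursion from the relations at $s=mt$ requires tracking the $O(T^3)$ corrections in $S(m)$ and choosing enough distinct values of $m$ to separate the unknowns at each inductive stage. Everything else is a direct transcription of the symmetry arguments already developed for $\partial_\beta g_{\alpha\alpha}(\sigma)$ and $\partial_j g_{\alpha i}(\sigma)$.
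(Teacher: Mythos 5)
Your proposal is correct and follows essentially the same route as the paper: reduce to the all-real-parts identity, polarize the already-established diagonal case $q_\beta=q_\alpha$ (equation \eqref{eq 3cubic4}) to get the exchange relation, specialize to $s=mt$ and compare coefficients of the Poincar\'e-disk expansions to kill the two families of resonant coefficients, and deduce \eqref{eq 3cubic10} and \eqref{eq 3cubic11} from the vanishing of the fully holomorphic integrals. Your treatment of \eqref{eq 3cubic10} and \eqref{eq 3cubic11} is in fact more explicit than the paper's (which only asserts they ``follow easily''), but it is the same phase-cancellation argument used in the model case.
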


\begin{proof}
We just need to show equation \eqref{eq 3cubic9}. Equations \text{\eqref{eq 3cubic10}} and \text{\eqref{eq 3cubic11}} follow easily similar to formal cases.

From the computation of $\partial_{i}g_{\alpha \alpha}(\sigma)$, we know    
\begin{align*}
    \int_{UX}\Re q_{\alpha}(x)\Re q_{\alpha}(\Phi_{t}(x)) \Re q_{i}(\Phi_{s}(x))\mathrm{d}m_{0}&=0\\
\int_{UX}\Re q_{\beta}(x)\Re q_{\beta}(\Phi_{t}(x)) \Re q_{i}(\Phi_{s}(x))\mathrm{d}m_{0}&=0\\
    \int_{UX}\Re (q_{\alpha}+q_{\beta})(x) \Re (q_{\alpha}+ q_{\beta})(\Phi_{t}(x)) \Re q_{i}(\Phi_{s}(x))\mathrm{d}m_{0}&=0
\end{align*} 
We deduce 
\begin{align*}
    \int_{UX}\Re q_{\alpha}(x)\Re q_{\beta}(\Phi_{t}(x)) \Re q_{i}(\Phi_{s}(x))\mathrm{d}m_{0}+\int_{UX}\Re q_{\beta}(x)\Re q_{\alpha}(\Phi_{t}(x)) \Re q_{i}(\Phi_{s}(x))\mathrm{d}m_{0}&=0
\end{align*} 

Similar to $\partial_{j}g_{\alpha i}(\sigma)$, we consider $s=mt$ for $m \in \mathbb{N}$ and $m\geq 2$. We observe
\begin{align*}
    &\int_{UX}\Re q_{\alpha}(x)\Re q_{\beta}(\Phi_{t}(x)) \Re q_{i}(\Phi_{mt}(x))\mathrm{d}m_{0}\\
    =&\int_{UX}\Re q_{\alpha}(x)\Re q_{\beta}(\Phi_{t}(x)) \Re q_{i}(\Phi_{(m-1)t}(-x))\mathrm{d}m_{0}\\
\end{align*} 
We recall the Poinc\'are disk model and our analytic expansion for  $q_{\alpha},q_{\beta}, q_{i}$ in \eqref{eq: analyticExI}, \eqref{eq: analyticExII} and \eqref{eq: analyticExIIII}. For $t,s \geq 0$, the analytic expansio
\begin{align*}
&\int_{\Sub UX}\Re q_{\alpha}(x)\Re q_{\beta}(\Phi_{t}(x))\Re q_{i}(\Phi_{s}(x))\mathrm{d}m_{0}(x)\\
=&\frac{1}{2\pi}\int_{0}^{2\pi}\int_{\Sub UX}\Re q_{\alpha}(e^{i\theta}x)\Re q_{\beta}(\Phi_{t}(e^{i\theta}x))\Re q_{i}(\Phi_{s}(e^{i\theta}x))\mathrm{d}m_{0}(x)\mathrm{d}\theta\\
=&\frac{1}{4}\sum\limits_{n=0}^{\infty}\bigg(\int_{UX}\Re(a_{0}b_{n}\bar{c}_{n+4})\mathrm{d}m_{0}T^{n}(1-T^{2})^{3}S^{n+4}(1-S^{2})^{2}\\
+&\int_{UX}\Re(a_{0}\bar{b}_{n+2}c_{n})\mathrm{d}m_{0}S^{n}(1-S^{2})^{2}T^{n+2}(1-T^{2})^{3}\bigg)
\end{align*}
Denoting $I_n=\int_{UX}\Re(a_{0}b_{n}\bar{c}_{n+4})\mathrm{d}m_{0}$ and $J_n=\int_{UX}\Re(a_{0}\bar{b}_{n+2}c_{n})\mathrm{d}m_{0}$ for $n\geq0$,we argue
\begin{align*}
  I_n=J_n=0.
\end{align*}

When $s=mt$, we have $S=S(m)=\frac{(1+T)^m-(1-T)^m}{(1+T)^m+(1-T)^m}=mT+O(T^3)$. The analytic expansions give
\begin{align*}
&\sum\limits_{n=0}^{\infty}\bigg(I_{n}T^{n}S(m)^{n+4}(1-S(m)^{2})^{2}-I_{n}e^{in\pi}T^{n}S(m-1)^{n+4}(1-S(m-1)^{2})^{2}\bigg)\\
=&\sum\limits_{n=0}^{\infty}\bigg(-J_{n}T^{n+2}S(m)^{n}(1-S(m)^{2})^{2}+J_{n}e^{in\pi}T^{n+2}S(m-1)^{n}(1-S(m-1)^{2})^{2}\bigg)
\end{align*}

The coefficients of $T^{4}$  yield the following respectively
\begin{align*}
   (m^4-(m-1)^4)I_{0}&=-(2m-1)J_1+(4m-2)J_0\\
\end{align*}
The cases $m=2$ and $m=3$ and $m=4$ gives $I_0=J_0=J_1=0$. By induction, assuming $I_{k}=J_{k+1}=0$ for $1\leq k<n$, the coefficient of $T^{2n+4}$ gives  
\begin{align*}
(m^{n+4}-e^{in\pi}(m-1)^{n+4})I_n=(e^{i(n+1)\pi}(m-1)^{n+1}-m^{n+1})J_{n+1}
\end{align*}
We conclude $I_n=J_n=0$ for $n\geq 0$ by choosing two different $m$. This finishes the proof of equation (\ref{eq 3cubic9}) for $t,s>0$.
Equation (\ref{eq 3cubic9}) for $t\leq 0$ and $s \leq 0$ can be proved similar to the former cases.
 Lemma \ref{lem 3cubicIII} and also Proposition \ref{prop case1rep3} therefore hold.
 \end{proof}
We have shown (i) $\partial_{\beta}g_{\alpha \alpha}(\sigma)=0$, (ii) $\partial_{i}g_{\alpha \alpha}(\sigma)=0$, (iii) $\partial_{j}g_{\alpha i}(\sigma)=0$ and (iv) $\partial_{\beta}g_{\alpha i}(\sigma)=0$ in consecutive sections. This finishes the proof of our Theorem \ref{thm main}.

\bibliographystyle{abbrv}
	\bibliography{ref}
\end{document}